\theoremstyle{plain}
\newtheorem{cor}{Corollary}
\newtheorem{lem}[cor]{Lemma}
\newtheorem{prop}[cor]{Proposition}
\newtheorem{thm}[cor]{Theorem}
\newtheorem*{thm*}{Theorem}
\theoremstyle{definition}
\newtheorem{definition}[cor]{Definition}
\newtheorem{remark}[cor]{Remark}
\newtheorem{example}[cor]{Example}
\newtheorem{assumption}[cor]{Assumption}
\numberwithin{cor}{section}
\numberwithin{equation}{section}
\DeclareMathOperator{\C}{C}
\newcommand{\E}{\mathbb{E}}
\renewcommand{\d}{\delta}
\renewcommand{\and}{\quad\textrm{ and }\quad}
\renewcommand{\P}{\mathbb{P}}
\renewcommand{\a}{\alpha}
\renewcommand{\b}{\beta}
\renewcommand{\o}{\omega}
\renewcommand{\O}{\Omega}
\newcommand{\sgn}{\text{sgn}}
\newcommand{\Ent}{\text{Ent}}
\newcommand{\F}{\mathcal F}
\newcommand{\R}{\mathbb{R}}
\newcommand{\TT}{\mathbb{T}}
\newcommand{\N}{\mathbb{N}}
\newcommand{\Z}{\mathbb{Z}}
\newcommand{\norm}[1]{\left\| #1 \right\|}
\newcommand{\D}{\Delta}
\newcommand{\ve}{\varepsilon}
\newcommand{\abs}[1]{\left|#1\right|}
\providecommand{\ud}[1]{\, \mathrm{d} #1}
\providecommand{\dx}{\ud{x}}
\providecommand{\dy}{\ud{y}}
\providecommand{\dxi}{\ud \xi}
\providecommand{\deta}{\ud{\eta}}
\providecommand{\dxp}{\ud{x'}}
\providecommand{\dxip}{\ud{\xi'}}
\providecommand{\ds}{\ud{s}}
\providecommand{\dt}{\ud{t}}
\providecommand{\dd}{\ud}
\def\XXint#1#2#3{{\setbox0=\hbox{$#1{#2#3}{\int}$ }
\vcenter{\hbox{$#2#3$ }}\kern-.6\wd0}}
\begin{document}

\title[Well-posedness of the Dean--Kawasaki equation]{Well-posedness of the Dean--Kawasaki and the nonlinear Dawson--Watanabe equation with correlated noise}

\author{Benjamin Fehrman}
\address{Mathematical Institute, University of Oxford, OX2 6GG Oxford, United Kingdom}
\email{Benjamin.Fehrman@maths.ox.ac.uk}

\author{Benjamin Gess}
\address{Max Planck Institute for Mathematics in the Sciences, 04103 Leipzig, Germany \newline \indent Fakult\"at f\"ur Mathematik, Universit\"at Bielefeld, 33615 Bielefeld, Germany}
\email{Benjamin.Gess@mis.mpg.de}

\date{\today}

\begin{abstract} In this paper we prove the well-posedness of the generalized Dean--Kawasaki equation driven by noise that is white in time and colored in space. The results treat diffusion coefficients that are only locally $\nicefrac{1}{2}$-H\"older continuous, including the square root. This solves several open problems, including the well-posedness of the Dean--Kawasaki equation and the nonlinear Dawson--Watanabe equation with correlated noise.  \end{abstract}

\subjclass[2010]{35Q84, 60F10, 60H15, 82B21, 82B31}
\keywords{stochastic partial differential equation, macroscopic fluctuation theory, fluctuating hydrodynamics, Dean--Kawasaki equation, Dawson--Watanabe equation}

\maketitle

\section{Introduction}
In this paper, we introduce a general framework to establish the well-posedness of nonnegative solutions to stochastic PDE of the type\footnote{In the introduction, we formally write $\partial_t\rho$ and $\dot{\xi}^F$ to denote the time-derivative of the stochastic processes $\rho$ and $\xi^F$.  In the remainder of the paper, we will use the probabilistic notation $\dd\rho$ and $\dd\xi^F$.}
\begin{equation}
\partial_t\rho=\Delta\Phi(\rho)-\nabla\cdot\left(\sigma(\rho)\circ\dot{\xi}^{F}+\nu(\rho)\right)+\phi(\rho)\dot{\xi}^{G}+\lambda(\rho)\;\;\textrm{in}\;\;\TT^{d}\times(0,T),\label{1_000}
\end{equation}
for Stratonovich-noise $\circ\;\dot{\xi}^{F}$ and It\^o-noise $\dot{\xi}^{G}$ white in time and sufficiently regular in space.  The assumptions on the nonlinearities $\Phi,\sigma,\nu,\phi,\lambda$ are given in detail below and apply to the full range of fast diffusion and porous medium equations, that is, $\Phi(\rho)=\rho^{m}$, for every $m\in(0,\infty)$, to irregular $\sigma$ including the square root, and to all locally $\nicefrac{1}{2}$-H\"older continuous $\phi$.

The results solve several previously open problems, including most notably the well-posedness of conservative stochastic PDE with correlated noise
\begin{equation}
\partial_t\rho=\Delta\Phi(\rho)-\nabla\cdot\left(\sigma(\rho)\circ\dot{\xi}^{F}+\nu(\rho)\right)\;\;\textrm{in}\;\;\TT^{d}\times(0,T),\label{1_0}
\end{equation}
such as the Dean--Kawasaki equation with correlated noise
\begin{equation}
\partial_t\rho=\Delta\rho-\nabla\cdot(\rho^{\frac{1}{2}}\circ\dot{\xi}^{F})\;\;\textrm{in}\;\;\TT^{d}\times(0,T).\label{1_0101}
\end{equation}
Equations like \eqref{1_0} arise as fluctuating continuum models for interacting particle systems, see, for instance, Giacomin, Lebowitz, and Presutti \cite[Section~4]{GiaLebPre1999}, and have been used to describe the hydrodynamic large deviations of simple exclusion and zero range particle processes, see, for example, Dirr, Stamatakis, and Zimmer \cite{DirStaZim2016}, Quastel, Rezakhanlou, and Varadhan \cite{QuaRezVar1999}, Benois, Kipnis, and Landim \cite{BenKipLan1995}, and the authors and Dirr \cite{FehGesDir2020,FehGes2020}. Yet, despite their physical relevance, and despite that analogous versions of \eqref{1_0101} were introduced nearly thirty years ago by Dean \cite{Dea1996} and Kawasaki \cite{Kaw1994}, both \eqref{1_0} and \eqref{1_0101} have until now lacked a precise mathematical meaning, see, for example, Donev, Fai, and Vanden-Eijnden \cite[Page~6]{DonFaiVan2014} and Konarovskyi and von Renesse \cite{KonvRe2019}.

The primary contribution of this work is the development of a robust solution theory for equations like \eqref{1_0} and \eqref{1_0101} with degenerate diffusions $\Phi$, irregular noise coefficients $\sigma$ including the square root, and locally Lipschitz continuous $\nu$.  In addition, the inclusion of nontrivial zeroth order terms $\phi$ and $\lambda$, while comparatively simpler, still provides an answer to several problems, such as the well-posedness of the nonlinear Dawson--Watanabe equation with correlated noise, see \eqref{intro_DW}.



The well-posedness of \eqref{1_000} is challenging, due to the nonlinearity and possible degeneracy of the diffusion, the possible irregularity caused by the fluctuations entering in the form of a stochastic conservation law, and the  lack of Lipschitz continuity of the noise coefficients.  Furthermore, the singularity appearing in the Stratonovich-to-It\^o-correction makes it necessary to develop a renormalized solution theory and substantially complicates the construction of solutions due to the lack of standard a priori estimates (see Section~\ref{sec_elements} below).

We next summarize the main results of this work, first in the conservative case \eqref{1_0}, second in the non-conservative case. In the first case we prove that the equation satisfies an almost sure $L^1$-contraction, and in both cases we develop an $L^p$-based theory for every $p\in[1,\infty)$ and a theory for initial data with finite entropy.  The detailed statement of the results can be found in the main text in the indicated Theorems and Corollaries.

\begin{thm}[Theorems~\ref{thm_unique}, \ref{thm_exist}, Corollaries~\ref{cor_exist}, \ref{cor_exist_1}]
Let $\dot{\xi}^{F}$, $\Phi$, $\sigma$, and $\nu$ satisfy Assumptions~\ref{assumption_noise}, \ref{assume_1}, and \ref{assume} for some $p\in[2,\infty)$, and let $\rho_{0}\in L^p(\TT^{d})$ be nonnegative. Then, there exists a unique stochastic kinetic solution of \eqref{1_0} in the sense of Definition~\ref{def_sol}, and the solution satisfies the estimates of Proposition~\ref{prop_approx_est}. 

In addition, if $\rho^{1}$ and $\rho^{2}$ are two stochastic kinetic solutions with initial data $\rho_{0}^{1}$ and $\rho_{0}^{2}$ then, a.s.\ for every $t\in[0,T]$, 
\begin{equation}
\norm{\rho^{1}(\cdot,t)-\rho^{2}(\cdot,t)}_{L^{1}(\TT^{d})}\leq\norm{\rho_{0}^{1}-\rho_{0}^{2}}_{L^{1}(\TT^{d})}.\label{eq:contraction}
\end{equation}
The same results hold if $\rho_{0}$ is nonnegative with finite entropy in the sense of Definition~\ref{def_entropy} and Assumption~\ref{assumption_10} is satisfied, or if $\rho_{0}\in L^{1}(\TT^{d})$ and $\sigma^{2}$ and $\nu$ grow at most linearly at infinity. 
\end{thm}

The pathwise contraction property \eqref{eq:contraction} is a key observation that implies the pathwise continuity of solutions with respect to the initial condition, a property rarely known for solutions to stochastic PDE. In fact, this is a key obstacle in the development of a random dynamical systems (RDS) approach to stochastic PDEs, see Flandoli \cite{F95} for a detailed discussion. The results here, in particular \eqref{eq:contraction}, constitute the basis for the construction of RDS for conservative stochastic PDEs by the authors and Gvalani \cite{FehGesGva2022}.

In addition to the applications to the non-equilibrium fluctuations of conservative systems, we detail further applications of \eqref{1_000} in the non-conservative case $(\sigma,\lambda\neq 0)$ to branching interacting diffusions and the nonlinear Dawson--Watanabe equation with correlated noise
\begin{equation}\label{intro_DW}\partial_t\rho=\Delta\Phi(\rho)+\rho^{\frac{1}{2}}\dot{\xi}^{G}\;\;\textrm{in}\;\;\TT^{d}\times(0,T),\end{equation}
to interacting particle systems with common noise, and to stochastic geometric PDE in Section~\ref{sec_app} below.  The main result in this case is as follows.

\begin{thm}[Theorems~\ref{thm_gen_unique}, \ref{thm_gen_exist}, Corollaries~\ref{cor_exist_3}, \ref{cor_exist_2}]
Let $\dot{\xi}^{F}$, $\dot{\xi}^{G}$, $\Phi$, $\sigma$, $\nu$, $\phi$, and $\lambda$ satisfy Assumptions~\ref{assumption_noise_1}, \ref{assume_5}, and \ref{assume_4} for some $p\in[2,\infty)$, and let $\rho_{0}\in L^p(\TT^{d})$ be nonnegative. Then there exists a unique stochastic kinetic solution of \eqref{1_000} in the sense of Definition~\ref{def_gen_sol}, and the solution satisfies the estimates of Proposition~\ref{prop_gen_est}. 

In addition, if $\rho^{1}$ and $\rho^{2}$ are two stochastic kinetic solutions with initial data $\rho_{0}^{1}$ and $\rho_{0}^{2}$ then there exists $c\in(0,\infty)$ such that, for every $t\in[0,T]$, 
\[
\E\left[\norm{\rho^{1}(\cdot,t)-\rho^{2}(\cdot,t)}_{L^{1}(\TT^{d}))}\right]\leq c\exp(ct)\norm{\rho_{0}^{1}-\rho_{0}^{2}}_{L^{1}(\TT^{d})}.
\]
Furthermore, there exists $c\in(0,\infty)$ such that 
\[\E\left[\sup_{t\in[0,T]}\norm{\rho^{1}(\cdot,t)-\rho^{2}(\cdot,t)}_{L^{1}(\TT^{d})}\right] \leq c\exp(cT)\left(\norm{\rho_{0}^{1}-\rho_{0}^{2}}_{L^{1}(\TT^{d})}^{\frac{1}{2}}+\norm{\rho_{0}^{1}-\rho_{0}^{2}}_{L^{1}(\TT^{d})}\right).\]
The same results hold if $\rho_{0}$ is nonnegative with finite entropy in the sense of Definition~\ref{def_entropy} and Assumption~\ref{assumption_11} is satisfied, or if $\rho_{0}\in L^{1}(\TT^{d})$ and $\sigma^{2}$, $\nu$, and $\phi^2$ grow at most linearly at infinity. 
\end{thm}

\subsection{Elements of the proof}\label{sec_elements}  The Stratonovich equation \eqref{1_0} with $\nu=0$ is formally equivalent to the It\^o equation
\begin{equation}\label{1_20}\partial_t\rho = \Delta\Phi(\rho)-\nabla\cdot(\sigma(\rho)\dot{\xi}^F)+(\nicefrac{1}{2})\nabla\cdot(F_1[\sigma'(\rho)]^2\nabla\rho+\sigma(\rho)\sigma'(\rho)F_2),\end{equation}
for coefficients $F_1=\sum_{k=1}^\infty f_k^2$ and $F_2=\frac{1}{2}\nabla F_1=\sum_{k=1}^\infty f_k\nabla f_k$ which, in the Dean--Kawasaki case, takes the form
\begin{equation}\label{1_21}\partial_t\rho = \Delta\rho-\nabla\cdot(\rho^\frac{1}{2}\dot{\xi}^F)+(\nicefrac{1}{8})\nabla\cdot\left(F_1\rho^{-1}\nabla\rho\right)+(\nicefrac{1}{4})\nabla\cdot F_2.\end{equation}
This illustrates two fundamental difficulties in treating \eqref{1_000}--\eqref{1_0101}:  not only is it necessary to treat nonlinearities that are only $\nicefrac{1}{2}$-H\"older continuous---which correspond to the most relevant applications of \eqref{1_000}--\eqref{1_0101} and which has remained a fundamental open problem despite much effort (see Section~\ref{sec_lit} below)---it is also necessary to treat the in general singular term 
\begin{equation}\label{eq:singular_term}
   \nabla\cdot\left(F_1\rho^{-1}\nabla\rho\right)=\nabla\cdot\left(F_1\nabla \mathrm{log}(\rho)\right)
\end{equation}
arising from the It\^o-correction. In fact, it is not even clear how to define a concept of weak solution to \eqref{1_21} since $\mathrm{log}(\rho)$ is not known to be locally integrable. 

In the theory of renormalized solutions introduced by DiPerna and Lions \cite{DiPLio1989} and extended to conservation laws by B\'enilan, Carrillo, and Wittbold \cite{BenCarWit2000}, the possible lack of local integrability of nonlinear terms is resolved by the notion of renormalized solutions, which, roughly speaking are required to satisfy the PDE only after cutting out large values, thereby enforcing local integrability. A key and fundamental idea in the treatment of equations like \eqref{1_20} developed in the present work is to introduce a concept of renormalized solutions, cutting out both large and small values of the solution, in order to avoid the possible lack of local integrability caused by the singular term \eqref{eq:singular_term} at \emph{small} values. This concept of renormalized solutions is here derived from stochastic kinetic solutions, see Definition~\ref{def_sol}, which is based on the kinetic formulation of scalar conservation laws introduced by Lions, Perthame, and Tadmor \cite{LioPerTad1994}, Perthame \cite{Per2002}, and Chen and Perthame \cite{ChePer2003} (see Section~\ref{sec_kinetic} below).

The renormalization away from large and small values causes substantial difficulties both in the proof of the uniqueness and the existence of solutions. We will comment on these next.

In the proof of uniqueness, the localization away from infinity and zero causes the necessity to introduce cutoff functions that create singularities near zero. These singularities have to be carefully controlled and compensated by properties of the entropy dissipation measure and nonlinearities.  We develop a new and precise characterization of the behavior of the entropy dissipation measure, and therefore the entropy inequality, on the zero set of the solution in Proposition~\ref{prop_measure} below. It is this characterization that we use to treat the singularities appearing due to the compact support of the test functions.

The singularity appearing due to the It\^o-correction \eqref{eq:singular_term} also significantly complicates the proof of a priori estimates on the time regularity of solutions and, thereby, the proof of the  existence of solutions. Indeed, even if a solution $\rho$ to \eqref{1_21} is spatially regular, due to the possible divergence of $\log(\rho)$ at $\rho \approx 0$, this does not imply an estimate on the time regularity of $\rho$.  To overcome this fact, also in the proof of existence we rely on cutting out small values of the solutions. Precisely, in Proposition~\ref{prop_approx_time} we prove stable estimates in time for nonlinear functions $\Psi_\d(\rho)$ of the solution (see Definition~\ref{def_cutoff}) that localize the solutions away from zero.  This implies the tightness in $L^1_tL^1_x$ for these cut-off solutions  $\Psi_\d(\rho)$. We then introduce a corresponding new metric on $L^1_tL^1_x$ (see Definition~\ref{def_metric} below) whose topology coincides with the usual strong norm topology, and show that the tightness the $\Psi_\d(\rho)$ implies the tightness in law of the approximating solutions $\rho$ themselves (see Proposition~\ref{prop_tight}).

Finally, as a consequence of the lack of a stable $W^{\beta,1}_tH^{-s}_x$-estimate and the kinetic formulation, it is not clear that the laws of the approximating solutions are tight in a space of continuous in time, $H^{-s}_x$-valued functions.  It is for this reason that we prove directly the tightness of the martingale terms of equation \eqref{1_20} in Proposition~\ref{prop_mart_tight}, which relies on the fact that the noise is sufficiently smooth in space.  We prove the existence of a probabilistically strong solution to \eqref{1_0} in Theorem~\ref{thm_exist}, and we extend these results to equation \eqref{1_000} in Section~\ref{sec_gen}.

\subsection{Applications}

\subsubsection*{Non-equilibrium fluctuations for symmetric systems\label{sec_app}}

The framework of fluctuating hydrodynamics postulates conservative, singular stochastic PDEs of the type \eqref{1_0} as mesoscopic descriptions of microscopic dynamics and their fluctuations, far from equilibrium. As a particular example, Ferrari, Presutti, and Vares \cite{FerPreVar1987} studied the fluctuating hydrodynamics of the zero range process about its hydrodynamic limit, and these were informally shown (see, for example, Dirr, Stamatakis, and Zimmer \cite{DirStaZim2016}) to satisfy the equation 
\begin{equation}
\partial_t\rho=\Delta\Phi(\rho)-\nabla\cdot\left(\Phi^{\frac{1}{2}}(\rho)\dot{\xi}\right)\label{1_0000}
\end{equation}
for $\dot{\xi}$ a $d$-dimensional space-time white noise, and for $\Phi$ the mean-local jump rate (see, for example, Kipnis and Landim \cite[Chapter~7]{KipLan1999}). Notably, \eqref{1_0000} is an informal equation and giving rigorous meaning to it would require renormalization. This would include, in particular, a choice of renormalization constants, and, thereby, a choice of the interpretation of the stochastic integral (\`a la Stratonovich vs.\ It\^o). The case $\Phi(\xi)=\xi$ corresponds to the Dean--Kawasaki equation (see Dean~\cite{Dea1996}, Kawsaki~\cite{Kaw1994}, Marconi and Tarazona \cite{MarTar1999}, and te Vrugt, L{\"o}wen and Wittkowski \cite{teVLowWit2020}). A rigorous justification of this ansatz, and of the choice of correlated Stratonovich noise, is given in \cite{FehGesDir2020,FehGes2020} through the analysis of the corresponding large deviations rate function of the symmetric simple exclusion and zero range particle processes, see, as well, \cite{BenKipLan1995,QuaRezVar1999}.

Notably, because of the irregularity of space-time white noise, the equation \eqref{1_0000} is supercritical in the language of regularity structures \cite{Hai2014}. However, it can be argued that the microscopic system comes with a typical de-correlation length for the noise, like the grid-size, which leads to \eqref{1_0000} with spatially correlated noise (see Giacomin, Lebowitz, and Presutti \cite[Section 4]{GiaLebPre1999}). This viewpoint has also been taken in \cite{FehGes2020}, where it has been shown that the small noise large deviations of 
\begin{equation}
\partial_t\rho^\ve=\Delta\Phi(\rho^\ve)-\ve \nabla\cdot\left(\Phi^{\frac{1}{2}}(\rho)\circ \dot{\xi}^{F,\ve}\right)\label{1_0000-1}
\end{equation}
with $\dot{\xi}^{F,\ve}$ being a spatially correlated noise converging, as $\ve\rightarrow 0$, to space-time white noise correctly reproduce the large deviations of the zero range process.   For the reasons mentioned above, the well-posedness of \eqref{1_0000-1} had remained a long-standing open problem in the literature, which is solved in the present work.
\begin{example}
 Let $\dot{\xi}^F$, $\Phi$, and $\sigma=\Phi^\frac{1}{2}$ satisfy Assumptions~\ref{assumption_noise}, \ref{assume_1}, and \ref{assume} for $p=2$, and let $\rho_0\in L^2(\TT^d)$ be nonnegative. Then there is a unique kinetic solution to \eqref{1_0000-1} and each two solutions satisfy \eqref{eq:contraction}. 
In particular, this includes the case of the Dean--Kawasaki equation with correlated Stratonovich noise, that is, $\Phi(\rho)=\rho$, and porous medium equation $\Phi(\rho)=\rho^{m}$, for all $m\in (1,\infty)$.
\end{example}

\subsubsection*{Non-equilibrium fluctuations for asymmetric systems}

Along the motivation of the previous example, a continuum, ``mesoscopic'' description of asymmetric systems is, informally, given by (see  \cite[Section 4]{GiaLebPre1999} and Mariani \cite{Mar2010})
\[
\partial_{t}\rho^\ve=\frac{\ve}{2}\Delta\Phi(\rho^\ve)+\nabla\cdot\nu(\rho^\ve)+\ve^{\frac{1}{2}}\nabla\cdot(\sqrt{a^{2}(\rho^\ve)}\dot{\xi}^\ve),
\]
where the bulk diffusion $\Phi'(\rho^\ve)$ and fluctuation intensity $\sqrt{a^2(\rho^\ve)}$ satisfy a fluctuation-dissipation relation, $\nu(\rho)$ corresponds to the asymmetric, nonlinear transport and $\dot{\xi}^\ve$ has spatial correlation length $\ve$. The asymmetric nature of the system in the Eulerian scaling is expressed by the coefficient $\ve\in(0,\infty)$ which, in the hydrodynamic limit, causes both the diffusion and fluctuations to vanish. 

A concrete example is given by the asymmetric zero range process. In this case, we have that $\Phi$ is the mean-local jump rate, $\sqrt{a^{2}(\rho^\ve)}=\sqrt{\Phi(\rho^\ve)}$, and $\nu(\rho^\ve)=\Phi(\rho^\ve)$, see, for example, Gon\c{c}alves \cite{Gon2014}.  That is,
\[\partial_{t}\rho^\ve=\frac{\ve}{2}\Delta\Phi(\rho^\ve)+\nabla\cdot\Phi(\rho^\ve)+\ve^{\frac{1}{2}}\nabla\cdot(\sqrt{\Phi(\rho^\ve)} \dot{\xi}^\ve),\]
where we emphasize that, as in the case of \eqref{1_0000-1}, even in the case of correlated noise this equation had until now lacked a rigorous mathematical meaning.  The well-posedness of this class of stochastic PDE,  with the choice of spatially correlated Stratonovich noise, was an open problem in the literature that is solved by the results of the present work. 

\begin{example}
 Let $\dot{\xi}^{F,\ve}$, $\Phi$, and $\sigma$ satisfy Assumptions~\ref{assumption_noise}, \ref{assume_1}, and \ref{assume} for $p=2$, and let $\rho_0\in L^2(\TT^d)$ be nonnegative. Then there is a unique kinetic solution to
\[\partial_{t}\rho^\ve=\frac{\ve}{2}\Delta\Phi(\rho^\ve)+\nabla\cdot\Phi(\rho^\ve)+\ve^{\frac{1}{2}}\nabla\cdot(\sqrt{\Phi(\rho^\ve)}\circ \dot{\xi}^{F,\ve}),\]
and each two solutions satisfy \eqref{eq:contraction}. 
\end{example}

The asymmetric simple exclusion process corresponds to $\Phi(\rho)=\rho$, $\nu(\rho)=a^{2}(\rho)=\rho(1-\rho)$. In this case, the exclusion rule prevents concentration of mass, which allows a much simpler treatment of the stochastic PDE, see \cite{Mar2010} and \cite{FehGesDir2020}.  However, prior to this work, even for this case it remained necessary to introduce an approximation of the square root $\sqrt{\rho(1-\rho)}$ in order to obtain the well-posedness of the equation.

\subsubsection*{Nonlinear Dawson--Watanabe equation}

The scaling limits of independent branching Brownian motions are known to converge to solutions of the Dawson--Watanabe stochastic PDE. In the case of mean-field interacting, branching processes, the analogous scaling limits are described in terms of (non-local) quasilinear stochastic PDE, see M\'el\'eard and Roelly \cite{MelRoe1992}. The localized interaction limit then, informally, leads to solutions to the nonlinear Dawson--Watanabe equation
\begin{equation}
\partial_{t}\rho=\Delta\Phi(\rho)dt+\phi(\rho)\dot{\xi}^{G},\label{eq:nl-dw}
\end{equation}
with $\Phi(\rho)=\rho^{2},\phi(\rho)=\rho^\frac{1}{2}$, see Dareiotis, Gerencs\'er and Gess \cite[Section 1.1]{DarGerGes2019}. Based on the work of Oelschl\"ager \cite{Oel1990}, one may expect that moderate interaction regimes could produce other cases of nonlinearities $\Phi$. 

The well-posedness of \eqref{eq:nl-dw} has been considered by Dareiotis, Gerencs\'er and Gess in \cite{DarGerGes2019,DarGerGes2020}. First, in \cite{DarGerGes2019}, for spatially correlated noise $\dot{\xi}^G$ and assuming that $\phi$ is $C^{\nicefrac{1}{2}+\ve}$, for some $\ve\in(0,\nicefrac{1}{2}]$, the well-posedness of entropy solutions to \eqref{eq:nl-dw} has been shown. This left the most relevant case $\phi(\rho)=\rho^\frac{1}{2}$ as an open problem, which is solved by the results of the present work.
\begin{example}
Let $\rho_{0}\in L^{2}(\TT^d)$ be nonnegative and assume that the noise satisfies Assumption \ref{assumption_noise_1}. Then there is a unique solution to \eqref{eq:nl-dw}, and the solutions satisfy, for some $c\in(0,\infty)$,
\[\E\sup_{t\in[0,T]}\|\rho^{1}(t)-\rho^{2}(t)\|_{L^{1}(\TT^{d})} \le c\exp(cT)(\|\rho_{0}^{1}-\rho_{0}^{2}\|_{L^{1}(\TT^{d})}^{\nicefrac{1}{2}}+\|\rho_{0}^{1}-\rho_{0}^{2}\|_{L^{1}(\TT^{d})}).\]
 \end{example}

\subsubsection*{Interacting particle systems with common noise}

In Kurtz, Xiong \cite{KurXio1999} and Coghi, Gess \cite{CogGes2019} it has been shown that the conditional empirical density measure $\mu^{N}:=\mathcal{L}(\frac{1}{N}\sum_{j=1}^{N}\delta_{X_{t}^{j}}\mid B)$ of a mean field interacting particle system 
\begin{align*}
\dot{X}_{t}^{i} & =\frac{1}{N}\sum_{j=1}^{N}\left(V_{1}(X_{t}^{i}-X_{t}^{j})+V_{2}(X_{t}^{i}-X_{t}^{j})\circ \dot{B}_t+V_{3}(X_{t}^{i}-X_{t}^{j})\dot{\beta}_{t}^{i}\right)
\end{align*}
with sufficiently regular interaction kernels $V_{i}$ and with $B$ and $\beta^{i}$ independent Brownian motions, converges in the mean field limit $N\to\infty$ to the solution of a nonlinear, nonlocal, stochastic Fokker Planck equation 
\begin{equation}
\begin{array}{l}
\partial_{t}\rho=\Delta((V_{3}\ast\rho)\rho)-\nabla\cdot((V_{1}\ast\rho)\rho)-\nabla\cdot((V_{2}\ast\rho)\rho\circ \dot{B}_{t}).\end{array}
\label{target equation}
\end{equation}
See also Kotelenez \cite{Kot2008} for a motivation of the same class of SPDE arising in statistical mechanics. 

For simplicity we now restrict to the case $d=1$. One may next consider the localized interaction limit, that is, when $V_{i}$ are replaced by Dirac sequences $V_{i,\ve}$ with corresponding solutions $\mu^{\ve}$ to \eqref{target equation}. Then, informally, in the limit $\ve\to0$, we obtain that $\mu^{\ve}\to\rho\,dx$ with $\rho$ being the solution to the nonlinear, stochastic Fokker Planck equation 
\begin{equation}
\begin{array}{l}
\partial_{t}\rho=\Delta\Phi(\rho)-\nabla\cdot\nu(\rho)-\nabla\cdot(\sigma(\rho)\circ \dot{B}_{t}),\end{array}\label{eq:MF_common_noise}
\end{equation}
with $\Phi(\rho)=\nu(\rho)=\sigma(\rho)=\rho^{2}$. For a notable relation to the theory of mean field games with common noise we refer to Lasry and Lions \cite{LasLio2006,LasLio20062,LasLio2007}. 
\begin{example}
Let $\rho_{0}\in L^{3}(\TT^1)$ be nonnegative. Then there is a unique solution to \eqref{eq:MF_common_noise} and each two solutions satisfy \eqref{eq:contraction}. 
\end{example}

\subsubsection*{Stochastic geometric PDE}

In \cite{KawOht1982}, Kawasaki and Ohta (see also Katsoulakis and Kho \cite{KatKho2001}) have derived the following informal stochastic PDE, describing the graph of the fluctuating interface, in the sharp interface limit of the fluctuating Ginzburg-Landau equation
\[
\partial_{t} u =\left(\sqrt{1+\abs{\nabla u}^2}\right)\nabla\cdot\left(\frac{\nabla u}{\sqrt{1+\abs{\nabla u}^{2}}}\right)+(1+\abs{\nabla u}^2)^{\frac{1}{4}}\dot{\xi},
\]
for $\dot{\xi}$ space-time white noise.  In one spatial dimension and passing to the first derivative $\rho=\partial_x u$, this corresponds to
\begin{equation}
\partial_{t}\rho=\Delta\Phi(\rho)+\nabla\cdot(\sigma(\rho)\dot{\xi})\label{eq:KO}
\end{equation}
with $\Phi(\rho)=\arctan(\rho)$ and $\sigma(\rho)=(1+\abs{\rho}^{2})^{\frac{1}{4}}$, see Es-Sarhir and von Renesse \cite{EsRen2012}. For further background on the fluctuating mean-curvature equation we refer to Souganidis and Yip \cite{SouYip2004} and Dirr, Luckhaus, and Novaga \cite{DirLucNov2001}. 

The well-posedness of the Kawasaki--Ohta equation \eqref{eq:KO} is a challenging problem due to the degeneracy of the diffusion $\Phi'(\rho)=\frac{1}{1+\rho^{2}}$ at large values of $\rho$, and due to the stochastic conservation law structure of the noise. Therefore, in \cite{EsRen2012} the analysis had to be restricted to spatially constant noise, which was first resolved in \cite{DarGes2020}. The general theory developed in the present paper contains this example with spatially correlated noise as a special case.
\begin{example}
Let $\rho_{0}\in L^{3}(\TT^1)$ be nonegative and assume that $\dot{\xi}^F$ satisfies Assumption~\ref{assumption_noise}. Then there is a unique solution to 
\[
\partial_{t}\rho=\nabla\cdot\left(\frac{\nabla \rho}{1+\rho^{2}}\right)+\nabla\cdot ((1+\rho^{2})^{\frac{1}{4}}\circ \dot{\xi}^{F}),
\]
and the solutions satisfy \eqref{eq:contraction}. 
\end{example}

\subsubsection*{Extensions}

Fluctuating branching interacting diffusion systems lead to a combination of the effects discussed above and, thereby, to stochastic PDE of the form \eqref{1_000}, combining both conservative and non-conservative fluctuations and transport. 

By a slight adaptation, see \cite{FehGesDir2020}, the methods of this work can be extended to stochastic PDE with diffusion coefficients having multiple points of irregularity, as it is typical for Fleming--Viot type stochastic PDEs 
\[
\partial_{t}\rho=\Delta \rho+\ve\sqrt{\rho(1-\rho)}\dot{\xi}^G,
\]
the stochastic Fisher-Kolmogorov-Petrovsky-Piscounov (Fisher-KPP) equation
\[
\partial_{t}\rho=\Delta \rho+\gamma\rho(1-\rho)+\ve\sqrt{\rho(1-\rho)}\dot{\xi}^G,
\]
assuming that $\dot{\xi}^{G}$ satisfies Assumption \ref{assumption_noise_1}, and the asymmetric simple exclusion process
\[
\partial_{t}\rho=\frac{\ve}{2}\Delta \rho+\nabla\cdot(\rho(1-\rho))+\ve^{\frac{1}{2}}\nabla\cdot(\sqrt{\rho(1-\rho)}\circ \dot{\xi}^F),
\]
assuming that $\dot{\xi}^F$ satisfies Assumption \ref{assumption_noise}.

\subsection{Comments on the results and assumptions}

\subsubsection*{The initial datum}
The above results are stated for deterministic initial data for simplicity.  In Theorems~\ref{thm_unique} and \ref{thm_gen_unique} we treat random initial data in $L^p(\O;L^p(\TT^d))$ for $p\in[2,\infty)$, Corollary~\ref{cor_exist} treats random initial data with finite entropy in the sense of Definition~\ref{def_entropy} below, and Corollary~\ref{cor_exist_1} treats random initial data that is only $L^1$-integrable.

Our assumptions require mild \emph{local} regularity assumptions for $\sigma$ on $(0,\infty)$, which in the model case $\Phi(\xi)=\xi^m$ require that $\sigma^2$ grows at most like $\xi^{m+1}$ at infinity, and that $\sigma^2$ vanishes linearly at zero (see Assumption~\ref{assume_1}, Assumption~\ref{assume}, and Example~\ref{example_1} below).  In the Dean--Kawasaki case $\sigma=\Phi^{\nicefrac{1}{2}}$, the final of these assumptions requires $m\in[1,\infty)$, but notably we do not impose any further regularity of $\sigma$ at zero which allows to treat the square root.  The general results for \eqref{1_000} are exactly analogous to those for \eqref{1_0} concerning the integrability of the data and regularity of the coefficients.

\subsubsection*{The noise}
Concerning the noise $\dot{\xi}^F$, we require that the coefficients $F_1$, $F_2$, and $F_3=\sum_{k=1}^\infty\abs{\nabla f_k}^2$ are continuous on $\TT^d$ and that the divergence $\nabla\cdot F_2$ is bounded on $\TT^d$.  In the model Dean--Kawasaki case with $m=1$, we assume further that $\nabla\cdot F_2=0$.  This means that the noise is probabilistically stationary, a property satisfied by space-time white noise and all of its standard approximations (see Remarks~\ref{rem_noise_1} and \ref{remark_infinite_noise} below).  Concerning the noise $\dot{\xi}^G$, we require only that the sum $G_1=\sum_{k=1}^\infty g_k^2$ is bounded and continuous on $\TT^d$.

\subsection{Overview of the literature}\label{sec_lit}  The methods of this paper are most closely related to the works of the two authors \cite{FehGes2019,FehGes2021}, which develop a kinetic approach to prove the path-by-path well-posedness of equations like \eqref{1_0} and \eqref{1_000} with linear $\phi$.  However, unlike the probabilistic approach taken in this paper, the methods of these works were motivated by the theory of stochastic viscosity solutions (see Lions and Souganidis \cite{LioSou5,LioSou4,LioSou3,LioSou2,LioSou1}), and the work on stochastic conservation laws of Lions, Perthame, and Souganidis \cite{LioPerSou2013,LioPerSou2014} and the second author and Souganidis \cite{GesSou2015,GesSou2017}.  Furthermore, the pathwise well-posedness theory of \cite{FehGes2019,FehGes2021} is based on rough path techniques, which in the context of this paper would require the nonlinearity $\sigma$ to be six-times continuously differentiable.

In addition to \cite{FehGes2019} the only other approach to equations like \eqref{1_0} with spatially inhomogeneous noise was developed by the second author and Dareiotis \cite{DarGes2020}, who construct probabilistic solutions to equations like \eqref{1_0} in a simpler context using the entropy formulation of the equation.  The work \cite{DarGes2020} applied only to the conservative case $\phi,\lambda=0$ and required the considerably stronger regularity assumption $\sigma\in\C^{1,\beta}$ for some $\beta\in(0,\infty)$ sufficiently large, in addition to the other conditions of \cite[Assumption 2.3]{DarGes2020}. In particular, this excludes the important case of square root diffusion coefficients as in \eqref{1_0101}. Furthermore, in the conservative case \eqref{1_0}, their main result \cite[Theorem~2.7]{DarGes2020} obtains the $L^1$-contraction of solutions only in expectation, as opposed to the pathwise result of \eqref{eq:contraction} above.  Finally, a significant advantage of the kinetic formulation over the entropy formulation is that, due to the precise identification of the kinetic defect measures, in this work we treat $L^1$-integrable initial data, as opposed to $L^{m+1}$-integrable initial data in the porous media case $\Phi(\xi)=\xi^m$, and require only local as opposed to global regularity from the solution, see Definition~\ref{def_sol} and specifically \eqref{def_2500000} below.

Equations of the form \eqref{1_000} with linear diffusions, $\sigma,\nu=0$, and with a $\gamma$-H\"older continuous noise coefficient have received significant attention in the literature  going back to Viot \cite{Vio1976}, due to their relevance to branching diffusion processes and population genetics. In particular, the strong uniqueness of solutions to semilinear stochastic heat equations with non-Lipschitz continuous coefficients and correlated noise has been shown by Mytnik, Perkins and Sturm in \cite{MytPerStu2006}, relying on regularity estimates obtained by  Sanz-Sol\'e and Sarr\`a in \cite{SanSar2002}.  We emphasize that \cite[Theorem~1.4]{MytPerStu2006} treats noise that is less regular than that considered in this paper, and investigates the question of pathwise uniqueness in regimes relating the H\"older regularity of $\phi$ to the decay of the spatial correlations of the noise.

In contrast, still in the case $\sigma,\nu=0$, the results of the present work treat nonlinear diffusions $\Phi$ and noise with bounded covariance.  This solves a problem left open by the second author, Dareiotis, and Gerencs\'{e}r \cite{DarGerGes2019}, where nonlinearities $\phi$ which are $(\nicefrac{1}{2}+\d)$-H\"older continuous for some $\d\in(0,\nicefrac{1}{2}]$ could be handled, thus leaving open the most relevant case $\phi(\xi)=\xi^{\nicefrac{1}{2}}$ treated in this work.

In the case of space-time white noise (now for $\sigma,\lambda,\nu=0$) the weak uniqueness of non-negative solutions to the stochastic heat equation has been shown by Perkins in \cite[Corollary III.4.3]{Perk2002} and Mytnik \cite{Myt1998} for noise coefficients of the form $\phi(\xi)=\xi^\gamma$ for $\gamma\in(\nicefrac{1}{2},1)$.  Pathwise uniqueness for $\gamma$-H\"older continuous $\phi$ for $\gamma\in(\nicefrac{3}{4},1)$ was shown by Mytnik and Perkins in \cite{MytPer2011}. Counter-examples for pathwise uniqueness for signed solutions have been developed by Mueller, Mytnik, and Perkins \cite{MueMytPer2014} for noise coefficients with $\gamma < \nicefrac{3}{4}$. Pathwise uniqueness for non-negative solutions with $\gamma < \nicefrac{3}{4}$ and space-time white noise is an open problem.

The well-posedness of the Dean--Kawasaki equation has attracted considerable interest in the literature. The existence of solutions to corrected / modified Dean Kawasaki equations has been shown by Sturm, von Renesse in \cite{StuvRe2009} by means of Dirichlet forms techniques. Subsequently, alternative constructions have been given in \cite{AndvRe2010} and \cite{KonvRe2019}. Negative results on the existence to the unmodified Dean--Kawasaki equation with space-time white It\^o-noise have been recently obtained in \cite{KonLehVon2019,KonLehvRe2020}. A regularized model replacing the Dean--Kawasaki equation was analyzed by Cornalba, Shardlow, and Zimmer in \cite{CorShaZim2019,CorShaZim2020}, by means of smoothed particles with second order (underdamped) Langevin dynamics. The well-posedness of the Dean--Kawasaki equation with correlated noise was until now an open problem.

Previous works considering the kinetic formulation of scalar conservation laws in simpler settings include, for example, Debussche and Vovelle \cite{DebVov2010}, Hofmanov\'a \cite{Hof2013}, and Debussche, Hofmanov\'a, and Vovelle \cite{DebHofVov2016}.  Finally, there is an extensive additional literature on stochastic nonlinear diffusion equations with additive or multiplicative noise.  See, for example, Barbu, Bogachev, Da Prato, and R\"ockner \cite{BarBogDapRock2006}, Barbu, Da Prato, and R\"ockner \cite{BarDapRoe2008,BarDapRoe20082,BarDapRoe2009,BarDapRoe2016}, Barbu and R\"ockner \cite{BarRoe2011}, Barbu, R\"ockner, and Russo \cite{BarbuRoecknerRusso}, Da Prato and R\"ockner \cite{DapRoe2004}, Da Prato, R\"ockner, Rozovski\u\i, and Wang \cite{DapRoeRozWan2006}, the second author \cite{Gess2012}, Kim \cite{Kim2006}, Krylov and Rozovski\u\i\ \cite{KryRoz1977,KryRoz2007}, Pardoux \cite{Par1972}, Pr\'ev\^ot and R\"ockner \cite{PreRoe2007}, Ren, R\"ockner, and Wang \cite{RenRoeWan2007}, R\"ockner and Wang \cite{RoeWan2008}, and Rozovski\u\i\ \cite{Roz1990}.

\subsection{Organization of the paper}  Section~\ref{sec_noise} introduces the assumptions for the noise.  Section~\ref{sec_kinetic} derives the kinetic formulation of \eqref{1_0} and defines in Definition~\ref{def_sol} a stochastic kinetic solution.  Section~\ref{sec_unique} proves the uniqueness of stochastic kinetic solutions.  We construct the solution in Section~\ref{sec_whole}, which is split into three subsections.  Section~\ref{sec_a_priori} establishes a priori estimates and Section~\ref{sec_exist_approx} proves the existence of solutions to approximating versions of \eqref{1_0} with a smooth and bounded nonlinearity $\sigma$.  Section~\ref{sec_exist} proves the existence of stochastic kinetic solutions for general $\sigma$.  Finally, Section~\ref{sec_gen} extends the results for \eqref{1_0} to equation \eqref{1_000}.

\section{The definition of the noise}\label{sec_noise}

In this section, we define the noise $\xi^F$.  For a sequence of continuously differentiable functions $F=(f_k)_{k\in\N}$ on $\TT^d$ and independent Brownian motions $\{B^k\}_{k\in\N}$, we define the noise $\xi^F = \sum_{k=1}^\infty f_k(x) B^k_t$.  It then follows from the definition of $\xi^F$ that the Stratonovich equation \eqref{1_0} is formally equivalent to the It\^o equation
\[\dd\rho = \Delta \Phi(\rho)\dt -\nabla\cdot \left(\sigma(\rho)\dd\xi^F+\nu(\rho)\dt\right)\dt+\frac{1}{2}\sum_{k=1}^\infty \nabla\cdot\left(f_k\sigma'(\rho)\nabla\left(f_k\sigma(\rho)\right)\right)\dt\;\;\textrm{in}\;\;\TT^d\times(0,T),\]
which can be written in the form
\[\dd\rho = \Delta \Phi(\rho)\dt -\nabla\cdot \left(\sigma(\rho)\dd\xi^F+\nu(\rho)\dt\right)+\frac{1}{2}\nabla\cdot\left(F_1[\sigma'(\rho)]^2\nabla\rho+\sigma'(\rho)\sigma(\rho)F_2\right)\dt\;\;\textrm{in}\;\;\TT^d\times(0,T),\]
for $F_1\colon\TT^d\rightarrow\R$ and $F_2\colon\TT^d\rightarrow\R^d$ defined by
\[F_1(x)=\sum_{k=1}^\infty f_k^2(x)\;\;\textrm{and}\;\;F_2(x)=\sum_{k=1}^\infty f_k(x)\nabla f_k(x).\]
We will make the following assumptions on the randomness in the equation, which includes the initial condition.

\begin{assumption}\label{assumption_noise}  Let $\{B^k\}_{k\in\N}$ be independent $d$-dimensional Brownian motions defined on a probability space $(\O,\F,\P)$ with respect to a filtration $(\F_t)_{t\in[0,\infty)}$ and let $\{f_k\colon\TT^d\rightarrow\R\}_{k\in\N}$ be continuously differentiable functions.  Assume that the sums $\{F_i\}_{i\in\{1,2,3\}}$ defined by
\[F_1=\sum_{k=1}^\infty f_k^2\;\;\textrm{and}\;\;F_2=\frac{1}{2}\sum_{k=1}^\infty \nabla f^2_k\;\;\textrm{and}\;\;F_3=\sum_{k=1}^\infty\abs{\nabla f_k}^2\]
are continuous on $\TT^d$---where the finiteness of $F_1$ and $F_3$ implies the absolute convergence of $F_2$---and assume that the divergence
\[\nabla\cdot F_2=\frac{1}{2}\Delta F_1\;\;\textrm{is bounded on $\TT^d$.}\]
We define $\xi^F$ as above and we assume that the initial data $\rho_0\in L^1(\O;L^1(\TT^d))$ is nonnegative and $\F_0$-measurable.  \end{assumption}

\begin{remark}\label{rem_noise_1}  For some statements, and particularly in the Dean--Kawasaki case \eqref{1_0101}, we will require that $\nabla\cdot F_2=\frac{1}{2}\Delta F_1=0$ (in fact, we only require that $\nabla\cdot F_2\geq 0$ but on the torus this is equivalent to $\nabla\cdot F_2=0$).  This amounts to $F_1$ being constant on the torus, which states that the noise is probabilistically stationary in the sense that it has the same law at every point in space.  This is a property satisfied by space-time white noise $\xi$ and all of its standard approximations, like every spatial convolution $\xi^\ve=(\xi*\kappa^\ve)$ and the noise discussed in Remark~\ref{remark_infinite_noise} below.

\end{remark}

\begin{remark}\label{remark_infinite_noise}  Important examples falling into the framework of this paper are arbitrary spatial convolutions of space-time white noise $\xi$ defined by $\xi^\ve=(\xi*\kappa^\ve)$ for smooth kernels $\kappa^\ve$, and the noise $\xi^a$ defined by
\[\xi^a = \sum_{k\in\Z^d} a_k\left(\sin(k\cdot x)B^k_t+\cos(k\cdot x)W^k_t\right),\]
for $\{B^k,W^k\}_{k\in\Z^d}$ independent Brownian motions defined on a probability space $(\O,\F,\P)$ with respect to a filtration $(\F_t)_{t\in[0,\infty)}$ and coefficients $a=(a_k)_{k\in\Z^d}$.  This is the standard spectral approximation of space-time white noise, and an explicit computation proves that
\[F_1=\sum_{k\in\Z^d}a_k^2\;\;\textrm{and}\;\;F_2=0\;\;\textrm{and}\;\;F_3=\sum_{k\in\Z^d}\abs{k}^2a_k^2.\]
Our methods apply to noise of this type provided $\sum_{k\in\Z^d}\abs{k}^2a_k^2<\infty$.\end{remark}

\section{The kinetic formulation of \eqref{1_0}}\label{sec_kinetic}

The following formal computations motivate the definition of a stochastic kinetic solution to the It\^o equation
\begin{align}\label{2_0} \dd \rho = \Delta\Phi(\rho)\dt -\nabla\cdot\left(\sigma(\rho)\dd\xi^F+\nu(\rho)\dt\right)+\frac{1}{2}\nabla\cdot\left(F_1[\sigma'(\rho)]^2\nabla\rho+\sigma(\rho)\sigma'(\rho)F_2\right)\dt,\end{align}
for coefficients $F_i$ defined in Assumption~\ref{assumption_noise}.  The computations are based on the kinetic formulation of scalar conservation laws.  If $S\colon\R\rightarrow\R$ is a smooth function, then after applying It\^o's formula we have formally that (see the discussion following \eqref{2_04} below---in general this equation will only be satisfied with an inequality)
\begin{align*}
\dd S(\rho) & = \nabla\cdot\left(\Phi'(\rho)S'(\rho)\nabla\rho\right)\dt+\frac{1}{2}\nabla\cdot \left(F_1[\sigma'(\rho)]^2S'(\rho)\nabla\rho+\sigma(\rho)\sigma'(\rho)S'(\rho)F_2\right)\dt
\\ &  \quad- S''(\rho)\left(\Phi'(\rho)\abs{\nabla\rho}^2 -\frac{1}{2}F_1\abs{\nabla\sigma(\rho)}^2-\frac{1}{2}\sigma(\rho)\nabla\sigma(\rho)\cdot F_2\right)\dt
\\ & \quad +\frac{1}{2}S''(\rho)\sum_{k=1}^\infty(\nabla(\sigma(\rho)f_k))^2\dt-S'(\rho)\nabla\cdot\nu(\rho)\dt- S'(\rho)\nabla\cdot(\sigma(\rho)\dd\xi^F).
\end{align*}
Since we have that
\[\sum_{k=1}^\infty(\nabla(\sigma(\rho)f_k))^2=F_1\abs{\nabla\sigma(\rho)}^2+2\sigma(\rho)\nabla\sigma(\rho)\cdot F_2+\sigma(\rho)^2F_3,\]
it follows that
\begin{align*}
\dd S(\rho) & = \nabla\cdot\left(\Phi'(\rho)S'(\rho)\nabla\rho\right)\dt+\frac{1}{2}\nabla\cdot \left(F_1[\sigma'(\rho)]^2S'(\rho)\nabla\rho+\sigma(\rho)\sigma'(\rho)S'(\rho)F_2\right)\dt
\\ & \quad  - S''(\rho)\Phi'(\rho)\abs{\nabla\rho}^2\dt +\frac{1}{2}S''(\rho)\left(\sigma(\rho)\sigma'(\rho)\nabla\rho\cdot F_2+\sigma(\rho)^2F_3\right)\dt
\\ & \quad -S'(\rho)\nabla\cdot\nu(\rho)\dt- S'(\rho)\nabla\cdot(\sigma(\rho)\dd\xi^F).
\end{align*}
Given a nonnegative solution $\rho$ of \eqref{2_0}, the kinetic function $\chi\colon\TT^d\times\R\times[0,T]\rightarrow\{0,1\}$ of $\rho$ is defined by
\[\overline{\chi}(\rho)=\chi(x,\xi,t)=\mathbf{1}_{\{0<\xi<\rho(x,t)\}}.\]
Provided $S(0)=0$ it follows from the identity
\[S(\rho(x,t))=\int_{\R}S'(\xi)\chi(x,\xi,t)\dxi,\]
and the density of linear combinations of functions of the type $S'(\xi)\psi(x)$ in $\C^\infty_c(\TT^d\times(0,\infty))$ for $\psi\in\C^\infty(\TT^d)$, that the kinetic function $\chi$ solves
\begin{align}\label{2_04}
\dd\chi  & = \nabla\cdot\left(\delta_0(\xi-\rho)\Phi'(\xi)\nabla\rho\right)\dt+\frac{1}{2}\nabla\cdot \left(\delta_0(\xi-\rho)\left(F_1[\sigma'(\xi)]^2\nabla\rho+\sigma(\xi)\sigma'(\xi)F_2\right)\right)\dt
\\ \nonumber & \quad  +\partial_\xi\left(\delta_0(\xi-\rho)\left(\Phi'(\xi)\abs{\nabla\rho}^2-\frac{1}{2}\sigma(\xi)\sigma'(\rho)\nabla\rho\cdot F_2-\frac{1}{2}\sigma(\xi)^2F_3\right)\right)\dt
\\ \nonumber & \quad -\delta_0(\xi-\rho)\nabla\cdot\nu(\rho)\dt-\delta_0(\xi-\rho)\nabla\cdot(\sigma(\rho)\dd\xi^F).
\end{align}
Equation \eqref{2_04} is the starting point for our solution theory.  However, the kinetic function will not in general satisfy \eqref{2_04} exactly.  On the level of an entropy solution this appears in terms of an entropy inequality, where for convex $S$ the equation satisfied by $S(\rho)$ is not satisfied with equality but with the inequality that $\dd S(\rho)$ is less than or equal to the righthand side.

On the kinetic level the entropy inequality is quantified exactly by a kinetic measure.  This is a nonnegative measure $q$ on $\TT^d\times\R\times[0,T]$ such that, in the sense of measures,
\[\delta_0(\xi-\rho)\Phi'(\xi)\abs{\nabla\rho}^2\leq q,\]
and such that in the sense of distributions the kinetic function $\chi$ solves the equation
\begin{align}\label{2_4}
\dd\chi  & = \nabla\cdot\left(\delta_0(\xi-\rho)\Phi'(\xi)\nabla\rho\right)\dt+\frac{1}{2}\nabla\cdot \left(\delta_0(\xi-\rho)\left(F_1[\sigma'(\xi)]^2\nabla\rho+\sigma(\xi)\sigma'(\xi)F_2\right)\right)\dt
\\ \nonumber & \quad  +\partial_\xi q\dt-\frac{1}{2}\partial_\xi\left(\delta_0(\xi-\rho)\left(\sigma(\xi)\sigma'(\xi)\nabla\rho\cdot F_2+\sigma(\xi)^2F_3\right)\right)\dt
\\ \nonumber & \quad -\delta_0(\xi-\rho)\nabla\cdot\nu(\rho)\dt-\delta_0(\xi-\rho)\nabla\cdot(\sigma(\rho)\dd\xi^F).
\end{align}
Motivated by \eqref{2_4}, we define a kinetic measure in Definition~\ref{def_measure} and we define a \emph{stochastic kinetic solution} of \eqref{2_4} in Definition~\ref{def_sol}.

\begin{definition}\label{def_measure}  Let $(\O,\F,\P)$ be a probability space with a filtration $(\F_t)_{t\in[0,\infty)}$.  A kinetic measure is a map $q$ from $\O$ to the space of nonnegative, locally finite measures on $\TT^d\times(0,\infty)\times[0,T]$ that satisfies the property that the process
\[(\o,t)\in\O\times[0,T]\rightarrow \int_0^t\int_\R\int_{\TT^d}\psi(x,\xi)\dd q(\o)\;\;\textrm{is $\F_t$-predictable,}\]
for every $\psi\in\C^\infty(\TT^d\times(0,\infty))$.
\end{definition}

\begin{remark}\label{rem_derivative}  In the kinetic formulation, we will frequently encounter derivatives of functions $\psi\in\C^\infty_c(\TT^d\times\R)$ evaluated at the point $\xi=\rho(x,t)$.  We will write
\[(\nabla\psi)(x,\rho(x,t))=\left.\nabla\psi(x,\xi)\right|_{\xi=\rho(x,t)}\]
to mean the gradient of $\nabla\psi$ evaluate at the point $(x,\rho(x,t))$ as opposed to the full gradient of the composition $\psi(x,\rho(x,t))$.\end{remark}

\begin{remark}  For $\F_t$-adapted processes $g_t\in L^2(\O\times[0,T];L^2(\TT^d))$ and $h_t\in L^2(\O\times[0,T];H^1(\TT^d))$ and for $t\in[0,T]$, we will write
\begin{align*}
& \int_0^t\int_{\TT^d}g_s\nabla\cdot\left(h_s\dd\xi^F\right) =\sum_{k=1}^\infty\left(\int_0^t\int_{\TT^d}g_sf_k\nabla h_s\cdot \dd B^k_s+\int_0^t\int_{\TT^d}g_sh_s\nabla f_k\cdot \dd B^k_t\right),
\end{align*}
where the integrals are interpreted in the It\^o sense.  \end{remark}

\begin{definition}\label{def_sol}  Let $\rho_0\in L^1(\O;L^1(\O))$ be nonnegative and $\F_0$-measurable.  A \emph{stochastic kinetic solution} of \eqref{2_0} is a nonnegative, almost surely continuous $L^1(\TT^d)$-valued $\F_t$-predictable function $\rho\in L^1(\O\times[0,T];L^1(\TT^d))$ that satisfies the following three properties.
\begin{enumerate}[(i)]
\item \emph{Preservation of mass}:  almost surely for every $t\in[0,T]$,
\begin{equation}\label{def_3535}\norm{\rho(\cdot,t)}_{L^1(\TT^d)}=\norm{\rho_0}_{L^1(\TT^d)}.\end{equation}
\item \emph{Integrability of the flux}:  we have that
\[\sigma(\rho)\in L^2(\O;L^2(\TT^d\times[0,T]))\;\;\textrm{and}\;\;\nu(\rho)\in L^1(\O;L^1(\TT^d\times[0,T];\R^d)).\]
\item \emph{Local regularity}:  for every $K\in\N$,
\begin{equation}\label{def_2500000} [(\rho\wedge K)\vee\nicefrac{1}{K}]\in L^2(\O;L^2([0,T];H^1(\TT^d))).\end{equation}
\end{enumerate}
Furthermore, there exists a kinetic measure $q$ that satisfies the following three properties.
\begin{enumerate}[(i)]
\setcounter{enumi}{3}
\item \emph{Regularity}: almost surely as nonnegative measures,
\begin{equation}\label{2_500}\delta_0(\xi-\rho)\Phi'(\xi)\abs{\nabla\rho}^2\leq q\;\;\textrm{on}\;\;\TT^d\times(0,\infty)\times[0,T].\end{equation}
\item \emph{Vanishing at infinity}:  we have that
\begin{equation}\label{def_5353}\lim_{M\rightarrow\infty}\E\left[q(\TT^d\times[M,M+1]\times[0,T])\right]=0.\end{equation}
\item \emph{The equation}: for every $\psi\in \C^\infty_c(\TT^d\times(0,\infty))$, almost surely for every $t\in[0,T]$,
\begin{align}\label{2_5000}
& \int_\R\int_{\TT^d}\chi(x,\xi,t)\psi(x,\xi) = \int_\R\int_{\TT^d}\overline{\chi}(\rho_0)\psi(x,\xi)-\int_0^t\int_{\TT^d}\Phi'(\rho)\nabla\rho\cdot(\nabla\psi)(x,\rho)
\\ \nonumber & \quad -\frac{1}{2}\int_0^t\int_{\TT^d}F_1(x)[\sigma'(\rho)]^2\nabla\rho\cdot(\nabla\psi)(x,\rho)-\frac{1}{2}\int_0^t\int_{\TT^d}\sigma(\rho)\sigma'(\rho)F_2(x)\cdot(\nabla\psi)(x,\rho)
\\ \nonumber & \quad -\int_0^t\int_\R\int_{\TT^d}\partial_\xi\psi(x,\xi)\dd q+\frac{1}{2}\int_0^t\int_{\TT^d}\left(\sigma(\rho)\sigma'(\rho)\nabla\rho\cdot F_2(x)+F_3(x)\sigma^2(\rho)\right)(\partial_\xi\psi)(x,\rho)
\\ \nonumber & \quad -\int_0^t\int_{\TT^d}\psi(x,\rho)\nabla\cdot\nu(\rho)\dt-\int_0^t\int_{\TT^d}\psi(x,\rho)\nabla\cdot\left(\sigma(\rho)\dd\xi^F\right).
\end{align}
\end{enumerate}
\end{definition}

\begin{remark} Since a stochastic kinetic solution is a continuous in time $L^1(\TT^d)$-valued process, every term of \eqref{2_5000} is a priori continuous in time, except possibly the term involving the kinetic measure $q$. As a consequence, also this term is continuous in time, which implies that the measure $q$ has no atoms in time in the sense that $q(\TT^d\times\R\times\{t\})=0$ for every $t\in[0,T]$. This means that there is no ambiguity when interpreting the integral in the fourth term on the righthand side of \eqref{2_5000}.
\end{remark}

\begin{remark}\label{rem_0}In Definition~\ref{def_sol} it is essential that test functions $\psi$ are restricted to be compactly supported in $\TT^d\times(0,\infty)$.  This amounts to a renormalization of the equation that restricts the solution to be away from its zero set.  As was shown in \eqref{1_21}, the third term on the righthand side of \eqref{2_5000} is not in general integrable unless $\psi$ is compactly supported away from zero in the $\xi$-variable, and the kinetic measure $q$ is not finite unless the initial data is $L^2$-integrable. Furthermore, in general, the solution $\rho$ is only regular on sets compactly supported away from its zero set.  \end{remark}

\section{Uniqueness of stochastic kinetic solutions to \eqref{1_0}}\label{sec_unique}

The proof of uniqueness is based on the following formal computation, which we will demonstrate for the particular choice of noise defined in Remark~\ref{remark_infinite_noise} and for $\nu=0$.  In this case, for the nonnegative real numbers $N_a,M_a\in\R$ defined by
\[N_a=\sum_{k=1}^\infty a_k^2\;\;\textrm{and}\;\;M_a=\sum_{k=1}^\infty \abs{k}^2a_k^2,\]
equation \eqref{2_4} takes the simpler form
\begin{align}\label{2_022}
\dd\chi  & = \nabla\cdot\left(\delta_0(\xi-\rho)\Phi'(\xi)\nabla\rho\right)\dt+\frac{N_a}{2}\nabla\cdot \left(\delta_0(\xi-\rho)[\sigma'(\xi)]^2\nabla\rho\right)\dt
\\ \nonumber & \quad  +\partial_\xi q\dt-\frac{M_a}{2}\partial_\xi\left(\delta_0(\xi-\rho)\sigma(\xi)^2\right)\dt-\delta_0(\xi-\rho)\nabla\cdot(\sigma(\rho)\dd\xi^a).
\end{align}
If $\rho^1,\rho^2$ are two stochastic kinetic solutions of \eqref{2_022} with initial data $\rho^1_0,\rho^2_0$, it follows from properties of the corresponding kinetic functions $\chi^1,\chi^2$ that
\begin{equation}\label{3_0000}\int_\R\abs{\chi^1-\chi^2}^2\dxi = \abs{\rho^1(x,t)-\rho^2(x,t)}\;\;\textrm{and}\;\;\abs{\chi^1-\chi^2}^2 = \chi^1+\chi^2-2\chi^1\chi^2.\end{equation}
Therefore,
\begin{equation}\label{3_000}\dd\int_{\TT^d}\abs{\rho^1-\rho^2} = \dd\int_\R\int_{\TT^d}\chi^1\dx\dxi+\dd\int_\R\int_{\TT^d}\chi^2\dx\dxi-2\dd\int_\R\int_{\TT^d}\chi^1\chi^2\dx\dxi.\end{equation}
After choosing $\psi=1$ in \eqref{2_5000}---which is however not justified in view of Remark~\ref{rem_0} and needs a careful rigorous analysis---it follows almost surely that, for every $i\in\{1,2\}$,
\begin{equation}\label{3_00}\dd\int_\R\int_{\TT^d}\chi^i\dx\dxi=-\int_{\TT^d}\nabla\cdot(\sigma(\rho^i)\dd\xi^a)=0.\end{equation}
The stochastic product rule and the distributional equalities $\partial_\xi\chi^i=\delta_0(\xi)-\delta_0(\xi-\rho^i)$ and $\nabla_x\chi^i = \delta_0(\xi-\rho^i)\nabla\rho^i$ prove that the mixed term almost surely satisfies
\begin{equation}\label{3_0}\dd\int_\R\int_{\TT^d}\chi^1\chi^2  = \dd I^\textrm{meas}+\dd I^\textrm{mart}+\dd I^\textrm{err},\end{equation}
for the measure term
\begin{align*}
\dd I^{\textrm{meas}} & = -2\int_\R\int_{\TT^d}\delta_0(\xi-\rho^1)\delta_0(\xi-\rho^2)[\Phi'(\rho^1)]^\frac{1}{2}[\Phi'(\rho^2)]^\frac{1}{2}\nabla\rho^1\cdot\nabla \rho^2\dt
\\ \nonumber & \quad -\int_\R\int_{\TT^d}(\delta_0(\xi)-\delta_0(\xi-\rho^2))\dd q^1-\int_\R\int_{\TT^d}(\delta_0(\xi)-\delta_0(\xi-\rho^1))\dd q^2\dt, 
\end{align*}
for the martingale term
\[\dd I^\textrm{mart} = -\int_{\TT^d}\chi^2(x,\rho^1,t)\nabla\cdot(\sigma(\rho^1)\dd\xi^a)+\chi^1(x,\rho^2,t)\nabla\cdot(\sigma(\rho^2)\dd\xi^a),\]
and for the error term
\begin{align*}
 \dd I^\textrm{err} & = -N_a\int_\R\int_{\TT^d}\delta_0(\xi-\rho^1)\delta_0(\xi-\rho^2)\sigma'(\rho^1)\sigma'(\rho^2)\nabla\rho^1\cdot\nabla\rho^2\dt
\\ \nonumber & +\frac{M_a}{2}\int_\R\int_{\TT^d}\delta_0(\xi-\rho^1)(\delta_0(\xi)-\delta_0(\xi-\rho^2))\sigma(\rho^1)^2+\delta_0(\xi-\rho^2)(\delta_0(\xi)-\delta_0(\xi-\rho^1))\sigma(\rho^2)^2\dt
\\ \nonumber & +\dd\int_\R\int_{\TT^d}\langle\chi^1,\chi^2\rangle_t.
\end{align*}
It follows formally from the definition of $\xi^a$ and \eqref{2_4} that
\begin{equation}\label{3_1}\dd\int_\R\int_{\TT^d}\langle \chi^1,\chi^2\rangle_t = \int_\R\int_{\TT^d}\delta_0(\xi-\rho^1)\delta_0(\xi-\rho^2)\left(N_a\sigma'(\rho^1)\sigma'(\rho^2)\nabla\rho^1\cdot\nabla\rho^2+M_a\sigma(\rho^1)\sigma(\rho^2)\right)\dt.\end{equation}
Returning to \eqref{3_0}, it follows from \eqref{2_500}, \eqref{3_1}, and H\"older's inequality that the measure term is nonnegative, and it follows from \eqref{3_1} that the error term vanishes.  For the martingale term, the formal identities $\chi^2(x,\rho^1,t)=\mathbf{1}_{\{\rho^2-\rho^1>0\}}$ and $\chi^1(x,\rho^2,t)=\mathbf{1}_{\{\rho^1-\rho^2>0\}}$ prove that
\[\dd I^\textrm{mart}= \int_{\TT^d}\delta_0(\rho^1-\rho^2)(\nabla\rho^2-\nabla\rho^1)\sigma(\rho^1)\dd\xi^a+\int_{\TT^d}\delta_0(\rho^2-\rho^1)(\nabla\rho^1-\nabla\rho^2)\sigma(\rho^2)\dd\xi^a=0.\]
Returning to \eqref{3_000}, it follows almost surely from \eqref{3_00}, \eqref{3_0}, and the $L^1(\TT^d)$-continuity of stochastic kinetic solutions that, for every $t\in[0,T]$,
\[\int_{\TT^d}\abs{\rho^1(x,t)-\rho^2(x,t)}\dx\leq \int_{\TT^d}\abs{\rho^1_0(x)-\rho^2_0(x)}\dx.\]
We make these computations rigorous in Theorem~\ref{thm_unique}.  In particular, the products of delta distributions are not defined and must be treated using commutator estimates.  And in accordance with Remark~\ref{rem_0}, neither the function $\psi=1$ nor the kinetic functions are admissible test functions.  For this reason it is necessary to introduce cutoff functions that create a singularities at zero.  These singularities are treated using Proposition~\ref{prop_measure}.

The proof of uniqueness is presented in Theorem~\ref{thm_unique} under the assumptions on $\Phi$, $\sigma$, and $\nu$ presented in Assumption~\ref{assume_1}.  Lemma~\ref{lem_ibp} proves an integration by parts formula for the kinetic function, Definition~\ref{def_smooth} introduces the convolution kernels and cutoff functions that will be used repeatedly in the proof, and Proposition~\ref{prop_measure} controls the kinetic measure at zero.

\begin{assumption}\label{assume_1}  Assume that $\Phi,\sigma\in\C([0,\infty))$ and $\nu\in\C([0,\infty);\R^d)$ satisfy the following six assumptions.
\begin{enumerate}[(i)]
\item We have that $\Phi,\sigma\in \C^{1,1}_{\textrm{loc}}((0,\infty))$ and $\nu\in\C^1_{\textrm{loc}}((0,\infty);\R^d)$.
\item  We have that $\Phi(0)=0$ and with $\Phi'>0$ on $(0,\infty)$.
\item There exists $c\in(0,\infty)$ such that
\[\limsup_{\xi\rightarrow 0^+}\frac{\sigma^2(\xi)}{\xi}\leq c,\]
which implies that $\sigma(0)=0$.
\item We have either that $\sigma\sigma'\in\C([0,\infty))$ with $(\sigma\sigma')(0)=0$ or that $\nabla\cdot F_2=0$ for $F_2$ defined in Assumption~\ref{assumption_noise}.
\item There exists $c\in[1,\infty)$ such that
\begin{equation}\label{assume_f6} \left(\sup\nolimits_{\xi'\in[0,\xi]}\sigma^2(\xi')\right)\leq c(1+\xi+\sigma^2(\xi))\;\;\textrm{for every}\;\;\xi\in[0,\infty).\end{equation}
\item There exists $c\in[1,\infty)$ such that
\begin{equation}\label{assume_f7}\left(\sup\nolimits_{\xi'\in[0,\xi]}\abs{\nu(\xi')}\right)\leq c\left(1+\xi+\abs{\nu(\xi)}\right)\;\;\textrm{for every}\;\;\xi\in[0,\infty).\end{equation}
\end{enumerate}
\end{assumption}

\begin{remark}  We observe that assumption \eqref{assume_f6} essentially amounts to a regularity assumption on the magnitude of the oscillations of $\sigma$ at infinity. This is demonstrated by the following aspects:
\begin{enumerate}
\item Condition \eqref{assume_f6}  is satisfied if $\sigma^2$ is monotone or if $\sigma^2$ grows linearly for large values of $\xi$.  Condition \eqref{assume_f6} is furthermore satisfied if  $\sigma^2$ has locally uniformly bounded oscillations, in the sense that, for some $c\in(0,\infty)$, for every $M\in(0,\infty)$,
\[\left(\sup\nolimits_{\xi\in[M,M+1]}\sigma^2(\xi)-\inf\nolimits_{\xi\in[M,M+1]}\sigma^2(\xi)\right)\leq c.\]
To see this, we observe that in this case $\sigma^2$ grows linearly in the sense that
\begin{align*}
\left(\sup\nolimits_{\xi'\in[0,\xi]}\sigma^2(\xi')\right) & \leq \sum_{k=0}^\infty\left(\sup\nolimits_{\xi'\in[M\wedge\xi,(M+1)\wedge\xi]}\sigma^2(\xi')-\inf\nolimits_{\xi'\in[M\wedge\xi,(M+1)\wedge\xi]}\sigma^2(\xi')\right)
\\ & \leq c(1+\xi).
\end{align*}
In particular, if $\sigma^2$ is uniformly continuous or if $\sigma$ satisfies, for some $c\in(0,\infty)$, for every $M\in(0,\infty)$, and for every $\xi\in[M,M+1]$,
\[\left(\sup\nolimits_{\xi'\in[M,(M+1)\wedge\xi]}\sigma^2(\xi')-\sigma^2(\xi)\right)\leq c,\]
then $\sigma^2$ has locally uniformly bounded oscillations and, thus, Condition \eqref{assume_f6} is satisfied.
\item Condition \eqref{assume_f6} is substantially more general than the above five conditions, and it allows for the oscillations of $\sigma^2$ to grow linearly at infinity.  Precisely, it follows from \eqref{assume_f6} that, for every $M\in(0,\infty)$,
\begin{align*}
\left(\sup\nolimits_{\xi\in[M,M+1]}\sigma^2(\xi)-\inf\nolimits_{\xi\in[M,M+1]}\sigma^2(\xi)\right)& \leq\left(\sup\nolimits_{\xi\in[M,M+1]}\sigma^2(\xi)-c\sigma^2(M+1)\right)
\\ & \leq c(1+M).
\end{align*}
A model case satisfying all of the conditions of Assumption~\ref{assume_1} is the function $\sigma^2(\xi)=\xi^m+\xi\sin(\xi^p)$ for every $m,p\in[1,\infty)$---that is, condition \eqref{assume_f6} imposes a condition on the growth of the magnitude of the oscillations of $\sigma^2$ but not on the growth of the frequency of the oscillations.

\item Assumption~\eqref{assume_f6} is satisfied by every model case, and it is used to guarantee the following condition:  for every $\rho\in L^\infty([0,T];L^1(\TT^d))$ that satisfies $\sigma(\rho)\in L^2([0,T];L^2(\TT^d))$, we have that
\begin{equation}\label{assume_f1}\lim_{M\rightarrow\infty}\left(\sup\nolimits_{\xi\in[M,(M+1)\wedge \rho]}\abs{\sigma(\xi)}\mathbf{1}_{\{\rho>M\}}\right)=0\;\;\textrm{strongly in}\;\;L^2(\TT^d\times[0,T]),\end{equation}
which follows from \eqref{assume_f6} and an application of Chebyshev's inequality. Assumption \eqref{assume_f6} could be replaced by the somewhat more general condition \eqref{assume_f1}  with no change to the arguments.
\end{enumerate}\end{remark}

\begin{remark}
 Assumption~\eqref{assume_f7} is used in the identical way for $\nu$.  Technically, it is used to guarantee that whenever $\rho\in L^\infty([0,T];L^1(\TT^d))$ with $\nu(\rho)\in L^1([0,T];L^1(\TT^d))$, we have that
\begin{equation}\label{assume_f2}\lim_{M\rightarrow\infty}\left(\sup\nolimits_{\xi\in[M,(M+1)\wedge \rho]}\abs{\nu(\xi)}\mathbf{1}_{\{\rho>M\}}\right)=0\;\;\textrm{strongly in}\;\;L^1(\TT^d\times[0,T]).\end{equation}
In this case the $L^1$-integrability suffices, whereas in the case of $\sigma$ the $L^2$-integrability is used to treat certain stochastic integrals.  Assumptions \eqref{assume_f7} could be replaced by the somewhat more general condition \eqref{assume_f2} with no change to the arguments.\end{remark}

\begin{lem}\label{lem_ibp}  Let $\rho\in H^1(\TT^d)$ be a nonnegative measurable function, and let $\chi=\overline{\chi}(\rho)$ be the kinetic function of $\rho$.  Then, for every $\psi\in\C^\infty_c(\TT^d\times(0,\infty))$,
\[\int_\R\int_{\TT^d}\nabla_x\psi(x,\xi)\chi(x,\xi,r)\dx\dxi  =-\int_{\TT^d}\psi(x,\rho(x))\nabla\rho\dx.\]
In particular, if $\rho$ is a stochastic kinetic solution in the sense of Definition~\ref{def_sol} then, almost surely for every $\psi\in C^\infty_c(\TT^d\times(0,\infty)\times[0,T])$,
\[\int_0^T\int_\R\int_{\TT^d}\chi(x,\xi,s)\nabla_x\psi(x,\xi,s)=-\int_0^T\int_{\TT^d}\nabla\rho (x,s)\psi(x,\rho(x,s),s).\]
\end{lem}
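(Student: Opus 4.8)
The plan is to reduce the identity to the chain rule for Sobolev functions composed with a smooth function, the compact support of $\psi$ away from $\xi=0$ and $\xi=\infty$ being exactly what makes the primitive of $\psi$ in $\xi$ a legitimate smooth function on all of $\TT^d\times\R$. First I would introduce
\[\Psi(x,u):=\int_0^u\psi(x,\xi)\dxi.\]
Since $\psi\in\C^\infty_c(\TT^d\times(0,\infty))$, there are $0<a<b<\infty$ with $\psi$ supported in $\TT^d\times[a,b]$; hence $\Psi(x,u)=0$ for $u\le a$, $\Psi(x,u)=\int_0^\infty\psi(x,\xi)\dxi$ for $u\ge b$, and $\Psi$ is a function in $\C^\infty(\TT^d\times\R)$ with $\Psi$, $\nabla_x\Psi$ and $\partial_u\Psi$ all bounded, satisfying $\partial_u\Psi(x,u)=\psi(x,u)$ and $(\nabla_x\Psi)(x,u)=\int_0^u(\nabla_x\psi)(x,\xi)\dxi$. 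Directly from the definition of the kinetic function,
\[\int_\R(\nabla_x\psi)(x,\xi)\overline{\chi}(\rho)(x,\xi)\dxi=\int_0^{\rho(x)}(\nabla_x\psi)(x,\xi)\dxi=(\nabla_x\Psi)(x,\rho(x)).\]

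Next, since $\rho\in H^1(\TT^d)$ and $\Psi$ is $\C^1$ on $\TT^d\times\R$ with bounded first derivatives, the composition $x\mapsto\Psi(x,\rho(x))$ lies in $H^1(\TT^d)$ and the chain rule gives, almost everywhere on $\TT^d$,
\[\nabla_x\bigl(\Psi(x,\rho(x))\bigr)=(\nabla_x\Psi)(x,\rho(x))+\psi(x,\rho(x))\nabla\rho(x),\]
which is consistent on $\{\rho=0\}$ since there $\nabla\rho=0$ almost everywhere and $\psi(x,0)=0$. Integrating over $\TT^d$ and using that the torus has no boundary, the left-hand side integrates to zero; combining with the previous display proves the first assertion. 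In the stochastic setting, to pass from ``for each fixed $\psi$, almost surely'' to ``almost surely, for every $\psi$'', I would restrict to a countable family dense in $\C^\infty_c(\TT^d\times(0,\infty)\times[0,T])$ for the $\C^1$-topology and use that both sides of the identity depend continuously on $\psi$ in that topology.

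For the time-dependent version the only subtlety is that $\rho(\cdot,s)$ need not belong to $H^1(\TT^d)$: only the truncations $\rho_K:=(\rho\wedge K)\vee\tfrac1K$ are controlled, by property (iii) of Definition~\ref{def_sol}, as elements of $L^2(\O;L^2([0,T];H^1(\TT^d)))$. I would fix $\psi\in\C^\infty_c(\TT^d\times(0,\infty)\times[0,T])$, choose $K$ so large that $\psi$ is supported in $\TT^d\times(\tfrac1K,K)\times[0,T]$, and observe that on $\{\tfrac1K<\xi<K\}$ one has $\overline{\chi}(\rho)=\overline{\chi}(\rho_K)$, while $\psi(x,\rho(x,s),s)\nabla\rho(x,s)=\psi(x,\rho_K(x,s),s)\nabla\rho_K(x,s)$ almost everywhere, because $\nabla\rho=\nabla\rho_K$ almost everywhere on $\{\tfrac1K<\rho<K\}$ and off this set $\psi$ evaluated at either $\rho$ or $\rho_K$ vanishes. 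Applying the first part of the lemma to $\rho_K(\cdot,s)$ for almost every $(\o,s)$, integrating in $s\in[0,T]$, and invoking Fubini---legitimate since $\psi$ and $\nabla_x\psi$ are bounded and $\nabla\rho_K\in L^2$---yields the claim after restoring $\rho$ in place of $\rho_K$ in both integrals. I do not expect a genuine obstacle; the only point requiring care---and it is not a serious difficulty---is the bookkeeping at the endpoints of the $\xi$-support of $\psi$, which is precisely the mechanism that makes the renormalized kinetic formulation work.
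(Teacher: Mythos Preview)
Your proof is correct and follows essentially the same route as the paper, just with the details spelled out: the paper's one-line argument invokes the distributional identity $\nabla_x\chi=\delta_0(\xi-\rho)\nabla\rho$ together with an approximation argument, which is exactly what your introduction of the primitive $\Psi$ and application of the $H^1$ chain rule makes rigorous. Your reduction of the time-dependent statement to the truncations $\rho_K$ via the compact support of $\psi$ is precisely the ``local regularity property of Definition~\ref{def_sol} and compact support of $\psi$'' that the paper cites; the separability argument for swapping the quantifiers over $\psi$ and $\omega$ is not needed here, since once $\rho_K(\omega,\cdot,\cdot)\in L^2([0,T];H^1)$ the identity is deterministic and holds for all $\psi$ simultaneously, but it does no harm.
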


\begin{proof}  The second statement is an immediate consequence of the local regularity property of Definition~\ref{def_sol}, the compact support of $\psi$, and the first statement.  The first statement is a consequence of the $H^1$-regularity of $\rho$, the distributional equality $\nabla_x\chi = \delta_0(\xi-\rho)\nabla\rho$, and an approximation argument, which completes the proof.  \end{proof}

\begin{definition}\label{def_smooth}  For every $\ve,\d\in(0,1)$ let $\kappa^\ve_d\colon\TT^d\rightarrow[0,\infty)$ and $\kappa^\d_1\colon\R\rightarrow[0,\infty)$ be standard convolution kernels of scales $\ve$ and $\d$ on $\TT^d$ and $\R$ respectively, and let $\kappa^{\ve,\d}$ be defined by
\[\kappa^{\ve,\d}(x,y,\xi,\eta)=\kappa^\ve_d(x-y)\kappa^\d_1(\xi-\eta)\;\;\textrm{for every}\;\;(x,y,\xi,\eta)\in(\TT^d)^2\times\R^2.\]
For every $\beta\in(0,1)$ let $\varphi_\beta\colon\R\rightarrow[0,1]$ be the unique nondecreasing piecewise linear function that satisfies
\begin{equation}\label{3_7}\varphi_\beta(\xi)=1\;\;\textrm{if}\;\;\xi\geq \beta,\;\;\varphi_\beta(\xi)=0\;\; \textrm{if}\;\;\xi\leq \nicefrac{\beta}{2},\;\;\textrm{and}\;\;\varphi'_\beta=\frac{2}{\beta}\mathbf{1}_{\{\nicefrac{\beta}{2}<\xi<\beta\}},\end{equation}
and for every $M\in\N$ let $\zeta_M\colon\R\rightarrow [0,1]$ be the unique nonincreasing piecewise linear function that satisfies
\[\zeta_M(\xi)=0\;\; \textrm{if}\;\;\xi\geq M+1,\;\;\zeta_M(\xi)= 1\;\;\textrm{if}\;\;\xi\leq M,\;\;\textrm{and}\;\;\zeta'_M=-\mathbf{1}_{\{M<\xi<M+1\}}.\]
\end{definition}

\begin{prop}\label{prop_measure}  Let $\xi^F$, $\Phi$, $\sigma$, and $\nu$ satisfy Assumptions~\ref{assumption_noise} and \ref{assume_1} and let $\rho_0\in L^1(\O;L^1(\TT^d))$ be nonnegative and $\F_0$-measurable.  Then, if $\rho$ is a stochastic kinetic solution of \eqref{2_0} in the sense of Definition~\ref{def_sol} with initial data $\rho_0$, it follows almost surely that
\[\liminf_{\beta\rightarrow 0}\left(\beta^{-1}q(\TT^d\times [\nicefrac{\beta}{2},\beta]\times[0,T])\right)= 0.\]
\end{prop}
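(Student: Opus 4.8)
The plan is to test the kinetic equation~\eqref{2_5000} against a function $\psi(x,\xi)=\varphi_\beta(\xi)\zeta_M(\xi)$ that depends only on the velocity variable (with $\varphi_\beta,\zeta_M$ the cutoffs of Definition~\ref{def_smooth}, so that $\varphi_\beta'=\tfrac2\beta\mathbf 1_{(\beta/2,\beta)}$), to take the expectation, and then to send $M\to\infty$ followed by $\beta\to0$. First I would observe that, since $\psi$ does not depend on $x$, the three flux terms of~\eqref{2_5000} containing $(\nabla\psi)(x,\rho)$ vanish identically, and that after taking expectations the stochastic term vanishes too: with $\zeta_M$ in place its integrand is square integrable by the local regularity of Definition~\ref{def_sol} and the flux integrability, so it is a genuine mean-zero martingale. (As $\varphi_\beta\zeta_M$ is only Lipschitz, I would first approximate it by elements of $\C^\infty_c(\TT^d\times(0,\infty))$ and pass to the limit.) Rewriting the transport term through $\int_{\TT^d}\psi(x,\rho)\nabla\cdot\nu(\rho)=-\int_{\TT^d}(\partial_\xi\psi)(x,\rho)\,\nabla\rho\cdot\nu(\rho)$ and letting $M\to\infty$---using the vanishing of $q$ at infinity~\eqref{def_5353}, the flux integrability, and~\eqref{assume_f1} to dispose of the contributions of $q$ and of the flux correction on $\{M<\rho<M+1\}$, together with monotone convergence for the mass terms---I expect to arrive at the identity
\begin{multline*}
\frac{2}{\beta}\,\E\bigl[q(\TT^d\times(\beta/2,\beta)\times[0,T])\bigr]=\E\bigl[A_\beta(0)-A_\beta(T)\bigr]\\
+\frac1\beta\,\E\int_0^T\!\!\int_{\{\beta/2<\rho<\beta\}}\!\!\Bigl(\sigma(\rho)\sigma'(\rho)\nabla\rho\cdot F_2+F_3\,\sigma^2(\rho)+2\,\nabla\rho\cdot\nu(\rho)\Bigr),
\end{multline*}
where $A_\beta(t):=\int_\R\int_{\TT^d}\chi(x,\xi,t)\varphi_\beta(\xi)$.

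It then remains to show that the right-hand side tends to $0$ as $\beta\to0$. The transport contribution $\tfrac2\beta\,\E\int_0^T\int_{\{\beta/2<\rho<\beta\}}\nabla\rho\cdot\nu(\rho)$ I would argue vanishes identically: for a.e.\ time the integrand equals $\nabla\cdot\bigl[\vec N_\beta(\rho)\bigr]$ with $\vec N_\beta(\xi):=\int_0^\xi\nu(s)\mathbf 1_{(\beta/2,\beta)}(s)\,ds$, and $\vec N_\beta(\rho)\in H^1(\TT^d;\R^d)$ by the local regularity, so its integral over the torus is zero. Likewise $\sigma(\rho)\sigma'(\rho)\nabla\rho\,\mathbf 1_{\{\beta/2<\rho<\beta\}}=\tfrac12\nabla\bigl[W_\beta(\rho)\bigr]$ with $W_\beta(\xi):=\sigma^2(\xi\wedge\beta\vee\tfrac\beta2)-\sigma^2(\tfrac\beta2)$, and integrating by parts I would write $\int_{\TT^d}\sigma(\rho)\sigma'(\rho)\nabla\rho\cdot F_2\,\mathbf 1_{\{\beta/2<\rho<\beta\}}=-\tfrac12\int_{\TT^d}W_\beta(\rho)\,\nabla\cdot F_2$ (legitimate since $W_\beta(\rho)\in H^1(\TT^d)\cap L^\infty(\TT^d)$ and, by Assumption~\ref{assumption_noise}, $F_2$ and $\nabla\cdot F_2$ are bounded on $\TT^d$); by Assumption~\ref{assume_1}(iv) either $\nabla\cdot F_2=0$ and this vanishes, or $\sigma\sigma'\in\C([0,\infty))$ with $(\sigma\sigma')(0)=0$ and then $|W_\beta|\le\beta\sup_{[0,\beta]}|\sigma\sigma'|=o(\beta)$, so after dividing by $\beta$ the term is $o(1)$. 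For $\tfrac1\beta\,F_3\,\sigma^2(\rho)$, Assumption~\ref{assume_1}(iii) gives $\sigma^2(\rho)\le c\beta$ on $\{\beta/2<\rho<\beta\}$ for small $\beta$, so this term is at most $c\|F_3\|_{L^\infty}\,\bigl|\{\beta/2<\rho<\beta\}\bigr|$, and the Lebesgue measure of $\{\beta/2<\rho<\beta\}$ tends to $0$ almost surely (every space-time point with $\rho>0$ leaves this set once $\beta<\rho$, and points with $\rho=0$ never belong to it), so its expectation tends to $0$ by dominated convergence. Finally, for the mass defect, $0\le\|\rho(\cdot,t)\|_{L^1(\TT^d)}-A_\beta(t)\le\beta\abs{\TT^d}$ together with the mass preservation~\eqref{def_3535} gives $|A_\beta(0)-A_\beta(T)|\le2\beta\abs{\TT^d}$. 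Hence $\tfrac1\beta\,\E\bigl[q(\TT^d\times(\beta/2,\beta)\times[0,T])\bigr]\to0$ as $\beta\to0$.

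To finish I would invoke Fatou's lemma. Enlarging the strip (replacing $[\beta/2,\beta]$ by $(\beta/4,2\beta)$ and repeating the argument with a cutoff vanishing below $\beta/4$ and equal to $1$ above $2\beta$) yields $\tfrac1\beta\,\E\bigl[q(\TT^d\times[\beta/2,\beta]\times[0,T])\bigr]\to0$, so along any sequence $\beta_n\downarrow0$,
\[
\E\Bigl[\liminf_{n\to\infty}\beta_n^{-1}q(\TT^d\times[\beta_n/2,\beta_n]\times[0,T])\Bigr]\le\liminf_{n\to\infty}\beta_n^{-1}\,\E\bigl[q(\TT^d\times[\beta_n/2,\beta_n]\times[0,T])\bigr]=0.
\]
Since $q$ is nonnegative, this forces $\liminf_{n}\beta_n^{-1}q(\TT^d\times[\beta_n/2,\beta_n]\times[0,T])=0$ almost surely, and consequently $\liminf_{\beta\to0}\beta^{-1}q(\TT^d\times[\beta/2,\beta]\times[0,T])=0$ almost surely.

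The soft part is the concluding Fatou argument. I expect the main obstacle to be establishing the displayed identity rigorously---justifying the Lipschitz test function, isolating and bounding the $M\to\infty$ remainder via~\eqref{def_5353} and~\eqref{assume_f1}, and especially exploiting the structural fact that $\sigma(\rho)\sigma'(\rho)\nabla\rho$ and $\nu(\rho)\cdot\nabla\rho$, restricted to the small-value strip, are exact spatial divergences, so that the It\^o correction and the transport term do not obstruct the vanishing of $\beta^{-1}\E[q(\text{strip})]$; it is Assumption~\ref{assume_1}(iii),(iv) that makes these cancellations work.
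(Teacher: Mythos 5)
Your proof is correct and follows essentially the same route as the paper's: test \eqref{2_5000} with smooth approximations of $\varphi_\beta\zeta_M$, take expectations to kill the martingale term, pass $M\rightarrow\infty$ using \eqref{def_5353}, the flux integrability, and \eqref{assume_f1}, then show the right-hand side of the resulting identity vanishes as $\beta\rightarrow 0$ term by term, and finally invoke Fatou together with the nonnegativity of $q$. Your handling of the $F_2$ and $F_3$ contributions on the small strip (integration by parts to produce $\nabla\cdot F_2$ and then Assumption~\ref{assume_1}(iii),(iv)) is the paper's \eqref{44_3} and the two lines that follow it. A few places where you are slightly more explicit than the paper: you write out the transport term as $\nabla\cdot\bigl[\vec N_\beta(\rho)\bigr]$ to justify that it vanishes (the paper simply does not display it in \eqref{44_1}, implicitly for the same reason); you give the quantitative bound $|A_\beta(0)-A_\beta(T)|\le 2\beta\abs{\TT^d}$ for the mass defect where the paper only appeals to dominated convergence; and you address the open-versus-closed strip issue by enlarging to $(\beta/4,2\beta)$, whereas the paper appeals to a Stampacchia-type argument.
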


\begin{proof}  For $M\in\N$ and $\beta\in(0,1)$, after approximating $\zeta_M\varphi_\beta$ by smooth functions whose derivatives converge everywhere to the indicator functions $2\beta^{-1}\mathbf{1}_{\{\nicefrac{\beta}{2}\leq \xi\leq \beta\}}$ and $\mathbf{1}_{\{M\leq \xi\leq M+1\}}$ and testing equation \eqref{2_5000} with these approximations, it follows using the dominated convergence theorem that, after passing to the limit with respect to these approximations,
\begin{align}\label{44_1}
& \E\left[2\beta^{-1}q(\TT^d\times[\nicefrac{\beta}{2},\beta]\times[0,T])\right]=\E\left[q(\TT^d\times[M,M+1]\times[0,T])\right]
\\ \nonumber & \quad -\E\left[\left.\int_\R\int_{\TT^d}\chi(x,\xi,s)\varphi_\beta\zeta_M\dx\dxi\right|_{s=0}^{s=T}\right]
\\ \nonumber & \quad  -\frac{1}{2}\E\left[\int_0^T\int_{\TT^d}\mathbf{1}_{\{M<\rho< M+1\}}\sigma(\rho)\sigma'(\rho)\nabla\rho\cdot F_2+\int_0^T\int_{\TT^d}\mathbf{1}_{\{M<\rho<M+1\}}F_3\sigma^2(\rho)\right]
\\ \nonumber & \quad +\beta^{-1}\E\left[\int_0^T\int_{\TT^d}\mathbf{1}_{\{\nicefrac{\beta}{2}<\rho<\beta\}}\sigma(\rho)\sigma'(\rho)\nabla\rho\cdot F_2+\int_0^T\int_{\TT^d}\mathbf{1}_{\{\nicefrac{\beta}{2}<\rho<\beta\}}F_3\sigma^2(\rho)\right],
\end{align}
where it follows from Stampacchia's lemma (see, for example, Evans \cite[Chapter~5, Exercises~17,18]{Eva2010}) and \eqref{def_2500000} that we could equally have taken indicator functions of the form $\mathbf{1}_{\{M\leq \rho\leq M+1\}}$ and $\mathbf{1}_{\{\nicefrac{\beta}{2}\leq\rho\leq\beta\}}$ without changing the resulting integrals.  We first observe by the $L^1$-integrability of $\rho$, the dominated convergence theorem, and Definition~\ref{def_smooth} that
\[\lim_{M\rightarrow\infty}\E\left[\left.\int_\R\int_{\TT^d}\chi(x,\xi,s)\varphi_\beta\zeta_M\dx\dxi\right|_{s=0}^{s=T}\right]=\E\left[\left.\int_\R\int_{\TT^d}\chi(x,\xi,s)\varphi_\beta\dx\dxi\right|_{s=0}^{s=T}\right].\]
After integrating by parts in the third term on the righthand side of \eqref{44_1}, we have almost surely that
\[\int_0^T\int_{\TT^d}\mathbf{1}_{\{M<\rho< M+1\}}\sigma(\rho)\sigma'(\rho)\nabla\rho\cdot F_2=-\frac{1}{2}\int_0^T\int_{\TT^d}\left(\sigma^2((\rho\wedge (M+1))\vee M)-\sigma^2(M)\right)\nabla\cdot F_2.\]
It then follows from the $L^2$-integrability of $\sigma(\rho)$ and \eqref{assume_f1}, the $L^1$-integrability of $\rho$, the boundedness of $\nabla\cdot F_2$ and $F_3$, and the dominated convergence theorem that, almost surely,
\[\lim_{M\rightarrow\infty}\left(\abs{\int_0^T\int_{\TT^d}\mathbf{1}_{\{M<\rho< M+1\}}\sigma(\rho)\sigma'(\rho)\nabla\rho\cdot F_2}+\abs{\frac{1}{2}\int_0^T\int_{\TT^d}\mathbf{1}_{\{M<\rho<M+1\}}F_3\sigma^2(\rho)}\right)=0,\]
from which it follows from the $L^2$-integrability of $\sigma(\rho)$ and the dominated convergence theorem that
\[\lim_{M\rightarrow\infty}\E\left[\int_0^T\int_{\TT^d}\mathbf{1}_{\{M<\rho< M+1\}}\sigma(\rho)\sigma'(\rho)\nabla\rho\cdot F_2+\int_0^T\int_{\TT^d}\mathbf{1}_{\{M<\rho<M+1\}}F_3\sigma^2(\rho)\right]=0.\]
Since it follows by Definition~\ref{def_sol} that
\[\lim_{M\rightarrow\infty}\E\left[q(\TT^d\times[M,M+1]\times[0,T])\right]=0,\]
returning to \eqref{44_1} we have, for every $\beta\in(0,1)$,
\begin{align}\label{44_2}
& \E\left[2\beta^{-1}q(\TT^d\times[\nicefrac{\beta}{2},\beta]\times[0,T])\right]=-\E\left[\left.\int_\R\int_{\TT^d}\chi(x,\xi,s)\varphi_\beta\dx\dxi\right|_{s=0}^{s=T}\right]
\\ \nonumber & \quad +\beta^{-1}\E\left[\int_0^T\int_{\TT^d}\mathbf{1}_{\{\nicefrac{\beta}{2}<\rho<\beta\}}\sigma(\rho)\sigma'(\rho)\nabla\rho\cdot F_2+\int_0^T\int_{\TT^d}\mathbf{1}_{\{\nicefrac{\beta}{2}<\rho<\beta\}}F_3\sigma^2(\rho)\right].
\end{align}
For the second term on the righthand side of \eqref{44_2}, it follows from \eqref{def_2500000} and Stampacchia's lemma (see \cite[Chapter~5, Exercises~17,18]{Eva2010}) that, almost surely,
\begin{align}\label{44_3}
\int_0^T\int_{\TT^d}\mathbf{1}_{\{\nicefrac{\beta}{2}<\rho<\beta\}}\sigma(\rho)\sigma'(\rho)\nabla\rho\cdot F_2 & =\frac{1}{2}\int_0^T\int_{\TT^d}\nabla\left(\sigma^2((\beta\wedge \rho)\vee\nicefrac{\beta}{2})-\sigma^2(\nicefrac{\beta}{2})\right)\cdot F_2
\\ \nonumber & = -\frac{1}{2}\int_0^T\int_{\TT^d}\left(\sigma^2((\beta\wedge \rho)\vee\nicefrac{\beta}{2})-\sigma^2(\nicefrac{\beta}{2})\right)\nabla\cdot F_2.
\end{align}
Assumption~\ref{assume_1} proves either that \eqref{44_3} is zero or it it proves using the boundedness of $\nabla\cdot F_2$, the continuity of $2\sigma\sigma'=(\sigma^2)'$, the fundamental theorem of calculus, and the dominated convergence theorem that, almost surely,
\begin{align*}
& \lim_{\beta\rightarrow 0}\left(\beta^{-1}\int_0^T\int_{\TT^d}\mathbf{1}_{\{\nicefrac{\beta}{2}<\rho<\beta\}}\sigma(\rho)\sigma'(\rho)\nabla\rho\cdot F_2\right)
\\ & = -\left(\lim_{\beta\rightarrow 0}\beta^{-1}\frac{1}{2}\int_0^T\int_{\TT^d}\left(\sigma^2((\beta\wedge \rho)\vee\nicefrac{\beta}{2})-\sigma^2(\nicefrac{\beta}{2})\right)\nabla\cdot F_2\right)
\\ & = -\frac{1}{2}\int_0^T\int_{\TT^d}(\sigma\sigma')(0)\mathbf{1}_{\{\rho>0\}}\nabla\cdot F_2 = 0,
\end{align*}
where the final inequality follows from the assumption that $(\sigma\sigma')(0)=0$.  Since it follows from Assumption~\ref{assume_1}, the boundedness of $F_3$, and the dominated convergence theorem that there exists $c\in(0,\infty)$ such that, almost surely,
\[\lim_{\beta\rightarrow 0}\left(\beta^{-1}\int_0^T\int_{\TT^d}\mathbf{1}_{\{\nicefrac{\beta}{2}<\rho<\beta\}}F_3\sigma^2(\rho)\right)\leq \lim_{\beta\rightarrow 0}\left(c\int_0^T\int_{\TT^d}\mathbf{1}_{\{\nicefrac{\beta}{2}<\rho<\beta\}}\right)=0,\]
it follows from the $L^1$-integrability of $\rho$, property \eqref{def_3535} of Definition~\ref{def_sol}, Definition~\ref{def_smooth}, and the dominated convergence theorem that, almost surely,
\[\lim_{\beta\rightarrow 0}\left.\int_\R\int_{\TT^d}\chi(x,\xi,s)\varphi_\beta\dx\dxi\right|_{s=0}^{s=T}=\norm{\rho(\cdot,T)}_{L^1(\TT^d)}-\norm{\rho_0}_{L^1(\TT^d)}=0.\]
Returning to \eqref{44_2}, we conclude that
\[\lim_{\beta\rightarrow 0}\E\left[2\beta^{-1}q(\TT^d\times[\nicefrac{\beta}{2},\beta]\times[0,T])\right]=0,\]
from which the claim follows by Fatou's lemma.  This completes the proof.  \end{proof}

\begin{thm}\label{thm_unique}  Let $\xi^F$, $\Phi$, $\sigma$, and $\nu$ satisfy Assumptions~\ref{assumption_noise} and \ref{assume_1}, let $\rho^1_0,\rho^2_0\in L^1(\O;L^1(\TT^d))$ be $\F_0$-measurable and let $\rho^1,\rho^2$ be stochastic kinetic solutions of \eqref{2_0} in the sense of Definition~\ref{def_sol} with initial data $\rho_0^1,\rho_0^2$.  Then, almost surely,
\[\sup_{t\in[0,T]}\norm{\rho^1(\cdot,t)-\rho^2(\cdot,t)}_{L^1(\TT^d)}\leq\norm{\rho^1_0-\rho^2_0}_{L^1(\TT^d)}.\]
\end{thm}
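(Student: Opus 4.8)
The plan is to make rigorous the formal computation presented above.  First I would recast the goal: writing $\chi^i=\overline{\chi}(\rho^i)$ for the kinetic functions and using the identities \eqref{3_0000} together with the mass conservation \eqref{def_3535}, the claimed estimate is equivalent to the assertion that the mixed quantity
\[
t\longmapsto \int_\R\int_{\TT^d}\chi^1(x,\xi,t)\,\chi^2(x,\xi,t)
\]
is, almost surely, nondecreasing on $[0,T]$.  Since by Remark~\ref{rem_0} neither $\psi\equiv 1$ nor the kinetic functions themselves are admissible test functions in \eqref{2_5000}, the first step is a doubling-of-variables regularization: I would test the kinetic equation \eqref{2_5000} for $\rho^1$ against test functions of the form $\kappa^{\ve,\d}(x,y,\xi,\eta)\,\varphi_\beta(\xi)\,\zeta_M(\xi)$ (suitably smoothed), integrated against $\chi^2(y,\eta,t)$, symmetrically test the equation for $\rho^2$ against the analogous object built from $\chi^1$, add the two, and apply the stochastic product rule together with the integration-by-parts formula of Lemma~\ref{lem_ibp}.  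This produces, for each $(\ve,\d,M,\beta)$, an identity for $\int_\R\int_{\TT^d}\chi^1\chi^2\varphi_\beta\zeta_M$ whose right-hand side decomposes into a parabolic dissipation term coming from $\Delta\Phi$, two kinetic-measure terms, the It\^o correction terms carrying $F_1[\sigma']^2$, $\sigma\sigma' F_2$ and $F_3\sigma^2$, the cross-variation term $\langle\chi^1,\chi^2\rangle$, the transport terms coming from $\nu$, a martingale term, and commutator errors generated by $\kappa^{\ve,\d}$, $\varphi_\beta$ and $\zeta_M$.

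Next I would dispose of these families of terms.  The parabolic dissipation term combines with the two kinetic-measure terms: using the regularity bound \eqref{2_500}, the local regularity \eqref{def_2500000}, the distributional identity $\nabla_x\chi^i=\delta_0(\xi-\rho^i)\nabla\rho^i$, and a Cauchy--Schwarz/Young inequality applied to the cross term $[\Phi'(\rho^1)]^{1/2}[\Phi'(\rho^2)]^{1/2}\nabla\rho^1\cdot\nabla\rho^2$, this combination contributes with the sign favourable to the estimate, up to errors that vanish as $\ve,\d\to 0$.  The It\^o correction terms are shown to be cancelled, as $\ve\to 0$, by the cross-variation $\langle\chi^1,\chi^2\rangle$ computed from \eqref{2_4}; this is where the continuity of $F_1,F_2,F_3$ and the boundedness of $\nabla\cdot F_2$ from Assumption~\ref{assumption_noise} are used.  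The transport terms, after pairing and letting $\ve\to 0$, reduce to the integral over $\TT^d$ of a divergence tested against $\varphi_\beta\zeta_M$, which vanishes, modulo contributions supported on $\{M<\rho^i<M+1\}$ and on $\{\nicefrac{\beta}{2}<\rho^i<\beta\}$; the former are controlled by \eqref{assume_f2} (a consequence of Assumption~\ref{assume_1}(vi)) as $M\to\infty$, and the latter by the $L^1$-integrability of $\rho^i$ as $\beta\to 0$.  The martingale term is a genuine martingale for fixed $\ve>0$ whose quadratic variation tends to zero as $\ve\to 0$---because the stochastic flux coefficients coincide on the diagonal $\{\rho^1=\rho^2\}$---so it converges to zero; in this purely conservative case it in fact vanishes identically once the regularization is removed, which is what produces a pathwise rather than an in-expectation estimate.

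I would then pass to the limit in the order $\ve\to 0$, $\d\to 0$, $M\to\infty$, $\beta\to 0$.  The limits $\ve\to 0$ and $\d\to 0$ eliminate the commutator errors by standard mollification estimates.  The limit $M\to\infty$ uses the vanishing-at-infinity property \eqref{def_5353} for the boundary terms at $\xi=M$ coming from $\zeta_M$, together with \eqref{assume_f1}, \eqref{assume_f2} and the integrability properties of Definition~\ref{def_sol} for the $\sigma$- and $\nu$-terms supported on $\{M<\rho^i<M+1\}$.  It remains to let $\beta\to 0$.  The genuinely delicate terms carry the factor $\varphi'_\beta(\rho^i)=\tfrac{2}{\beta}\mathbf{1}_{\{\nicefrac{\beta}{2}<\rho^i<\beta\}}$: one of them is a multiple of $\beta^{-1}q^i(\TT^d\times[\nicefrac{\beta}{2},\beta]\times[0,T])$, which I would control by invoking Proposition~\ref{prop_measure}, so that $\liminf_{\beta\to 0}\beta^{-1}q^i(\TT^d\times[\nicefrac{\beta}{2},\beta]\times[0,T])=0$ almost surely and the estimate survives along a suitable sequence $\beta_n\downarrow 0$; the remaining $\beta^{-1}$-terms, carrying $\sigma\sigma' F_2$, $F_3\sigma^2$ and $\Phi'|\nabla\rho^i|^2$ localized to $\{\nicefrac{\beta}{2}<\rho^i<\beta\}$, are handled exactly as in the proof of Proposition~\ref{prop_measure}, using Assumption~\ref{assume_1}(iii)--(iv), the boundedness of $\nabla\cdot F_2$, the bound $\limsup_{\xi\to 0^+}\sigma^2(\xi)/\xi\le c$, and dominated convergence.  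Since $\int_\R\int_{\TT^d}\chi^i(x,\xi,t)\varphi_{\beta_n}(\xi)\to\norm{\rho^i(\cdot,t)}_{L^1(\TT^d)}$ as $n\to\infty$, one obtains, almost surely, $\int_\R\int_{\TT^d}\chi^1(t)\chi^2(t)\geq\int_\R\int_{\TT^d}\chi_0^1\chi_0^2$, first for each fixed $t$, hence simultaneously for all $t$ in a countable dense subset of $[0,T]$, and then for every $t\in[0,T]$ by the $L^1(\TT^d)$-continuity of stochastic kinetic solutions.  Unwinding \eqref{3_0000} and \eqref{def_3535} yields the stated supremum bound.

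The step I expect to be the main obstacle is the passage $\beta\to 0$, that is, the control of the singularities at the zero set of the solutions that are created by the renormalization.  Unlike the classical kinetic theory, the test functions here are compactly supported away from both $0$ and $\infty$, and the lower cutoff $\varphi_\beta$ generates $\beta^{-1}$-scaled contributions that are not obviously controllable.  Their treatment rests entirely on Proposition~\ref{prop_measure} for the kinetic-measure part---whose own proof crucially exploits the mass conservation \eqref{def_3535} and the structural dichotomy in Assumption~\ref{assume_1}(iv) (either $(\sigma\sigma')(0)=0$ or $\nabla\cdot F_2=0$)---and on the precise linear vanishing of $\sigma^2$ near zero from Assumption~\ref{assume_1}(iii) for the noise part.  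A secondary but substantial difficulty is organizing the commutator estimates in the doubling-of-variables argument so that all the mollification errors vanish uniformly in the remaining parameters and in the correct order of limits; the interaction between the products of delta distributions, the parabolic dissipation measure, and the It\^o--Stratonovich correction is where the bulk of the technical work lies.
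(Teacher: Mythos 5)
Your proposal follows the paper's proof in its essential structure: doubling of variables with the kernel $\kappa^{\ve,\d}$ and the cutoffs $\varphi_\beta,\zeta_M$, the sign of the kinetic-measure term via \eqref{2_500}, the cancellation of the It\^o-correction error terms against the cross-variation $\langle\chi^1,\chi^2\rangle$, the use of Proposition~\ref{prop_measure} for the $\beta^{-1}$-weighted kinetic-measure cutoff, and the handling of the $\nu$- and $\sigma$-localization errors at $\{M<\rho^i<M+1\}$ via \eqref{assume_f1}--\eqref{assume_f2}. The reordering of the limits ($M\to\infty$ before $\beta\to 0$) is immaterial, since after $\ve,\d\to 0$ all the $\beta$- and $M$-dependent remainders decouple. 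The formulation of the goal as monotonicity of $\int\chi^1\chi^2$ and the recovery of the supremum bound via a dense set of times and $L^1$-continuity are fine.

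The genuine gap is in the martingale term. You assert that after passing $\ve\to 0$ the martingale term has quadratic variation tending to zero and ``vanishes identically once the regularization is removed.'' This is not the case. After the $\ve,\d\to 0$ limits the martingale term equals the nontrivial stochastic integral \eqref{3_987},
\[
\int_0^t\int_{\TT^d}\sgn(\rho^2-\rho^1)\left(\varphi_\beta(\rho^1)\zeta_M(\rho^1)\nabla\cdot(\sigma(\rho^1)\dd\xi^F)-\varphi_\beta(\rho^2)\zeta_M(\rho^2)\nabla\cdot(\sigma(\rho^2)\dd\xi^F)\right),
\]
whose quadratic variation is strictly positive for fixed $\beta,M$. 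The fact that the flux coefficients coincide on the diagonal does not by itself make this vanish, because $\varphi_\beta\zeta_M\,\nabla\cdot(\sigma(\rho)\dd\xi^F)$ is not a pure divergence. The paper's key device, which is absent from your sketch, is the auxiliary Lipschitz function $\Theta_{\beta,M}$ with $\Theta_{\beta,M}(0)=0$ and $\Theta_{\beta,M}'=\varphi_\beta\zeta_M\sigma'$. This permits the decomposition \eqref{3_988} of the limiting martingale into (a) the ``true divergence'' piece $\int\sgn(\rho^2-\rho^1)\nabla\cdot\bigl((\Theta_{\beta,M}(\rho^1)-\Theta_{\beta,M}(\rho^2))\dd\xi^F\bigr)$, which vanishes identically by a further $\sgn^\d$-regularization and the Lipschitz bound $\abs{(\sgn^\d)'(\rho^1-\rho^2)(\Theta_{\beta,M}(\rho^1)-\Theta_{\beta,M}(\rho^2))}\lesssim \mathbf{1}_{\{0<\abs{\rho^1-\rho^2}<c\d\}}$ (this is \eqref{3_989}); and (b) the remainder $\int\sgn(\rho^2-\rho^1)(\varphi_\beta\zeta_M\sigma(\rho^i)-\Theta_{\beta,M}(\rho^i))\nabla\cdot\dd\xi^F$, which does \emph{not} vanish for fixed $\beta,M$ and only tends to zero as $\beta\to 0$, $M\to\infty$ using $\sigma(0)=0$, the $L^2$-integrability of $\sigma(\rho^i)$, and the growth assumption \eqref{assume_f1} (this is \eqref{3_890}). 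Without $\Theta_{\beta,M}$ and the subsequent $\beta,M$-limits for the remainder, the martingale term does not close, and the pathwise estimate does not follow.
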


\begin{proof}   Let $\chi^1$ and $\chi^2$ be the kinetic functions of $\rho^1$ and $\rho^2$ and for every $\ve,\d\in(0,1)$ and $i\in\{1,2\}$ let $\chi^{\ve,\d}_{t,i}(y,\eta)=(\chi^i(\cdot,\cdot,t)*\kappa^{\ve,\d})(y,\eta)$ for the convolution kernel $\kappa^{\ve,\d}$ defined in Definition~\ref{def_smooth}.  It follows from Definition~\ref{def_sol} and the Kolmogorov continuity criterion (see, for example, Revuz and Yor \cite[Chapter~1, Theorem~2.1]{RevYor1999}) that for every $\ve,\d\in(0,1)$ there exists a subset of full probability such that, for every $i\in\{1,2\}$, $(y,\eta)\in\TT^d\times(\nicefrac{\d}{2},\infty)$, and $t\in[0,T]$,
\begin{align}\label{3_6}
& \left.\chi^{\ve,\d}_{s,i}(y,\eta)\right|_{s=0}^t=\nabla_y\cdot \left(\int_0^t\int_{\TT^d}\Phi'(\rho^i)\nabla\rho^i \kappa^{\ve,\d}(x,y,\rho^i,\eta)\right)
\\ \nonumber & \quad +\nabla_y\cdot\left(\frac{1}{2}\int_0^t\int_{\TT^d}\left(F_1(x)[\sigma'(\rho^i)]^2\nabla\rho^i+\sigma(\rho^i)\sigma'(\rho^i)F_2(x)\right)\kappa^{\ve,\d}(x,y,\rho^i,\eta)\right)
\\ \nonumber & \quad + \partial_\eta\left(\int_0^t\int_\R\int_{\TT^d}\kappa^{\ve,\d}(x,y,\xi,\eta)\dd q^i\right)
\\ \nonumber & \quad -\partial_\eta\left(\frac{1}{2}\int_0^t\int_{\TT^d}\left(F_3(x)\sigma^2(\rho^i)+\sigma(\rho^i)\sigma'(\rho^i)\nabla\rho^i\cdot F_2(x)\right)\kappa^{\ve,\d}(x,y,\rho^i,\eta) \right)
\\ \nonumber & \quad -\int_0^t\int_{\TT^d}\kappa^{\ve,\d}(x,y,\rho^i,\eta)\nabla\cdot\nu(\rho) - \int_0^t\int_{\TT^d}\kappa^{\ve,\d}(x,y,\rho^i,\eta)\nabla\cdot\left(\sigma(\rho^i)\cdot\dd\xi^F\right).
\end{align}
We will first treat the analogues of the first two terms on the righthand side of \eqref{3_000}.  For the cutoff functions defined in Definition~\ref{def_smooth}, it follows almost surely from \eqref{3_7} and \eqref{3_6} that, for every $\ve,\beta\in(0,1)$, $M\in\N$, and $\d\in(0,\nicefrac{\beta}{4})$, for every $t\in[0,T]$ and $i\in\{1,2\}$,
\begin{equation}\label{3_8}
\left.\int_\R\int_{\TT^d}\chi^{\ve,\d}_{s,i}(y,\eta)\varphi_\beta(\eta)\zeta_M(\eta)\dy\deta\right|^t_{s=0} = I^{i,\textrm{cut}}_t+I^{i,\textrm{mart}}_t+I^{i,\textrm{cons}}_t
\end{equation}
for the cutoff term defined by
\begin{align*}
& I^{i,\textrm{cut}}_t =-\int_0^t\int_{\R^2}\int_{(\TT^d)^2}\kappa^{\ve,\d}(x,y,\xi,\eta)\partial_\eta(\varphi_\beta(\eta)\zeta_M(\eta))\dd q^i(x,\xi,s)
 \\  & +\frac{1}{2}\int_0^t\int_\R\int_{(\TT^d)^2}\left(F_3(x)\sigma^2(\rho^i(x,s))+\sigma(\rho^i)\sigma'(\rho^i)\nabla\rho^i\cdot F_2(x)\right)\kappa^{\ve,\d}(x,y,\rho^i(x,s),\eta)\partial_\eta(\varphi_\beta(\eta)\zeta_M(\eta)),
 \end{align*}
for the martingale term defined by
\[I^{i,\textrm{mart}}_t = -\int_0^t\int_\R\int_{(\TT^d)^2}\kappa^{\ve,\d}(x,y,\rho^i(x,s),\eta)\nabla\cdot(\sigma(\rho^i(x,s))\dd\xi^F)\varphi_\beta(\eta)\zeta_M(\eta)\dx\dy\deta,\]
and for the conservative term defined by
\[I^{i,\textrm{cons}}_t=-\int_0^t\int_{\TT^d}\kappa^{\ve,\d}(x,y,\rho^i,\eta)\nabla\cdot\nu(\rho)\dt,\]
where we emphasize that the terms $I^{i,\textrm{cut}}_t$, $I^{i,\textrm{mart}}_t$, and $I^{i,\textrm{cons}}_t$ depend on $\ve,\d,\beta\in(0,1)$ and $M\in\N$.

We will now treat the analogue of the mixed term on the righthand side of \eqref{3_000}.  In the following, we will write $(x,\xi)\in\TT^d\times\R$ for the arguments of $\chi^1$ and all related quantities, and we will write $(x',\xi')\in\TT^d\times\R$ for the arguments of $\chi^2$ and all related quantities.  Let
\[\overline{k}^{\ve,\d}_{s,1}(x,y,\eta)=\kappa^{\ve,\d}(x,y,\rho^1(x,s),\eta)\;\;\textrm{and}\;\;\overline{k}^{\ve,\d}_{s,2}(x',y,\eta)=\kappa^{\ve,\d}(x',y,\rho^2(x',s),\eta).\]
From \eqref{3_7}, \eqref{3_6}, and the stochastic product rule we have almost surely that, for every $\ve,\beta\in(0,1)$, $M\in\N$, and $\d\in(0,\nicefrac{\beta}{4})$, for every $t\in[0,T]$,
\begin{align}\label{3_09}
& \left.\int_\R\int_{\TT^d}\chi^{\ve,\d}_{s,1}(y,\eta)\chi^{\ve,\d}_{s,2}(y,\eta)\varphi_\beta(\eta)\zeta_M(\eta)\dy\deta\right|_{s=0}^t
\\ \nonumber & = \int_0^t\int_\R\int_{\TT^d}\left(\chi^{\ve,\d}_{s,2}(y,\eta)\dd \chi^{\ve,\d}_{s,1}(y,\eta)+\chi^{\ve,\d}_{s,1}(y,\eta)\dd \chi^{\ve,\d}_{s,2}(y,\eta)+\dd\langle \chi^{\ve,\d}_2,\chi^{\ve,\d}_1\rangle_s(y,\eta)\right)\varphi_\beta(\eta)\zeta_M(\eta)\dy\deta.
\end{align}
It follows from Lemma~\ref{lem_ibp}, \eqref{3_6}, the definition of $\varphi_\beta$, $\d\in(0,\nicefrac{\beta}{4})$, and the distributional equalities involving the kinetic function that
\begin{equation}\label{3_0009}
\int_0^t\int_\R\int_{\TT^d}\chi^{\ve,\d}_{s,2}(y,\eta)\dd \chi^{\ve,\d}_{s,1}(y,\eta)\varphi_\beta(\eta)\zeta_M(\eta)\dy\deta\ds = I^{2,1,\textrm{err}}_t+I^{2,1,\textrm{meas}}_t+I^{2,1,\textrm{cut}}_t+I^{2,1,\textrm{mart}}_t+I^{2,1,\textrm{cons}}_t\end{equation}
where, after adding the second term of \eqref{3_000009} below and subtracting it in \eqref{3_0000009} below, the error term is
\begin{align}\label{3_000009}
I^{2,1,\textrm{err}}_t & = -\int_0^t\int_{\R}\int_{(\TT^d)^3}\Phi'(\rho^1)\nabla\rho^1\cdot\nabla\rho^2\overline{\kappa}^{\ve,\d}_{s,1}\overline{\kappa}^{\ve,\d}_{s,2}\varphi_\beta(\eta)\zeta_M(\eta)
\\ \nonumber & \quad +\int_0^t\int_{\R}\int_{(\TT^d)^3}[\Phi'(\rho^1)]^\frac{1}{2}[\Phi'(\rho^2)]^\frac{1}{2}\nabla\rho^1\cdot\nabla\rho^2\overline{\kappa}^{\ve,\d}_{s,1}\overline{\kappa}^{\ve,\d}_{s,2}\varphi_\beta(\eta)\zeta_M(\eta)
\\ \nonumber & \quad - \frac{1}{2}\int_0^t\int_\R\int_{(\TT^d)^3}\left(F_1(x)[\sigma'(\rho^1)]^2\nabla\rho^1\cdot\nabla\rho^2+\sigma(\rho^1)\sigma'(\rho^1)\nabla\rho^2\cdot F_2(x)\right)\overline{\kappa}^{\ve,\d}_{s,1}\overline{\kappa}^{\ve,\d}_{s,2}\varphi_\beta(\eta)\zeta_M(\eta)
\\ \nonumber & \quad - \frac{1}{2}\int_0^t\int_\R\int_{(\TT^d)^3}\left(F_3(x)\sigma^2(\rho^1)+\sigma(\rho^1)\sigma'(\rho^1)\nabla\rho^1\cdot F_2(x)\right)\overline{\kappa}^{\ve,\d}_{s,1}\overline{\kappa}^{\ve,\d}_{s,2}\varphi_\beta(\eta)\zeta_M(\eta),
\end{align}
the measure term is
\begin{align}\label{3_0000009}
& I^{2,1,\textrm{meas}}_t  = \int_0^t\int_{\R^2}\int_{(\TT^d)^3}\kappa^{\ve,\d}(x,y,\xi,\eta)\overline{\kappa}^{\ve,\d}_{s,2}\varphi_\beta(\eta)\zeta_M(\eta)\dd q^1(x,\xi,s)\dxp\dy\deta
\\ \nonumber & \quad - \int_0^t\int_{\R}\int_{(\TT^d)^3}[\Phi'(\rho^1)]^\frac{1}{2}[\Phi'(\rho^2)]^\frac{1}{2}\nabla\rho^1\cdot\nabla\rho^2\overline{\kappa}^{\ve,\d}_{s,1}\overline{\kappa}^{\ve,\d}_{s,2}\varphi_\beta(\eta)\zeta_M(\eta)\dx\dxp\dy\deta\ds,
\end{align}
the cutoff term is
\begin{align*}
& I^{2,1,\textrm{cut}}_t  = -\int_0^t\int_{\R^2}\int_{(\TT^d)^2}\kappa^{\ve,\d}(x,y,\xi,\eta)\chi^{\ve,\d}_{s,2}(y,\eta)\partial_\eta(\varphi_\beta(\eta)\zeta_M(\eta))\dd q^1(x,\xi,s)\dy\deta\ds
\\  & \quad +\frac{1}{2}\int_0^t\int_{\R}\int_{(\TT^d)^2}\left(F_3(x)\sigma^2(\rho^1)+\sigma(\rho^1)\sigma'(\rho^1)\nabla\rho^1\cdot F_2(x)\right)\overline{\kappa}^{\ve,\d}_{s,1}\chi^{\ve,\d}_{s,2}\partial_\eta(\varphi_\beta(\eta)\zeta_M(\eta))\dx\dy\deta\ds,
\end{align*}
the martingale term is
\[I^{2,1,\textrm{mart}}_t=-\int_0^t\int_\R\int_{(\TT^d)^2}\overline{\kappa}^{\ve,\d}_{s,1}\chi^{\ve,\d}_{s,2}\varphi_\beta(\eta)\zeta_M(\eta)\nabla\cdot(\sigma(\rho^1)\dd\xi^F(x))\dx\dy\deta,\]
and the conservative term is
\[I^{2,1,\textrm{cons}}_t = -\int_0^t\int_\R\int_{(\TT^d)^2}\overline{\kappa}^{\ve,\d}_{s,1}\chi^{\ve,\d}_{s,2}\varphi_\beta(\eta)\zeta_M(\eta)\nabla\cdot\nu(\rho^1)\dx\dy\deta\ds.\]
The analogous formula holds for the second term on the righthand side of \eqref{3_09}, with the decomposition $I^{1,2,\textrm{err}}_t$ and similarly for the remaining four parts.  For the final term of \eqref{3_09}, it follows from \eqref{3_6} and the definition of $\xi^F$ that
\begin{align}\label{3_009}
& \int_0^t\int_\R\int_{\TT^d}\dd\langle\chi^{\ve,\d}_1,\chi^{\ve,\d}_{s,2}\rangle_s(y,\eta)\varphi_\beta(\eta)\zeta_M(\eta)\dy\deta\ds
\\ \nonumber & = \sum_{k=1}^\infty\int_0^t\int_\R\int_{(\TT^d)^3}f_k(x)f_k(x')\sigma'(\rho^1)\sigma'(\rho^2)\nabla\rho^1\cdot\nabla\rho^2\overline{\kappa}^{\ve,\d}_{s,1}\overline{\kappa}^{\ve,\d}_{s,2}\varphi_\beta(\eta)\zeta_M(\eta)\dx\dxp\dy\deta\ds
\\ \nonumber & \quad + \sum_{k=1}^\infty\int_0^t\int_\R\int_{(\TT^d)^3}\nabla f_k(x)\cdot\nabla f_k(x')\sigma(\rho^1)\sigma(\rho^2)\overline{\kappa}^{\ve,\d}_{s,1}\overline{\kappa}^{\ve,\d}_{s,2}\varphi_\beta(\eta)\zeta_M(\eta)\dx\dxp\dy\deta\ds
\\ \nonumber &  \quad + \sum_{k=1}^\infty \int_0^t\int_\R\int_{(\TT^d)^3}\sigma'(\rho^1)\sigma(\rho^2)f_k(x)\nabla f_k(x')\cdot\nabla\rho^1\overline{\kappa}^{\ve,\d}_{s,1}\overline{\kappa}^{\ve,\d}_{s,2}\varphi_\beta(\eta)\zeta_M(\eta)\dx\dxp\dy\deta\ds
\\ \nonumber &  \quad + \sum_{k=1}^\infty \int_0^t\int_\R\int_{(\TT^d)^3}\sigma(\rho^1)\sigma'(\rho^2)f_k(x')\nabla f_k(x)\cdot\nabla\rho^2\overline{\kappa}^{\ve,\d}_{s,1}\overline{\kappa}^{\ve,\d}_{s,2}\varphi_\beta(\eta)\zeta_M(\eta)\dx\dxp\dy\deta\ds.
\end{align}
It follows from \eqref{3_09}, \eqref{3_0009}, and \eqref{3_009} that
\[\left.\int_\R\int_{\TT^d}\chi^{\ve,\d}_{s,1}(y,\eta)\chi^{\ve,\d}_{s,2}(y,\eta)\varphi_\beta(\eta)\zeta_M(\eta)\dy\deta\right|^t_{s=0} = I^{\textrm{err}}_t+I^{\textrm{meas}}_t+I^{\textrm{mix},\textrm{cut}}_t+I^{\textrm{mix},\textrm{mart}}_t+I^{\textrm{mix},\textrm{cons}}_t,\]
where the error terms \eqref{3_000009} and \eqref{3_009} combine to form, using Einstein's summation convention over repeated indices,\small
\begin{align*}
& I^{\textrm{err}}_t = -\int_0^t\int_\R\int_{(\TT^d)^3}\left([\Phi'(\rho^1)]^\frac{1}{2}-[\Phi'(\rho^2)]^\frac{1}{2}\right)^2\nabla\rho^1\cdot\nabla\rho^2\overline{\kappa}^{\ve,\d}_{s,1}\overline{\kappa}^{\ve,\d}_{s,2}\varphi_\beta\zeta_M
\\ \nonumber & -\frac{1}{2} \int_0^t\int_\R\int_{(\TT^d)^3}(F_1(x)[\sigma'(\rho^1)]^2+F_1(x')[\sigma'(\rho^2)]^2-2f_k(x)f_k(x')\sigma'(\rho^1)\sigma'(\rho^2))\overline{\kappa}^{\ve,\d}_{s,1}\overline{\kappa}^{\ve,\d}_{s,2}\varphi_\beta\zeta_M
\\ \nonumber & -\frac{1}{2} \int_0^t\int_\R\int_{(\TT^d)^3}(F_3(x)\sigma^2(\rho^1)+F_3(x')\sigma^2(\rho^2)-2\nabla f_k(x)\cdot\nabla f_k(x')\sigma(\rho^1)\sigma(\rho^2))\overline{\kappa}^{\ve,\d}_{s,1}\overline{\kappa}^{\ve,\d}_{s,2}\varphi_\beta\zeta_M
\\ \nonumber & -\frac{1}{2} \int_0^t\int_\R\int_{(\TT^d)^3}(\sigma(\rho^1)\sigma'(\rho^1)F_2(x)+\sigma(\rho^2)\sigma'(\rho^2)F_2(x')-2\sigma'(\rho^1)\sigma(\rho^2)f_k(x)\nabla f_k(x'))\cdot\nabla\rho^1\overline{\kappa}^{\ve,\d}_{s,1}\overline{\kappa}^{\ve,\d}_{s,2}\varphi_\beta\zeta_M
\\ \nonumber & -\frac{1}{2} \int_0^t\int_\R\int_{(\TT^d)^3}(\sigma(\rho^1)\sigma'(\rho^1)F_2(x)+\sigma(\rho^2)\sigma'(\rho^2)F_2(x')-2\sigma(\rho^1)\sigma'(\rho^2)f_k(x')\nabla f_k(x))\cdot\nabla\rho^2\overline{\kappa}^{\ve,\d}_{s,1}\overline{\kappa}^{\ve,\d}_{s,2}\varphi_\beta\zeta_M,
\end{align*}
\normalsize and where the measure terms \eqref{3_0000009} combine to form
\begin{align*}
I^{\textrm{meas}}_t & =  \int_0^t\int_{\R^2}\int_{(\TT^d)^3}\kappa^{\ve,\d}(x,y,\xi,\eta)\overline{\kappa}^{\ve,\d}_{s,2}\varphi_\beta(\eta)\zeta_M(\eta)\dd q^1(x,\xi,s)\dxp\dy\deta
\\ \nonumber & \quad +\int_0^t\int_{\R^2}\int_{(\TT^d)^3}\kappa^{\ve,\d}(x',y,\xi',\eta)\overline{\kappa}^{\ve,\d}_{s,1}\varphi_\beta(\eta)\zeta_M(\eta)\dd q^2(x',\xi',s)\dx\dy\deta
\\ \nonumber & \quad -2\int_0^t\int_{\R}\int_{(\TT^d)^3}[\Phi'(\rho^1)]^\frac{1}{2}[\Phi'(\rho^2)]^\frac{1}{2}\nabla\rho^1\cdot\nabla\rho^2\overline{\kappa}^{\ve,\d}_{s,1}\overline{\kappa}^{\ve,\d}_{s,2}\varphi_\beta(\eta)\zeta_M(\eta)\dx\dxp\dy\deta\ds.
\end{align*}
For the cutoff, martingale, and conservative terms defined respectively by
\[I^{\textrm{cut,mart,cons}}_t= I^{1,\textrm{cut,mart,cons}}_t+I^{2,\textrm{cut,mart,cons}}_t-2(I^{2,1,\textrm{cut,mart,cons}}_t+I^{1,2,\textrm{cut,mart,cons}}_t),\]
we have from \eqref{3_8} and \eqref{3_0009} that, almost surely for every $t\in[0,T]$,
\begin{equation}\label{3_21}\left.\int_\R\int_{\TT^d}\left(\chi^{\ve,\d}_{s,1}+\chi^{\ve,\d}_{s,2}-2\chi^{\ve,\d}_{s,1}\chi^{\ve,\d}_{s,2}\right)\varphi_\beta\zeta_M\right|_{s=0}^t=-2I^{\textrm{err}}_t-2I^{\textrm{meas}}_t+I^{\textrm{mart}}_t+I^{\textrm{cut}}_t+I^{\textrm{cons}}_t.\end{equation}
We will handle the five terms on the righthand side of \eqref{3_21} separately.

\textbf{The measure term}.  It follows from property \eqref{2_500} of the kinetic measure and H\"older's inequality that the measure term almost surely satisfies, for every $t\in[0,T]$,
\begin{equation}\label{3_22} I^{\textrm{meas}}_t\geq 0.\end{equation}

\textbf{The error term}.  For the error term, it follows from $\d\in(0,\nicefrac{\beta}{4})$, the definition of the convolution kernel, the definitions of $\varphi_\beta$ and $\zeta_M$, the local $H^1$-regularity of the solutions $\rho^i$, the continuity of the $F_i$, and the dominated convergence theorem that, for $\overline{\kappa}^\d_{i,s}(y,\eta)=\kappa^\d_1(\rho^i(y,s)-\eta)$ for each $i\in\{1,2\}$,
\begin{align}\label{3_0010000}
& \lim_{\ve\rightarrow 0} I^{\textrm{err}}_t = -\int_0^t\int_\R\int_{\TT^d}\left([\Phi'(\rho^1)]^\frac{1}{2}-[\Phi'(\rho^2)]^\frac{1}{2}\right)^2\nabla\rho^1\cdot\nabla\rho^2\overline{\kappa}^{\d}_{s,1}\overline{\kappa}^{\d}_{s,2}\varphi_\beta\zeta_M
\\ \nonumber & -\frac{1}{2} \int_0^t\int_\R\int_{\TT^d}\left(F_1(y)\left(\sigma'(\rho^1)-\sigma'(\rho^2)\right)^2+F_3(y)\left(\sigma(\rho^1)-\sigma(\rho^2)\right)^2\right)\overline{\kappa}^{\d}_{s,1}\overline{\kappa}^{\d}_{s,2}\varphi_\beta\zeta_M
\\ \nonumber & -\frac{1}{2} \int_0^t\int_\R\int_{\TT^d}(\sigma(\rho^1)\sigma'(\rho^1)+\sigma(\rho^2)\sigma'(\rho^2)-2\sigma'(\rho^1)\sigma(\rho^2))F_2(y)\cdot\nabla\rho^1\overline{\kappa}^{\ve,\d}_{s,1}\overline{\kappa}^{\ve,\d}_{s,2}\varphi_\beta\zeta_M
\\ \nonumber & -\frac{1}{2} \int_0^t\int_\R\int_{\TT^d}(\sigma(\rho^1)\sigma'(\rho^1)+\sigma(\rho^2)\sigma'(\rho^2)-2\sigma(\rho^1)\sigma'(\rho^2))F_2(y)\cdot\nabla\rho^2\overline{\kappa}^{\ve,\d}_{s,1}\overline{\kappa}^{\ve,\d}_{s,2}\varphi_\beta\zeta_M.
\end{align}
It follows from Assumption~\ref{assume_1} and the definition of the convolution kernel that there exists $c\in(0,\infty)$ depending on $\beta$ and $M$ such that, for every $\d\in(0,\nicefrac{\beta}{4})$, whenever $\overline{\kappa}^{\ve,\d}_{s,1}\overline{\kappa}^{\ve,\d}_{s,2}\varphi_\beta\zeta_M\neq 0$,
\begin{align*}
& \left([\Phi'(\rho^1)]^\frac{1}{2}-[\Phi'(\rho^2)]^\frac{1}{2}\right)^2+\left([\sigma'(\rho^1)]^\frac{1}{2}-[\sigma'(\rho^2)]^\frac{1}{2}\right)^2+\left(\sigma(\rho^1)-\sigma(\rho^2)\right)^2
\\ & \quad +\abs{\sigma(\rho^1)\sigma'(\rho^1)+\sigma(\rho^2)\sigma'(\rho^2)-2\sigma'(\rho^1)\sigma(\rho^2)}
\\ & \quad \quad +\abs{\sigma(\rho^1)\sigma'(\rho^1)+\sigma(\rho^2)\sigma'(\rho^2)-2\sigma(\rho^1)\sigma'(\rho^2)} \leq c\mathbf{1}_{\{0<\abs{\rho^1(x,s)-\rho^2(x',s)}<c\d\}}\d.
\end{align*}
Here we are using the fact that the local Lipschitz regularity of $\Phi'$, that $\Phi'>0$ on $(0,\infty)$, and $\d\in(0,\nicefrac{\beta}{4})$ implies that the square root is $\nicefrac{1}{2}$-H\"older continuous on the support of $\varphi_\beta\zeta_M$.  The final two terms are bounded using the triangle inequality, the local boundedness of $\sigma$ and $\sigma'$, and the local Lipschitz regularity of $\sigma$ and $\sigma'$.  Returning to \eqref{3_0010000}, it follows from the boundedness of the $F_i$, H\"older's inequality, and Young's inequality that there exists $c\in(0,\infty)$ depending on $\beta$ and $M$ such that, almost surely for every $t\in[0,T]$,
\begin{equation}\label{3_11}
\limsup_{\ve\rightarrow 0}\abs{I^{\textrm{err}}_t} \leq c\int_0^T\int_\R\int_{\TT^d}\mathbf{1}_{\{0<\abs{\rho^1(y,s)-\rho^2(y,s)}<c\d\}}(1+\abs{\nabla\rho^1}^2+\abs{\nabla\rho^2}^2)(\d\overline{\kappa}^\d_{s,1})\overline{\kappa}^\d_{s,2}\varphi_\beta\zeta_M.
\end{equation}
It follows from the uniform boundedness of $(\d\overline{\kappa}^\d_{s,1})$ in $\delta\in(0,\nicefrac{\beta}{4})$, the definitions of $\overline{\kappa}^\d_{s,2}$, $\varphi_\beta$, and $\zeta_M$, property \eqref{def_2500000} of the solutions, the dominated convergence theorem, and \eqref{3_11} that, almost surely for every $t\in[0,T]$,
\begin{equation}\label{3_12}\limsup_{\d\rightarrow 0}\left(\limsup_{\ve\rightarrow 0}\abs{I^{\textrm{err}}_t}\right)=0.\end{equation}

\textbf{The martingale term}.  For the analysis of the martingale terms we will repeatedly use the fact that if $F^\ve_t,F\in L^2(\O\times[0,T])$ are $\F_t$-progressively measurable processes that satisfy $F^\ve_t\rightarrow F$ in $L^2(\O\times[0,T])$ as $\ve\rightarrow 0$ then, after passing to a subsequence, the It\^o integrals satisfy, almost surely for every $t\in[0,T]$,
\begin{equation}\label{lem_dct} \lim_{\ve\rightarrow 0}\int_0^tF^\ve_s\dd B_s = \int_0^t F_s\dd B_s.\end{equation}
The proof is a consequence of the Burkholder-Davis-Gundy inequality (see, for example, \cite[Chapter~4, Theorem~4.1]{RevYor1999}).  It follows from property \eqref{def_2500000} of the solutions, the definition of $\kappa^{\ve,\d}$, the boundedness of the kinetic functions, the definition of $\xi^F$, and \eqref{lem_dct} that, after passing to a subsequence $\ve\rightarrow 0$, almost surely for every $t\in[0,T]$,
\begin{align}\label{3_23}
\lim_{\ve\rightarrow 0} I^{\textrm{mart}}_t & = \int_0^t\int_\R\int_{\TT^d}\overline{\kappa}^\d_{s,1}(2\chi^\d_{s,2}-1)\varphi_\beta(\eta)\zeta_M(\eta)\nabla\cdot(\sigma(\rho^1)\dd\xi^F)\dy\deta
\\ \nonumber & \quad + \int_0^t\int_\R\int_{\TT^d}\overline{\kappa}^\d_{s,2}(2\chi^\d_{s,1}-1)\varphi_\beta(\eta)\zeta_M(\eta)\nabla\cdot(\sigma(\rho^2)\dd\xi^F)\dy\deta,
\end{align}
for $\chi^\d_{s,i}(y,\eta)=(\chi^i_s(y,\cdot)*\kappa^\d_1)(\eta)$.  It follows from $\varphi_\beta,\zeta_M\in\C^\infty_c((0,\infty))$, the definition of $\overline{\kappa}^\d_{s,1}$, the boundedness of the kinetic function, property \eqref{def_2500000} of the solutions, the definition of $\xi^F$, and the Burkholder-Davis-Gundy inequality (see, for example, \cite[Chapter~4, Theorem~4.1]{RevYor1999}) that there exists $c\in(0,\infty)$ such that
\begin{align*}
& \E\left[\sup_{t\in[0,T]}\abs{\int_0^t\int_\R\int_{\TT^d}\overline{\kappa}^\d_{s,1}(2\chi^\d_{s,2}-1)(\varphi_\beta(\eta)\zeta_M(\eta)-\varphi_\beta(\rho^1)\zeta_M(\rho^1))\nabla\cdot(\sigma(\rho^1)\dd\xi^F)\dy\deta}\right]
\\ & \leq c\E\left[\int_0^T\left(\int_\R\int_{\TT^d}\overline{\kappa}^\d_{s,1}(\varphi_\beta(\eta)\zeta_M(\eta)-\varphi_\beta(\rho^1)\zeta_M(\rho^1))\left(\sigma'(\rho^1)\abs{\nabla\rho^1}+\abs{\sigma(\rho^1)}\right)\dy\deta \right)^2\ds\right]^\frac{1}{2}
\\ & \leq c\d\E\left[\int_0^T\int_{\TT^d}\left([\sigma'(\rho^1)]^2\abs{\nabla\rho^1}^2+\abs{\sigma(\rho^1)}^2\right)\mathbf{1}_{\{\nicefrac{\beta}{2}-\d<\rho^1<M+1+\d\}}\dy\ds\right]^\frac{1}{2}.
\end{align*}
It follows from the local regularity of $\rho^1$ and the dominated convergence theorem that, after passing to a subsequence $\d\rightarrow 0$, almost surely for every $t\in[0,T]$,
\begin{equation}\label{3_24}\lim_{\d\rightarrow 0}\abs{\int_0^t\int_\R\int_{\TT^d}\overline{\kappa}^\d_{s,1}(2\chi^\d_{s,2}-1)(\varphi_\beta(\eta)\zeta_M(\eta)-\varphi_\beta(\rho^1)\zeta_M(\rho^1))\nabla\cdot(\sigma(\rho^1)\dd\xi^F)\dy\deta}=0,\end{equation}
and similarly for the symmetric term coming from \eqref{3_23}. An explicit calculation proves that the convolution $\abs{(\kappa^\d*(\kappa^\d*\chi^2_s))}\leq 1$ and that, whenever $2\d<\rho^2(y,s)$,
\begin{equation}\label{3_25}
\int_{\R^2}\kappa^\d(\xi-\eta)\kappa^\d(\eta-\xi')\chi^2_s(y,\xi')\deta\dxip =\left\{\begin{aligned} & 0 && \textrm{if}\;\;\xi\leq -2\d\;\;\textrm{or}\;\;\xi\geq \rho^2(y,s)+2\d, \\ & \nicefrac{1}{2} && \textrm{if}\;\;\xi = 0\;\;\textrm{or}\;\;\xi = \rho^2(y,s), \\ & 1 && \textrm{if}\;\;2\d<\xi<\rho^2(y,s)-2\d.\end{aligned}\right.
\end{equation}
It follows from \eqref{3_25} and $\varphi_\beta(0)=0$ that pointwise
\[\lim_{\d\rightarrow 0}\left(\int_\R\overline{\kappa}^\d_{s,1}(2\chi^\d_{s,2}-1)\deta\right)\varphi_\beta(\rho^1) =  \left(\mathbf{1}_{\{\rho^1=\rho^2\}}+2\mathbf{1}_{\{0\leq \rho^1<\rho^2\}}-1\right)\varphi_\beta(\rho^1).\]
In combination \eqref{3_23}, \eqref{3_24}, and \eqref{3_25} prove with \eqref{def_2500000}, the definition of $\xi^F$, the fact that $\varphi_\beta(0)=0$, and \eqref{lem_dct} that, after passing to a subsequence $\d\rightarrow 0$, almost surely for every $t\in[0,T]$,
\begin{align*}
\lim_{\d\rightarrow 0}\left(\lim_{\ve\rightarrow 0} I^\textrm{mart}_t\right) & =  \int_0^t\int_{\TT^d}\left(\mathbf{1}_{\{\rho^1=\rho^2\}}+2\mathbf{1}_{\{\rho^1<\rho^2\}}-1\right)\varphi_\beta(\rho^1)\zeta_M(\rho^1)\nabla\cdot(\sigma(\rho^1)\dd\xi^F)\dy
\\ \nonumber & \quad + \int_0^t\int_{\TT^d}\left(\mathbf{1}_{\{\rho^1=\rho^2\}}+2\mathbf{1}_{\{\rho^2<\rho^1\}}-1\right)\varphi_\beta(\rho^2)\zeta_M(\rho^2)\nabla\cdot(\sigma(\rho^2)\dd\xi^F)\dy.
\end{align*}
Since $\mathbf{1}_{\{\rho^2<\rho^1\}}=1-\mathbf{1}_{\{\rho^1=\rho^2\}}-\mathbf{1}_{\{\rho^1<\rho^2\}}$ and since $\sgn(\rho^2-\rho^1)=\mathbf{1}_{\{\rho^1=\rho^2\}}+2\mathbf{1}_{\{\rho^1<\rho^2\}}-1$, along subsequences $\ve,\d\rightarrow 0$, almost surely for every $t\in[0,T]$,
\begin{equation}\label{3_987}
\lim_{\d,\ve\rightarrow 0}\left(I^\textrm{mart}_t\right)  =  \int_0^t\int_{\TT^d}\sgn(\rho^2-\rho^1)\left(\varphi_\beta(\rho^1)\zeta_M(\rho^1)\nabla\cdot(\sigma(\rho^1)\dd\xi^F)-\varphi_\beta(\rho^2)\zeta_M(\rho^2)\nabla\cdot(\sigma(\rho^2)\dd\xi^F)\right),
\end{equation}
where we observe that there is no ambiguity in interpreting the value of $\sgn(0)$ since by Stampacchia's lemma (see \cite[Chapter~5, Exercises~17,18]{Eva2010}) we have that
\[\int_0^T\int_{\TT^d}\mathbf{1}_{\{\rho^1=\rho^2\}}\left(\varphi_\beta(\rho^1)\zeta_M(\rho^1)\nabla\cdot(\sigma(\rho^1)\dd\xi^F)-\varphi_\beta(\rho^2)\zeta_M(\rho^2)\nabla\cdot(\sigma(\rho^2)\dd\xi^F)\right)\dy=0.\]
For every $\beta\in(0,1)$ and $M\in\N$ let $\Theta_{\beta,M}\colon[0,\infty)\rightarrow[0,\infty)$ be the unique function that satisfies $\Theta_{\beta,M}(0)=0$ and $\Theta_{\beta,M}'(\xi)=\varphi_\beta(\xi)\zeta_M(\xi)\sigma'(\xi)$.  Returning to \eqref{3_987}, it follows that, along subsequences,
\begin{align}\label{3_988}
\lim_{\d,\ve\rightarrow 0}\left(I^\textrm{mart}_t\right)  & =  \int_0^t\int_{\TT^d}\sgn(\rho^2-\rho^1)\nabla\cdot\left(\left(\Theta_{\beta,M}(\rho^1)-\Theta_{\beta,M}(\rho^2)\right)\dd\xi^F\right)
\\ \nonumber &  \quad +\int_0^t\int_{\TT^d}\sgn(\rho^2-\rho^1)\left(\varphi_\beta(\rho^1)\zeta_M(\rho^1)\sigma(\rho^1)-\Theta_{\beta,M}(\rho^1)\right)\nabla\cdot\dd\xi^F
\\ \nonumber & \quad - \int_0^t\int_{\TT^d}\sgn(\rho^2-\rho^1)\left(\varphi_\beta(\rho^2)\zeta_M(\rho^2)\sigma(\rho^2)-\Theta_{\beta,M}(\rho^2)\right)\nabla\cdot\dd\xi^F.
\end{align}
For the first term on the righthand side of \eqref{3_988}, for $\sgn^\d=(\sgn*\kappa^\d_1)$ for every $\d\in(0,1)$, it follows from \eqref{lem_dct} that, along a subsequence $\d\rightarrow 0$, almost surely for every $t\in[0,T]$,
\begin{align}\label{3_98900}
& \int_0^t\int_{\TT^d}\sgn(\rho^2-\rho^1)\nabla\cdot\left(\left(\Theta_{\beta,M}(\rho^1)-\Theta_{\beta,M}(\rho^2)\right)\dd\xi^F\right)
 \\ \nonumber &  = \lim_{\d\rightarrow 0} \int_0^t\int_{\TT^d}\sgn^\d(\rho^1-\rho^2)\nabla\cdot\left(\left(\Theta_{\beta,M}(\rho^1)-\Theta_{\beta,M}(\rho^2)\right)\dd\xi^F\right)
\\ \nonumber & = -\lim_{\d\rightarrow 0}\int_0^T\int_{\TT^d}(\sgn^\d)'(\rho^1-\rho^2)\left(\Theta_{\beta,M}(\rho^1)-\Theta_{\beta,M}(\rho^2)\right)(\nabla\rho^1-\nabla\rho^2)\cdot \dd\xi^F.
\end{align}
It follows from Assumption~\ref{assume_1}, Stampacchia's lemma (see \cite[Chapter~5, Exercises~17,18]{Eva2010}), and the definition on $\Theta_{\beta,M}$ that $\Theta_{\beta,M}$ is Lipschitz continuous on $\R$, and that there exists $c\in(0,\infty)$ independent of $\d\in(0,1)$ but depending on the definition of the convolution kernel, $M\in\N$, and $\beta\in(0,1)$ such that, for all $\d\in(0,\nicefrac{\beta}{4})$,
\[\abs{(\sgn^\d)'(\rho^1-\rho^2)\left(\Theta_{\beta,M}(\rho^1)-\Theta_{\beta,M}(\rho^2)\right)}\leq c\mathbf{1}_{\{0<\abs{\rho^1-\rho^2}<c\d\;\;\textrm{and}\;\;\nicefrac{\beta}{4}<\rho^i<M+\d\;\;\textrm{for}\;\;i\in\{1,2\}\}}.\]
It then follows from the local regularity of the solutions, the dominated convergence theorem, \eqref{lem_dct}, and \eqref{3_98900} that, almost surely for every $t\in[0,T]$,
\begin{equation}\label{3_989}  \int_0^t\int_{\TT^d}\sgn(\rho^2-\rho^1)\nabla\cdot\left(\left(\Theta_{\beta,M}(\rho^1)-\Theta_{\beta,M}(\rho^2)\right)\dd\xi^F\right)=0.\end{equation}
For the second term on the righthand side of \eqref{3_988}, it follows from the $L^2$-integrability of the $\sigma(\rho^i)$ that
\[\lim_{M\rightarrow\infty}\left(\lim_{\beta\rightarrow 0}\varphi_\beta(\rho^i)\zeta_M(\rho^i)\sigma(\rho^i)\right)=\sigma(\rho^i)\;\;\textrm{strongly in}\;\;L^2(\TT^d\times[0,T]).\]
It follows from the definition of $\Theta_{\beta,M}$, from $\sigma(0)=0$, and an explicit computation using the integration by parts formula and the definitions of $\varphi_\beta$ and $\zeta_M$ that there exists $c\in(0,\infty)$ such that, for every $M\in[1,\infty)$ and $\beta\in(0,\nicefrac{1}{2})$,
\[\abs{\sigma(\rho^i)-\Theta_{\beta,M}(\rho^i)}\leq c\left(\sup_{\xi\in[0,\beta]}\abs{\sigma(\xi)}+\sigma(\rho^i)\mathbf{1}_{\{\rho^i>M\}}+\sup_{\xi\in[M,(M+1)\wedge\rho^i]}\abs{\sigma(\xi)}\mathbf{1}_{\{\rho^i>M\}}\right).\]
The first term converges to zero as $\beta\rightarrow 0$ using the continuity of $\sigma$ and $\sigma(0)=0$, the second term converges to zero strongly in $L^2(\O\times[0,T];L^2(\TT^d))$ as $M\rightarrow\infty$ using the $L^2$-integrability of $\sigma(\rho^i)$ and the $L^1$-integrability of $\rho^i$, and using \eqref{assume_f1} of Assumption~\ref{assume_1} the final term converges strongly to zero as $M\rightarrow\infty$ in $L^2(\O\times[0,T];L^2(\TT^d)$.  It then follows from \eqref{lem_dct} that, along subsequences $\beta\rightarrow 0$ and $M\rightarrow\infty$, almost surely for every $t\in[0,T]$,
\begin{equation}\label{3_890}\lim_{M\rightarrow\infty}\left(\lim_{\beta\rightarrow 0}\int_0^t\int_{\TT^d}\sgn(\rho^2-\rho^1)\left(\varphi_\beta(\rho^1)\zeta_M(\rho^1)\sigma(\rho^1)-\Theta_{\beta,M}(\rho^1)\right)\nabla\cdot\dd\xi^F\right)=0,\end{equation}
and similarly for the analogous term defined by $\rho^2$.  In combination \eqref{3_988}, \eqref{3_989}, and \eqref{3_890} prove that, along subsequences, almost surely for every $t\in[0,T]$,
\begin{equation}\label{3_29}  \lim_{M\rightarrow\infty}\left(\lim_{\beta\rightarrow 0}\left(\lim_{\d\rightarrow 0}\left(\lim_{\ve\rightarrow 0} I^\textrm{mart}_t\right)\right)\right) =0.\end{equation}

\textbf{The conservative term.}  The conservative term is given for every $t\in[0,T]$ by
\[I^{\textrm{cons}}_t=\int_0^t\int_\R\int_{(\TT^d)^2}\overline{\kappa}^{\ve,\d}_{s,1}\nabla\cdot\nu(\rho^1)(1-2\chi^{\ve,\d}_{s,2})\varphi_\beta(\eta)\zeta_M(\eta)+\overline{\kappa}^{\ve,\d}_{s,2}\nabla\cdot\nu(\rho^2)(1-2\chi^{\ve,\d}_{s,1})\varphi_\beta(\eta)\zeta_M(\eta),\]
and is treated using the argument leading from \eqref{3_987} to \eqref{3_890}, since we may assume without loss of generality that $\nu(0)=0$.  In this case, we use the fact that $\nu(\rho)$ is $L^1$-integrable and use \eqref{assume_f2} to apply the dominated convergence theorem, whereas in the previous argument we used the $L^2$-integrability of $\sigma(\rho)$ to deal with the stochastic integral.  This proves that, along subsequences, almost surely for every $t\in[0,T]$,
\begin{equation}\label{3_29292929}  \lim_{M\rightarrow\infty}\left(\lim_{\beta\rightarrow 0}\left(\lim_{\d\rightarrow 0}\left(\lim_{\ve\rightarrow 0} I^\textrm{cons}_t\right)\right)\right) =0.\end{equation}

\textbf{The cutoff term.}  The cutoff term is defined for every $t\in[0,T]$, $\ve,\beta\in(0,1)$, $\d\in(0,\nicefrac{\beta}{4})$, and $M\in\N$ by
\begin{align*}
I^{\textrm{cut}}_t & = \int_0^t\int_{\R^2}\int_{(\TT^d)^2}\kappa^{\ve,\d}(x,y,\xi,\eta)(2\chi^{\ve,\d}_{s,2}-1)\partial_\eta(\varphi_\beta(\eta)\zeta_M(\eta))\dd q^1(x,\xi,s)
 \\  & +\frac{1}{2}\int_0^t\int_\R\int_{(\TT^d)^2}\left(F_3(x)\sigma^2(\rho^1(x,s))+\sigma(\rho^1)\sigma'(\rho^1)\nabla\rho^i\cdot F_2(x)\right)(2\chi^{\ve,\d}_{s,2}-1)\overline{\kappa}^{\ve,\d}_{s,1}\partial_\eta(\varphi_\beta(\eta)\zeta_M(\eta))
 \\ & + \int_0^t\int_{\R^2}\int_{(\TT^d)^2}\kappa^{\ve,\d}(x,y,\xi,\eta)(2\chi^{\ve,\d}_{1,2}-1)\partial_\eta(\varphi_\beta(\eta)\zeta_M(\eta))\dd q^2(x,\xi,s)
 \\  & +\frac{1}{2}\int_0^t\int_\R\int_{(\TT^d)^2}\left(F_3(x)\sigma^2(\rho^2(x,s))+\sigma(\rho^2)\sigma'(\rho^2)\nabla\rho^i\cdot F_2(x)\right)(2\chi^{\ve,\d}_{s,1}-1)\overline{\kappa}^{\ve,\d}_{s,2}\partial_\eta(\varphi_\beta(\eta)\zeta_M(\eta)).
\end{align*}
For the terms involving the kinetic measures, it follows from Definition~\ref{def_smooth} and the boundedness of the kinetic function that there exists $c\in(0,\infty)$ such that
\begin{align*}
& \limsup_{\ve,\d\rightarrow 0}\abs{\int_0^t\int_{\R^2}\int_{(\TT^d)^2}\kappa^{\ve,\d}(x,y,\xi,\eta)(2\chi^{\ve,\d}_{s,2}-1)\partial_\eta(\varphi_\beta(\eta)\zeta_M(\eta))\dd q^1(x,\xi,s)}
\\ & \leq c\left(\beta^{-1}q^1(\TT^d\times[\nicefrac{\beta}{2},\beta]\times[0,T])+q^1(\TT^d\times[M,M+1]\times[0,T])\right),
\end{align*}
and similarly for the term involving $q^2$.  It then follows from property \eqref{def_5353} of Definition~\ref{def_sol}, Proposition~\ref{prop_measure}, the nonnegativity of the measures, and Fatou's lemma that there almost surely exist subsequences $\beta\rightarrow 0$ and $M\rightarrow \infty$ such that
\[\lim_{M\rightarrow\infty}\left(\lim_{\beta\rightarrow0}\left(\beta^{-1}q^1(\TT^d\times[\nicefrac{\beta}{2},\beta]\times[0,T])+q^1(\TT^d\times[M,M+1]\times[0,T])\right)\right)=0,\]
and similarly for the term involving $q^2$.  The terms involving $F_3$ are treated identically to how they were treated in Proposition~\ref{prop_measure} using the $L^2$-integrability of the $\sigma(\rho^i)$, Assumption~\ref{assume_1}, and the boundedness of $F_3$.  Finally, for the terms involving $F_2$, we pass to the limit $\ve,\d\rightarrow 0$ exactly as in the arguments leading from \eqref{3_23} to \eqref{3_987} using Stampacchia's lemma (see \cite[Chapter~5, Exercises~17,18]{Eva2010}), Definition~\ref{def_smooth}, and property \eqref{def_2500000} of Definition~\ref{def_sol} to see that these terms become
\begin{align*}
& \beta^{-1}\int_0^T\int_{\TT^d}\sgn(\rho^2-\rho^1)\nabla \left(\left(\sigma^2((\rho^1\wedge \beta)\vee\nicefrac{\beta}{2})-\sigma^2(\nicefrac{\beta}{2})\right)-\left(\sigma^2((\rho^2\wedge \beta)\vee\nicefrac{\beta}{2})-\sigma^2(\nicefrac{\beta}{2})\right)\right)\cdot F_2
\\ &+\frac{1}{2}\int_0^T\int_{\TT^d}\sgn(\rho^2-\rho^1)\nabla\left(\sigma^2((\rho^1\wedge (M+1))\vee M)-\sigma^2(M)\right)\cdot F_2
\\  & -\frac{1}{2}\int_0^T\int_{\TT^d}\sgn(\rho^2-\rho^1)\nabla\left(\sigma^2((\rho^2\wedge (M+1))\vee M)-\sigma^2(M)\right)\cdot F_2.
\end{align*}
We then pass to the limits $\beta\rightarrow 0$ and $M\rightarrow\infty$ exactly as in Proposition~\ref{prop_measure} and \eqref{3_989}, using the local regularity of $\sigma$ and either that $\nabla\cdot F=0$ or that $(\sigma\sigma')\in\C([0,\infty))$ with $(\sigma\sigma')(0)=0$.  In combination these estimates prove that there almost surely exist subsequences such that, for every $t\in[0,T]$,
\begin{equation}\label{3_18} \lim_{M\rightarrow\infty}\left(\lim_{\beta\rightarrow 0}\left(\lim_{\d\rightarrow 0}\left(\lim_{\ve\rightarrow 0}I^{\textrm{cut}}_t\right)\right)\right)=0.\end{equation}

\textbf{Conclusion.} Properties of the kinetic function \eqref{3_0000} and estimates \eqref{3_21}, \eqref{3_22}, \eqref{3_12}, \eqref{3_29}, \eqref{3_29292929}, and \eqref{3_18} prove that there almost surely exist random subsequences $\ve,\d,\beta\rightarrow 0$ and $M\rightarrow\infty$ such that, for every $t\in[0,T]$,
\begin{align*}
\left.\int_\R\int_{\TT^d}\abs{\chi^1_t-\chi^2_t}^2\right|_{s=0}^{s=t} & = \lim_{M\rightarrow\infty}\left(\lim_{\beta\rightarrow 0}\left(\lim_{\d\rightarrow 0}\left(\lim_{\ve\rightarrow 0}\left.\int_\R\int_{\TT^d}\abs{\chi^{\ve,\d}_{t,1}-\chi^{\ve,\d}_{t,2}}^2\varphi_\beta\zeta_M\right|_{s=0}^{s=t}\right)\right)\right)
\\ & \leq \lim_{M\rightarrow\infty}\left(\lim_{\beta\rightarrow 0}\left(\lim_{\d\rightarrow 0}\left(\lim_{\ve\rightarrow 0}\left(-2I^{\textrm{err}}_t-2I^{\textrm{meas}}_t+I^{\textrm{mart}}_t+I^{\textrm{cut}}_t+I^{\textrm{cons}}_t\right)\right)\right)\right)
\\ & =0.
\end{align*}
Properties of the kinetic function \eqref{3_0000} then prove that
\[\int_{\TT^d}\abs{\rho^1(\cdot,t)-\rho^2(\cdot,t)}=\int_\R\int_{\TT^d}\abs{\chi^1_t-\chi^2_t}^2\leq \int_\R\int_{\TT^d}\abs{\overline{\chi}(\rho^1_0)-\overline{\chi}(\rho^2_0)}^2=\int_{\TT^d}\abs{\rho^1_0-\rho^2_0},\]
which completes the proof. \end{proof}

\section{Existence of stochastic kinetic solutions}\label{sec_whole}  In this section, we construct a stochastic kinetic solution of \eqref{1_0} in the sense of Definition~\ref{def_sol}.  The section is split into three subsections.  In Section~\ref{sec_a_priori} we obtain stable priori estimates for  solutions to a regularized version of \eqref{1_0}, and in Section~\ref{sec_exist_approx} we construct using these a priori estimates a solution to the regularization of \eqref{1_0}.  Finally, in Section~\ref{sec_exist}, we pass to the limit with respect to these regularizations and construct a solution of \eqref{1_0}.

\subsection{A priori estimates for \eqref{1_0}}\label{sec_a_priori}

In this section, we establish a priori estimates for the approximate equation
\begin{align}\label{4_0} \dd\rho=\Delta\Phi(\rho)\dt+\a\Delta\rho\dt-\nabla\cdot(\sigma(\rho)\dd\xi^F+\nu(\rho)\dt)+\frac{1}{2}\nabla\cdot\left(F_1[\sigma'(\rho)]^2\nabla\rho+\sigma(\rho)\sigma'(\rho)F_2\right)\dt,\end{align}
for $\a\in(0,\infty)$, and for nonlinearities $\Phi$, $\sigma$, and $\nu$ satisfying Assumptions~\ref{assume} and \ref{assume_3} below.  These estimates provide the foundation for our existence theory.

The most important estimates of the section are proven in Proposition~\ref{prop_approx_est}, which are based on the auxiliary function introduced in Lemma~\ref{lem_aux} and Assumptions~\ref{assume} and \ref{assume_3}.  In Lemma~\ref{lem_frac} and Corollary~\ref{cor_frac_sob} we show that the estimates of Proposition~\ref{prop_approx_est} imply fractional Sobolev regularity for the solution.  In Proposition~\ref{prop_approx_time}, we obtain stable $W^{\beta,1}_tH^{-s}_x$-estimates for nonlinear functions $\Psi_\d(\rho)$ of the solution defined in Definition~\ref{def_cutoff} that localize the solution away from its zero set.  Finally, in Proposition~\ref{ent_dis_est} we prove an entropy dissipation estimate for initial data with finite entropy.  This estimate will be used in Corollary~\ref{cor_exist} to extend the existence proof for $L^p$-initial data of Theorem~\ref{thm_exist}, for $p\in[2,\infty)$, to initial data with finite entropy.
\begin{lem}\label{lem_aux}  Let $\Phi\in\C([0,\infty))\cap\C^1_{\textrm{loc}}((0,\infty))$ be strictly increasing with $\Phi(0)=0$, let $p\in[2,\infty)$, and let $\Theta_{\Phi,p}\in\C([0,\infty))\cap\C^1_{\textrm{loc}}((0,\infty))$ be the unique function satisfying $\Theta_{\Phi,p}(0)=0$ and $\Theta_{\Phi,p}'(\xi)=\xi^{\nicefrac{(p-2)}{2}}[\Phi'(\xi)]^{\nicefrac{1}{2}}$.  Assume that there exists $c\in(0,\infty)$ and $m\in\N$ such that $\Phi(\xi)\leq c(1+\xi^m)$ for every $\xi\in[0,\infty)$.  Then, for every $p\in[2,\infty)$ there exists $c\in(0,\infty)$ such that
\[0\leq \Theta_{\Phi,p}(\xi)\leq c(1+\xi^{\frac{m+p-1}{2}})\;\;\textrm{for every}\;\;\xi\in[0,\infty).\]
\end{lem}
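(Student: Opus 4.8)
The plan is to realize $\Theta_{\Phi,p}$ as the explicit integral $\Theta_{\Phi,p}(\xi)=\int_0^\xi s^{(p-2)/2}[\Phi'(s)]^{1/2}\ds$ and then apply the Cauchy--Schwarz inequality, splitting the integrand so that one factor produces a power of $\xi$ and the other produces $\Phi(\xi)$; the assumed polynomial growth $\Phi(\xi)\le c(1+\xi^m)$ then closes the bound. First I would check that this integral is well defined: since $\Phi$ is increasing we have $\Phi'\ge 0$ on $(0,\infty)$, so by Cauchy--Schwarz $\int_0^\xi s^{(p-2)/2}[\Phi'(s)]^{1/2}\ds\le\bigl(\int_0^\xi s^{p-2}\ds\bigr)^{1/2}\bigl(\int_0^\xi\Phi'(s)\ds\bigr)^{1/2}$, and for $\xi\le 1$ both factors on the right are finite (the first since $p\ge2$ makes $s^{p-2}$ bounded on $[0,1]$, the second since, as shown below, it equals $\Phi(\xi)<\infty$). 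Hence the integrand is integrable near $0$, the displayed integral is the unique function with the prescribed derivative and vanishing at $0$, it is continuous and nondecreasing on $[0,\infty)$, and consequently $\Theta_{\Phi,p}(\xi)\ge\Theta_{\Phi,p}(0)=0$, which gives the lower bound.

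For the upper bound I would first record the identity $\int_0^\xi\Phi'(s)\ds=\Phi(\xi)$ for every $\xi>0$: the fundamental theorem of calculus on $[\ve,\xi]$, valid because $\Phi\in\C^1_{\textrm{loc}}((0,\infty))$, gives $\Phi(\xi)-\Phi(\ve)=\int_\ve^\xi\Phi'(s)\ds$, and letting $\ve\downarrow0$ the left side converges to $\Phi(\xi)-\Phi(0)=\Phi(\xi)$ by continuity of $\Phi$ at $0$, while the right side converges to $\int_0^\xi\Phi'(s)\ds$ by monotone convergence since $\Phi'\ge0$. Applying the Cauchy--Schwarz splitting for general $\xi>0$ then yields
\[
\Theta_{\Phi,p}(\xi)\le\Bigl(\int_0^\xi s^{p-2}\ds\Bigr)^{1/2}\Bigl(\int_0^\xi\Phi'(s)\ds\Bigr)^{1/2}=\frac{\xi^{(p-1)/2}}{(p-1)^{1/2}}\,\Phi(\xi)^{1/2}.
\]

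Finally I would substitute $\Phi(\xi)\le c(1+\xi^m)$ and use the elementary inequalities $(1+\xi^m)^{1/2}\le1+\xi^{m/2}$ and $\xi^{(p-1)/2}\le1+\xi^{(m+p-1)/2}$, valid for all $\xi\ge0$, together with $\xi^{(p-1)/2}\xi^{m/2}=\xi^{(m+p-1)/2}$, to conclude $\Theta_{\Phi,p}(\xi)\le C\bigl(1+\xi^{(m+p-1)/2}\bigr)$ with $C$ depending only on $p$, $m$, and $c$. The argument is essentially mechanical; the only point requiring a little care is the possible singularity of $\Phi'$ at the origin (as for fast-diffusion nonlinearities $\Phi(\xi)=\xi^m$ with $m<1$), which is exactly why the Cauchy--Schwarz splitting and the monotone-convergence justification of $\int_0^\xi\Phi'=\Phi$ are used rather than a crude pointwise bound on $\Phi'$.
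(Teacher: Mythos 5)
Your proposal is correct and follows essentially the same route as the paper: express $\Theta_{\Phi,p}$ as an integral, split by Cauchy--Schwarz (the paper says H\"older, but with exponents $(2,2)$ it is the same estimate) to get $\Theta_{\Phi,p}(\xi)\lesssim\xi^{(p-1)/2}\Phi(\xi)^{1/2}$, then invoke the growth hypothesis $\Phi(\xi)\le c(1+\xi^m)$ and finish with Young-type elementary inequalities. Your extra care about integrability of $\Phi'$ near $0$ and the monotone-convergence justification of $\int_0^\xi\Phi'=\Phi$ is a welcome tightening of detail that the paper leaves implicit, but it does not change the argument.
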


\begin{proof}  Since $\Phi$ is strictly increasing, it follows from the fundamental theorem of calculus, $\Phi(0)=0$, and H\"older's inequality that
\[0\leq \Theta_{\Phi,p}(\xi)=\int_0^\xi(\xi')^{\nicefrac{p-2}{2}}[\Phi'(\xi')]^\frac{1}{2}\dxp\leq (\nicefrac{1}{p})^\frac{1}{2}\xi^\frac{p-1}{2}\left(\int_0^\xi\Phi'(\xi')\dxp\right)^\frac{1}{2}=(\nicefrac{1}{p})^\frac{1}{2}\xi^\frac{p-1}{2}\Phi(\xi)^\frac{1}{2}.\]
The claim now follows from $\Phi(\xi)\leq c(1+\xi^m)$ and Young's inequality.  \end{proof}
\begin{assumption}\label{assume}  Let $\Phi,\sigma\in\C([0,\infty))\cap\C^1_{\textrm{loc}}((0,\infty)))$, $\nu\in\C([0,\infty);\R^d)\cap\C^1_{\textrm{loc}}((0,\infty);\R^d)$, and $p\in[2,\infty)$ satisfy the following seven assumptions.

\begin{enumerate}[(i)]
\item We have $\Phi(0)=\sigma(0)=0$ and $\Phi'>0$ on $(0,\infty)$.
\item There exists $m\in[1,\infty)$ and $c\in(0,\infty)$ such that
\begin{equation}\label{aa_0} \Phi(\xi)\leq c(1+\xi^m)\;\;\textrm{for every}\;\;\xi\in[0,\infty).\end{equation}
\item There exists $c\in(0,\infty)$ such that, for $\Theta_{\Phi,p}$ defined in Lemma~\ref{lem_aux},
\begin{equation}\label{11_11_11}\abs{\nu(\xi)}+\Phi'(\xi)\leq c(1+\xi+\Theta^2_{\Phi,p}(\xi))\;\;\textrm{for every}\;\;\xi\in(0,\infty).\end{equation}
\item Either there exists $c\in(0,\infty)$ and $\theta\in[0,\nicefrac{1}{2}]$ such that
\begin{equation}\label{5_00}\xi^{-\left(\frac{p-2}{2}\right)}[\Phi'(\xi)]^{-\nicefrac{1}{2}}\leq c\xi^\theta\;\;\textrm{for every}\;\;\xi\in(0,\infty),\end{equation}
or for $\Theta_{\Phi,p}$ defined in Lemma~\ref{lem_aux}, there exists $c\in(0,\infty)$ and $q\in[1,\infty)$ such that
\begin{equation}\label{5_0}\abs{\xi-\xi'}^q\leq c\abs{\Theta_{\Phi,p}(\xi)-\Theta_{\Phi,p}(\xi')}^2\;\;\textrm{for every}\;\;\xi,\xi'\in[0,\infty).\end{equation}
\item For $\Theta_{\Phi,2}$ and $\Theta_{\Phi,p}$ defined in Lemma~\ref{lem_aux}, there exists $c\in(0,\infty)$ such that
\begin{equation}\label{aa_5050}\sigma^2(\xi)\leq c(1+\xi+\Theta^2_{\Phi,2}(\xi))\;\;\textrm{and}\;\;\xi^{p-2}\sigma^2(\xi)\leq c(1+\xi+\Theta^2_{\Phi,p}(\xi))\;\;\textrm{for every}\;\;\xi\in[0,\infty).\end{equation}
\item Either $\nabla\cdot F_2=0$ or for the unique function $\Psi_{\sigma,p}\in\C([0,\infty))\cap\C^1_\textrm{loc}((0,\infty))$ defined by $\Psi_{\sigma,p}(0)=0$ and $\Psi_{\sigma,p}'(\xi)=\xi^{p-2}\sigma(\xi)\sigma'(\xi)$ there exists $c\in(0,\infty)$ such that
\begin{equation}\label{aa_7000}\abs{\Psi_{\sigma,p}(\xi)}\leq c(1+\xi+\Theta^2_{\Phi,p}(\xi))\;\;\textrm{for every}\;\;\xi\in[0,\infty).\end{equation}
\item For every $\d\in(0,1)$ there exists $c_\d\in(0,\infty)$ such that, for every $\xi\in(\d,\infty)$,
\begin{equation}\label{aa_121212}\frac{[\sigma'(\xi)]^4}{\Phi'(\xi)}+(\sigma(\xi)\sigma'(\xi))^2+\Phi'(\xi)\leq c_\d(1+\xi+\Theta^2_{\Phi,p}(\xi)).\end{equation}
\end{enumerate}
\end{assumption}

\begin{example}\label{example_1}  In the model case that $\Phi(\xi)=\xi^m$ the function $\Theta_{\Phi,p}$ defined in Lemma~\ref{lem_aux} is given for a constant $c_{p,m}\in(0,\infty)$ by
\[\Theta_{\Phi,p}(\xi)=c_{p,m}\xi^{\frac{m+p-1}{2}}.\]
An explicit computation proves that $\Theta_{\Phi,p}$ satisfies condition (2) of Assumption~\ref{assume} for every $m\in(0,\infty)$  and $p\in[2,\infty)$.  In the case $p=2$, the function $\Theta_{\Phi,2}$ satisfies \eqref{5_00} if $m\in(0,1]$ and $\Theta_{\Phi,2}$ satisfies \eqref{5_0} for $q=m+1$ if $m\in[1,\infty)$.  Concerning condition \ref{aa_7000}, in the model case that $\Phi(\xi)=\xi^m$ and $\sigma=\Phi^{\nicefrac{1}{2}}$, it follows for some $\tilde{c}_{p,m}\in(0,\infty)$ that $\Psi_{\sigma,p}(\xi)=\tilde{c}_{p,m}\xi^{m+p-2}$.  Assumption \eqref{aa_7000} is therefore satisfied for every $m\in(0,\infty)$ and $p\in[2,\infty)$.  Finally, since in every model case $\Phi'$ is uniformly bounded away from zero on $[\d,\infty)$, condition \eqref{aa_121212} amounts to a mild growth condition on $\sigma$ and its derivative that is satisfied by every example, for every $m\in(0,\infty)$.\end{example}

\begin{lem}  Assume $\Psi\in\C([0,\infty))$ is strictly increasing with $\Psi(0)=0$, assume that there exists $p\in[1,\infty)$ such that
\begin{equation}\label{50_1}\Psi^2(\xi)\leq (1+\xi^p)\;\;\textrm{for every}\;\;\xi\in[0,\infty),\end{equation}
and assume that $z\colon\TT^d\rightarrow\R$ is measurable with $\Psi(z)\in H^1(\TT^d)$.  Then for every $\ve\in(0,1)$ there exists $c\in(0,\infty)$ depending on $\ve$ such that
\begin{equation}\label{50_2}\norm{\Psi(z)}^2_{L^2(\TT^d)}\leq c(1+\norm{z}^p_{L^1(\TT^d)})+\ve\norm{\nabla\Psi(z)}^2_{L^2(\TT^d)}.\end{equation}
\end{lem}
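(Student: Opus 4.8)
The plan is to read \eqref{50_2} as an interpolation inequality: I would bound $\norm{\Psi(z)}_{L^2(\TT^d)}$ between a low-exponent Lebesgue (quasi-)norm of $\Psi(z)$, controlled by the growth hypothesis \eqref{50_1} together with $z\in L^1(\TT^d)$, and a high-exponent Lebesgue norm of $\Psi(z)$, controlled by its $H^1$-regularity through Sobolev embedding; the gradient term in \eqref{50_2} then appears upon applying Young's inequality with a small parameter. Throughout I use that, since $\Psi$ is defined only on $[0,\infty)$, the composition $\Psi(z)$ presupposes $z\ge 0$ a.e., whence $\Psi(z)\ge 0$ by monotonicity and $\Psi(0)=0$, and that $|\TT^d|=1$.

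First I would establish the low-exponent bound. From \eqref{50_1} and the subadditivity of $t\mapsto t^{1/p}$ (valid since $p\ge 1$) one has the pointwise estimate $\Psi(z)^{2/p}=(\Psi^2(z))^{1/p}\le(1+z^p)^{1/p}\le 1+z$; integrating over $\TT^d$ gives $\norm{\Psi(z)}_{L^{2/p}(\TT^d)}^{2/p}\le 1+\norm{z}_{L^1(\TT^d)}$, hence $\norm{\Psi(z)}_{L^{2/p}(\TT^d)}^2\le(1+\norm{z}_{L^1(\TT^d)})^p\le 2^{p-1}(1+\norm{z}_{L^1(\TT^d)}^p)$. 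If $p=1$ the exponent $2/p$ equals $2$ and this is already \eqref{50_2} with $c=1$ and the gradient term discarded; so I may assume $p>1$, i.e.\ $2/p<2$.

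Next I would interpolate. Fix $s\in(2,\infty)$ with $H^1(\TT^d)\hookrightarrow L^s(\TT^d)$ --- any finite $s$ if $d\le 2$, and $s=\tfrac{2d}{d-2}$ if $d\ge 3$. Since $2/p<2<s$, the elementary log-convexity of Lebesgue (quasi-)norms --- merely Hölder's inequality, valid also when $2/p<1$ --- gives $\norm{\Psi(z)}_{L^2}\le\norm{\Psi(z)}_{L^{2/p}}^{1-\lambda}\norm{\Psi(z)}_{L^s}^{\lambda}$ with $\lambda$ determined by $\tfrac{1}{2}=(1-\lambda)\tfrac{p}{2}+\tfrac{\lambda}{s}$, that is $\lambda=\tfrac{p-1}{p-2/s}$, which lies in $(0,1)$ precisely because $p>1$ and $s>2$. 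Combining with the Sobolev embedding and squaring, $\norm{\Psi(z)}_{L^2}^2\le C\norm{\Psi(z)}_{L^{2/p}}^{2(1-\lambda)}\norm{\Psi(z)}_{H^1}^{2\lambda}$, and writing $\norm{\Psi(z)}_{H^1}^{2\lambda}\le\norm{\Psi(z)}_{L^2}^{2\lambda}+\norm{\nabla\Psi(z)}_{L^2}^{2\lambda}$ (using $\lambda\le 1$), two applications of Young's inequality with conjugate exponents $\tfrac{1}{1-\lambda},\tfrac{1}{\lambda}$ bound the two resulting products respectively by $\tfrac{1}{2}\norm{\Psi(z)}_{L^2}^2+C'\norm{\Psi(z)}_{L^{2/p}}^2$ and by $\ve\norm{\nabla\Psi(z)}_{L^2}^2+C_\ve\norm{\Psi(z)}_{L^{2/p}}^2$. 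Absorbing $\tfrac{1}{2}\norm{\Psi(z)}_{L^2}^2$ into the left-hand side and inserting the low-exponent bound then yields \eqref{50_2} with a constant depending on $\ve$ (and on $p$, $d$).

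The computation is routine; the \textbf{main subtlety} is that for $p>2$ the space $L^{2/p}(\TT^d)$ is merely quasi-Banach, so one must use the Hölder-based log-convexity of $L^q$-norms (which survives for exponents below $1$) rather than abstract interpolation of Banach couples, and one must check that the interpolation parameter $\lambda$ stays strictly in $(0,1)$ --- it degenerates at $p=1$, which is why that case is treated directly --- so that Young's inequality genuinely produces the arbitrarily small coefficient $\ve$ in front of $\norm{\nabla\Psi(z)}_{L^2(\TT^d)}^2$.
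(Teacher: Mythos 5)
Your proof is correct and takes essentially the same route as the paper's: bound a low Lebesgue exponent of $\Psi(z)$ using the growth hypothesis and $\|z\|_{L^1}$, bound a high Lebesgue exponent via Sobolev embedding, interpolate between them, and use Young's inequality to peel off the small coefficient $\ve$ in front of the gradient term. The only difference is bookkeeping: the paper phrases the interpolation for the function $\Psi(z)^{2/p}$ between $L^1$ and $L^{2_*p/2}$, whereas you interpolate $\Psi(z)$ directly between $L^{2/p}$ and $L^s$; these are identical via the elementary identity $\|\Psi(z)^{2/p}\|_{L^r}^p=\|\Psi(z)\|_{L^{2r/p}}^2$. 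Your version has the minor advantage of explicitly isolating the degenerate case $p=1$ and flagging the quasi-norm nature of $L^{2/p}$ when $p>2$, both of which the paper passes over silently; the paper's phrasing avoids the quasi-norm issue entirely by working with $L^1$ of the power $\Psi(z)^{2/p}$.
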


\begin{proof}  It follows from \eqref{50_1} that there exists $c\in(0,\infty)$ such that
\begin{equation}\label{4_30}\norm{\Psi(z)^\frac{2}{p}}^p_{L^1(\TT^d)}\leq c(1+\norm{z}^p_{L^1(\TT^d)}).\end{equation}
Let $2_*=6$ if $d\in\{1,2,3\}$ and let $\nicefrac{1}{2_*}=\nicefrac{1}{2}-\nicefrac{1}{d}$ if $d\geq 3$.  The triangle inequality, H\"older's inequality, and the Sobolev inequality prove that there exists $c\in(0,\infty)$ such that
\begin{align}\label{4_030} & \norm{\Psi(z)^\frac{2}{p}}^p_{L^\frac{2_*p}{2}(\TT^d)} =\norm{\Psi(z)}^2_{L^{2_*}(\TT^d)}
\\ \nonumber & \leq c\left(\langle\Psi(z)\rangle^2+\left(\int_{\TT^d}\left(\Psi(z)-\langle\Psi(z)\rangle\right)^{2_*}\right)^\frac{2}{2_*}\right) \leq c\left(\E\left[\norm{\Psi(z)}^2_{L^2(\TT^d)}\right]+\E\left[\int_{\TT^d}\abs{\nabla\Psi(z)}^2\right]\right),
\end{align}
for $\langle\Psi(z)\rangle = \int_{\TT^d}\Psi(z)$.  Since $1\leq p\leq \frac{2_*p}{2}$, interpolating between estimates \eqref{4_30} and \eqref{4_030} yields for $\nicefrac{1}{p}=\theta+\nicefrac{2(1-\theta)}{2_*p}$ that
\[\norm{\Psi(z)}^2_{L^2(\TT^d)}=\norm{\Psi(z)^\frac{2}{p}}^p_{L^p(\TT^d)}\leq \norm{\Psi(z)^\frac{2}{p}}^{\theta p}_{L^1(\TT^d)}\norm{\Psi(z)^\frac{2}{p}}^{(1-\theta)p}_{L^\frac{2_*p}{2}(\TT^d)},\]
and hence by H\"older's inequality, Young's inequality, \eqref{4_30}, and \eqref{4_030} for every $\d\in(0,1)$ there exists $c\in(0,\infty)$ such that
\[\norm{\Psi(z)}^2_{L^2(\TT^d)}\leq \d\norm{\Psi(z)}^2_{L^2(\TT^d)}+\d\int_{\TT^d}\abs{\nabla\Psi(z)}^2+c\left(1+\norm{z}^p_{L^1(\TT^d)}\right).\]
After choosing $\d(1-\d)^{-1}=\ve$ this completes the proof.\end{proof}

\begin{remark}The following assumption is only used to obtain the a priori estimates below and to construct solutions on an approximate level in Section~\ref{sec_exist_approx}, where the purpose of this assumption is to avoid singularities like \eqref{eq:singular_term} in the It\^o-correction.  We dispense with this assumption and treat general nonlinearities $\sigma$ in Section~\ref{sec_exist}.\end{remark}

\begin{assumption}\label{assume_3}  Let $\sigma\in\C([0,\infty))\cap\C^\infty((0,\infty))$ with $\sigma(0)=0$ and with $\sigma'\in \C^\infty_c([0,\infty))$.\end{assumption}

\begin{definition}\label{def_approx} Let $\xi^F$, $\Phi$, $\sigma$, and $\nu$ satisfy Assumptions~\ref{assumption_noise}, \ref{assume}, and \ref{assume_3} for some $p\in[2,\infty)$, let $\a\in(0,1)$, and let $\rho_0\in L^{p+m-1}(\O;L^1(\TT^d))\cap L^p(\O;L^p(\TT^d))$ be nonnegative and $\F_0$-measurable.  A solution of \eqref{4_0} with initial data $\rho_0$ is a continuous $L^p(\TT^d)$-valued, nonnegative, $\F_t$-predictable process $\rho$ such that almost surely $\rho$ and $\Theta_{\Phi,2}(\rho)$ are in $L^2([0,T];H^1(\TT^d))$ and such that for every $\psi\in\C^\infty(\TT^d)$, almost surely for every $t\in[0,T]$,
\begin{align*}
& \int_{\TT^d}\rho(x,t)\psi(x)\dx  = \int_{\TT^d}\rho_0\psi\dx -\int_0^t\int_{\TT^d}\Phi'(\rho)\nabla\rho\cdot\nabla\psi - \a\int_0^t\int_{\TT^d}\nabla\rho\cdot\nabla\psi+\int_0^t\int_{\TT^d}\nu(\rho)\cdot\nabla\psi
\\ & \quad +\int_0^t\int_{\TT^d}\sigma(\rho)\nabla\psi\cdot\dd\xi^F -\frac{1}{2}\int_0^t\int_{\TT^d}F_1[\sigma'(\rho)]^2\nabla\rho\cdot\nabla\psi-\frac{1}{2}\int_0^t\int_{\TT^d}\sigma(\rho)\sigma'(\rho)F_2\cdot\nabla\psi.
\end{align*}  \end{definition}

\begin{remark}  In Definition~\ref{def_approx} the distributional equality $\Phi'(\rho)\nabla\rho = [\Phi'(\rho)]^{\nicefrac{1}{2}}\nabla\Theta_{\Phi,2}(\rho)$, the assumption that $\Theta_{\Phi,2}(\rho)$ is an $L^2_tH^1_x$-function, \eqref{11_11_11}, and H\"older's inequality prove that the term $\Phi'(\rho)\nabla\rho$ is integrable. The integrability of $\nu(\rho)$ follows from \eqref{11_11_11} and the estimates of Proposition~\ref{prop_approx_est} below. \end{remark}

\begin{prop}\label{prop_approx_est}  Let $\xi^F$, $\Phi$, $\sigma$, and $\nu$ satisfy Assumptions~\ref{assumption_noise}, \ref{assume}, and \ref{assume_3} for some $p\in[2,\infty)$, let $\a\in(0,1)$, let $T\in[1,\infty)$, let $\rho_0\in L^{m+p-1}(\O;L^1(\TT^d))\cap L^p(\O;L^p(\TT^d))$ be nonnegative and $\F_0$-measurable, and let $\rho$ be a solution of \eqref{4_0} in the sense of Definition~\ref{def_approx}.  Then, almost surely for every $t\in[0,T]$,
\begin{equation}\label{4_00019}\norm{\rho(\cdot,t)}_{L^1(\TT^d)}=\norm{\rho_0}_{L^1(\TT^d)}.\end{equation}
For $\Theta_{\Phi,p}$ defined in Lemma~\ref{lem_aux}, there exists $c\in(0,\infty)$ depending on $p$ but independent of $\a$ and $T$ such that
\begin{align}\label{4_19}
&\sup_{t\in[0,T]}\E\left[\int_{\TT^d}\rho^p(x,t)\right]+\E\left[\int_0^T\int_{\TT^d}\abs{\nabla\Theta_{\Phi,p}(\rho)}^2\right]+\E\left[\a\int_0^T\int_{\TT^d}\abs{\rho}^{p-2}\abs{\nabla\rho}^2\right]
\\ \nonumber & \leq  cT\left(1+\E\left[\norm{\rho_0}^{m+p-1}_{L^1(\TT^d)}+\norm{\rho_0}^p_{L^p(\TT^d)}\right]\right).
\end{align}
For every $M_1<M_2\in(0,\infty)$ there exists $c\in(0,\infty)$ independent of $M_1$ and $M_2$ such that
\begin{align}\label{50_7070}
& \E\left[\int_0^T\int_{\TT^d}\mathbf{1}_{\{M_1<\rho<M_2\}}\Phi'(\rho)\abs{\nabla\rho}^2+\a\int_0^T\int_{\TT^d}\mathbf{1}_{\{M_1<\rho<M_2\}}\abs{\nabla\rho}^2\right]
\\ \nonumber & \leq  c\E\left[\int_{\TT^d}(\rho_0-M_1)_++\int_0^T\int_{\TT^d}\mathbf{1}_{\{\rho\geq M_1\}}\sigma^2(\rho\wedge M_2)\right].
\end{align}
\end{prop}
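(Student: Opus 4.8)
The plan is to obtain all three assertions by applying It\^o's formula to $\int_{\TT^d}S(\rho)\dx$ for three choices of the renormalization $S$, using the pointwise identity for $\dd S(\rho)$ derived in Section~\ref{sec_kinetic} (which holds with equality at the level of the non-degenerate equation \eqref{4_0}) together with the extra term $-\a S''(\rho)\abs{\nabla\rho}^2\dt$ coming from $\a\Delta\rho$, and then integrating over $\TT^d$ so that every full spatial divergence disappears. Each computation is first run with the Lipschitz truncation $S_K$ of $S$ (equal to $S$ on $[0,K]$, affine on $[K,\8)$, so that $S_K',S_K''$ are bounded and every term is manifestly finite), and the limit $K\to\8$ is passed using the regularity $\rho,\Theta_{\Phi,2}(\rho)\in L^2([0,T];H^1(\TT^d))$ of solutions of \eqref{4_0}, the integrability afforded by \eqref{11_11_11} and \eqref{aa_5050}, and dominated convergence; the noise contributes a local martingale whose expectation vanishes after a standard localization. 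Mass conservation \eqref{4_00019} is the case $S(\xi)=\xi$ (equivalently $\psi\equiv1$ in Definition~\ref{def_approx}): all drift terms are integrals of spatial divergences and the stochastic term carries $\nabla\psi=0$, so $\int_{\TT^d}\rho(\cdot,t)=\int_{\TT^d}\rho_0$ almost surely, which since $\rho\geq0$ is \eqref{4_00019}.

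For \eqref{4_19} I take $S(\xi)=\xi^p$, so that $S''(\rho)\Phi'(\rho)\abs{\nabla\rho}^2=p(p-1)\abs{\nabla\Theta_{\Phi,p}(\rho)}^2$ by the definition of $\Theta_{\Phi,p}$ and $S''(\rho)\abs{\nabla\rho}^2=p(p-1)\rho^{p-2}\abs{\nabla\rho}^2$; integrating the $\dd S(\rho)$-identity over $\TT^d\times[0,t]$ and taking expectations puts the left side of \eqref{4_19} on the left with a good sign, against $\E\int_{\TT^d}\rho_0^p$ and three error terms. The $\nu$-term vanishes, being (for $S_K$, hence in the limit) the integral over $\TT^d$ of a spatial divergence; the term $\tfrac12\E\int_0^t\int_{\TT^d}F_3\sigma^2(\rho)\rho^{p-2}$ is $\leq c\,\E\int_0^t\int_{\TT^d}(1+\rho+\Theta_{\Phi,p}^2(\rho))$ by the boundedness of $F_3$ and \eqref{aa_5050}; and the term $\tfrac12\E\int_0^t\int_{\TT^d}\rho^{p-2}\sigma(\rho)\sigma'(\rho)\nabla\rho\cdot F_2$ equals, when $\nabla\cdot F_2\neq0$, $-\tfrac12\E\int_0^t\int_{\TT^d}\Psi_{\sigma,p}(\rho)\nabla\cdot F_2$ after integrating by parts on $\TT^d$, and is then bounded by $c\,\E\int_0^t\int_{\TT^d}(1+\rho+\Theta_{\Phi,p}^2(\rho))$ using $\nabla\cdot F_2\in L^\8$ and \eqref{aa_7000}. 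By mass conservation $\E\int_0^t\int_{\TT^d}(1+\rho)\leq cT(1+\E\norm{\rho_0}_{L^1(\TT^d)})$, and applying the interpolation estimate \eqref{50_2} to a fixed multiple of $\Theta_{\Phi,p}$ — which by Lemma~\ref{lem_aux} satisfies \eqref{50_1} with exponent $m+p-1$ — together with mass conservation gives $\E\int_0^t\int_{\TT^d}\Theta_{\Phi,p}^2(\rho)\leq cT(1+\E\norm{\rho_0}_{L^1(\TT^d)}^{m+p-1})+\ve\,\E\int_0^t\int_{\TT^d}\abs{\nabla\Theta_{\Phi,p}(\rho)}^2$. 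Choosing $\ve$ small enough to absorb the last term into the left side and taking the supremum over $t\in[0,T]$ yields \eqref{4_19}; notably no Gr\"onwall argument is needed, because after this absorption nothing involving $\int\rho^p$ remains on the right.

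For the localized estimate \eqref{50_7070} I take $S=\Sigma_{M_1,M_2}$, the convex function with $\Sigma_{M_1,M_2}(0)=0$ and $\Sigma_{M_1,M_2}''=\mathbf{1}_{[M_1,M_2]}$, so that $S'(\xi)=(\xi\wedge M_2-M_1)_+$ is bounded, $S''(\rho)\Phi'(\rho)\abs{\nabla\rho}^2=\mathbf{1}_{\{M_1<\rho<M_2\}}\Phi'(\rho)\abs{\nabla\rho}^2$, and $S''(\rho)\abs{\nabla\rho}^2=\mathbf{1}_{\{M_1<\rho<M_2\}}\abs{\nabla\rho}^2$. Integrating the $\dd S(\rho)$-identity over $\TT^d\times[0,T]$, taking expectations, discarding the nonnegative terminal term $\E\int_{\TT^d}\Sigma_{M_1,M_2}(\rho(\cdot,T))$, and using that the $\nu$-term vanishes, the left side of \eqref{50_7070} is bounded by $\E\int_{\TT^d}\Sigma_{M_1,M_2}(\rho_0)$ plus the two Stratonovich corrections: first $\tfrac12\E\int_0^T\int_{\TT^d}\mathbf{1}_{\{M_1<\rho<M_2\}}F_3\sigma^2(\rho)$, which since $\mathbf{1}_{\{M_1<\rho<M_2\}}\leq\mathbf{1}_{\{\rho\geq M_1\}}$ and $\sigma^2(\rho)=\sigma^2(\rho\wedge M_2)$ on $\{\rho<M_2\}$ is at most $\tfrac12\norm{F_3}_{L^\8}\,\E\int_0^T\int_{\TT^d}\mathbf{1}_{\{\rho\geq M_1\}}\sigma^2(\rho\wedge M_2)$, precisely of the form on the right of \eqref{50_7070}; and second $\tfrac12\E\int_0^T\int_{\TT^d}\mathbf{1}_{\{M_1<\rho<M_2\}}\sigma(\rho)\sigma'(\rho)\nabla\rho\cdot F_2$, which vanishes if $\nabla\cdot F_2=0$ and otherwise, after integrating by parts on $\TT^d$, equals $-\tfrac14\E\int_0^T\int_{\TT^d}(\sigma^2((\rho\wedge M_2)\vee M_1)-\sigma^2(M_1))\nabla\cdot F_2$, which one estimates using $\nabla\cdot F_2\in L^\8$, the boundedness of $\sigma$ and $\sigma'$ from Assumption~\ref{assume_3}, and $\abs{\sigma^2((\rho\wedge M_2)\vee M_1)-\sigma^2(M_1)}\leq c\,(\rho-M_1)_+$, possibly absorbing a small multiple of the $\a$-term on the left. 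I expect the main obstacle to be organising this final bookkeeping — in particular controlling the initial datum term $\E\int_{\TT^d}\Sigma_{M_1,M_2}(\rho_0)$ and the $F_2$-correction against the right side of \eqref{50_7070} — so that the resulting constant is genuinely independent of $M_1$ and $M_2$; this uniformity, rather than any single one of the ingredients above, is the delicate point of this part.
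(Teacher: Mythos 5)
Your proposal follows the paper's approach essentially verbatim for \eqref{4_00019} and \eqref{4_19}: take $\psi\equiv 1$ for mass conservation and use nonnegativity; apply It\^o's formula (the paper cites Krylov's version, for which your Lipschitz-truncation scheme is a reasonable substitute) to $\int_{\TT^d}\rho^p$; observe the cancellation of the $F_1[\sigma'(\rho)]^2\abs{\nabla\rho}^2$ contribution of the It\^o quadratic variation against the Stratonovich correction, leaving only the $F_2$- and $F_3$-terms; discard the $\nu$-term as a total spatial divergence; integrate the $F_2$-term by parts and invoke \eqref{aa_7000}; and close by interpolating $\Theta_{\Phi,p}^2(\rho)$ via \eqref{50_2} against the conserved $L^1$-mass. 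As you note, no Gr\"onwall argument is required, and this matches the paper.

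For \eqref{50_7070} your renormalization $\Sigma_{M_1,M_2}$ with $\Sigma_{M_1,M_2}''=\mathbf{1}_{[M_1,M_2]}$ and your treatment of the $F_3$-correction match the paper, but your handling of the $F_2$-correction when $\nabla\cdot F_2\neq 0$ does not close the argument. The Lipschitz bound $\abs{\sigma^2((\rho\wedge M_2)\vee M_1)-\sigma^2(M_1)}\leq c(\rho-M_1)_+$ is correct under Assumption~\ref{assume_3}, but it produces the time-integrated error $c\,\E\int_0^T\int_{\TT^d}(\rho-M_1)_+$, which is not one of the terms on the right side of \eqref{50_7070} and cannot be absorbed into the $\a$-gradient term as you suggest: a zeroth-order quantity in $\rho$ and an integrated gradient square are of entirely different nature, and no smallness in $\a$ relates them. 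The paper instead keeps the estimate at the level of $\sigma^2$, using that $\sigma^2((\rho\wedge M_2)\vee M_1)-\sigma^2(M_1)$ vanishes off $\{\rho>M_1\}$ to dominate the $F_2$-correction by a multiple of $\mathbf{1}_{\{\rho\geq M_1\}}\sigma^2(\rho\wedge M_2)$, which is precisely the form on the right of \eqref{50_7070}; passing to $(\rho-M_1)_+$ forfeits this structure. You correctly identify in your closing remarks that the uniformity in $M_1,M_2$ --- in both the $F_2$-correction and the initial-datum term $\Sigma_{M_1,M_2}(\rho_0)$ --- is the delicate point, but you do not resolve it, so this part of the proposal remains incomplete.
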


\begin{proof}  The $L^1$-estimate follows from the nonnegativity of $\rho$, after choosing $\psi=1$ in Definition~\ref{def_approx}.  For the energy estimate \eqref{4_19}, it follows from Assumption~\ref{assume} and It\^o's formula---which is justified similarly to \cite[Proposition~7.7]{FehGes2020} using the version of It\^o's formula proven in Krylov~\cite[Theorem~3.1]{Kry2013}---that, for $\Theta_{\Phi,p}$ defined in Lemma~\ref{lem_aux}, almost surely for every $t\in[0,T]$,
\begin{align}\label{4_00}
& \left.\frac{1}{p(p-1)}\int_{\TT^d}\rho^2(x,s)\right|_{s=0}^{s=t} =-\int_0^t\int_{\TT^d}\abs{\nabla\Theta_{\Phi,p}(\rho)}^2-\a\int_0^t\int_{\TT^d}\abs{\rho}^{p-2}\abs{\nabla\rho}^2
\\ \nonumber & \quad +\int_0^t\int_{\TT^d}\abs{\rho}^{p-2}\nu(\rho)\cdot\nabla\rho+\int_0^t\int_{\TT^d}\sigma(\rho)\abs{\rho}^{p-2}\nabla\rho\cdot\dd\xi^F-\frac{1}{2}\int_0^t\int_{\TT^d}F_1[\sigma'(\rho)]^2\abs{\rho}^{p-2}\abs{\nabla\rho}^2
\\ \nonumber & \quad -\frac{1}{2}\int_0^t\int_{\TT^d}\sigma(\rho)\sigma'(\rho)\abs{\rho}^{p-2}F_2\cdot\nabla\rho +\frac{1}{2}\sum_{k=1}^\infty\int_0^t\int_{\TT^d}\abs{\rho}^{p-2}\left(\nabla\cdot(\sigma(\rho)f_k)\right)^2.
\end{align}
The definitions of the coefficients $F_i$ prove that
\begin{equation}\label{4_01} \frac{1}{2}\sum_{k=1}^\infty\left(\nabla\cdot(\sigma(\rho)f_k)\right)^2=\frac{1}{2}\left(F_1[\sigma'(\rho)]^2\abs{\nabla\rho}^2+2\sigma(\rho)\sigma'(\rho)F_2\cdot\nabla\rho+F_3\sigma^2(\rho)\right).\end{equation}
Returning to \eqref{4_00}, it follows from \eqref{4_01} that, almost surely for every $t\in[0,T]$,
\begin{align}\label{4_02}
& \left.\frac{1}{p(p-1)}\int_{\TT^d}\rho^p(x,s)\right|_{s=0}^{s=t} =-\int_0^t\int_{\TT^d}\abs{\nabla\Theta_{\Phi,p}(\rho)}^2-\a\int_0^t\int_{\TT^d}\abs{\rho}^{p-2}\abs{\nabla\rho}^2+\int_0^t\int_{\TT^d}\abs{\rho}^{p-2}\nu(\rho)\cdot\nabla\rho
\\ \nonumber & +\int_0^t\int_{\TT^d}\sigma(\rho)\abs{\rho}^{p-2}\nabla\rho\cdot\dd\xi^F+\frac{1}{2}\int_0^t\int_{\TT^d}F_3\abs{\rho}^{p-2}\sigma^2(\rho)+\frac{1}{2}\int_0^t\int_{\TT^d}\abs{\rho}^{p-2}\sigma(\rho)\sigma'(\rho)F_2\cdot\nabla\rho.
\end{align}
For the third term on the righthand side of \eqref{4_02}, it follows that
\begin{equation}\label{15_1}\abs{\int_0^T\int_{\TT^d}\abs{\rho}^{p-2}\nabla\rho\cdot\nu(\rho)}=\abs{\sum_{i=1}^d\int_0^T\int_{\TT^d}\partial_i\left(\Theta_{\nu,p,i}(\rho)\right)}=0,\end{equation}
for $\Theta_{\nu,p,i}\in\C([0,\infty))\cap\C^1_{\textrm{loc}}((0,\infty))$ the unique functions satisfying $\Theta_{\nu,p,i}(0)=0$ and $\Theta'_{\nu,p,i}(\xi)=\abs{\xi}^{p-2}\nu_i(\xi)$.  After integrating by parts in the final term of \eqref{4_02},
\[\int_0^t\int_{\TT^d}\sigma(\rho)\sigma'(\rho)\abs{\rho}^{p-2}F_2\cdot\nabla\rho=-\int_0^t\int_{\TT^d}\Psi_{\sigma,p}(\rho)\nabla\cdot F_2,\]
for $\Psi_{\sigma,\rho}\in\C([0,\infty))\cap\C^1_{\textrm{loc}}((0,\infty))$ defined in \eqref{aa_7000}.   It follows that this term is zero if $\nabla\cdot F_2=0$ or using the boundedness of $\nabla\cdot F_2$ that it is bounded by \eqref{aa_7000}.  After taking the expectation of \eqref{4_02}, it follows from $T\in[1,\infty)$, \eqref{aa_5050}, \eqref{4_00019}, the boundedness of $\nabla\cdot F_2$ and $F_3$, H\"older's inequality, and Young's inequality that there exists $c\in(0,\infty)$ such that
\begin{align*}
&\sup_{t\in[0,T]}\E\left[\int_{\TT^d}\rho^p(x,t)\right]+\E\left[\int_0^T\int_{\TT^d}\abs{\nabla\Theta_{\Phi,p}(\rho)}^2\right]+\E\left[\a\int_0^T\int_{\TT^d}\abs{\rho}^{p-2}\abs{\nabla\rho}^2\right]
\\ \nonumber & \leq  cT\E\left[1+\norm{\rho_0}_{L^1(\TT^d)}+\int_{\TT^d}\rho^p_0+\int_0^T\int_{\TT^d}\Theta_{\Phi,p}^2(\rho)\right].
\end{align*}
Lemma~\ref{lem_aux}, the interpolation estimate \eqref{50_2} with $\Psi=\Theta_{\Phi,p}$ integrated in time, the $L^1$-estimate \eqref{4_00019}, $T\in[1,\infty)$, and Young's inequality prove that there exists $c\in(0,\infty)$ such that
\begin{align*}
&\sup_{t\in[0,T]}\E\left[\int_{\TT^d}\rho^p(x,t)\right]+\E\left[\int_0^T\int_{\TT^d}\abs{\nabla\Theta_{\Phi,p}(\rho)}^2\right]+\E\left[\a\int_0^T\int_{\TT^d}\abs{\rho}^{p-2}\abs{\nabla\rho}^2\right]
\\ \nonumber & \leq  cT\left(1+\E\left[\norm{\rho_0}^{m+p-1}_{L^1(\TT^d)}+\norm{\rho_0}^p_{L^p(\TT^d)}\right]\right),
\end{align*}
which completes the proof of \eqref{4_19}.

To prove \eqref{50_7070}, following an approximation argument that justifies applying It\^o's formula \cite[Theorem~3.1]{Kry2013} as in \cite[Proposition~7.7]{FehGes2020} to the unique function $S_M\colon[0,\infty)\rightarrow[0,\infty)$ satisfying $S''_M(\xi)=\mathbf{1}_{\{M_1<\xi<M_2\}}$, after observing the cancellation between the It\^o- and It\^o-to-Stratonovich corrections as above, we have almost surely that
\begin{align*}
& \int_{\TT^d}S_M(\rho(x,T))  = \int_{\TT^d}S_M(\rho_0)-\int_0^T\int_{\TT^d}\mathbf{1}_{\{M_1<\rho<M_2\}}\Phi'(\rho)\abs{\nabla\rho}^2-\a\int_0^T\int_{\TT^d}\mathbf{1}_{\{M_1<\rho<M_2\}}\abs{\nabla\rho}^2
\\ & \quad +\int_0^T\int_{\TT^d}S''_M(\rho)\nu(\rho)\cdot\nabla\rho+\int_0^T\int_{\TT^d}S''_M(\rho)\sigma(\rho)\nabla\rho\cdot\dd\xi^F
\\ & \quad +\frac{1}{2}\int_0^T\int_{\TT^d}\mathbf{1}_{\{M_1<\rho<M_2\}}\left(\sigma(\rho)\sigma'(\rho)\nabla\rho\cdot F_2+F_3\sigma^2(\rho)\right).
\end{align*}
The fourth term on the righthand side vanishes similarly to \eqref{15_1}.  Therefore, after taking the expectation and using the distributional equality
\[\mathbf{1}_{\{M_1<\rho<M_2\}}\sigma(\rho)\sigma'(\rho)\nabla\rho=\frac{1}{2}\nabla\left[\left(\sigma^2((\rho\wedge M_2)\vee M_1)-\sigma^2(M_1)\right)\right],\]
we have using the definition of $S_M$ that
\begin{align*}
& \E\left[\int_0^T\int_{\TT^d}\mathbf{1}_{\{M_1<\rho<M_2\}}\Phi'(\rho)\abs{\nabla\rho}^2+\a\int_0^T\int_{\TT^d}\mathbf{1}_{\{M_1<\rho<M_2\}}\abs{\nabla\rho}^2\right] \leq \E\left[\int_{\TT^d}(\rho_0-M_1)_+\right]
\\ &\quad -\frac{1}{2}\E\left[\int_0^T\int_{\TT^d}\left(\frac{1}{2}\left(\sigma^2((\rho\wedge M_2)\vee M_1)-\sigma^2(M_1)\right)\nabla\cdot F_2-\mathbf{1}_{\{M_1<\rho<M_2\}}F_3\sigma^2(\rho)\right)\right].
\end{align*}
The boundedness of $\nabla\cdot F_2$ and $F_3$ prove that there exists $c\in(0,\infty)$ such that
\begin{align*}
& \E\left[\int_0^T\int_{\TT^d}\mathbf{1}_{\{M_1<\rho<M_2\}}\Phi'(\rho)\abs{\nabla\rho}^2+\a\int_0^T\int_{\TT^d}\mathbf{1}_{\{M_1<\rho<M_2\}}\abs{\nabla\rho}^2\right]
\\ & \leq c\E\left[\int_{\TT^d}(\rho_0-M_1)_++\int_0^T\int_{\TT^d}\mathbf{1}_{\{\rho\geq M_1\}}\sigma^2(\rho\wedge M_2)\right],
\end{align*}
which completes the proof of \eqref{50_7070} and the proof.\end{proof}

\begin{remark}\label{rem_rem} Observe that Assumption~\ref{assume} guarantees that the estimates of Proposition~\ref{prop_approx_est} are always satisfied in the case $p=2$, even when $p\in(2,\infty)$.  The case $p=2$ appears in the definition of the approximate kinetic measures \eqref{4_700} below. \end{remark}

\begin{lem}\label{lem_frac}  Let $\Phi$ satisfy Assumption~\ref{assume} for some $p\in[2,\infty)$ and let $z\in H^1(\TT^d)$ be nonnegative. If $\Phi$ satisfies \eqref{5_00}, then
\begin{equation}\label{5_000}\norm{\nabla z}_{L^1(\TT^d;\R^d)}\leq \norm{z}_{L^1(\TT^d)}^{\theta}\norm{\nabla\Theta_{\Phi,p}(z)}_{L^2(\TT^d)}.\end{equation}
If $\Phi$ satisfies \eqref{5_0}, then for every $\beta\in (0,\nicefrac{2}{q}\wedge 1)$, for some $c\in(0,\infty)$ depending on $\beta$,
\[\norm{z}_{W^{\beta,1}(\TT^d)}\leq c\left(\norm{z}_{L^1(\TT^d)}+\norm{\nabla\Theta_{\Phi,p}(z)}^\frac{2}{q}_{L^2(\TT^d;\R^d)}\right).\]
\end{lem}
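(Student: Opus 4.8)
The plan is to prove the two estimates separately, each by a direct pointwise or distributional manipulation followed by an application of Hölder's inequality and—in the second case—a fractional Sobolev characterization. For the first inequality, under hypothesis \eqref{5_00}, I would start from the distributional chain rule $\nabla\Theta_{\Phi,p}(z) = z^{(p-2)/2}[\Phi'(z)]^{1/2}\nabla z$, valid by the local regularity of $\Theta_{\Phi,p}$ on $(0,\infty)$, the assumption $z\in H^1(\TT^d)$, and the fact that $\nabla z=0$ a.e.\ on $\{z=0\}$. Inverting this gives $\nabla z = z^{-(p-2)/2}[\Phi'(z)]^{-1/2}\nabla\Theta_{\Phi,p}(z)$ a.e.\ on $\{z>0\}$. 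Applying the bound \eqref{5_00}, $z^{-(p-2)/2}[\Phi'(z)]^{-1/2}\leq c z^\theta$, and then Hölder's inequality in the form $\int |w|^{2\theta}|v| \le \|w\|_{L^1}^{\theta}\|v\|_{?}$—more precisely writing $|\nabla z|\le c\, z^\theta |\nabla\Theta_{\Phi,p}(z)|$ and using Hölder with exponents adapted to $\theta\in[0,\nicefrac12]$ against the $L^1$-norm of $z$ and the $L^2$-norm of $\nabla\Theta_{\Phi,p}(z)$—yields \eqref{5_000}. (Here one uses $2\theta + 1 = $ the appropriate total weight; since $\theta\le\nicefrac12$, the exponent pair $(1/(2\theta), 2)$ or a limiting argument when $\theta=0$ works, and the constant can be normalized to $1$ as stated.)

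For the second inequality, under hypothesis \eqref{5_0}, the key is the characterization of the Gagliardo seminorm: for $\beta\in(0,1)$,
\[
[z]_{W^{\beta,1}(\TT^d)} \sim \int_{\TT^d}\int_{\TT^d}\frac{|z(x)-z(y)|}{|x-y|^{d+\beta}}\,\dx\dy.
\]
I would bound $|z(x)-z(y)|$ using \eqref{5_0}: $|z(x)-z(y)|\le c^{1/q}|\Theta_{\Phi,p}(z(x))-\Theta_{\Phi,p}(z(y))|^{2/q}$. Since $\Theta_{\Phi,p}(z)\in H^1(\TT^d)$ (by the energy estimate, or directly by hypothesis via $z\in H^1$ and the growth of $\Theta_{\Phi,p}'$), we have $\Theta_{\Phi,p}(z)\in W^{1,1}(\TT^d)$, hence the difference quotient $|\Theta_{\Phi,p}(z(x))-\Theta_{\Phi,p}(z(y))|$ is controlled by the maximal function of $|\nabla\Theta_{\Phi,p}(z)|$ times $|x-y|$. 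Raising to the power $2/q<1$, inserting into the Gagliardo integral, and using that $\beta<\nicefrac2q$ ensures the resulting singular integral $\int |x-y|^{2/q - d - \beta}$ converges near the diagonal on the torus; combining with an $L^{2}\to$ weak-$L^2$ bound for the maximal function and Hölder's inequality (with exponent $q/2$ and its conjugate on the finite-measure torus) produces $\|\nabla\Theta_{\Phi,p}(z)\|_{L^2}^{2/q}$. Adding $\|z\|_{L^1}$ to control the zeroth-order part completes the estimate.

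The main obstacle I anticipate is the second estimate: making the maximal-function / difference-quotient argument rigorous requires care because $2/q$ may be less than $1$, so one cannot directly apply the Hardy–Littlewood maximal inequality in $L^1$; instead one either works with the $L^2$-theory after noting $\nabla\Theta_{\Phi,p}(z)\in L^2$, or uses the Riesz-potential estimate $|u(x)-u(y)|\le c|x-y|(I_1|\nabla u|(x) + I_1|\nabla u|(y))$ with $I_1$ the Riesz potential and then the mapping properties $I_1\colon L^2\to L^{2^*}$, checking that the exponents close up under the constraint $\beta<\nicefrac2q\wedge 1$. An alternative, possibly cleaner route is to interpolate: $\Theta_{\Phi,p}(z)\in W^{1,1}$ (or $H^1$) combined with the Hölder-type lower bound \eqref{5_0} gives that $z$ lies in a fractional space by a composition/interpolation lemma, and then one reads off the seminorm bound. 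Either way, the routine verification that the singular integral converges precisely when $\beta<\nicefrac2q$ is the technical heart; the first estimate \eqref{5_000} is comparatively immediate once the chain rule and \eqref{5_00} are in place.
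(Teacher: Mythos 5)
Your derivation of \eqref{5_000} matches the paper's: the chain rule (via Stampacchia's lemma) gives $\nabla z = z^{-(p-2)/2}[\Phi'(z)]^{-1/2}\nabla\Theta_{\Phi,p}(z)$ a.e.\ on $\{z>0\}$, hypothesis \eqref{5_00} gives $|\nabla z|\le c\,z^{\theta}|\nabla\Theta_{\Phi,p}(z)|$, and then Cauchy--Schwarz together with $\|z^{2\theta}\|_{L^1(\TT^d)}\le c\|z\|_{L^1(\TT^d)}^{2\theta}$ (using $2\theta\le 1$ and the finiteness of $|\TT^d|$) finish; this is precisely the paper's computation. For the fractional estimate your main route (difference quotient controlled by a maximal function of $\nabla\Theta_{\Phi,p}(z)$, then $M\colon L^2\to L^2$) is correct, and you rightly single out the subtlety that for $2/q<1$ the argument must lean on $\nabla\Theta_{\Phi,p}(z)\in L^2$ rather than any $L^1$ maximal inequality. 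The paper dispenses with maximal-function theory: it writes $\Theta_{\Phi,p}(z)(x)-\Theta_{\Phi,p}(z)(y)=\int_0^1\nabla\Theta_{\Phi,p}(z)(y+s(x-y))\cdot(x-y)\,\ds$, brings the $2/q$-th power inside, and uses Fubini and H\"older with the observation $d+\beta-\nicefrac{2}{q}<d$ to handle the singular kernel directly. The two routes are comparable in length; the paper's is more self-contained (no appeal to the Hardy--Littlewood inequality), and it is essentially the ``integrate along segments'' alternative you sketch at the end of your proposal. Both approaches correctly identify $\beta<\nicefrac{2}{q}\wedge 1$ as the exact threshold, and both produce the exponent $\nicefrac{2}{q}$ on $\|\nabla\Theta_{\Phi,p}(z)\|_{L^2}$.
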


\begin{proof}  It follows from an approximation argument using \eqref{5_00}, $\Phi\in \C^1_{\textrm{loc}}((0,\infty))$, and the chain rule (see \cite[Chapter~5, Exercises~17,18]{Eva2010}) that $\nabla z = \abs{\rho}^{2-p}[\Phi'(z)]^{-\nicefrac{1}{2}}\nabla\Theta_{\Phi,p}(z)$.  It then follows from \eqref{5_00} and H\"older's inequality that, for $\theta\in[0,\nicefrac{1}{2}]$ as in \eqref{5_00} and for some $c\in(0,\infty)$,
\begin{align*}
\norm{\nabla z}_{L^1(\TT^d)}\leq c\norm{z^\theta\nabla\Theta_{\Phi,p}(z)}_{L^2(\TT^d)} & \leq c\norm{z^{2\theta}}^\frac{1}{2}_{L^1(\TT^d)}\norm{\nabla\Theta_{\Phi,p}(z)}_{L^2(\TT^d)}
\\ & \leq c\norm{z}^\theta_{L^1(\TT^d)}\norm{\nabla\Theta_{\Phi,p}(z)}_{L^2(\TT^d)},
\end{align*}
which completes the proof of \eqref{5_000}.  If $\Phi$ satisfies \eqref{5_0}, then for $q\in[1,\infty)$ as in \eqref{5_0} the fractional Sobolev semi-norm satisfies, for every $\beta\in(0,\nicefrac{2}{q}\wedge 1)$, for some $c\in(0,\infty)$ depending on $\beta$,
\begin{align*}
& \int_{\TT^d}\int_{\TT^d}\frac{\abs{z(x)-z(y)}}{\abs{x-y}^{d+\beta}}\dx\dy\leq c\int_{\TT^d}\int_{\TT^d}\frac{\abs{\Theta_{\Phi,p}(z)(x)-\Theta_{\Phi,p}(z)(y)}^\frac{2}{q}}{\abs{x-y}^{d+\beta}}\dx\dy
\\ & \leq c\int_{\TT^d}\int_{\TT^d}\abs{\int_0^1\nabla\Theta_{\Phi,p}(z)(y+s(x-y))\cdot(x-y)\ds}^\frac{2}{q}\abs{x-y}^{-(d+\beta)}\dx\dy
\\ & \leq c\int_{\TT^d}\int_{\TT^d}\int_0^1\abs{\nabla\Theta_{\Phi,p}(z)(y+s(x-y))}^\frac{2}{q}\abs{x-y}^{-(d+\beta-\nicefrac{2}{q})}\ds\dx\dy.
\end{align*}
It then follows from H\"older's inequality, $q\in[1,\infty)$, and $d+\beta-\nicefrac{2}{q} < d$ that there exists $c\in(0,\infty)$ depending on $\beta$ such that
\[\int_{\TT^d}\int_{\TT^d}\frac{\abs{z(x)-z(y)}}{\abs{x-y}^{d+\beta}}\dx\dy\leq c\left(\int_{\TT^d}\abs{\nabla\Theta_{\Phi,p}(z)}^2\dx\right)^\frac{1}{q},\]
which, together with the definition of the fractional Sobolev norm, completes the proof.\end{proof}

\begin{cor}\label{cor_frac_sob}   Let $\xi^F$, $\Phi$, $\sigma$, and $\nu$ satisfy Assumptions~\ref{assumption_noise}, \ref{assume}, and \ref{assume_3} for some $p\in[2,\infty)$, let $T\in[1,\infty)$, let $\a\in(0,1)$, let $\rho_0\in L^{m+p-1}(\O;L^1(\TT^d))\cap L^p(\O;L^p(\TT^d))$ be nonnegative and $\F_0$-measurable, and let $\rho$ be a solution of \eqref{4_0} in the sense of Definition~\ref{def_approx}.  Then, if $\Phi$ satisfies \eqref{5_00}, for some $c\in(0,\infty)$ independent of $\a$ and $T$,
\[\E\left[\norm{\rho}_{L^1([0,T];W^{1,1}(\TT^d))}\right]\leq cT\left(1+\E\left[\norm{\rho_0}^{m+p-1}_{L^1(\TT^d)}+\int_{\TT^d}\rho_0^p\right]\right).\]
If $\Phi$ satisfies \eqref{5_0} then for every $\beta\in(0,\nicefrac{2}{q}\wedge 1)$ there exists $c\in(0,\infty)$ depending on $\beta$ but independent of $\a$ and $T$ such that
\[\E\left[\norm{\rho}_{L^1([0,T];W^{\beta,1}(\TT^d))}\right]\leq cT\left(1+\E\left[\norm{\rho_0}^{m+p-1}_{L^1(\TT^d)}+\int_{\TT^d}\rho_0^p\right]\right).\]
\end{cor}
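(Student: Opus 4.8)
The plan is to combine the pointwise-in-time interpolation inequalities of Lemma~\ref{lem_frac} with the a priori energy estimate \eqref{4_19} of Proposition~\ref{prop_approx_est}, using the mass conservation identity \eqref{4_00019} to handle the $L^1$-norms that appear. The whole argument is a matter of integrating in time, inserting the bound \eqref{4_19}, and using Hölder's and Young's inequalities; there is no new analytic content.

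For the case that $\Phi$ satisfies \eqref{5_00}, I would apply \eqref{5_000} with $z=\rho(\cdot,t)$ for almost every $t\in[0,T]$ and integrate in time. By \eqref{4_00019} the quantity $\norm{\rho(\cdot,t)}_{L^1(\TT^d)}=\norm{\rho_0}_{L^1(\TT^d)}$ does not depend on $t$, so
\[
\int_0^T\norm{\nabla\rho(\cdot,t)}_{L^1(\TT^d;\R^d)}\dt\leq\norm{\rho_0}_{L^1(\TT^d)}^{\theta}\int_0^T\norm{\nabla\Theta_{\Phi,p}(\rho(\cdot,t))}_{L^2(\TT^d)}\dt,
\]
and together with the trivial identity $\int_0^T\norm{\rho(\cdot,t)}_{L^1(\TT^d)}\dt=T\norm{\rho_0}_{L^1(\TT^d)}$ this controls $\norm{\rho}_{L^1([0,T];W^{1,1}(\TT^d))}$. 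Taking expectations and applying the Cauchy--Schwarz inequality in $t$ and then in $\o$, one bounds $\E[\norm{\rho_0}_{L^1(\TT^d)}^{2\theta}]\leq c(1+\E[\norm{\rho_0}_{L^1(\TT^d)}^{m+p-1}])$ (legitimate since $2\theta\leq 1\leq m+p-1$) and $\E[\int_0^T\int_{\TT^d}\abs{\nabla\Theta_{\Phi,p}(\rho)}^2]$ via \eqref{4_19}; Young's inequality and $T\in[1,\infty)$ then give the first estimate.

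For the case that $\Phi$ satisfies \eqref{5_0}, fix $\beta\in(0,\nicefrac{2}{q}\wedge 1)$ and apply the second inequality of Lemma~\ref{lem_frac} with $z=\rho(\cdot,t)$. Integrating in time, using \eqref{4_00019} on the $L^1$-term, and applying Hölder's inequality in time with conjugate exponents $q$ and $\nicefrac{q}{q-1}$ to the term $\norm{\nabla\Theta_{\Phi,p}(\rho(\cdot,t))}_{L^2}^{\nicefrac{2}{q}}$ (which is admissible for every $q\in[1,\infty)$ since $\nicefrac{2}{q}\leq 2$) yields
\[
\int_0^T\norm{\rho(\cdot,t)}_{W^{\beta,1}(\TT^d)}\dt\leq cT\norm{\rho_0}_{L^1(\TT^d)}+cT^{1-\nicefrac{1}{q}}\left(\int_0^T\int_{\TT^d}\abs{\nabla\Theta_{\Phi,p}(\rho)}^2\right)^{\nicefrac{1}{q}}.
\]
Taking expectations, using Jensen's inequality (since $\nicefrac{1}{q}\leq 1$), the bound \eqref{4_19}, and then Young's inequality together with $T\in[1,\infty)$, one obtains the second estimate.

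I do not expect a genuine obstacle here; the only point that needs care is the bookkeeping of integrability exponents in $t$ and in $\o$, so that every power of $\norm{\rho_0}_{L^1(\TT^d)}$ that arises remains at or below $m+p-1$ (so that \eqref{4_19} applies) and so that the time exponent $\nicefrac{2}{q}$ is handled correctly in the regime $q\in[1,2)$ as well as $q\geq 2$.
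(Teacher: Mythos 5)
Your proposal is correct and follows the same route as the paper's one-line proof, which cites exactly Proposition~\ref{prop_approx_est}, Lemma~\ref{lem_frac}, $T\in[1,\infty)$, $\theta\in[0,\nicefrac{1}{2}]$, $q\in[1,\infty)$, and Young's inequality. You have simply made explicit the applications of Cauchy--Schwarz in time and in $\omega$ (respectively Hölder in time and Jensen in $\omega$) that the paper leaves implicit, and the exponent bookkeeping---$2\theta\leq 1\leq m+p-1$, the $T^{1/2}\cdot T^{1/2}=T$ and $T^{1-1/q}\cdot T^{1/q}=T$ cancellations---is exactly what makes the constants independent of $T$ and $\a$.
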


\begin{proof}  The proof is a consequence of Proposition~\ref{prop_approx_est}, Lemma~\ref{lem_frac}, $T\in[1,\infty)$, $\theta\in[0,\nicefrac{1}{2}]$, $q\in[1,\infty)$, and Young's inequality.  \end{proof}

\begin{definition}\label{def_cutoff}  For every $\d\in(0,1)$ let $\psi_\d\in\C^\infty([0,\infty))$ be a smooth nondecreasing function satisfying $0\leq\psi_\d\leq 1$, $\psi_\d(\xi)=1$ if $\xi\geq \d$, $\psi_\d(\xi)=0$ if $\xi\leq\nicefrac{\d}{2}$, and $\abs{\psi_\d'(\xi)}\leq\nicefrac{c}{\d}$ for some $c\in(0,\infty)$ independent of $\d$.  For every $\d\in(0,1)$ let $\Psi_\d\in\C^\infty([0,\infty))$ be defined by
\[\Psi_\d(\xi)=\psi_\d(\xi)\xi\;\;\textrm{for every}\;\;\xi\in[0,\infty).\]
\end{definition}

\begin{prop}\label{prop_approx_time}  Let $\xi^F$, $\Phi$, $\sigma$, and $\nu$ satisfy Assumptions~\ref{assumption_noise}, \ref{assume}, and \ref{assume_3} for some $p\in[2,\infty)$, let $T\in[1,\infty)$, let $\a\in(0,1)$, let $\rho_0\in L^{m+p-1}(\O;L^1(\TT^d))\cap L^p(\O;L^p(\TT^d))$ be nonnegative and $\F_0$-measurable, and let $\rho$ be a solution of \eqref{4_0} in the sense of Definition~\ref{def_approx}.  Then, for every $\beta\in(0,\nicefrac{1}{2})$ and $s>\frac{d}{2}+1$ there exists $c\in(0,\infty)$ depending on $\d$, $\beta$, and $s$ but independent of $\a$ and $T$ such that
\[\E\left[\norm{\Psi_\d(\rho)}_{W^{\beta,1}([0,T];H^{-s}(\TT^d))}\right]\leq cT\left(1+\E\left[\norm{\rho_0}^{p+m-1}_{L^1(\TT^d)}+\int_{\TT^d}\rho_0^p\right]\right).\]
\end{prop}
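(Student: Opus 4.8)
The plan is to apply It\^o's formula to the functional $\o\mapsto\int_{\TT^d}\Psi_\d(\rho(x,t))\phi(x)\dx$ for $\phi\in H^s(\TT^d)$, to read off the drift and martingale terms, and to control their $W^{\beta,1}([0,T];H^{-s}(\TT^d))$-norms using the a priori bounds of Proposition~\ref{prop_approx_est}. Since $\Psi_\d\in\C^\infty([0,\infty))$ has $\Psi_\d'$ bounded uniformly in $\d$ and $\Psi_\d''$ bounded (by a constant depending on $\d$) and vanishing outside $[\nicefrac{\d}{2},\d]$, the version of It\^o's formula proven in Krylov~\cite[Theorem~3.1]{Kry2013}, justified as in \cite[Proposition~7.7]{FehGes2020}, applies to a solution of \eqref{4_0} in the sense of Definition~\ref{def_approx}. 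After integrating by parts in $x$ and using \eqref{4_01} for the quadratic variation of $\rho$, this writes $t\mapsto\int_{\TT^d}\Psi_\d(\rho(\cdot,t))\phi$ as $\int_{\TT^d}\Psi_\d(\rho_0)\phi$ plus a drift $\int_0^t\int_{\TT^d}(g_\tau\phi+h_\tau\cdot\nabla\phi)\dd\tau$ plus a martingale $\mathcal N_t(\phi)=\int_0^t\int_{\TT^d}(\Psi_\d''(\rho)\nabla\rho\,\phi+\Psi_\d'(\rho)\nabla\phi)\cdot\sigma(\rho)\dd\xi^F$, where $g_\tau$ is a finite sum of terms $\Psi_\d''(\rho)(\cdots)$ with $(\cdots)$ one of $\Phi'(\rho)\abs{\nabla\rho}^2$, $\a\abs{\nabla\rho}^2$, $F_1[\sigma'(\rho)]^2\abs{\nabla\rho}^2$, $\sigma(\rho)\sigma'(\rho)\nabla\rho\cdot F_2$, $F_3\sigma^2(\rho)$, $\nu(\rho)\cdot\nabla\rho$ (each supported in $\{\nicefrac{\d}{2}\le\rho\le\d\}$), and $h_\tau$ is a finite sum of terms $\Psi_\d'(\rho)(\cdots)$ with $(\cdots)$ one of $\Phi'(\rho)\nabla\rho$, $\a\nabla\rho$, $F_1[\sigma'(\rho)]^2\nabla\rho$, $\sigma(\rho)\sigma'(\rho)F_2$, $\nu(\rho)$ (each supported in $\{\rho\ge\nicefrac{\d}{2}\}$).

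The key point --- and the reason the cutoff $\Psi_\d$ is introduced --- is that every term of $g_\tau$, being supported in $\{\nicefrac{\d}{2}\le\rho\le\d\}$, is controlled by the local energy estimate \eqref{50_7070} with $M_1=\nicefrac{\d}{2}$ and $M_2=\d$: there $\Phi'$ is bounded below by a positive constant $c_\d$ while $\sigma,\sigma',F_1,F_3,\nu$ are bounded, so each such term is dominated in absolute value by $c_\d\,\mathbf{1}_{\{\nicefrac{\d}{2}\le\rho\le\d\}}(\abs{\nabla\rho}^2+\a\abs{\nabla\rho}^2+1)$, whose space-time integral has expectation $\le c_\d T(1+\E\norm{\rho_0}_{L^1(\TT^d)})$ by \eqref{50_7070} together with $(\rho_0-\nicefrac{\d}{2})_+\le\rho_0$, $\sigma^2(\rho\wedge\d)\le\sup_{[0,\d]}\sigma^2$, $T\in[1,\infty)$, and the fact that Assumption~\ref{assume_3} makes $\sigma$ bounded at this level. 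Each term of $h_\tau$ is controlled by \eqref{4_19}: $\Psi_\d'(\rho)\Phi'(\rho)\nabla\rho=\Psi_\d'(\rho)[\Phi'(\rho)]^{\nicefrac{1}{2}}\nabla\Theta_{\Phi,2}(\rho)$ is handled by the Cauchy--Schwarz inequality and $\E\int_0^T\int_{\TT^d}\Phi'(\rho)<\8$, which follows from \eqref{11_11_11}, \eqref{4_19}, \eqref{4_00019}, and the interpolation estimate \eqref{50_2} applied to $\Theta_{\Phi,p}$ (Remark~\ref{rem_rem} ensures also the $p=2$ estimates); $\Psi_\d'(\rho)\a\nabla\rho$ is controlled by $\E\,\a\int_0^T\int_{\TT^d}\rho^{p-2}\abs{\nabla\rho}^2$ from \eqref{4_19}, using $\rho^{p-2}\ge c_\d$ on $\{\rho\ge\nicefrac{\d}{2}\}$ and $\a\le1$; $\Psi_\d'(\rho)\nu(\rho)$ by \eqref{11_11_11} and \eqref{4_19}; $\Psi_\d'(\rho)F_1[\sigma'(\rho)]^2\nabla\rho$ by \eqref{50_7070} with $M_1=\nicefrac{\d}{2}$ and $M_2$ the right endpoint of the (bounded, by Assumption~\ref{assume_3}) support of $\sigma'$; and $\Psi_\d'(\rho)\sigma(\rho)\sigma'(\rho)F_2$ is simply bounded. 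This yields $\E\norm{g}_{L^1([0,T]\times\TT^d)}+\E\norm{h}_{L^1([0,T]\times\TT^d;\R^d)}\le cT(1+\E[\norm{\rho_0}^{p+m-1}_{L^1(\TT^d)}+\int_{\TT^d}\rho_0^p])$ with $c$ depending on $\d$ but not on $\a$ or $T$; the same bounds show that for $\norm{\phi}_{H^s(\TT^d)}\le1$ the martingale integrand $(\Psi_\d''(\rho)\nabla\rho\,\phi+\Psi_\d'(\rho)\nabla\phi)\sigma(\rho)$ has $L^2(\O\times[0,T])$-norm bounded by the same right-hand side.

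To pass to the $W^{\beta,1}_tH^{-s}_x$-estimate, I would use that $s>\nicefrac{d}{2}+1$ gives $H^s(\TT^d)\hookrightarrow\C^1(\TT^d)$, so $\norm{\int_{t'}^t\int_{\TT^d}(g_\tau\phi+h_\tau\cdot\nabla\phi)\dd\tau}_{H^{-s}(\TT^d)}\le c\int_{t'}^t(\norm{g_\tau}_{L^1(\TT^d)}+\norm{h_\tau}_{L^1(\TT^d)})\dd\tau$; hence the drift part of $\Psi_\d(\rho)$ has an $L^1([0,T];H^{-s}(\TT^d))$ time-derivative, and therefore lies in $W^{1,1}([0,T];H^{-s}(\TT^d))\hookrightarrow W^{\beta,1}([0,T];H^{-s}(\TT^d))$ with norm controlled by $\E\norm{g}_{L^1}+\E\norm{h}_{L^1}$ up to a constant depending on $\beta$ and $T$ (the $L^1_t$-part being bounded directly by $\norm{\rho_0}_{L^1(\TT^d)}$ since $\Psi_\d(\rho)\le\rho$). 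For the martingale part, using It\^o's isometry, the structure of $\xi^F$, and Fubini's theorem one gets, for $\beta<\nicefrac{1}{2}$, $\E[\int_0^T\int_0^T\abs{t-t'}^{-1-2\beta}\norm{\mathcal N_t-\mathcal N_{t'}}^2_{H^{-s}(\TT^d)}\dd t\dd t']\le c_{\beta,T}\,\E\int_0^T\norm{\text{integrand}_\tau}^2\dd\tau$ (the inner double integral of $\abs{t-t'}^{-1-2\beta}$ being finite since $\beta<\nicefrac{1}{2}$), and then the Hilbert-space embedding $W^{\beta,2}\hookrightarrow W^{\beta,1}$, Jensen's inequality, and the Burkholder--Davis--Gundy inequality (see \cite[Chapter~4, Theorem~4.1]{RevYor1999}) for the $L^1_t$-part give the bound on $\E\norm{\mathcal N}_{W^{\beta,1}([0,T];H^{-s}(\TT^d))}$. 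Assembling the drift and martingale contributions and using $T\in[1,\infty)$ gives the claim.

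I expect the main obstacle to be not any individual estimate but the requirement that all constants be independent of the viscosity $\a$. This rules out exploiting spatial Sobolev bounds on $\rho$ (which degenerate as $\a\to0$) and forces the use of precisely the $\a$-uniform energy and entropy-dissipation estimates \eqref{4_19} and \eqref{50_7070}; correspondingly, the purpose of the cutoff $\Psi_\d$ is exactly to delete the singular It\^o correction $\nabla\cdot(F_1[\sigma'(\rho)]^2\nabla\rho)$ on $\{\rho\le\nicefrac{\d}{2}\}$ --- where $\rho^{-1}$, and hence the growth bound \eqref{11_11_11}, is useless --- while on $\{\rho\ge\nicefrac{\d}{2}\}$ all coefficients are regular and $\Phi'$, $\rho^{p-2}$ are bounded below, so that the needed integrability is recovered from those a priori bounds. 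A secondary technical point is the careful justification of It\^o's formula for $\Psi_\d(\rho)$ given only the regularity assumed in Definition~\ref{def_approx}, handled as in \cite[Proposition~7.7]{FehGes2020}.
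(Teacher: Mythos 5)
Your proposal is correct and follows the same broad plan as the paper's proof (apply It\^o's formula to $\Psi_\d(\rho)$, split into finite-variation and martingale parts, bound the former in $W^{1,1}_tH^{-s}_x$ and the latter in $W^{\beta,2}_tH^{-s}_x$ via Burkholder--Davis--Gundy, and conclude by the embeddings $W^{1,1},W^{\beta,2}\hookrightarrow W^{\beta,1}$), but it chooses a genuinely different a priori estimate to control the $\Psi_\d''$-terms, which is the heart of the matter. The paper rewrites $\Phi'(\rho)\abs{\nabla\rho}^2=\abs{\nabla\Theta_{\Phi,2}(\rho)}^2$ and then controls everything globally via the energy estimate \eqref{4_19} with $p=2$ (available by Remark~\ref{rem_rem}), the interpolation estimate \eqref{50_2}, and the growth conditions of Assumption~\ref{assume}, especially \eqref{aa_121212}; your proposal instead keeps the terms in $\nabla\rho$-form and invokes the local energy estimate \eqref{50_7070} with $M_1=\nicefrac{\d}{2}$, $M_2=\d$, exploiting that $\Phi'$ is bounded below on $[\nicefrac{\d}{2},\d]$ and that the right-hand side of \eqref{50_7070} there is bounded by $c_\d(\norm{\rho_0}_{L^1}+T)$. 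Both routes give $\alpha$- and $T$-uniform constants; yours is arguably more self-contained in that it does not lean on \eqref{aa_121212} for the $g_\tau$-part, relying instead on the more elementary boundedness of $\sigma,\sigma',\Phi'$ on the compact set $[\nicefrac{\d}{2},\d]$, while the paper's formulation in terms of $\Theta_{\Phi,2}$ is tidier and treats all the divergence-form terms uniformly. Two minor observations: first, the term $F_1[\sigma'(\rho)]^2\abs{\nabla\rho}^2$ you list in $g_\tau$ in fact cancels between the It\^o quadratic-variation correction and the It\^o-to-Stratonovich correction (the paper notes this cancellation explicitly), so it need not be estimated at all, though including it does no harm since your \eqref{50_7070}-based bound covers it anyway; second, your final identification of the purpose of the cutoff --- deleting the singular It\^o correction on $\{\rho\le\nicefrac{\d}{2}\}$ while regularity is regained on $\{\rho\ge\nicefrac{\d}{2}\}$ --- is precisely the mechanism the paper exploits, and you have correctly flagged the $\alpha$-uniformity requirement as the structural constraint that forces the use of \eqref{4_19} and \eqref{50_7070} rather than any viscosity-dependent Sobolev bound.
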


\begin{proof}  It follows from It\^o's formula---which is justified similarly to \cite[Proposition~7.7]{FehGes2020} using \cite[Theorem~3.1]{Kry2013}---the compact support of $\Psi_\d$ in $(0,\infty)$, and the distributional equality \eqref{4_02525} that, for every $\d\in(0,1)$, as distributions on $\TT^d$, we have that $\Psi_\d(\rho(x,t)) = \Psi_\d(\rho_0)+ I^{\textrm{f.v.}}_t+ I^{\textrm{mart}}_t$ for the finite variation part
\small
\begin{align*}
& I^{\textrm{f.v.}}_t  =\int_0^t\nabla\cdot\left(\Psi'_\d(\rho)[\Phi'(\rho)]^\frac{1}{2}\nabla\Theta_{\Phi,2}(\rho)\right)-\int_0^t\Psi''_\d(\rho)\abs{\nabla\Theta_{\Phi,2}(\rho)}^2+\a\int_0^t\nabla\cdot\left(\Psi'_\d(\rho)\nabla\rho\right)
\\ & \quad -\a\int_0^t\Psi_\d''(\rho)\abs{\nabla\rho}^2 +\frac{1}{2}\int_0^t\nabla\cdot\left(\Psi'_\d(\rho)F_1(x)\frac{[\sigma'(\rho)]^2}{[\Phi'(\rho)]^\frac{1}{2}}\nabla\Theta_{\Phi,2}(\rho)\right)
\\ &\quad  +\frac{1}{2}\int_0^t\nabla\cdot\left(\Psi'_\d(\rho)\sigma(\rho)\sigma'(\rho)F_2\right)+\frac{1}{2}\int_0^t\Psi''_\d(\rho)\frac{\sigma(\rho)\sigma'(\rho)}{[\Phi'(\rho)]^\frac{1}{2}}\nabla\Theta_{\Phi,2}(\rho)\cdot F_2+\frac{1}{2}\int_0^t\Psi_\d''(\rho)F_3(x)\sigma^2(\rho)
\\ & \quad -\nabla\cdot\left(\int_0^t\Psi'_\d(\rho)\nu(\rho)\right)+\int_0^t\Psi''_\d(\rho)\nu(\rho)\cdot\nabla\rho,
\end{align*}
\normalsize and for the martingale part
\[I ^{\textrm{mart}}_t = -\int_0^t\nabla\cdot\left(\Psi'_\d(\rho)\sigma(\rho)\dd\xi^F\right)+\int_0^t\Psi_\d''(\rho)[\Phi'(\rho)]^{-\nicefrac{1}{2}}\nabla\Theta_{\Phi,2}(\rho)\cdot\dd\xi^F.\]
Since $s>\frac{d}{2}+1$, it follows from the Sobolev embedding theorem that there exists $c\in(0,\infty)$ such that $\norm{f}_{L^\infty(\TT^d)}+\norm{\nabla f}_{L^\infty(\TT^d;\R^d)}\leq c\norm{f}_{H^s(\TT^d)}$.  It then follows from the facts that $\Psi_\d'$ is supported on $[\nicefrac{\d}{2},\infty)$, that $\Psi_\d''$ is supported on $[\nicefrac{\d}{2},\d]$, and that $\Phi'>0$ on $(0,\infty)$, the facts that $\a\in(0,1)$ and $T\in[1,\infty)$, and from Assumption~\ref{assume} and particularly \eqref{aa_121212}, H\"older's inequality, and Young's inequality that there exists $c\in(0,\infty)$ depending on $\d\in(0,1)$ such that
\begin{align}\label{4_400}
& \norm{I^\textrm{f.v.}_\cdot}_{W^{1,1}([0,T];H^{-s}(\TT^d))} \leq cT\left(1+\norm{\rho_0}_{L^1(\TT^d)}+\norm{\Phi'(\rho)\mathbf{1}_{\{\rho\geq\nicefrac{\d}{2}\}}}^2_{L^2(\TT^d)}+\norm{\sigma(\rho)\mathbf{1}_{\{\rho\geq\nicefrac{\d}{2}\}}}^2_{L^2(\TT^d)}\right)
\\ \nonumber &+ cT\left(\norm{\Theta_{\Phi,2}(\rho)}^2_{L^2(\TT^d\times[0,T])}+\norm{\nabla\Theta_{\Phi,2}(\rho)}^2_{L^2(\TT^d\times[0,T];\R^d)}+\a\norm{\nabla\rho}^2_{L^2(\TT^d\times[0,T];\R^d)}\right).
\end{align}
It then follows from $\a\in(0,1)$, $T\in[1,\infty)$, Remark~\ref{rem_rem}, the interpolation estimate \eqref{50_2} applied to $\Psi=\Theta_{\Phi,2}$, Proposition~\ref{prop_approx_est}, $p\in[2,\infty)$, H\"older's inequality, Young's inequality, and \eqref{4_400} that there exists $c\in(0,\infty)$ depending on $\d$ such that
\begin{equation}\label{4_402}
\E\left[\norm{I^\textrm{f.v.}_\cdot}_{W^{1,1}([0,T];H^{-s}(\TT^d))}\right] \leq cT\left(1+\E\left[\norm{\rho_0}^{p+m-1}_{L^1(\TT^d)}+\int_{\TT^d}\rho_0^p\right]\right).
\end{equation}
It remains to treat the martingale part.  Following a computation similar to Flandoli and Gatarek \cite[Lemma~2.1]{FlaGat1995}, the Burkholder-Davis-Gundy inequality (see, for example, \cite[Chapter~4, Theorem~4.1]{RevYor1999}), $s>\frac{d}{2}+1$, the fact that  $\Psi_\d''$ is supported on $[\nicefrac{\d}{2},\d]$, H\"older's inequality, Assumption~\ref{assume} and particularly \eqref{aa_5050} and \eqref{aa_121212}, and \eqref{4_00019} prove that for any $\beta\in(0,\nicefrac{1}{2})$ there exists $c\in(0,\infty)$ depending on $\d$ and $\beta$ such that the fractional Sobolev seminorm satisfies
\begin{align}\label{4_403}
& \E\left[\int_0^T\int_0^T\abs{s-t}^{-(1+2\beta)}\norm{I^\textrm{mart}_t-I^\textrm{mart}_s}_{H^{-s}(\TT^d)}^{2}\ds\dt\right]
\\ \nonumber & \leq c\E\left[\int_0^T\norm{\nabla\Theta_{\Phi,2}(\rho)\mathbf{1}_{\{\nicefrac{\d}{2}\leq \rho\leq \d\}}}^2_{L^2(\TT^d;\R^d)}+\norm{\sigma(\rho)}^2_{L^2(\TT^d)}\ds\right]
\\ \nonumber & \leq c\E\left[T+T\norm{\rho_0}_{L^1(\TT^d)}+\int_0^T\int_{\TT^d}\abs{\nabla\Theta_{\Phi,2}(\rho)}^2+\abs{\Theta_{\Phi,2}(\rho)}^2\right].
\end{align}
It follows similarly that there exists $c\in(0,\infty)$ such that
\begin{equation}\label{4_404}E\left[\norm{I^\textrm{mart}_\cdot}^2_{L^2([0,T];H^{-s}(\TT^d))}\right]\leq c\E\left[T+T\norm{\rho_0}_{L^1(\TT^d)}+\int_0^T\int_{\TT^d}\abs{\nabla\Theta_{\Phi,2}(\rho)}^2+\abs{\Theta_{\Phi,2}(\rho)}^2\right].\end{equation}
The interpolation estimate \eqref{50_2}, Proposition~\ref{prop_approx_est}, \eqref{4_403}, \eqref{4_404}, $p\in[2,\infty)$, H\"older's inequality, Young's inequality, and $T\in[1,\infty)$ then prove that
\begin{equation}\label{4_405}
\E\left[\norm{I^\textrm{mart}_\cdot}^2_{W^{\beta,2}([0,T];H^{-s}(\TT^d))}\right]  \leq cT\left(1+\E\left[\norm{\rho_0}^{p+m-1}_{L^1(\TT^d)}+\int_{\TT^d}\rho_0^p\right]\right).
\end{equation}
In combination \eqref{4_402}, \eqref{4_405}, and the embeddings $W^{\beta,2},W^{1,1}\hookrightarrow W^{\beta,1}$ for every $\beta\in(0,\nicefrac{1}{2})$ complete the proof.  \end{proof}

\begin{remark} The following definition and assumption are only used to treat initial data with finite entropy in the sense of Definition~\ref{def_entropy} below.  In Definition~\ref{def_sol}, the role of these assumptions is to guarantee the integrability of the stochastic flux $\sigma$.  In the model case $\sigma=\Phi^{\nicefrac{1}{2}}$ for $\Phi(\xi)=\xi^m$ this integrability amounts to $L^m_tL^m_x$-integrability for the solution, and in general a function with finite entropy is $L^m$-integrable only if $m=1$.  We emphasize that this assumption is satisfied by the model case $\sigma=\Phi^{\nicefrac{1}{2}}$ for $\Phi(\xi)=\xi^m$ for every $m\in[1,\infty)$.

\end{remark}

\begin{definition}\label{def_entropy} The space of nonnegative, $L^1(\TT^d)$-functions with finite entropy is the space
\[\Ent(\TT^d)=\left\{\rho\in L^1(\TT^d)\colon \rho\geq0\;\;\textrm{almost everywhere with}\;\;\int_{\TT^d}\rho\log(\rho)<\infty\right\}.\]
A function $\rho\colon\O\rightarrow L^1(\TT^d)\cap\Ent(\TT^d)$ satisfies $\rho\in L^1(\O;\Ent(\TT^d))$ if $\rho$ is $\F_0$-measurable with $\E\left[\norm{\rho}_{L^1(\TT^d)}\right]<\infty$ and $\E\left[\int_{\TT^d}\rho\log(\rho)\right]<\infty$.
\end{definition}

\begin{assumption}\label{assumption_10}  Let $\Phi,\sigma\in\C([0,\infty))$ and $\nu\in\C([0,\infty);\R^d)$ satisfy the following four assumptions.
\begin{enumerate}[(i)]
\item There exists $c\in(0,\infty)$ such that $\abs{\sigma(\xi)}\leq c\Phi^\frac{1}{2}(\xi)$ for every $\xi\in [0,\infty)$.
\item There exists $c\in(0,\infty)$ such that
\begin{equation}\label{50_5}\abs{\nu(\xi)}+\Phi'(\xi)\leq c(1+\xi+\Phi(\xi))\;\;\textrm{for every}\;\;\xi\in[0,\infty).\end{equation}
\item We have $\nabla\cdot F_2 = 0$.
\item We have that $\log(\Phi)$ is locally integrable on $[0,\infty)$.
\end{enumerate}
\end{assumption}

\begin{prop}\label{ent_dis_est}  Let $\xi^F$, $\Phi$, $\sigma$, and $\nu$ satisfy Assumptions~\ref{assumption_noise}, \ref{assume}, \ref{assume_3}, and \ref{assumption_10} for some $p\in[2,\infty)$, let $\a\in(0,1)$, let $T\in[1,\infty)$, let $\rho_0\in L^m(\O;L^1(\TT^d))\cap L^1(\O;\Ent(\TT^d))$ be $\F_0$-measurable, and let $\rho$ be a solution of \eqref{4_0} in the sense of Definition~\ref{def_approx}.  Then there exists $c\in(0,\infty)$ independent of $\a$ and $T$ such that, for $\Psi_\Phi\colon[0,\infty)\rightarrow\R$ the unique function satisfying $\Psi_\Phi(0)=0$ with $\Psi_\Phi'(\xi)=\log(\Phi(\xi))$,
\begin{align}\label{4_019}
& \E\left[\sup_{t\in[0,T]}\int_{\TT^d}\Psi_\Phi(\rho(x,t))\right] +\E\left[\int_0^T\int_{\TT^d}\abs{\nabla\Phi^\frac{1}{2}(\rho)}^2\right]+\a\E\left[\int_0^T\int_{\TT^d}\frac{\Phi'(\rho)}{\Phi(\rho)}\abs{\nabla\rho}^2\right]
 \\ \nonumber & \leq cT\left(1+\E\left[\norm{\rho_0}^m_{L^1(\TT^d)}+\int_{\TT^d}\Psi_\Phi(\rho_0)\right]\right).
\end{align}

\end{prop}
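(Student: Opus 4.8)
The plan is to apply It\^o's formula to the entropy functional $t\mapsto\int_{\TT^d}\Psi_\Phi(\rho(x,t))\,\dx$, using the version of It\^o's formula of Krylov \cite[Theorem~3.1]{Kry2013} via the approximation procedure of \cite[Proposition~7.7]{FehGes2020}, exactly as in Proposition~\ref{prop_approx_est}. Since $\log\Phi$ is singular at the origin, I would first run the computation with the regularized functions $S_\ve\in\C^1([0,\infty))$ defined by $S_\ve(0)=0$ and $S_\ve'(\xi)=\log(\Phi(\xi)+\ve)$, noting that $S_\ve''(\xi)=\Phi'(\xi)/(\Phi(\xi)+\ve)$ is locally bounded on $[0,\infty)$ by \eqref{50_5}, and then let $\ve\to 0$. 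As in Proposition~\ref{prop_approx_est} the It\^o and It\^o-to-Stratonovich corrections combine, and after integrating the divergence terms to zero one obtains, almost surely for every $t\in[0,T]$, an identity expressing $\int_{\TT^d}S_\ve(\rho(x,t))$ as $\int_{\TT^d}S_\ve(\rho_0)$ plus a finite-variation part and a martingale part. The second-order terms contribute $-\int_0^t\int_{\TT^d}\frac{[\Phi'(\rho)]^2}{\Phi(\rho)+\ve}\abs{\nabla\rho}^2$ and $-\a\int_0^t\int_{\TT^d}\frac{\Phi'(\rho)}{\Phi(\rho)+\ve}\abs{\nabla\rho}^2$; moving these to the left and letting $\ve\downarrow 0$ they increase monotonically to $4\int_0^t\int_{\TT^d}\abs{\nabla\Phi^{\frac{1}{2}}(\rho)}^2$ and $\a\int_0^t\int_{\TT^d}\frac{\Phi'(\rho)}{\Phi(\rho)}\abs{\nabla\rho}^2$.

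Next I would dispose of the four error terms. The transport term $\int_0^t\int_{\TT^d}S_\ve'(\rho)\nabla\cdot\nu(\rho)$ vanishes, since $S_\ve'(\rho)\nabla\cdot\nu(\rho)=\nabla\cdot G_\ve(\rho)$ for the $\R^d$-valued primitive $G_\ve$ of $S_\ve'\nu'$, which is well defined because $\nu\in\C([0,\infty);\R^d)$ and $S_\ve'\in\C([0,\infty))$; the noise correction $\frac{1}{2}\int_0^t\int_{\TT^d}S_\ve''(\rho)\sigma(\rho)\sigma'(\rho)\nabla\rho\cdot F_2$ vanishes by the same divergence argument together with $\nabla\cdot F_2=0$ (this is where Assumption~\ref{assumption_10}~(iii) is essential, as it removes the need for any bound on $\sigma\sigma'$); and the noise correction $\frac{1}{2}\int_0^t\int_{\TT^d}F_3\,S_\ve''(\rho)\sigma^2(\rho)$ is controlled, using $\abs{\sigma(\xi)}\leq c\Phi^{\frac{1}{2}}(\xi)$ (Assumption~\ref{assumption_10}~(i)) and \eqref{50_5}, by $c\int_0^t\int_{\TT^d}\Phi'(\rho)\leq c\int_0^t\int_{\TT^d}(1+\rho+\Phi(\rho))$. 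The terms $\int_{\TT^d}\Phi(\rho)$ and $\int_{\TT^d}\Phi'(\rho)$ produced in this way are absorbed into the dissipation: applying the interpolation estimate \eqref{50_2} with $\Psi=\Phi^{\frac{1}{2}}$ (strictly increasing with $\Phi(\xi)\leq c(1+\xi^m)$ by \eqref{aa_0}) and using the conservation of mass \eqref{4_00019} gives, for every $\d\in(0,1)$, that $\int_{\TT^d}\Phi(\rho)\leq c(1+\norm{\rho_0}^m_{L^1(\TT^d)})+\d\int_{\TT^d}\abs{\nabla\Phi^{\frac{1}{2}}(\rho)}^2$.

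For the martingale part $-\int_0^t\int_{\TT^d}S_\ve'(\rho)\nabla\cdot(\sigma(\rho)\dd\xi^F)$, I would integrate by parts in $x$ to remove the derivative from the factor $\sigma'(\rho)\nabla\rho$: writing $S_\ve'(\rho)\sigma'(\rho)\nabla\rho=\nabla T_\ve(\rho)$ with $T_\ve'=S_\ve'\sigma'$, and expanding $\nabla\cdot(\sigma(\rho)\dd\xi^F)=\sum_k(f_k\sigma'(\rho)\nabla\rho+\sigma(\rho)\nabla f_k)\cdot\dd B^k$, the martingale part becomes $-\sum_k\int_0^t\left(\int_{\TT^d}(S_\ve'(\rho)\sigma(\rho)-T_\ve(\rho))\nabla f_k\right)\cdot\dd B^k$. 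The crucial point is that $S_\ve'(\rho)\sigma(\rho)-T_\ve(\rho)=\int_0^\rho S_\ve''(s)\sigma(s)\ds$, so that $\abs{\sigma(s)}\leq c\Phi^{\frac{1}{2}}(s)$ gives the $\ve$-uniform bound $\abs{S_\ve'(\rho)\sigma(\rho)-T_\ve(\rho)}\leq c\int_0^\rho\Phi'(s)\Phi^{-\frac{1}{2}}(s)\ds=2c\Phi^{\frac{1}{2}}(\rho)$. The Burkholder-Davis-Gundy inequality (see, e.g., \cite[Chapter~4, Theorem~4.1]{RevYor1999}), the continuity of $F_3$, and H\"older's inequality then bound $\E[\sup_{t\in[0,T]}\abs{\,\cdot\,}]$ of this martingale by $c\,\E[(\int_0^T\int_{\TT^d}\Phi(\rho))^{\frac{1}{2}}]$, and Young's inequality with the previous bound on $\int_{\TT^d}\Phi(\rho)$ reduces this to a small multiple of $\E[\int_0^T\int_{\TT^d}\abs{\nabla\Phi^{\frac{1}{2}}(\rho)}^2]$ plus $cT(1+\E[\norm{\rho_0}^m_{L^1(\TT^d)}])$.

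Collecting the bounds, choosing the absorption parameters small, rearranging so that the supremum of the entropy and the time-integrated dissipations both sit on the left, and finally letting $\ve\to 0$---monotone convergence for the two dissipative terms, Fatou's lemma on the left-hand side, and the local integrability of $\log\Phi$ (Assumption~\ref{assumption_10}~(iv)) with dominated convergence for $\int_{\TT^d}S_\ve(\rho_0)$ and $\int_{\TT^d}S_\ve(\rho(\cdot,t))$---yields \eqref{4_019}. I expect the martingale term to be the main obstacle: the singularity of $\log\Phi$ at the origin makes the naive quadratic-variation estimate, which would involve $\log\Phi(\rho)\sigma'(\rho)\nabla\rho$, uncontrollable, and it is only the integration-by-parts identity together with the cancellation $\abs{S_\ve'(\rho)\sigma(\rho)-T_\ve(\rho)}\leq 2c\Phi^{\frac{1}{2}}(\rho)$---which hinges on $\abs{\sigma}\leq c\Phi^{\frac{1}{2}}$---that reduces it to the controllable quantity $\int_{\TT^d}\Phi(\rho)$. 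A secondary point is ensuring that the left-hand side is a priori finite, so that the absorptions are legitimate; this is handled, as usual, by inserting an additional truncation at large values of $\rho$ into $S_\ve$ and passing to the limit, or by first establishing the estimate for bounded $\F_0$-measurable initial data and then approximating.
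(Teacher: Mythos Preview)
Your proposal is correct and follows the paper's overall strategy: regularize the entropy by $S_\ve'(\xi)=\log(\Phi(\xi)+\ve)$, apply It\^o's formula, identify the dissipation terms, kill the transport and $F_2$ contributions as exact divergences (the latter using $\nabla\cdot F_2=0$), bound the $F_3$ term via $\sigma^2\leq c\Phi$ and \eqref{50_5}, control the martingale by BDG, absorb $\int_{\TT^d}\Phi(\rho)$ through the interpolation \eqref{50_2}, and pass $\ve\to 0$ by Fatou.

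The one genuine difference is the martingale estimate. The paper does \emph{not} integrate by parts onto $\nabla f_k$: it leaves the martingale in the form
\[
\int_0^t\int_{\TT^d}\frac{2\Phi^{\frac12}(\rho)\,\sigma(\rho)}{\Phi(\rho)+\ve}\,\nabla\Phi^{\frac12}(\rho)\cdot\dd\xi^F,
\]
applies BDG and H\"older---splitting the integrand as $\frac{\sigma}{(\Phi+\ve)^{1/2}}\cdot\bigl(\frac{4\Phi}{\Phi+\ve}\bigr)^{1/2}\abs{\nabla\Phi^{1/2}}$ and using $\sigma^2\leq c\Phi$ together with $\Phi/(\Phi+\ve)\leq 1$---to obtain the bound
\[
c\,\Bigl(\E\Bigl[\int_0^T\int_{\TT^d}\tfrac{4\Phi(\rho)}{\Phi(\rho)+\ve}\abs{\nabla\Phi^{\frac12}(\rho)}^2\Bigr]\Bigr)^{\frac12},
\]
which is then absorbed \emph{directly} into the dissipation via Young's inequality. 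Your route---integrating by parts to produce $\int_0^\rho S_\ve''(s)\sigma(s)\,ds$ and bounding it by $2c\Phi^{1/2}(\rho)$---is equally valid and yields instead a bound by $c\bigl(\E[\int_0^T\int_{\TT^d}\Phi(\rho)]\bigr)^{1/2}$, which you then reduce to the dissipation through a second application of \eqref{50_2}. The paper's path is one step shorter since it meets the dissipation term immediately; your path has the minor advantage that the quadratic-variation estimate involves only zeroth-order quantities and never sees $\nabla\rho$. Either way, the singularity of $\log\Phi$ at the origin is defused by the same mechanism, namely $\abs{\sigma}\leq c\Phi^{1/2}$.
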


\begin{proof}  For every $\d\in(0,1)$ let $\Psi_{\Phi,\d}\colon[0,\infty)\rightarrow\R$ be the unique smooth function satisfying $\Psi_{\Phi,\d}(0)=0$ and $\Psi_{\Phi,\d}'(\xi)=\log(\Phi(\xi)+\d)$.  It follows from It\^o's formula---which is justified similarly to \cite[Proposition~7.7]{FehGes2020} using \cite[Theorem~3.1]{Kry2013}---the nonnegativity of $\rho$, and the distributional equality
\begin{equation}\label{4_02525}\nabla\Phi^\frac{1}{2}(\rho)=\frac{\Phi'(\rho)}{2\Phi^\frac{1}{2}(\rho)}\nabla\rho\;\;\textrm{on}\;\;\TT^d\times(0,\infty)\times[0,T],\end{equation}
that, almost surely for every $t\in[0,T]$,
\begin{align}\label{4_025}
& \left.\int_{\TT^d}\Psi_{\Phi,\d}(\rho(x,s))\right|_{s=0}^{s=t}  = -\int_0^t\int_{\TT^d}\frac{4\Phi(\rho)}{\Phi(\rho)+\d}\abs{\nabla\Phi^\frac{1}{2}(\rho)}^2-\a\int_0^t\int_{\TT^d}\frac{\Phi'(\rho)}{\Phi(\rho)+\d}\abs{\nabla\rho}^2
\\ \nonumber & \quad +\int_0^t\int_{\TT^d}\frac{\Phi'(\rho)}{\Phi(\rho)+\d}\nu(\rho)\cdot\nabla\rho+ \int_0^t\int_{\TT^d}\frac{2\Phi^\frac{1}{2}(\rho)\sigma(\rho)}{\Phi(\rho)+\d}\nabla\Phi^\frac{1}{2}(\rho)\cdot\dd\xi^F
\\ \nonumber & \quad + \frac{1}{2}\int_0^t\int_{\TT^d}\frac{\Phi'(\rho)}{\Phi(\rho)+\d}\left(\sigma(\rho)\sigma'(\rho)F_2\cdot \nabla\rho+F_3\sigma^2(\rho)\right).
\end{align}
The third term on the righthand side of \eqref{4_025} vanishes in exact analogy with \eqref{15_1}.    It follows from $\sigma\leq c\Phi^\frac{1}{2}$, the Burkholder-Davis-Gundy inequality (see, for example, \cite[Chapter~4, Theorem~4.1]{RevYor1999}), H\"older's inequality, $\nicefrac{\xi}{1+\xi}\leq 1$ for every $\xi\in[0,\infty)$, and the definition of $\xi^F$ that there exists $c\in(0,\infty)$ such that
\begin{align}\label{4_05}
\E\left[\sup_{t\in[0,T]}\abs{ \int_0^t\int_{\TT^d}\frac{2\Phi^\frac{1}{2}(\rho)\sigma(\rho)}{\Phi(\rho)+\d}\nabla\Phi^\frac{1}{2}(\rho)\cdot\dd\xi^F}\right] & \leq c\E\left[\int_0^T\left(\int_{\TT^d}\frac{2\Phi^\frac{1}{2}(\rho)\sigma(\rho)}{\Phi(\rho)+\d}\abs{\nabla\Phi^\frac{1}{2}(\rho)}\right)^2\right]^\frac{1}{2}
\\ \nonumber & \leq c \E\left[\int_0^T\int_{\TT^d}\frac{4\Phi(\rho)}{\Phi(\rho)+\d}\abs{\nabla\Phi^\frac{1}{2}(\rho)}^2\right]^\frac{1}{2}.
\end{align}
For the final two term on the righthand side of \eqref{4_025}, it follows from $\sigma\leq c\Phi^\frac{1}{2}$, \eqref{4_00019}, \eqref{50_5}, and the boundedness of $F_3$ that there exists $c\in(0,\infty)$ such that, for every $\d\in(0,1)$,
\begin{equation}\label{4_4646}\abs{\int_0^T\int_{\TT^d}\frac{\Phi'(\rho)\sigma^2(\rho)}{\Phi(\rho)+\d}F_3}\leq c\int_0^T\int_{\TT^d}\Phi'(\rho)\leq c\left(T+T\norm{\rho_0}_{L^1(\TT^d)}+\int_0^T\int_{\TT^d}\Phi(\rho)\right),\end{equation}
and for $\Theta_{\sigma,\d}\colon[0,\infty)\rightarrow[0,\infty)$ the unique function satisfying
\[\Theta_{\sigma,\d}(0)=0\;\;\textrm{and}\;\;\Theta_{\sigma,\d}'(\xi)=\frac{\Phi'(\xi)\sigma(\xi)\sigma'(\xi)}{\Phi(\xi)+\d},\]
it follows from $\nabla\cdot F_2=0$ that
\begin{equation}\label{4_4444}  \int_0^T\int_{\TT^d}\frac{\Phi'(\rho)\sigma(\rho)\sigma'(\rho)}{\Phi(\rho)+\d}F_2\cdot\nabla\rho=-\int_0^T\int_{\TT^d}\Theta_{\sigma,\d}(\rho)\nabla\cdot F_2=0. \end{equation}
Returning to \eqref{4_025}, it follows from \eqref{4_05}, \eqref{4_4646}, \eqref{4_4444}, Young's inequality, and $T\in[1,\infty)$ that, for some $c\in(0,\infty)$ independent of $\d\in(0,1)$, $\a\in(0,1)$, and $T\in[1,\infty)$,
\begin{align}\label{4_07}
& \E\left[\sup_{t\in[0,T]}\int_{\TT^d}\Psi_{\Phi,\d}(\rho(x,t))\right] +\E\left[\int_0^T\int_{\TT^d}\frac{4\Phi(\rho)}{\Phi(\rho)+\d}\abs{\nabla\Phi^\frac{1}{2}(\rho)}^2\right]+\a\E\left[\int_0^T\int_{\TT^d}\frac{\Phi'(\rho)}{\Phi(\rho)+\d}\abs{\nabla\rho}^2\right]
\\ \nonumber & \leq cT\left(1+\E\left[\norm{\rho_0}_{L^1(\TT^d)}+\int_{\TT^d}\Psi_{\Phi,\d}(\rho_0)+\int_0^T\int_{\TT^d}\Phi(\rho)\right]\right).
\end{align}
Since $\{\Phi(\rho)=0\}=\{\Phi^\frac{1}{2}(\rho)=0\}$, it follows from Stampacchia's lemma (see \cite[Chapter~5, Exercises~17,18]{Eva2010}) that
\[\int_0^T\int_{\TT^d}\mathbf{1}_{\{\Phi(\rho)=0\}}\abs{\nabla\Phi^\frac{1}{2}(\rho)}^2 = 0.\]
Fatou's lemma proves that, after passing to the limit $\d\rightarrow 0$ in \eqref{4_07}, for some $c\in(0,\infty)$,
\begin{align*}
& \E\left[\sup_{t\in[0,T]}\int_{\TT^d}\Psi_\Phi(\rho(x,t))\right] +\a\E\left[\int_0^T\int_{\TT^d}\abs{\nabla\Phi^\frac{1}{2}(\rho)}^2\right]+\a\E\left[\int_0^T\int_{\TT^d}\frac{\Phi'(\rho)}{\Phi(\rho)}\abs{\nabla\rho}^2\right]
\\ \nonumber & \leq cT\left(1+\E\left[\norm{\rho_0}_{L^1(\TT^d)}+\int_{\TT^d}\Psi_{\Phi,\d}(\rho_0)+\int_0^T\int_{\TT^d}\Phi(\rho)\right]\right).
\end{align*}
The interpolation estimate \eqref{50_2} applied to $\Psi=\Phi^\frac{1}{2}$ integrated in time, \eqref{aa_0}, and $T\in[1,\infty)$ prove that there exists $c\in(0,\infty)$ such that
\begin{align*}
& \E\left[\sup_{t\in[0,T]}\int_{\TT^d}\Psi_\Phi(\rho(x,t))\right] +\E\left[\int_0^T\int_{\TT^d}\abs{\nabla\Phi^\frac{1}{2}(\rho)}^2\right]+\a\E\left[\int_0^T\int_{\TT^d}\frac{\Phi'(\rho)}{\Phi(\rho)}\abs{\nabla\rho}^2\right]
 \\ \nonumber & \leq cT\left(1+\E\left[\norm{\rho_0}_{L^1(\TT^d)}+\norm{\rho_0}^m_{L^1(\TT^d)}+\int_{\TT^d}\Psi_\Phi(\rho_0)\right]\right).
\end{align*}
Estimate \eqref{4_019} now follows from $T\in[1,\infty)$, $m\in[1,\infty)$, and Young's inequality, which completes the proof.  \end{proof}

\begin{remark}  The condition that $\nabla\cdot F_2=0$ in Proposition~\ref{ent_dis_est} is not necessary.  Returning to \eqref{4_4444}, it is only necessary that for all $\d\in(0,1)$ sufficiently small the functions $\Theta_{\sigma,\d}$ satisfy, for every $\d\in(0,1)$ sufficiently small, for some $c\in(0,\infty)$,
\begin{equation}\label{eq_cond_1}\abs{\Theta_{\sigma,\d}(\xi)\nabla\cdot F_2(x)}\leq c(1+\xi+\Phi(\xi))\;\;\textrm{for every}\;\;\xi\in[0,\infty)\;\;\textrm{and}\;\;x\in\TT^d.\end{equation}
The term \eqref{4_4444} can then be estimated identically to \eqref{4_4646}.  However, in the model case that $\Phi(\xi)=\xi^m$ and $\sigma(\xi)=\Phi^{\nicefrac{1}{2}}(\xi)$ condition \eqref{eq_cond_1} requires $m\in(1,\infty)$, and therefore excludes the Dean--Kawasaki case unless $\nabla\cdot F_2=0$.  \end{remark}

\subsection{Existence of solutions to \eqref{1_0} for a smooth and bounded $\sigma$}\label{sec_exist_approx}

In this section, we establish the existence of solutions to the equation
\begin{align}\label{8_0} \dd\rho = \Delta\Phi(\rho)\dt-\nabla\cdot(\sigma(\rho)\dd\xi^F+\nu(\rho)\dt)+\frac{1}{2}\nabla\cdot\left(F_1(x)[\sigma'(\rho)]^2\nabla\rho+\sigma(\rho)\sigma'(\rho)\nabla\rho\cdot F_2\right)\dt\end{align}
for nonnegative initial data in $L^{m+p-1}(\O;L^1(\TT^d))\cap L^p(\O;L^p(\TT^d))$ and for nonlinearities $\Phi$, $\sigma$, and $\nu$ that satisfy Assumptions~\ref{assume} and \ref{assume_3}.  We prove the existence in Proposition~\ref{prop_approx_exist}.  In Proposition~\ref{prop_approx_kinetic}, we derive the kinetic formulation of \eqref{8_0},  and we show that the solution constructed in Proposition~\ref{prop_approx_exist} is a stochastic kinetic solution of \eqref{8_0} in the sense of Definition~\ref{def_sol}.

\begin{prop}\label{prop_approx_exist}  Let $\xi^F$, $\Phi$, $\sigma$, and $\nu$ satisfy Assumptions~\ref{assumption_noise}, \ref{assume}, and \ref{assume_3} for some $p\in[2,\infty)$, let $\a\in(0,1)$, and let $\rho_0\in L^{m+p-1}(\O;L^1(\TT^d))\cap L^p(\O;L^p(\TT^d))$ be nonnegative and $\F_0$-measurable.  Then there exists a solution of \eqref{4_0} in the sense of Definition~\ref{def_approx}.  Furthermore, the solution satisfies the estimates of Proposition~\ref{prop_approx_est}.\end{prop}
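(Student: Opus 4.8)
The plan is to obtain $\rho$ as a limit of solutions to uniformly non-degenerate, Lipschitz regularizations of \eqref{4_0}, for which well-posedness is classical, using the a priori bounds of Proposition~\ref{prop_approx_est} to control the limit. For each $n\in\N$ I would fix a smooth strictly increasing $\Phi^n$ with $\Phi^n(0)=0$, with $(\Phi^n)'$ bounded and bounded below by a positive constant, with $\Phi^n\to\Phi$ and $(\Phi^n)'\to\Phi'$ locally uniformly on $(0,\8)$, and such that $\Phi^n$ satisfies \eqref{aa_0}, \eqref{11_11_11} and the remaining structural conditions of Assumption~\ref{assume} with constants independent of $n$ (for instance by mollifying, adding a small linear term, and truncating the growth far out and then removing the truncation); similarly I would replace $\nu$ by a smooth, globally Lipschitz, bounded truncation $\nu^n\to\nu$ with the bounds of Assumption~\ref{assume} uniform in $n$. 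Since $\sigma$ already satisfies Assumption~\ref{assume_3}, the noise coefficient is left unchanged, and one checks that $u\mapsto-\nabla\cdot(\sigma(u)\dd\xi^F)$ is Hilbert--Schmidt into $H^{-1}(\TT^d)$ with Lipschitz dependence, because $\sum_k\|\sigma(u)f_k\|_{L^2}^2=\int\sigma(u)^2 F_1$ is controlled by $\|F_1\|_{L^\infty}$ from Assumption~\ref{assumption_noise}. The regularized equation
\[\dd\rho^n=\Delta\Phi^n(\rho^n)\dt+\a\Delta\rho^n\dt-\nabla\cdot\left(\sigma(\rho^n)\dd\xi^F+\nu^n(\rho^n)\dt\right)+\tfrac12\nabla\cdot\left(F_1[\sigma'(\rho^n)]^2\nabla\rho^n+\sigma(\rho^n)\sigma'(\rho^n)F_2\right)\dt\]
has leading coefficient $(\Phi^n)'(\rho^n)+\a+\tfrac12 F_1[\sigma'(\rho^n)]^2$ bounded above and below by positive constants and all remaining coefficients globally Lipschitz of at most linear growth; existence and uniqueness of a probabilistically strong, continuous $L^p(\TT^d)$-valued, $\F_t$-predictable solution $\rho^n$ on $(\O,\F,(\F_t),\P)$ then follows from the variational theory for (locally) monotone SPDE (the quasilinear term being absorbed by the local monotonicity condition, with the bad constant controlled by the $H^1$-norm of the solution, which is a priori bounded), or equivalently from a Galerkin approximation together with the a priori estimates. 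By It\^o's formula and the argument of Proposition~\ref{prop_approx_est}, $\rho^n$ and $\Theta_{\Phi^n,2}(\rho^n)$ lie in $L^2([0,T];H^1(\TT^d))$ almost surely and $\rho^n$ satisfies \eqref{4_00019}, \eqref{4_19} and \eqref{50_7070} with constants independent of $n$, since that proof uses only the structural assumptions, which hold uniformly in $n$.

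Next I would pass to the limit $n\to\8$. From \eqref{4_19} with $p=2$ and the $\a\Delta$ term, $\{\rho^n\}$ is bounded in $L^2(\O;L^\8([0,T];L^2(\TT^d)))\cap L^2(\O;L^2([0,T];H^1(\TT^d)))$; combining the equation with \eqref{4_19} and \eqref{11_11_11}, the finite-variation part of $\rho^n$ is uniformly bounded in $W^{1,1}([0,T];H^{-s}(\TT^d))$ and the martingale part in $W^{\b,2}([0,T];H^{-s}(\TT^d))$ for $\b\in(0,\nicefrac12)$ and $s$ large, so $\{\rho^n\}$ is bounded in $L^1(\O;W^{\b,1}([0,T];H^{-s}(\TT^d)))$. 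By the Aubin--Lions--Simon lemma the laws of $\rho^n$ are tight on $L^2([0,T]\times\TT^d)$, hence the laws of $(\rho^n,\{B^k\}_k)$ are tight on $L^2([0,T]\times\TT^d)\times\C([0,T];(\R^d)^{\N})$. In parallel I would record that \eqref{4_0} enjoys pathwise uniqueness: writing $\Delta\Phi(\rho)+\a\Delta\rho=\Delta(\Phi(\rho)+\a\rho)$ with $(\Phi+\a\cdot)'\geq\a>0$, two solutions on the same stochastic basis satisfy an $L^1$-contraction by a doubling-of-variables argument (a simplified version of the argument of Theorem~\ref{thm_unique}, the stochastic conservation-law structure being unchanged). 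Consequently, along any pair of subsequences, the Skorokhod representation theorem produces on a common probability space two solutions of \eqref{4_0} driven by the same Brownian motions, which coincide by pathwise uniqueness, so the joint laws are supported on the diagonal; by the Gy\"ongy--Krylov characterization of convergence in probability, $\rho^n$ converges in probability in $L^2([0,T]\times\TT^d)$ to an $\F_t$-adapted limit $\rho$.

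It then remains to pass to the limit in the weak formulation of Definition~\ref{def_approx}. Along a subsequence $\rho^n\to\rho$ almost surely in $L^2([0,T]\times\TT^d)$, hence, along a further subsequence, pointwise almost everywhere; together with $\Phi^n\to\Phi$, $\nu^n\to\nu$ locally uniformly, with the uniform integrability of $\Phi^n(\rho^n)$, $\nu^n(\rho^n)$ and $\sigma(\rho^n)^2$ furnished by \eqref{4_19}, \eqref{aa_0}, \eqref{11_11_11}, Lemma~\ref{lem_aux} and the interpolation estimate \eqref{50_2}, and with the weak $L^2$-convergence $\nabla\Theta_{\Phi^n,2}(\rho^n)\rightharpoonup\nabla\Theta_{\Phi,2}(\rho)$ (whose limit is identified from the almost everywhere convergence of $\rho^n$), every non-stochastic term in Definition~\ref{def_approx} passes to the limit; the stochastic term converges in probability because $\sigma$ is bounded and Lipschitz under Assumption~\ref{assume_3}, so $\sigma(\rho^n)\to\sigma(\rho)$ in $L^2([0,T]\times\TT^d)$. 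Using the $H^1$-regularity of $\rho$ inherited from the uniform $L^2_tH^1_x$-bound via lower semicontinuity, the limiting identity can be recast in the precise form of Definition~\ref{def_approx}; $\rho$ is nonnegative, continuous and $L^p$-valued, and $\F_t$-predictable, so it is a solution of \eqref{4_0} in the sense of Definition~\ref{def_approx}. Proposition~\ref{prop_approx_est} then applies to $\rho$ and yields the asserted estimates, completing the proof.

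The main obstacle is the passage to the limit in the quasilinear diffusion $\Delta\Phi(\rho)$ and the drift $\nu(\rho)$: the coefficient $\Phi'$ may simultaneously degenerate or blow up as $\rho\to0$ and grow superlinearly as $\rho\to\8$, so neither $\Phi^n(\rho^n)$ nor $(\Phi^n)'(\rho^n)$ is bounded in $L^\8$ and the limit cannot be commuted with the nonlinearity naively. The argument must therefore be organized around the bounded quantity $\Theta_{\Phi^n,p}(\rho^n)$ and its $H^1$-bound from \eqref{4_19}, the conservation of mass \eqref{4_00019}, and the value-localized estimate \eqref{50_7070}, and the almost everywhere convergence of $\rho^n$ must be combined carefully with weak compactness and with the integrability structure built into Assumption~\ref{assume} to identify the limiting fluxes; by contrast, the non-degeneracy furnished by the $\a\Delta$ term makes the spatial compactness routine here, and under Assumption~\ref{assume_3} the passage to the limit in the noise term is straightforward.
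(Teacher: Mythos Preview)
Your proposal is essentially correct and follows the same blueprint as the paper: regularize $\Phi$ and $\nu$, establish existence for the uniformly parabolic approximation, derive the estimates of Proposition~\ref{prop_approx_est} uniformly in the regularization, obtain tightness via Aubin--Lions--Simon, and upgrade to probabilistically strong convergence via pathwise uniqueness and the Gy\"ongy--Krylov lemma. The paper's proof is itself only a sketch and refers forward to ``a simplified version of Theorem~\ref{thm_exist}'' for the compactness-and-limit step, which is precisely the Skorokhod/Gy\"ongy--Krylov machinery you spell out.

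The one organizational difference is that the paper does not appeal to an off-the-shelf variational or locally monotone SPDE result for the regularized equation; instead it introduces two further layers of approximation---a Galerkin projection onto $\textrm{span}\{e_1,\ldots,e_M\}$ and a truncation of the noise to $\xi^{F,K}=\sum_{k\leq K}f_kB^k$---so that at the innermost level one has a finite-dimensional It\^o SDE with smooth bounded coefficients, to which the classical existence theory applies directly. The limits are then taken in the order $M\to\infty$, $K\to\infty$, $n\to\infty$. Your shortcut of invoking the variational framework is reasonable but, as you half-acknowledge, the verification of local monotonicity for the quasilinear term with an $H^1$-dependent constant is somewhat circular unless one already has the a~priori bound; the paper's explicit Galerkin route avoids this subtlety. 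Either way, the substantive analytic content---the uniform estimates of Proposition~\ref{prop_approx_est}, the Aubin--Lions compactness, and the identification of the limit in each nonlinear term using the structure of Assumption~\ref{assume}---is the same.
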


\begin{proof}  Let $\{\Phi_n\}_{n\in\N}$ be smooth, bounded, and nondecreasing with $\Phi_n(0)=0$, and let $\Phi_n$ and $\Phi'_n$ converge locally uniformly as $n\rightarrow\infty$ to $\Phi$ and $\Phi'$ on $[0,\infty)$, and let $\{\nu_n\}_{n\in\N}$ be smooth approximations of $\nu$ that converge locally uniformly to $\nu$ as $n\rightarrow\infty$.   Let $\{e_k\}_{k\in\N}$ be an orthonormal $L^2(\TT^d)$-basis that is an orthogonal $H^1(\TT^d)$-basis.  For every $K\in\N$ let $\xi^{F,K}$ denote the finite-dimensional noise $\xi^{F,K}=\sum_{k=1}^Kf_k(x)B^k_t$, and for every $M\in\N$ let $\Pi_M\colon L^2(\TT^d\times[0,T])\rightarrow L^2(\TT^d\times[0,T])$ be the projection map defined by $\Pi_Mg(x,t)=\sum_{k=1}^Mg_k(t)e_k$ for $g_k(t)=\int_{\TT^d}g(x,t)e_k(x)\dx$, and let $L^2_M = \Pi_M(L^2(\TT^d\times[0,T]))$.  Then, the projected equation
\begin{align*}
 \dd\rho & =\Pi_M\left(\Delta\Phi_n(\rho)\dt+\a\Delta\rho\dt-\nabla\cdot(\sigma(\rho)\dd\xi^{F,K}-\nabla\cdot\nu_n(\rho)\dt)\right)
 \\ & \quad +\Pi_M\left(\frac{1}{2}\nabla\cdot\left(F^K_1[\sigma'(\rho)]^2\nabla\rho+\sigma(\rho)\sigma'(\rho)\nabla\rho\cdot F^K_2\right)\dt\right),
\end{align*}
posed in $L^2(\O;L^2_M)$ for $F^K_1=\sum_{k=1}^Kf_k^2$ and $F_2^K=\sum_{k=1}^Kf_k\nabla f_k$ is equivalent to a finite-dimensional system of It\^o equations.  Since the $\Phi_n$, $\sigma$, and $\nu_n$ are smooth, bounded functions, the system has a unique strong solution (see, for example, \cite[Chapter~9, Theorem~2.1]{RevYor1999}).  We then pass first to the limit $M\rightarrow\infty$, then $K\rightarrow\infty$, and then $n\rightarrow\infty$ using simplified version of Theorem~\ref{thm_exist} below, relying on simplified versions of Proposition~\ref{prop_approx_est}, Proposition~\ref{prop_approx_time}, and the Aubin-Lions-Simon Lemma \cite{Aub1963,Lio1969,Sim1987}.  The $L^p$-continuity is a consequence of It\^o's formula \cite[Theorem~3.1]{Kry2013}, and the nonnegativity is a consequence of applying It\^o's formula to the negative part $\min(0,\rho)$ of the solution like was done for the positive part in Proposition~\ref{prop_approx_est}.  For a similar argument in this simplified setting see, for example, \cite[Proposition~5.4]{DarGes2020}.  \end{proof}

\begin{prop}\label{prop_approx_kinetic}  Let $\xi^F$, $\Phi$, $\sigma$, and $\nu$ satisfy Assumptions~\ref{assumption_noise}, \ref{assume}, and \ref{assume_3} for some $p\in[2,\infty)$, let $\a\in(0,1)$, and let $\rho_0\in L^{m+p-1}(\O;L^1(\TT^d))\cap L^p(\O;L^p(\TT^d))$ be nonnegative and $\F_0$-measurable.  Let $\rho$ be a solution of \eqref{4_0} in the sense of Definition~\ref{def_approx} and let $\chi\colon\TT^d\times\R\times[0,T]\rightarrow\{0,1\}$ be the kinetic function $\chi(x,\xi,t)=\mathbf{1}_{\{0<\xi<\rho(x,t)\}}$.  Then $\rho$ is a stochastic kinetic solution in the sense that, almost surely for every $\psi\in\C^\infty_c(\TT^d\times\R)$ and $t\in[0,T]$,\small
\begin{align}\label{4_100}
& \int_{\TT^d}\int_\R\chi(x,\xi,t)\psi = \int_{\TT^d}\overline{\chi}(\rho_0)\psi-\int_0^t\int_{\TT^d}\Phi'(\rho)\nabla\rho\cdot (\nabla\psi)(x,\rho)-\a\int_0^t\int_{\TT^d}\nabla\rho\cdot(\nabla\psi)(x,\rho)
\\ \nonumber & \quad -\frac{1}{2}\int_0^t\int_{\TT^d}F_1[\sigma'(\rho)]^2\nabla\rho\cdot(\nabla\psi)(x,\rho) -\frac{1}{2}\int_0^t\int_{\TT^d}\sigma(\rho)\sigma'(\rho)F_2\cdot(\nabla\psi)(x,\rho)
\\ & \nonumber \quad -\int_0^t\int_{\TT^d}(\partial_\xi\psi)(x,\rho)\Phi'(\xi)\abs{\nabla\rho}^2-\a\int_0^t\int_{\TT^d}(\partial_\xi\psi)(x,\rho)\abs{\nabla\rho}^2
\\ \nonumber & \quad +\frac{1}{2}\int_0^t\int_{\TT^d}(\partial_\xi\psi)(x,\rho)\sigma(\rho)\sigma'(\rho)\nabla\rho\cdot F_2+\frac{1}{2}\int_0^T\int_{\TT^d} (\partial_\xi\psi)(x,\rho)F_3\sigma^2(\rho)
\\ & \nonumber \quad -\int_0^t\int_{\TT^d}\psi(x,\rho)\nabla\cdot\nu(\rho)-\int_0^t\int_{\TT^d}\psi(x,\rho)\nabla\cdot\left(\sigma(\rho)\dd\xi^F\right),
\end{align}
\normalsize where the derivatives are interpreted according to Remark~\ref{rem_derivative} and where the kinetic measure is defined by \eqref{4_700} below.  \end{prop}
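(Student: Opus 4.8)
The plan is to derive \eqref{4_100} from the weak formulation of Definition~\ref{def_approx} by applying It\^o's formula to nonlinear functions $S(\rho)$ of the solution and then passing to the kinetic picture through the identity $S(\rho(x,t))=\int_\R S'(\xi)\chi(x,\xi,t)\dxi$, which holds for every $S\in\C^\infty(\R)$ with $S(0)=0$. Concretely, I would fix such an $S$ with $S'$ compactly supported and fix $\phi\in\C^\infty(\TT^d)$, and apply It\^o's formula to $t\mapsto\int_{\TT^d}S(\rho(x,t))\phi(x)\dx$. Starting from Definition~\ref{def_approx} and using that $\rho$ and $\Theta_{\Phi,2}(\rho)$ lie in $L^2([0,T];H^1(\TT^d))$ almost surely, this is justified by the version of It\^o's formula of Krylov \cite[Theorem~3.1]{Kry2013} together with a spatial mollification argument exactly as in \cite[Proposition~7.7]{FehGes2020}; here Assumption~\ref{assume_3}, which makes $\sigma$ and $\sigma'$ bounded, together with the growth bounds of Assumption~\ref{assume}, guarantees that every coefficient is integrable against the available regularity. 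This produces the finite-variation contributions of $\Delta\Phi(\rho)$, $\a\Delta\rho$, $\nabla\cdot\nu(\rho)$ and of the It\^o-to-Stratonovich correction $\tfrac12\nabla\cdot(F_1[\sigma'(\rho)]^2\nabla\rho+\sigma(\rho)\sigma'(\rho)F_2)$, the It\^o correction $\tfrac12\sum_k S''(\rho)(\nabla\cdot(\sigma(\rho)f_k))^2$, and the martingale term $-\sum_k S'(\rho)\nabla\cdot(\sigma(\rho)f_k)\ud{B^k}$.

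The key step is then the cancellation. Using the pointwise identity $\sum_k(\nabla\cdot(\sigma(\rho)f_k))^2=F_1[\sigma'(\rho)]^2\abs{\nabla\rho}^2+2\sigma(\rho)\sigma'(\rho)F_2\cdot\nabla\rho+F_3\sigma^2(\rho)$ as in \eqref{4_01}, together with the chain rule $S'(\rho)\nabla\cdot G=\nabla\cdot(S'(\rho)G)-S''(\rho)\nabla\rho\cdot G$, the $F_1[\sigma'(\rho)]^2\abs{\nabla\rho}^2$ terms coming from the Stratonovich and from the It\^o corrections cancel; integrating the surviving divergence terms by parts against $\phi$ yields exactly the weak form of the kinetic equation for $S(\rho)$, with the entropy-defect term $-\int_{\TT^d}S''(\rho)(\Phi'(\rho)+\a)\abs{\nabla\rho}^2\phi$ appearing separately. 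Writing $S(\rho(x,t))=\int_\R S'(\xi)\chi(x,\xi,t)\dxi$ and noting that $(\nabla_x\psi)(x,\rho)=S'(\rho)\nabla\phi$ and $(\partial_\xi\psi)(x,\rho)=S''(\rho)\phi$ for $\psi(x,\xi)=S'(\xi)\phi(x)$, the resulting identity is precisely \eqref{4_100} tested against this $\psi$, provided the kinetic measure $q$ of \eqref{4_700} is taken to be $q=\delta_0(\xi-\rho)(\Phi'(\rho)+\a)\abs{\nabla\rho}^2$, so that $-\int_0^t\int_\R\int_{\TT^d}\partial_\xi\psi\ud q$ reproduces the entropy-defect term.

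To reach an arbitrary $\psi\in\C^\infty_c(\TT^d\times\R)$, I would invoke the density of finite linear combinations of products $S'(\xi)\phi(x)$, with $S'\in\C^\infty_c(\R)$ and $\phi\in\C^\infty(\TT^d)$, in $\C^\infty_c(\TT^d\times\R)$ (for instance via a Fourier expansion in $\xi$ on a fixed interval, truncated by a fixed cutoff in $\xi$), together with the continuity of each term of \eqref{4_100} under $\C^1$-convergence of $\psi$ with uniformly bounded support: the deterministic terms by dominated convergence, using the $L^2_tH^1_x$-regularity of $\rho$ from Definition~\ref{def_approx} and the bounds of Proposition~\ref{prop_approx_est} (in particular $\nabla\Theta_{\Phi,2}(\rho)\in L^2$, $\sigma(\rho)\in L^2$, $\nu(\rho)\in L^1$, so that all the relevant integrands are dominated on the compact $\xi$-support of $\psi$), and the martingale term by the It\^o isometry together with dominated convergence. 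The asserted properties of $q$ then follow: $q$ is predictable since $\rho$ and $\nabla\rho$ are; it is nonnegative, locally finite, and satisfies $q\ge\delta_0(\xi-\rho)\Phi'(\rho)\abs{\nabla\rho}^2$ by construction; and $\E[q(\TT^d\times[M,M+1]\times[0,T])]\to0$ as $M\to\infty$ by \eqref{50_7070}, using the $L^1$-integrability of $\rho_0$ and the $L^2$-integrability of $\sigma(\rho)$ (itself a consequence of \eqref{aa_5050}, the interpolation estimate \eqref{50_2} and Proposition~\ref{prop_approx_est} in the case $p=2$, cf.\ Remark~\ref{rem_rem}).

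The main obstacle I anticipate is the rigorous justification of It\^o's formula for the functional $\int_{\TT^d}S(\rho)\phi$ and the careful accounting of the many terms through the Stratonovich cancellation described above. However, because $\sigma'$ is compactly supported here every coefficient is bounded and no singularity at the zero set of $\rho$ appears, so this reduces to an essentially standard computation; the genuinely delicate singular behaviour at small values is deferred to Section~\ref{sec_exist}.
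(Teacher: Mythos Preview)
Your proposal is correct and follows essentially the same route as the paper: apply It\^o's formula (via \cite[Theorem~3.1]{Kry2013} as in \cite[Proposition~7.7]{FehGes2020}) to $\int_{\TT^d}S(\rho)\phi$ for smooth $S$ with $S'$ compactly supported and $\phi\in\C^\infty(\TT^d)$, rewrite the result for the test function $\psi(x,\xi)=S'(\xi)\phi(x)$ using the kinetic identity, and extend to general $\psi\in\C^\infty_c(\TT^d\times\R)$ by density. The paper's proof is terser about the Stratonovich/It\^o cancellation (it simply records the outcome, having already computed it in Section~\ref{sec_kinetic}) and about the properties of $q$ (it just invokes Proposition~\ref{prop_approx_est} to say $q$ is finite), but your more explicit accounting is correct and helpful.
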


\begin{proof}  Let $S\colon \R\rightarrow\R$ be a smooth, bounded function and let $\psi\in\C^\infty(\TT^d)$.  It\^o's formula--- which is justified similarly to \cite[Proposition~7.7]{FehGes2020} using \cite[Theorem~3.1]{Kry2013}---applied to the composition $S(\rho)$ implies that, almost surely for every $t\in[0,T]$,\small
\begin{align*}
& \int_{\TT^d}S(\rho(x,t))\psi(x) = \int_{\TT^d}S(\rho_0)\psi-\int_0^t\int_{\TT^d}S'(\rho)\Phi'(\rho)\nabla\rho\cdot \nabla\psi-\a\int_0^t\int_{\TT^d}S'(\rho)\nabla\rho\cdot\nabla\psi
\\ & \nonumber \quad  -\frac{1}{2}\int_0^t\int_{\TT^d}S'(\rho)[\sigma'(\rho)]^2\nabla\rho\cdot\nabla\psi-\frac{1}{2}\int_0^t\int_{\TT^d}S'(\rho)\sigma(\rho)\sigma'(\rho)F_2\cdot\nabla\psi
\\ & \nonumber \quad -\int_0^t\int_{\TT^d}S''(\rho)\psi\Phi'(\rho)\abs{\nabla\rho}^2-\a\int_0^t\int_{\TT^d}S''(\rho)\psi\abs{\nabla\rho}^2-\int_0^t\int_{\TT^d}S'(\rho)\nabla\cdot\nu(\rho)
\\ \nonumber & \quad -\int_0^t\int_{\TT^d}S'(\rho)\psi(x)\nabla\cdot\left(\sigma(\rho)\dd\xi^F\right)+\frac{1}{2}\int_0^t\int_{\TT^d}S''(\rho)\psi\sigma\sigma'(\rho)\nabla\rho\cdot F_2+\frac{1}{2}\int_0^t\int_{\TT^d}S''(\rho)\psi F_3\sigma^2(\rho).
\end{align*}
\normalsize After defining the smooth function $\Psi_S(x,\xi)=\psi(x)S'(\xi)$, almost surely for every $t\in[0,T]$,\small
\begin{align*}
& \int_{\TT^d}\int_\R\chi(x,\xi,t)\Psi_S = \int_{\TT^d}\overline{\chi}(\rho_0)\Psi_S-\int_0^t\int_{\TT^d}\Phi'(\rho)\nabla\rho\cdot (\nabla_x\Psi_S)(x,\rho)-\a\int_0^t\int_{\TT^d}\nabla\rho\cdot(\nabla_x\Psi_S)(x,\rho)
\\ \nonumber & -\frac{1}{2}\int_0^t\int_{\TT^d}[\sigma'(\rho)]^2\nabla \rho\cdot(\nabla_x\Psi_S)(x,\rho)-\frac{1}{2}\int_0^t\int_{\TT^d}\sigma(\rho)\sigma'(\rho)F_2\cdot (\nabla_x\Psi_S)(x,\rho)
\\ & \nonumber -\int_0^t\int_{\TT^d}(\partial_\xi\Psi_S)(x,\rho)\Phi'(\rho)\abs{\nabla\rho}^2-\a\int_0^t\int_{\TT^d}(\partial_\xi\Psi_S)(x,\rho)\abs{\nabla\rho}^2-\int_0^t\int_{\TT^d}\Psi_S(x,\rho)\nabla\cdot\nu(\rho)
\\ \nonumber & -\int_0^t\int_{\TT^d}\Psi_S(x,\rho)\nabla\cdot\left(\sigma(\rho)\dd\xi^F\right)+\frac{1}{2}\int_0^t\int_{\TT^d} (\partial_\xi\Psi_S)(x,\rho)\sigma(\rho)\sigma'(\rho)\nabla\rho\cdot F_2+\frac{1}{2}\int_0^t\int_{\TT^d}(\partial_\xi\Psi_S)(x,\rho)F_3\sigma^2(\rho).
\end{align*}
\normalsize Since linear combinations of functions of the type $\Psi_S$ are dense in the space $\C^\infty_c(\TT^d\times\R)$, this completes the proof that $\rho$ satisfies \eqref{4_100}.  The kinetic measure corresponding to the solution $\rho$ in Proposition~\ref{prop_approx_kinetic} is the measure
\begin{equation}\label{4_700}q = \delta_0(\xi-\rho)\Phi'(\xi)\abs{\nabla\rho}^2+\a\delta_0(\xi-\rho)\abs{\nabla\rho}^2=\delta_0(\xi-\rho)\left(\abs{\nabla\Theta_{\Phi,2}(\rho)}^2+\a\abs{\nabla\rho}^2\right).\end{equation}
It follows from the definitions, Assumption~\ref{assume}, and Proposition~\ref{prop_approx_est} that $q$ is a finite measure that satisfies the conditions of Definition~\ref{def_measure}, which completes the proof.\end{proof}

\subsection{Existence of stochastic kinetic solutions to \eqref{1_0}}\label{sec_exist}

In this section, we will prove the existence of stochastic kinetic solutions to the equation
\begin{align}\label{6_0}\dd \rho = \Delta\Phi(\rho)\dt-\nabla\cdot\left(\sigma(\rho)\dd\xi^F+\nu(\rho)\dt\right)+\frac{1}{2}\nabla\cdot\left(F_1[\sigma'(\rho)]^2\nabla\rho+\sigma(\rho)\sigma'(\rho) F_2\right)\dt,\end{align}
with initial data in the space $L^{m+p-1}(\O;L^1(\TT^d))\cap L^p(\O;L^p(\TT^d))$ and nonlinearities $\Phi$, $\sigma$, and $\nu$ satisfying Assumptions~\ref{assume_1} and \ref{assume}.  The essential difficulty is that we do not have a stable $W^{\beta,1}_tH^{-s}_x$-estimate for the approximate solutions constructed in Proposition~\ref{prop_approx_exist}.  We have only the stable $W^{\beta,1}_tH^{-s}_x$-estimate of Proposition~\ref{prop_approx_time} for the functions $\Psi_\d(\rho)$ defined in Definition~\ref{def_cutoff}.  It is for this reason that we introduce in Definition~\ref{def_metric_1} a new metric on $L^1([0,T];L^1(\TT^d))$ based on the nonlinear approximations $\Psi_\d$.   The corresponding metric topology is equal to the usual strong norm topology on $L^1([0,T];L^1(\TT^d))$ (see  Lemma~\ref{lem_equivalent_top}), and with respect to this metric we prove that the tightness of the $\Psi_\d(\rho)$ in law implies the tightness of the approximate solutions $\rho$ in law (see Definition~\ref{def_approx_sol} and Proposition~\ref{prop_tight}).  In Proposition~\ref{prop_mart_tight} we prove the tightness of the martingale terms of the equation.  In Theorem~\ref{thm_exist} we prove the existence of a probabilistically weak solution, and we then use the pathwise uniqueness of Theorem~\ref{thm_unique} and Lemma~\ref{lem_weak_conv} to prove the existence of a probabilistically strong solution.  Corollaries~\ref{cor_exist} and \ref{cor_exist_1} extend these results to initial data with finite entropy and $L^1$-initial data respectively.

\begin{lem}\label{lem_nonlinear}  Let $\sigma$ satisfy Assumption~\ref{assume}.  Then there exists a sequence $\{\sigma_n\}_{n\in\N}$ that satisfies Assumption~\ref{assume_3} for every $n\in\N$, that satisfies Assumption~\ref{assume} uniformly in $n\in\N$, and that satisfies $\sigma_n\rightarrow\sigma$ in $\C^1_{\textrm{loc}}((0,\infty))$ as $n\rightarrow\infty$.  \end{lem}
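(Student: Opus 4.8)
The plan is to obtain $\sigma_n$ from $\sigma$ by three modifications, each of which leaves $\sigma$ unchanged on an interval that exhausts $(0,\infty)$. First, since $\sigma\in\C([0,\infty))$ with $\sigma(0)=0$, the numbers $a_n:=\sup_{[0,\nicefrac{2}{n}]}\abs{\sigma}$ tend to zero, and I replace $\sigma$ on $[0,\nicefrac1n]$ by a smooth function vanishing on $[0,\nicefrac{1}{2n}]$, matching $\sigma$ smoothly near $\nicefrac1n$, and bounded by $c\,a_n$ with derivative bounded by $c\,a_n n$. Second, on $[\nicefrac1n,\infty)$ the modified function equals $\sigma$, which is only $\C^1_\textrm{loc}$, so I mollify it at a scale $\ve_n\in(0,\nicefrac1{4n})$; crucially, for each fixed $n$ the quantities $\sup_{[\nicefrac1{2n},n+1]}(\abs{\sigma}+\abs{\sigma'})$, $(\inf_{[\nicefrac1{2n},n+1]}\Phi')^{-1}$, $n^{p}$, and the moduli of continuity of $\Phi'$, $\Theta_{\Phi,2}$, $\Theta_{\Phi,p}$ on $[\nicefrac1{2n},n+1]$ are finite, so I may fix $\ve_n$ (a function of $n$ alone) small enough that $\ve_n$ times any prescribed power of all of these is at most $\nicefrac1n$, and in particular $\ve_n\to0$. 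Third, I multiply the result by a smooth cutoff $\eta_n$ with $\eta_n\equiv1$ on $[0,n]$ and $\eta_n\equiv0$ on $[n+1,\infty)$, obtaining $\sigma_n$. Then $\sigma_n\in\C([0,\infty))\cap\C^\infty((0,\infty))$, $\sigma_n(0)=0$, and $\sigma_n'$ is smooth and supported in $[0,n+1]$, so $\sigma_n$ satisfies Assumption~\ref{assume_3}; and on any compact $K\subset(0,\infty)$ one has $K\subset(\nicefrac1n,n)$ for $n$ large, where $\sigma_n=\sigma\ast\kappa_{\ve_n}$, which converges to $\sigma$ in $\C^1(K)$ since $\sigma\in\C^1_\textrm{loc}((0,\infty))$. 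Hence $\sigma_n\to\sigma$ in $\C^1_\textrm{loc}((0,\infty))$.

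\textbf{The interior and the origin.} The remaining (and main) task is to verify Assumption~\ref{assume} for $\sigma_n$ with constants independent of $n$. On $[0,\nicefrac1n]$ every right-hand side $1+\xi+\Theta^2_{\Phi,2}(\xi)$ or $1+\xi+\Theta^2_{\Phi,p}(\xi)$ is $\geq1$, $\sigma_n$ is bounded by $c\,a_n\leq c$, the derivative terms contribute only on an interval shrinking to $\{0\}$, and \eqref{aa_121212} is required only away from the origin, so all bounds there are immediate. On $[\nicefrac1n,n]$ we have $\sigma_n=\sigma\ast\kappa_{\ve_n}$, and Jensen's inequality gives $\sigma_n^2\leq\sigma^2\ast\kappa_{\ve_n}$ and $[\sigma_n']^4\leq\abs{\sigma'}^4\ast\kappa_{\ve_n}$; combining these with \eqref{aa_5050}, \eqref{aa_7000}, \eqref{aa_121212} for $\sigma$ and with the near-constancy (up to a fixed factor) of $\Phi'$, $\Theta_{\Phi,2}$, $\Theta_{\Phi,p}$ over each $\ve_n$-window guaranteed by the choice of $\ve_n$, the required inequalities follow with the constants of $\sigma$ up to an error $\leq\nicefrac cn$. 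For \eqref{aa_7000} one distinguishes cases: if it holds because $\nabla\cdot F_2=0$, nothing is to prove; otherwise $\sigma$ satisfies the growth bound and one compares $\Psi_{\sigma_n,p}$ with $\Psi_{\sigma,p}$ directly—the contribution of $[0,\nicefrac1n]$ to $\Psi_{\sigma_n,p}$ is $O(a_n^2 n^{2-p})=O(1)$ by the bounds of Step~1, while $\int_{\nicefrac1n}^{\xi}(\xi')^{p-2}(\sigma_n\sigma_n'-\sigma\sigma')\,d\xi'$ on $[\nicefrac1n,n]$ is $\leq\nicefrac cn$ because the smallness of $\ve_n$ beats the factor $(\xi')^{p-2}\leq n^{p-2}$, so $\abs{\Psi_{\sigma_n,p}}\leq\abs{\Psi_{\sigma,p}}+c\leq c'(1+\xi+\Theta^2_{\Phi,p}(\xi))$ there.

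\textbf{The frozen region.} On $[n,\infty)$ the function $\sigma_n$ is eventually identically zero and $\sigma_n'$ is supported in $[n,n+1]$; since the right-hand sides are nondecreasing in $\xi$, for $\xi\geq n+1$ all bounds for $\sigma_n$ are automatic, and it remains to control $\sigma_n$, $\sigma_n'$, $\sigma_n\sigma_n'$ on the unit interval $[n,n+1]$. There $\abs{\sigma_n}\leq\sup_{[n-\ve_n,n+1]}\abs{\sigma}$ and $\abs{\sigma_n'}\leq c\sup_{[n-\ve_n,n+1]}(\abs{\sigma'}+\abs{\sigma})$, so by \eqref{aa_5050}, \eqref{aa_121212} (with $\delta=1$, legitimate since $n\geq1$) and \eqref{11_11_11} for $\sigma$ the left-hand sides of Assumption~\ref{assume} at $\xi\in[n,n+1]$ are bounded by $c(1+(n+1)+\Theta^2_{\Phi,2}(n+1))$ or $c(1+(n+1)+\Theta^2_{\Phi,p}(n+1))$; for $\Psi_{\sigma_n,p}$ one additionally uses the second bound of \eqref{aa_5050} already established for $\sigma_n$ and, for $p>2$, the elementary inequality $\Theta^2_{\Phi,2}\leq c(1+\Theta^2_{\Phi,p})$ (which follows from $(\xi')^{(p-2)/2}\geq1$ for $\xi'\geq1$ in the defining integrals). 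The argument is then closed by the observation that $\Theta_{\Phi,2}$ and $\Theta_{\Phi,p}$ vary by at most a bounded factor over any unit interval at infinity: feeding $[\Phi'(\xi)]^{1/2}\leq c(1+\xi^{1/2}+\Theta_{\Phi,p}(\xi))$, which follows from \eqref{11_11_11}, into $\Theta_{\Phi,p}(\xi+1)-\Theta_{\Phi,p}(\xi)=\int_\xi^{\xi+1}(\xi')^{(p-2)/2}[\Phi'(\xi')]^{1/2}\,d\xi'$ and subdividing $[\xi,\xi+1]$ into finitely many subintervals depending only on $c$ and $p$, a Gr\"onwall argument gives $\Theta_{\Phi,p}(\xi+1)\leq C(1+\xi+\Theta_{\Phi,p}(\xi))$, and likewise for $\Theta_{\Phi,2}$. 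Hence $1+(n+1)+\Theta^2_{\Phi,p}(n+1)\leq C(1+n+\Theta^2_{\Phi,p}(n))\leq C(1+\xi+\Theta^2_{\Phi,p}(\xi))$ for $\xi\in[n,n+1]$, and similarly with $\Theta_{\Phi,2}$, which completes the verification.

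\textbf{Main obstacle.} The construction and the $\C^1_\textrm{loc}((0,\infty))$-convergence are routine; the entire difficulty is the uniform verification of \eqref{aa_5050}--\eqref{aa_121212}. Two points carry the argument: the freedom to shrink the mollification scale $\ve_n$ as a function of $n$ alone, which renders all commutator errors negligible on the compact range $[\nicefrac1n,n]$; and the behaviour in the frozen region, where the growth bounds survive only because the self-bounding inequality \eqref{11_11_11} forces $\Theta_{\Phi,2}$ and $\Theta_{\Phi,p}$ to be slowly varying over unit intervals at infinity, so that cutting $\sigma_n$ off at $\xi=n$ costs nothing.
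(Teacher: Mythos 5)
Your strategy---smooth modification near the origin, mollification, multiplicative cutoff at infinity---is the one the paper's one-line proof has in mind, and your treatment of the origin, of the mollification, and of the $\C^1_{\textrm{loc}}((0,\infty))$-convergence is fine. The verification of \eqref{aa_121212} in the frozen region $[n,n+1]$, however, does not go through. From $\sigma_n=\eta_n\sigma^{\mathrm{moll}}$ you have $\sigma_n'=\eta_n'\sigma^{\mathrm{moll}}+\eta_n(\sigma^{\mathrm{moll}})'$, so $(\sigma_n\sigma_n')^2$ contains a contribution of order $\sigma^4$ on $[n,n+1]$. Assumption~\ref{assume} controls $(\sigma\sigma')^2$ and $[\sigma']^4/\Phi'$ via \eqref{aa_121212}, not $\sigma^4$. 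In the model case $\Phi(\xi)=\xi^m$, $\sigma=\Phi^{1/2}$, one has $\sigma^4=\xi^{2m}$ while the right-hand side of \eqref{aa_121212} is $\sim\xi^{m+p-1}$, and $\xi^{2m}\leq c\,\xi^{m+p-1}$ fails whenever $m>p-1$ (in particular for $p=2$ and any $m>1$, which includes admissible Dean--Kawasaki cases). Your invocations of \eqref{aa_5050}, \eqref{aa_121212} with $\delta=1$, and \eqref{11_11_11} cannot repair this: they bound $\sigma^2$ by the \emph{first} power of $1+\xi+\Theta^2_{\Phi,2}(\xi)$, so $\sigma^4$ is only dominated by a \emph{square} of such a quantity, which is not $\leq C(1+\xi+\Theta^2_{\Phi,p}(\xi))$ uniformly. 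So the constant $c_\delta$ you produce for $\sigma_n$ necessarily diverges with $n$, contrary to the uniformity claim.

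The slow-variation bound $\Theta_{\Phi,p}(\xi+1)\leq C(1+\xi+\Theta_{\Phi,p}(\xi))$ is also not established for $p>2$: substituting \eqref{11_11_11} into $\Theta'_{\Phi,p}(\xi')=(\xi')^{(p-2)/2}[\Phi'(\xi')]^{1/2}$ gives a Gr\"onwall coefficient $c\,(\xi')^{(p-2)/2}$, so the multiplicative Gr\"onwall factor over $[n,n+1]$ is of order $\exp\bigl(c\,(n+1)^{(p-2)/2}\bigr)$, and subdividing $[\xi,\xi+1]$ into a number of pieces that is ``finite depending only on $c$ and $p$'' does not help, since the coefficient is unbounded on $(0,\infty)$. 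The first gap would disappear if, instead of multiplying by $\eta_n$, you cut off $\sigma'$---freeze $\sigma^{\mathrm{moll}}$ to a constant past some $K_n$ and smooth the kink---so that $|\sigma_n'|\leq|(\sigma^{\mathrm{moll}})'|$ and $\sigma_n\sigma_n'=0$ beyond $K_n$. But then the weighted bound $\xi^{p-2}\sigma_n^2(\xi)\leq c(1+\xi+\Theta^2_{\Phi,p}(\xi))$ on the frozen tail $\xi>K_n$ still has to be verified, and this requires a careful choice of $K_n$ rather than $K_n=n$. As written, the cutoff step is a genuine gap; closing it in the full generality of Assumption~\ref{assume} seems to call either for more structure (e.g.\ the lower bound on $\Phi'$ away from zero that Example~\ref{example_1} implicitly assumes) or for a different construction near infinity.
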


\begin{proof}  The proof follows by constructing smooth and bounded approximations by convolution using Assumption~\ref{assume}.  \end{proof}

\begin{definition}\label{def_metric_1}  For every $\d\in(0,1)$ let $\Psi_\d$ be as in Definition~\ref{def_cutoff}.  Let $D\colon L^1([0,T];L^1(\TT^d))\times L^1([0,T];L^1(\TT^d))\rightarrow[0,\infty)$ be defined by
\begin{equation}\label{def_metric}D(f,g) = \sum_{k=1}^\infty 2^{-k}\left(\frac{\norm{\Psi_{\nicefrac{1}{k}}(f)-\Psi_{\nicefrac{1}{k}}(g)}_{L^1([0,T];L^1(\TT^d))}}{1+\norm{\Psi_{\nicefrac{1}{k}}(f)-\Psi_{\nicefrac{1}{k}}(g)}_{L^1([0,T];L^1(\TT^d))}}\right).\end{equation}

\end{definition}

\begin{lem}\label{lem_equivalent_top}  The function $D$ defined by \eqref{def_metric} is a metric on $L^1([0,T];L^1(\TT^d))$.  Furthermore, the metric topology determined by $D$ is equal to strong norm topology on $L^1([0,T];L^1(\TT^d))$.  \end{lem}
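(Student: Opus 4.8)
The plan is to verify first that $D$ is a metric, and then to establish the equivalence of topologies by showing that $D$-convergence and norm convergence of sequences coincide (which suffices since both topologies are metrizable, hence first countable). The metric axioms are routine: each summand $d_k(f,g) = \norm{\Psi_{\nicefrac{1}{k}}(f)-\Psi_{\nicefrac{1}{k}}(g)}_{L^1_tL^1_x}/(1+\norm{\Psi_{\nicefrac{1}{k}}(f)-\Psi_{\nicefrac{1}{k}}(g)}_{L^1_tL^1_x})$ is nonnegative, symmetric, and satisfies the triangle inequality because $t\mapsto t/(1+t)$ is nondecreasing, concave, and subadditive on $[0,\infty)$ and $f\mapsto\Psi_{\nicefrac1k}(f)$ is Lipschitz in $L^1$ (indeed $\abs{\Psi_\d(a)-\Psi_\d(b)}\le C\abs{a-b}$ pointwise since $\Psi_\d'$ is bounded uniformly). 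Nonnegativity of $D$ and the triangle inequality for $D$ follow by summing, and the series converges by the factor $2^{-k}$. The only nontrivial metric axiom is definiteness: $D(f,g)=0$ forces $\Psi_{\nicefrac1k}(f)=\Psi_{\nicefrac1k}(g)$ in $L^1_tL^1_x$ for every $k$, i.e. $\psi_{\nicefrac1k}(f)f=\psi_{\nicefrac1k}(g)g$ a.e.\ for every $k$; letting $k\to\infty$, since $\psi_{\nicefrac1k}\uparrow\mathbf 1_{(0,\infty)}$ pointwise and $f,g\ge 0$ need not hold—so here one should note that $\psi_{\nicefrac1k}(\xi)\xi\to\xi$ for every $\xi\in\R$ (the product vanishes at $\xi=0$ anyway), hence $f=g$ a.e.\ by dominated convergence.

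For the topological equivalence I would argue both implications for sequences. \textbf{Norm convergence implies $D$-convergence:} if $f_n\to f$ in $L^1_tL^1_x$, then by the pointwise Lipschitz bound $\norm{\Psi_\d(f_n)-\Psi_\d(f)}_{L^1_tL^1_x}\le C\norm{f_n-f}_{L^1_tL^1_x}\to 0$ for each fixed $\d$, so each $d_k(f_n,f)\to 0$; since $d_k\le 1$ and the weights are summable, dominated convergence for series gives $D(f_n,f)\to 0$. \textbf{$D$-convergence implies norm convergence:} suppose $D(f_n,f)\to 0$. Then for each fixed $k$, $d_k(f_n,f)\to 0$, hence $\Psi_{\nicefrac1k}(f_n)\to\Psi_{\nicefrac1k}(f)$ in $L^1_tL^1_x$. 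The key estimate is that for any $g\in L^1_tL^1_x$,
\[
\norm{g-\Psi_\d(g)}_{L^1_tL^1_x}=\norm{(1-\psi_\d(g))g}_{L^1_tL^1_x}\le \int_0^T\int_{\TT^d}\abs{g}\mathbf 1_{\{\abs{g}\le\d\}}\le \d\,\abs{\TT^d}\,T,
\]
using $0\le\psi_\d\le 1$ with $\psi_\d(\xi)=1$ for $\xi\ge\d$ and recalling $\psi_\d(\xi)\xi$ already vanishes near $0$—more precisely $\abs{(1-\psi_\d(\xi))\xi}\le \d$ for all $\xi$. Hence $\sup_n\norm{f_n-\Psi_\d(f_n)}_{L^1_tL^1_x}\le\d\abs{\TT^d}T$ and likewise for $f$. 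Given $\ve>0$, choose $k$ with $2/k\cdot\abs{\TT^d}T<\ve/2$, then $N$ with $\norm{\Psi_{\nicefrac1k}(f_n)-\Psi_{\nicefrac1k}(f)}_{L^1_tL^1_x}<\ve/2$ for $n\ge N$; the triangle inequality
\[
\norm{f_n-f}_{L^1_tL^1_x}\le \norm{f_n-\Psi_{\nicefrac1k}(f_n)}+\norm{\Psi_{\nicefrac1k}(f_n)-\Psi_{\nicefrac1k}(f)}+\norm{\Psi_{\nicefrac1k}(f)-f}<\ve
\]
closes the argument. Finally, since both topologies are metrizable and have the same convergent sequences, they coincide.

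The main obstacle is the second implication—that $D$-convergence forces norm convergence—because $D$ only controls each truncated quantity $\Psi_{\nicefrac1k}$ separately, not uniformly in $k$. The resolution is precisely the uniform approximation bound $\norm{g-\Psi_\d(g)}_{L^1_tL^1_x}\le C\d$, which holds with a constant independent of $g$ (depending only on $T$ and $\abs{\TT^d}$). This is what lets a single small $\d$ handle the ``tail'' uniformly over $n$, reducing the problem to the already-known convergence of $\Psi_{\nicefrac1k}(f_n)$. Note that this bound crucially uses the linear growth of $\Psi_\d(\xi)=\psi_\d(\xi)\xi$ near zero, so that the error $(1-\psi_\d(\xi))\xi$ is bounded by $\d$ pointwise rather than merely being supported near zero; this is the structural reason Definition~\ref{def_cutoff} defines $\Psi_\d$ as $\psi_\d(\xi)\xi$ rather than, say, $\psi_\d$ alone.
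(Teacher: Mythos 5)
Your proof is correct and takes essentially the same route as the paper. The paper's argument hinges on the two-sided comparison
\[
\norm{\Psi_{\nicefrac1k}(f)-\Psi_{\nicefrac1k}(g)}_{L^1_tL^1_x}\le \norm{f-g}_{L^1_tL^1_x}\le \norm{\Psi_{\nicefrac1k}(f)-\Psi_{\nicefrac1k}(g)}_{L^1_tL^1_x}+\frac{cT}{k},
\]
and your proof simply makes both halves explicit: the right-hand inequality is exactly your uniform truncation bound $\norm{g-\Psi_\d(g)}_{L^1_tL^1_x}\le\d\,\abs{\TT^d}\,T$, and for the left-hand direction you replace the constant-$1$ Lipschitz bound with a constant-$C$ Lipschitz bound and then pass through each $d_k(f_n,f)\to 0$ separately, which is a little more robust (it does not depend on tuning $\psi_\d$ so that $\Psi_\d'\le 1$) and is in any case sufficient for equality of topologies. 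The definiteness argument and the concavity/subadditivity of $t\mapsto t/(1+t)$ for the triangle inequality are the same ideas the paper invokes, just spelled out. One small stylistic point: the Lipschitz property of $\Psi_{\nicefrac1k}$ plays no role in the triangle inequality for $d_k$ (only that $\|\Psi_{\nicefrac1k}(\cdot)-\Psi_{\nicefrac1k}(\cdot)\|_{L^1_tL^1_x}$ is a pseudometric and $t/(1+t)$ is subadditive and increasing), so that clause can be dropped.
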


\begin{proof}  The fact that $D$ is a metric follows from the fact that $\Psi_{\nicefrac{1}{k}}(g)=\Psi_{\nicefrac{1}{k}}(f)$ for every $k\in\N$ if and only if $f=g$, and from the fact that $f(t)=\nicefrac{t}{1+t}$ is concave.  To see that the two topologies coincide, it is sufficient to prove that they determine the same convergent sequences.  This follows from the fact that there exists $c\in(0,\infty)$ such that, for every $k\in\N$,
\begin{align*}
\norm{\Psi_{\nicefrac{1}{k}}(f)-\Psi_{\nicefrac{1}{k}}(g)}_{L^1([0,T];L^1(\TT^d))} & \leq \norm{f-g}_{L^1([0,T];L^1(\TT^d))}
\\ & \leq \norm{\Psi_{\nicefrac{1}{k}}(f)-\Psi_{\nicefrac{1}{k}}(g)}_{L^1([0,T];L^1(\TT^d))}+\frac{cT}{k},
\end{align*}
which completes the proof.  \end{proof}

\begin{definition}\label{def_approx_sol}  Let $\xi^F$, $\Phi$, $\sigma$, and $\nu$ satisfy Assumptions~\ref{assumption_noise} and \ref{assume} for some $p\in[2,\infty)$, let $T\in[1,\infty)$, let $\{\sigma_n\}_{n\in\N}$ be as in Lemma~\ref{lem_nonlinear}, and let $\rho_0\in  L^{m+p-1}(\O;L^1(\TT^d))\cap L^p(\O;L^p(\TT^d))$ be nonnegative and $\F_0$-measurable.  For every $n\in\N$ and $\a\in(0,1)$ let $\rho^{\a,n}$ be the stochastic kinetic solution of
\begin{align*}
\dd\rho^{\a,n} & =\D\Phi(\rho^{\a,n})\dt+\a\Delta\rho^{\a,n}\dt-\nabla\cdot(\sigma_n(\rho^{\a,n})\dd \xi^F+\nu(\rho)\dt)
\\ & \quad +\frac{1}{2}\nabla\cdot\left([\sigma_n'(\rho^{\a,n})]^2\nabla\rho^{\a,n}+\sigma_n(\rho)\sigma_n'(\rho)F_2\right)\dt
\end{align*}
in $\TT^d\times(0,T)$ with initial data $\rho_0$ constructed in Proposition~\ref{prop_approx_kinetic}.  \end{definition}

\begin{prop}\label{prop_tight}  Let $\xi^F$, $\Phi$, $\sigma$, and $\nu$ satisfy Assumptions~\ref{assumption_noise} and \ref{assume} for some $p\in[2,\infty)$, let $\{\sigma_n\}_{n\in\N}$ be as in Lemma~\ref{lem_nonlinear}, and let $\rho_0\in  L^{m+p-1}(\O;L^1(\TT^d))\cap L^p(\O;L^p(\TT^d))$ be nonnegative and $\F_0$-measurable.  Then the laws of the $\{\rho^{\a,n}\}_{\a\in(0,1),n\in\N}$ are tight on $L^1([0,T];L^1(\TT^d))$ in the strong norm topology.  \end{prop}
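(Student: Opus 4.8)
The plan is to use Lemma~\ref{lem_equivalent_top} to reduce tightness in the strong norm topology of $L^1([0,T];L^1(\TT^d))$ to a statement about the nonlinear cutoffs $\Psi_\d(\rho^{\a,n})$, for which the stable estimates of Corollary~\ref{cor_frac_sob} and Proposition~\ref{prop_approx_time} are available; then to combine the Aubin--Lions--Simon compactness lemma with Markov's inequality and, finally, a diagonal argument to pass back from the $\Psi_\d(\rho^{\a,n})$ to the $\rho^{\a,n}$ themselves. Since $L^1([0,T];L^1(\TT^d))$ is a Polish space, it suffices to produce, for every $\eta\in(0,1)$, a compact set $\mathcal K_\eta$ with $\inf_{\a\in(0,1),\,n\in\N}\P(\rho^{\a,n}\in\mathcal K_\eta)\ge 1-\eta$.

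First I would fix $\eta\in(0,1)$ and, for each $k\in\N$, consider $z^{\a,n}_k:=\Psi_{\nicefrac{1}{k}}(\rho^{\a,n})$. The function $\Psi_{\nicefrac1k}$ satisfies $\Psi_{\nicefrac1k}(0)=0$, $0\le\Psi_{\nicefrac1k}(\xi)\le\xi$, and is globally Lipschitz with constant bounded independently of $k$, since $\Psi_{\nicefrac1k}'(\xi)=\psi_{\nicefrac1k}'(\xi)\xi+\psi_{\nicefrac1k}(\xi)$ and $\psi_{\nicefrac1k}'$ is supported on $[\nicefrac1{2k},\nicefrac1k]$ with $|\psi_{\nicefrac1k}'|\le ck$. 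Consequently a globally Lipschitz map takes $W^{\beta_0,1}(\TT^d)$ boundedly into itself (estimating the Gagliardo seminorm), so Corollary~\ref{cor_frac_sob} transfers: whether $\Phi$ satisfies \eqref{5_00} or \eqref{5_0}, there is $\beta_0\in(0,1]$ and a constant $c_k$, independent of $\a$ and $n$, with $\E[\|z^{\a,n}_k\|_{L^1([0,T];W^{\beta_0,1}(\TT^d))}]\le c_k$. Likewise Proposition~\ref{prop_approx_time} gives $\beta\in(0,\nicefrac12)$, $s>\nicefrac d2+1$, and a constant $c_k$ (depending on $\nicefrac1k$) with $\E[\|z^{\a,n}_k\|_{W^{\beta,1}([0,T];H^{-s}(\TT^d))}]\le c_k$, again uniformly in $\a$ and $n$. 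Because $W^{\beta_0,1}(\TT^d)\hookrightarrow\hookrightarrow L^1(\TT^d)\hookrightarrow H^{-s}(\TT^d)$, the Aubin--Lions--Simon lemma \cite{Aub1963,Lio1969,Sim1987} (in the form valid for $L^1$-in-time integrability and fractional time regularity) shows that bounded subsets of $L^1([0,T];W^{\beta_0,1}(\TT^d))\cap W^{\beta,1}([0,T];H^{-s}(\TT^d))$ are relatively compact in $L^1([0,T];L^1(\TT^d))$. Applying Markov's inequality to the two bounds above then yields, for each $k$, a compact set $A_k\subset L^1([0,T];L^1(\TT^d))$ with $\inf_{\a,n}\P(z^{\a,n}_k\in A_k)\ge 1-\eta 2^{-k}$.

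Next I would assemble these into a compact set for the $\rho^{\a,n}$. Put $B_\eta:=\{f\in L^1([0,T];L^1(\TT^d)):\Psi_{\nicefrac1k}(f)\in A_k\text{ for every }k\in\N\}$ and $\mathcal K_\eta:=\overline{B_\eta}$. Each $f\mapsto\Psi_{\nicefrac1k}(f)$ is globally Lipschitz, hence continuous, on $L^1([0,T];L^1(\TT^d))$, so $B_\eta$ is an intersection of closed preimages of compact sets. The two-sided bound from the proof of Lemma~\ref{lem_equivalent_top},
\[
\|f-g\|_{L^1([0,T];L^1(\TT^d))}\le\|\Psi_{\nicefrac1k}(f)-\Psi_{\nicefrac1k}(g)\|_{L^1([0,T];L^1(\TT^d))}+\frac{cT}{k}\qquad(k\in\N),
\]
combined with a diagonal extraction over $k$ from the compact sets $A_k$, shows that any sequence in $B_\eta$ has a subsequence which is Cauchy in $L^1([0,T];L^1(\TT^d))$; hence $B_\eta$ is relatively compact and $\mathcal K_\eta$ is compact in the strong norm topology. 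Finally $\{\rho^{\a,n}\in\mathcal K_\eta\}\supseteq\bigcap_{k\in\N}\{\Psi_{\nicefrac1k}(\rho^{\a,n})\in A_k\}$, so $\inf_{\a,n}\P(\rho^{\a,n}\in\mathcal K_\eta)\ge1-\sum_{k}\eta 2^{-k}=1-\eta$, which is the desired tightness.

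The main obstacle is the last passage, from compactness of the laws of the cutoffs $\Psi_\d(\rho^{\a,n})$ to compactness of the laws of the $\rho^{\a,n}$: we control the approximate solutions only through these nonlinear cutoffs, and $\rho\mapsto\Psi_\d(\rho)$ is not invertible, so the reconstruction of $\mathcal K_\eta$ rests entirely on the quantitative, $\d$-uniform comparison of Lemma~\ref{lem_equivalent_top} and the diagonal argument. The remaining points — that the Lipschitz cutoffs preserve the fractional spatial regularity of Corollary~\ref{cor_frac_sob}, and that Aubin--Lions--Simon applies with the $L^1$-in-time and fractional-in-time integrability at hand — are routine.
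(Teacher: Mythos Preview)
Your proposal is correct and follows essentially the same route as the paper: uniform spatial regularity for $\Psi_{1/k}(\rho^{\a,n})$ via Corollary~\ref{cor_frac_sob} and the Lipschitz bound on $\Psi_{1/k}$, time regularity via Proposition~\ref{prop_approx_time}, Aubin--Lions--Simon for tightness of each $\Psi_{1/k}(\rho^{\a,n})$, and then the preimage/diagonal construction $\bigcap_k \Psi_{1/k}^{-1}(A_k)$ together with the two-sided inequality of Lemma~\ref{lem_equivalent_top}. The paper packages the last step by passing to the metric $D$ of Definition~\ref{def_metric_1} (so that sequential compactness of $\bigcap_k F_k^{-1}(C_k)$ is immediate) and then invoking Lemma~\ref{lem_equivalent_top}, while you carry out the equivalent Cauchy/diagonal argument directly in the norm; these are the same proof in different clothing.
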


\begin{proof}  According to Lemma~\ref{lem_equivalent_top}, it is equivalent to prove that the laws are tight on the space $L^1([0,T];L^1(\TT^d))$ equipped with the metric $D$ defined in \eqref{def_metric}.  Since it follows from Definition~\ref{def_cutoff} that there exists $c\in(0,\infty)$ independent of $k\in\N$ such that $\Psi'_{\nicefrac{1}{k}}(\xi)\leq c$ for every $\xi\in[0,\infty)$, it follows from Corollary~\ref{cor_frac_sob} that if $\Phi$ satisfies \eqref{5_00} then there exists $c\in(0,\infty)$ such that, for every $\a\in(0,1)$, $n,k\in\N$,
\begin{equation}\label{6_21} \E\left[\norm{\Psi_{\nicefrac{1}{k}}(\rho^{\a,n})}_{L^1([0,T];W^{1,1}(\TT^d))}\right]\leq cT\left(1+\E\left[\norm{\rho_0}^{p+m-1}_{L^1(\TT^d)}+\int_{\TT^d}\rho_0^p\right]\right),\end{equation}
and if $\Phi$ satisfies \eqref{5_0} then for every $\beta\in(0,\nicefrac{2}{q}\wedge 1)$ there exists $c\in(0,\infty)$ such that, for every $\a\in(0,1)$, $n,k\in\N$,
\begin{equation}\label{6_22}\E\left[\norm{\Psi_{\nicefrac{1}{k}}(\rho^{\a,n})}_{L^1([0,T];W^{\beta,1}(\TT^d))}\right]\leq cT\left(1+\E\left[\norm{\rho_0}^{p+m-1}_{L^1(\TT^d)}+\int_{\TT^d}\rho_0^p\right]\right).\end{equation}
Finally, it follows from Proposition~\ref{prop_approx_est} and $\Psi_{\nicefrac{1}{k}}(\xi)\leq \xi$ for every $\xi\in[0,\infty)$ and $k\in\N$ that, for every $\a\in(0,1)$ and $n,k\in\N$,
\begin{equation}\label{6_23} \E\left[\norm{\Psi_{\nicefrac{1}{k}}(\rho^{\a,n})}_{L^\infty([0,T];L^1(\TT^d))}\right]\leq\norm{\rho_0}_{L^1(\TT^d)}.\end{equation}
Let $s>\frac{d}{2}+1$.  The compact embeddings of $W^{1,1}(\TT^d),W^{\beta,1}(\TT^d)\hookrightarrow L^1(\TT^d)$, the continuous embedding of $L^1(\TT^d)\hookrightarrow H^{-s}(\TT^d)$, the Aubins-Lions-Simon lemma \cite{Aub1963,Lio1969,Sim1987} and specifically Simon \cite[Corollary~5]{Sim1987}, and estimates \eqref{6_21}, \eqref{6_22}, and \eqref{6_23} and Proposition~\ref{prop_approx_time} prove that, for every $k\in\N$, the laws $\{\Psi_{\nicefrac{1}{k}}(\rho^{\a,n})\}_{\a\in(0,1),n\in\N}$ are tight on $L^1([0,T];L^1(\TT^d))$ in the strong topology.

It follows from Definition~\ref{def_cutoff} that the maps $F_k\colon L^1([0,T];L^1(\TT^d))\rightarrow L^1([0,T];L^1(\TT^d))$ defined for every $k\in\N$ by $F_k(\rho)=\Psi_{\nicefrac{1}{k}}(\rho)$ are continuous in the strong topology.  Let $\ve\in(0,1)$ and for every $k\in\N$ the tightness in law of the $\{\Psi_{\nicefrac{1}{k}}(\rho^{\a,n})\}_{\a\in(0,1),n\in\N}$ proves that there exists a compact set $C_k\subseteq L^1([0,T];L^1(\TT^d))$ such that for every $\a\in(0,1)$ and $n\in\N$ we have $\P[\Psi_{\nicefrac{1}{k}}(\rho^{\a,n})\notin C_k]<\nicefrac{\ve}{2^k}$.   Let $D_k=F^{-1}_k(C_k)$ and let $K=\cap_{k=1}^\infty D_k$.    Then for every $\a\in(0,1)$ and $n\in\N$ we have $\P[\rho^{\a,n}\notin K]<\ve$, and it follows from Definition~\ref{def_metric_1} that $K$ is sequentially compact and hence compact in $L^1([0,T];L^1(\TT^d))$ with respect to the topology determined by the metric $D$ defined in \eqref{def_metric}.  Lemma~\ref{lem_equivalent_top} completes the proof, since this topology is equivalent to the strong norm topology.  \end{proof}

\begin{prop}\label{prop_mart_tight}  Let $\xi^F$, $\Phi$, $\sigma$, and $\nu$ satisfy Assumptions~\ref{assumption_noise} and \ref{assume} for some $p\in[2,\infty)$, let $\rho_0\in  L^{m+p-1}(\O;L^1(\TT^d))\cap L^p(\O;L^p(\TT^d))$ be nonnegative and $\F_0$-measurable, and let the solutions $\{\rho^{\a,n}\}_{\a\in(0,1),n\in\N}$ be as in Definition~\ref{def_approx_sol}.  Then for every $\psi\in\C^\infty_c(\TT^d\times(0,\infty))$ and $\gamma\in(0,\nicefrac{1}{2})$ the laws of the martingales
\[M^{\a,n,\psi}_t = \int_0^t\int_{\TT^d}\psi(x,\rho^{\a,n})\nabla\cdot\left(\sigma_n(\rho^{\a,n})\dd\xi^F\right),\]
are tight on $\C^\gamma([0,T])$.
\end{prop}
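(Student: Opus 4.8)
Fix $\psi\in\C^\infty_c(\TT^d\times(0,\infty))$ and $\gamma\in(0,\nicefrac12)$, and choose $0<\delta<M<\infty$ with $\psi(x,\xi)=0$ whenever $\xi\notin[\delta,M]$. The plan is to establish, uniformly in $\a\in(0,1)$ and $n\in\N$, a moment bound $\sup_{\a,n}\E\left[\norm{M^{\a,n,\psi}}^{2m}_{\C^{\gamma'}([0,T])}\right]<\infty$ for some $\gamma'\in(\gamma,\nicefrac12)$ and some large $m\in\N$. Since bounded subsets of $\C^{\gamma'}([0,T])$ are precompact in $\C^\gamma([0,T])$, Markov's inequality then gives tightness of the laws on $\C^\gamma([0,T])$. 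Because each $M^{\a,n,\psi}$ is a continuous martingale with $M^{\a,n,\psi}_0=0$, by the Burkholder--Davis--Gundy inequality \cite[Chapter~4, Theorem~4.1]{RevYor1999} and the Kolmogorov continuity criterion \cite[Chapter~1, Theorem~2.1]{RevYor1999} it is enough to prove a \emph{deterministic} quadratic-variation estimate
\[\langle M^{\a,n,\psi}\rangle_t-\langle M^{\a,n,\psi}\rangle_s\leq c\,\abs{t-s}\qquad\textrm{for all }0\leq s\leq t\leq T,\]
with $c$ independent of $\a$ and $n$: this yields $\E\left[\abs{M^{\a,n,\psi}_t-M^{\a,n,\psi}_s}^{2m}\right]\leq c_m\abs{t-s}^{m}$ for every $m\in\N$, and choosing $m$ so large that $\nicefrac{(m-1)}{2m}>\gamma'$ produces the desired $\C^{\gamma'}$-moment bound.

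\textbf{The key step} is therefore to rewrite $M^{\a,n,\psi}$ so that no factor of $\nabla\rho^{\a,n}$ survives in the integrand. Expanding the stochastic integral over the $\{B^k\}$ as in the convention preceding Definition~\ref{def_sol}, using the pointwise identity $\psi(x,\rho^{\a,n})\sigma_n'(\rho^{\a,n})\nabla\rho^{\a,n}=\nabla_x\big[\Lambda_n(x,\rho^{\a,n})\big]-(\nabla_x\Lambda_n)(x,\rho^{\a,n})$ for the function $\Lambda_n(x,\xi)=\int_0^\xi\psi(x,r)\sigma_n'(r)\dr$ (which is valid a.e.\ in $(x,s)$ since $\rho^{\a,n}(\cdot,s)\in H^1(\TT^d)$ for a.e.\ $s$ by Definition~\ref{def_approx}), and then integrating by parts in $x$ on the torus, one obtains a representation of the form
\[M^{\a,n,\psi}_t=\sum_{k=1}^\infty\int_0^t\int_{\TT^d}\Big(\nabla f_k\big(\psi(x,\rho^{\a,n})\sigma_n(\rho^{\a,n})-\Lambda_n(x,\rho^{\a,n})\big)-f_k\,(\nabla_x\Lambda_n)(x,\rho^{\a,n})\Big)\cdot\dd B^k_s,\]
where the original series and the one after integration by parts agree because the manipulation is performed termwise and both converge in $L^2(\O;\C([0,T]))$. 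Since $\psi$ is supported in $\{\delta\leq\xi\leq M\}$ and $\sigma_n\rightarrow\sigma$ in $\C^1_{\textrm{loc}}((0,\infty))$ by Lemma~\ref{lem_nonlinear}, the quantities $\psi(x,\xi)\sigma_n(\xi)$, $\Lambda_n(x,\xi)$, and $(\nabla_x\Lambda_n)(x,\xi)$ are bounded by a constant $c_\psi$ depending only on $\psi$ and on $\sup_n\norm{\sigma_n}_{\C^1([\delta,M])}$, hence independent of $\a$, $n$, $x$, $\xi$, and the solution. Consequently the $\R^d$-valued integrand of the $k$-th term is pointwise bounded by $c_\psi(\abs{f_k}+\abs{\nabla f_k})$.

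\textbf{Conclusion of the argument.} Using the independence of the $\{B^k\}$ and the Cauchy--Schwarz inequality on $\TT^d$,
\[\langle M^{\a,n,\psi}\rangle_t-\langle M^{\a,n,\psi}\rangle_s=\sum_{k=1}^\infty\int_s^t\Big|\int_{\TT^d}\big(\,\cdots\,\big)\dx\Big|^2\ds\leq 2c_\psi^2\,\abs{\TT^d}\Big(\int_{\TT^d}(F_1+F_3)\Big)\abs{t-s},\]
and the right-hand side is finite and independent of $\a$ and $n$ because $F_1$ and $F_3$ are continuous, hence bounded, on $\TT^d$ by Assumption~\ref{assumption_noise}. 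Together with the reduction described above --- Burkholder--Davis--Gundy, the Kolmogorov criterion, $M^{\a,n,\psi}_0=0$, Markov's inequality, and the compact embedding $\C^{\gamma'}([0,T])\hookrightarrow\C^\gamma([0,T])$ --- this completes the proof.

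\textbf{Main obstacle.} The crux is exactly the difficulty flagged in the introduction: the quadratic variation of $M^{\a,n,\psi}$ in its original form contains $\int_{\TT^d}\mathbf{1}_{\{\delta<\rho^{\a,n}<M\}}[\sigma_n'(\rho^{\a,n})]^2\abs{\nabla\rho^{\a,n}}^2$, for which Proposition~\ref{prop_approx_est} provides only an $L^1(\O)$-bound, so no moment estimate on the bracket increments is directly available and the $W^{\beta,1}_tH^{-s}_x$-type machinery of Proposition~\ref{prop_approx_time} does not place the $\rho^{\a,n}$ themselves into a space of time-continuous paths. Removing $\nabla\rho^{\a,n}$ through the integration by parts above --- which genuinely uses both the compact support of $\psi$ away from $\{\rho=0\}$ (so that $\sigma_n$ and $\sigma_n'$ are bounded there) and the spatial smoothness of the noise (finiteness of $F_1$ and $F_3$) --- is precisely what converts a stochastic bracket bound into a deterministic one. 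The residual points, namely justifying the termwise integration by parts and the convergence of the series under the $L^2_tH^1_x$-regularity of $\rho^{\a,n}$, are routine given the uniform-in-$k$ bounds recorded above.
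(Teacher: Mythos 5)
Your proof is correct and matches the paper's approach: you reproduce the key integration-by-parts decomposition (your $\Lambda_n$ is exactly the paper's $\Psi_n$) that removes $\nabla\rho^{\a,n}$ from the martingale integrand at the cost of terms controlled uniformly in $\a,n$ via Lemma~\ref{lem_nonlinear} and the boundedness of $F_1,F_3$, then conclude via the Burkholder--Davis--Gundy inequality and the Kolmogorov continuity criterion. Your explicit observation that the resulting bracket bound is deterministic --- so no moment input from Proposition~\ref{prop_approx_est} is needed at this step --- is a clean streamlining of the same argument.
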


\begin{proof}  For every $n\in\N$ let $\Psi_n$ be the unique smooth and bounded function on $\TT^d\times(0,\infty)$ satisfying $\Psi_n(x,0)=0$ for every $x\in\TT^d$ and $(\partial_\xi\Psi_n)(x,\xi)=\psi(x,\xi)\sigma_n'(\xi)$.  We then observe using the definition of $M^{\a,n,\psi}_t$, the $H^1$-regularity of $\rho^{\a,n}$, and the chain rule that, almost surely for every $t\in[0,T]$,
\begin{align}\label{6_24}
M^{\a,n,\psi}_t & = \int_0^t\int_{\TT^d}\nabla\left(\Psi_n(x,\rho^{\a,n})\right)\cdot\dd\xi^F-\int_0^t\int_{\TT^d}(\nabla_x\Psi_n)(x,\rho^{\a,n})\cdot\dd\xi^F
\\ \nonumber & \quad +\int_0^t\int_{\TT^d}\psi(x,\rho^{\a,n})\sigma_n(\rho^{\a,n})\nabla\cdot\dd\xi^F.
\end{align}
After integrating by parts in the first term on the righthand side of \eqref{6_24},
\begin{align}\label{6_25}
M^{\a,n,\psi}_t & = -\int_0^t\int_{\TT^d}\Psi_n(x,\rho^{\a,n})\nabla\cdot\dd\xi^F-\int_0^t\int_{\TT^d}(\nabla_x\Psi_n)(x,\rho^{\a,n})\cdot\dd\xi^F
\\ \nonumber & \quad +\int_0^t\int_{\TT^d}\psi(x,\rho^{\a,n})\sigma_n(\rho^{\a,n})\nabla\cdot\dd\xi^F,
\end{align}
and it follows from Lemma~\ref{lem_nonlinear} and $\psi\in\C^\infty_c(\TT^d\times(0,\infty))$ that the $\Psi_n$ and $(\nabla_x\Psi_n)$ are uniformly bounded in $n\in\N$.  Furthermore, since Lemma~\ref{lem_nonlinear} proves that the $\sigma_n$ are uniformly bounded on the support of $\psi$, it follows from the Burkholder-Davis-Gundy inequality (see, for example, \cite[Chapter~4, Theorem~4.1]{RevYor1999}), Proposition~\ref{prop_approx_est}, Lemma~\ref{lem_nonlinear}, the definition of $\xi^F$, and \eqref{6_25} that for every $r\in(0,\infty)$ there exists $c\in(0,\infty)$ depending on $\psi$ and $r$ such that, for every $s\leq t\in[0,T]$, $\a\in(0,1)$, and $n\in\N$,
\[\E\left[\abs{M^{\a,n,\psi}_t-M^{\a,n,\psi}_s}^r\right]\leq c\abs{s-t}^\frac{r}{2}.\]
After choosing $r\in(2,\infty)$ such that $\gamma<\nicefrac{1}{2}-\nicefrac{1}{r}$, the claim follows from the quantified version of Kolmogorov's continuity criterion.  See, for example, Friz and Victoir \cite[Corollary A.11]{FriVic2010}.  \end{proof}

\begin{lem}\label{lem_weak_conv}  Let $(\O,\F,\P)$ be a probability space and let $\overline{X}$ be a complete separable metric space.  Then a sequence $\{X_n\colon\O\rightarrow \overline{X}\}$ of $\overline{X}$-valued random variables on $\O$ converges in probability, as $n\rightarrow\infty$, if and only if for every pair of sequences $\{(n_k,m_k)\}_{k=1}^\infty$ satisfying $n_k,m_k\rightarrow\infty$ as $k\rightarrow\infty$ there exists a further subsequence $\{(n_{k'},m_{k'})\}_{k'=1}^\infty$ satisfying $n_{k'},m_{k'}\rightarrow\infty$ as $k'\rightarrow\infty$ such that the joint laws of $(X_{n_{k'}},X_{m_{k'}})_{k'\in\N}$ converge weakly, as $k'\rightarrow\infty$, to a probability measure $\mu$ on $\overline{X}\times\overline{X}$ satisfying $\mu(\{(x,y)\in\overline{X}\times \overline{X}\colon x=y\})=1$.
\end{lem}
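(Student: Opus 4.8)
The assertion is the standard ``Gy\"ongy--Krylov'' type characterization of convergence in probability, and I would prove the two implications separately.

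For the forward implication, suppose that $X_n\to X$ in probability. Then for \emph{any} sequences $n_k,m_k\to\infty$ the pair $(X_{n_k},X_{m_k})$ converges in probability in the complete separable metric space $\overline{X}\times\overline{X}$ (equipped, say, with the sum metric) to $(X,X)$, and hence converges in law; the limiting law is the image of the law of $X$ under the diagonal embedding $x\mapsto(x,x)$, which is a probability measure concentrated on $\{(x,y)\in\overline{X}\times\overline{X}\colon x=y\}$. Since the whole sequence of joint laws already converges, the required ``further subsequence'' may be taken to be the full sequence.

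For the reverse implication the plan is to argue by contradiction. I would first invoke completeness of $\overline{X}$ to reduce the claim to showing that $(X_n)$ is Cauchy in probability: the space of $\overline{X}$-valued random variables equipped with the topology of convergence in probability is complete when $\overline{X}$ is, so a Cauchy sequence there converges in probability to some limit. Suppose, then, that $(X_n)$ is not Cauchy in probability. Then there exist $\ve\in(0,1)$ and sequences $n_k,m_k\to\infty$ with
\[\P\big[d_{\overline X}(X_{n_k},X_{m_k})>\ve\big]\geq\ve\qquad\textrm{for every }k\in\N.\]
Applying the hypothesis to the pair $(n_k,m_k)$ produces a subsequence along which the joint laws $\mu_{k'}$ of $(X_{n_{k'}},X_{m_{k'}})$ converge weakly to a probability measure $\mu$ on $\overline{X}\times\overline{X}$ with $\mu(\{x=y\})=1$. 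The set $F=\{(x,y)\in\overline{X}\times\overline{X}\colon d_{\overline X}(x,y)\geq\ve\}$ is closed and disjoint from the diagonal, so $\mu(F)=0$, whereas $\mu_{k'}(F)\geq\ve$ for every $k'$; the portmanteau theorem then yields the contradiction
\[\ve\leq\limsup_{k'\to\infty}\mu_{k'}(F)\leq\mu(F)=0.\]
Hence $(X_n)$ is Cauchy in probability and therefore converges in probability.

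The only input that is not a direct application of the portmanteau theorem together with the continuity of the metric on $\overline{X}\times\overline{X}$ is the completeness of the space of random variables under convergence in probability; if one prefers not to cite it, it is proved by the usual Borel--Cantelli argument, extracting a subsequence $X_{n_j}$ with $\P[d_{\overline X}(X_{n_j},X_{n_{j+1}})>2^{-j}]\leq 2^{-j}$, using completeness of $\overline{X}$ pointwise to obtain an almost sure limit of $(X_{n_j})$, and upgrading Cauchyness in probability of $(X_n)$ to convergence to that limit. I do not expect any genuine obstacle here: the argument is short and the hypothesis is tailored precisely to feed the portmanteau estimate above.
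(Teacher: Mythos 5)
Your proof is correct and is essentially the standard Gy\"ongy--Krylov argument; the paper does not reproduce a proof but cites \cite[Lemma~1.1]{GyoKry1996}, and your argument (forward direction via the diagonal pushforward, reverse direction by contradiction through Cauchyness in probability, the portmanteau theorem on the closed set $\{d_{\overline X}\geq\ve\}$, and completeness of convergence-in-probability when $\overline X$ is complete) is the content of that reference.
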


\begin{proof} The proof can be found in Gy\"ongy and Krylov \cite[Lemma~1.1]{GyoKry1996}. \end{proof}

\begin{thm}\label{thm_exist} Let $\xi^F$, $\Phi$, $\sigma$, and $\nu$ satisfy Assumptions~\ref{assumption_noise} and \ref{assume} for some $p\in[2,\infty)$ and let $\rho_0\in L^{m+p-1}(\O;L^1(\TT^d))\cap L^p(\O;L^p(\TT^d))$ be nonnegative and $\F_0$-measurable.    Then there exists a stochastic kinetic solution of \eqref{6_0} in the sense of Definition~\ref{def_sol}.  Furthermore, the solution satisfies the estimates of Proposition~\ref{prop_approx_est}.\end{thm}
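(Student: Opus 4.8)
The plan is to build the solution by compactness, starting from the regularized, smoothed solutions $\rho^{\a,n}$ of Definition~\ref{def_approx_sol}, which exist and are stochastic kinetic solutions of their equation by Propositions~\ref{prop_approx_exist} and~\ref{prop_approx_kinetic}, with kinetic measures $q^{\a,n}$ given by \eqref{4_700}, and then to remove simultaneously the viscosity $\a\to 0$ and the regularization $n\to\infty$ along a subsequence (we may assume $T\in[1,\infty)$, the general case following by restriction). First I would record that, since Lemma~\ref{lem_nonlinear} makes the $\sigma_n$ satisfy Assumption~\ref{assume} uniformly in $n$, the bounds of Proposition~\ref{prop_approx_est}, Proposition~\ref{prop_approx_time}, and Corollary~\ref{cor_frac_sob} are uniform in $\a\in(0,1)$ and $n\in\N$; in particular, by the case $p=2$ (see Remark~\ref{rem_rem}) and \eqref{4_19}, one has $\sup_{\a,n}\E[q^{\a,n}(\TT^d\times(0,\infty)\times[0,T])]<\infty$, $\sup_{\a,n}\E\norm{\sigma_n(\rho^{\a,n})}_{L^2(\TT^d\times[0,T])}^2<\infty$ by \eqref{aa_5050}, and $\sup_{\a,n}\E\norm{\nu(\rho^{\a,n})}_{L^1}<\infty$ by \eqref{11_11_11}, while the vanishing-at-infinity property \eqref{def_5353} holds uniformly in $(\a,n)$ by Chebyshev's inequality.

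Next I would establish the tightness of the full collection of objects needed to pass to the limit in the kinetic equation \eqref{4_100}: the solutions $\rho^{\a,n}$ are tight on $L^1([0,T];L^1(\TT^d))$ in the strong topology by Proposition~\ref{prop_tight}; the kinetic measures $q^{\a,n}$ are tight as random nonnegative measures on the compact space $\TT^d\times[0,\infty]\times[0,T]$ via the uniform mass bound together with \eqref{def_5353} for the mass escaping to $\xi=\infty$; the martingales $M^{\a,n,\psi}_t=\int_0^t\int_{\TT^d}\psi(x,\rho^{\a,n})\nabla\cdot(\sigma_n(\rho^{\a,n})\dd\xi^F)$ are tight on $\C^\gamma([0,T])$, for each $\psi$ in a fixed countable dense subset of $\C^\infty_c(\TT^d\times(0,\infty))$, by Proposition~\ref{prop_mart_tight}; and the driving Brownian motions $(B^k)_{k\in\N}$ have a fixed law. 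By the Skorokhod representation theorem, along a subsequence there is a new probability space carrying copies $(\tilde\rho^{\a,n},\tilde q^{\a,n},\tilde M^{\a,n,\psi},\tilde B^k)$ with the same joint laws, converging almost surely to a limit $(\tilde\rho,\tilde q,\tilde M^\psi,\tilde B^k)$; I would set $\tilde\xi^F=\sum_k f_k\tilde B^k$ and take $(\tilde\F_t)$ to be the completed filtration generated by $\tilde\rho$, $\tilde q$, and the $\tilde B^k$, with respect to which $\tilde q$ is predictable.

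I would then pass to the limit in each term of \eqref{4_100} written for $\tilde\rho^{\a,n}$. Because every admissible $\psi$ is compactly supported in $\TT^d\times(0,\infty)$, the a.s.\ strong $L^1_tL^1_x$ convergence $\tilde\rho^{\a,n}\to\tilde\rho$ (hence, along a further subsequence, a.e.\ convergence) and the local $\C^{1,1}$ regularity of $\Phi,\sigma$ force every quantity $g(\tilde\rho^{\a,n})(\nabla\psi)(x,\tilde\rho^{\a,n})$ or $(\partial_\xi\psi)(x,\tilde\rho^{\a,n})g(\tilde\rho^{\a,n})$ to converge strongly in $L^2(\tilde\O\times[0,T]\times\TT^d)$ by dominated convergence; pairing these with $\nabla\Theta_{\Phi,2}(\tilde\rho^{\a,n})$, which converges weakly in $L^2$ with weak limit $\nabla\Theta_{\Phi,2}(\tilde\rho)$ (since $\Theta_{\Phi,2}(\tilde\rho^{\a,n})\to\Theta_{\Phi,2}(\tilde\rho)$ in $L^1_{x,t}$ and gradients are weakly closed), yields the diffusion, It\^o--Stratonovich correction, and cutoff terms; the viscous terms carrying the factor $\a$ vanish since $\a^{1/2}\nabla\tilde\rho^{\a,n}$ is bounded in $L^2$ on $\supp\psi$ by \eqref{4_19}; the transport term passes by \eqref{11_11_11} and \eqref{4_19}; the measure term passes because $\partial_\xi\psi$ is continuous and compactly supported in $(0,\infty)$ and $\tilde q^{\a,n}\to\tilde q$ weakly; and the stochastic term is identified with $\int_0^\cdot\int_{\TT^d}\psi(x,\tilde\rho)\nabla\cdot(\sigma(\tilde\rho)\dd\tilde\xi^F)$ by the standard martingale identification argument, $\tilde M^\psi$ being a continuous $(\tilde\F_t)$-martingale whose bracket is the a.s.\ limit of those of $\tilde M^{\a,n,\psi}$ (using $\sigma_n(\tilde\rho^{\a,n})\to\sigma(\tilde\rho)$ strongly in $L^2_{x,t}$ by a.e.\ convergence and the uniform integrability from \eqref{aa_5050} and \eqref{4_19}). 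One then checks that $\tilde q$ is a kinetic measure satisfying \eqref{2_500} by weak lower semicontinuity applied to $q^{\a,n}\ge\delta_0(\xi-\rho^{\a,n})\abs{\nabla\Theta_{\Phi,2}(\rho^{\a,n})}^2$, and \eqref{def_5353}; preservation of mass, the $L^2$-integrability of the flux, and the local $H^1$-regularity \eqref{def_2500000} pass by strong $L^1$-convergence, the uniform $L^2$-bound on $\sigma_n(\tilde\rho^{\a,n})$, and \eqref{50_7070} together with weak lower semicontinuity; and the $L^1(\TT^d)$-continuity of $\tilde\rho$ follows from the kinetic equation, since all terms other than the $\tilde q$-term are continuous in time and $\tilde q$ has no atoms in time. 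This produces a probabilistically weak stochastic kinetic solution of \eqref{6_0}, and the estimates of Proposition~\ref{prop_approx_est} hold for it by Fatou's lemma and weak lower semicontinuity of norms.

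Finally, to upgrade to a probabilistically strong solution on the original probability space I would invoke the Gy\"ongy--Krylov criterion of Lemma~\ref{lem_weak_conv} with $\overline X=L^1([0,T];L^1(\TT^d))$: given any two parameter sequences, the joint laws of the two enlarged tuples — sharing the same Brownian motions and initial datum — are tight, a sub-subsequence converges weakly, and the limit passage above identifies both marginal limits as stochastic kinetic solutions of \eqref{6_0} driven by the \emph{same} noise with the \emph{same} initial data; by the pathwise $L^1$-contraction of Theorem~\ref{thm_unique} these coincide almost surely, so the limiting joint law is concentrated on the diagonal. Lemma~\ref{lem_weak_conv} then gives convergence in probability of $\rho^{\a,n}$ in $L^1([0,T];L^1(\TT^d))$ on the original space; the limit $\rho$ admits an $\F_t$-predictable version, and rerunning the limit passage on the original space shows it is a stochastic kinetic solution of \eqref{6_0} in the sense of Definition~\ref{def_sol} satisfying the estimates of Proposition~\ref{prop_approx_est}. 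I expect the main obstacle to be precisely the limit passage in the stochastic term and in the kinetic-measure term in the absence of a stable $W^{\beta,1}_tH^{-s}_x$-bound on $\rho$ itself — this is why the martingale tightness is proven separately in Proposition~\ref{prop_mart_tight} and why the weak-$*$ compactness and lower-semicontinuity properties of the $q^{\a,n}$ (including the behaviour at $\xi=0$ and $\xi=\infty$) must be handled with care.
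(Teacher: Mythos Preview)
Your overall strategy matches the paper's: tightness from Propositions~\ref{prop_tight} and~\ref{prop_mart_tight}, Skorokhod representation, identification of the martingales as stochastic integrals via the bracket argument, and the Gy\"ongy--Krylov lemma combined with Theorem~\ref{thm_unique} to upgrade to a strong solution. The passage to the limit in the finite-variation terms and the verification of \eqref{2_500}, \eqref{def_5353}, \eqref{def_2500000} are all correctly sketched.

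There is, however, a genuine gap in your treatment of the $L^1(\TT^d)$-continuity in time. You write that ``$\tilde q$ has no atoms in time'' and deduce continuity from this, but weak limits of atomless measures can acquire atoms, so this assertion requires proof. In the paper this is the most delicate part of the existence argument and occupies roughly two pages. The route is: first, weak convergence of $\tilde q^{\a,n}\to\tilde q$ only yields the kinetic equation \eqref{6_46} for $t$ outside the (at most countable) set $\mathcal A$ of atoms of $\tilde q$; one then builds right- and left-continuous representatives $\tilde\chi^{\pm}$ via the right/left-continuous versions $\tilde Q^{\pm}_\psi$ of the measure term. To show that $\tilde\chi^{\pm}$ are genuine kinetic functions (i.e.\ $\{0,1\}$-valued with $\partial_\xi\tilde\chi^{\pm}\le 0$), the paper \emph{reuses the uniqueness machinery of Theorem~\ref{thm_unique}} applied to the nonnegative quantity $\int\tilde\chi^{\pm}(1-\tilde\chi^{\pm})$. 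Finally, to show $\tilde\rho^+=\tilde\rho^-$ (equivalently, that $\tilde q$ has no atoms), one computes the jump at an atom $t$ as a limit $\liminf_{n\to\infty}\bigl(n\,\tilde q(\{t\}\times\TT^d\times[\nicefrac{1}{2n},\nicefrac{1}{n}])+\tilde q(\{t\}\times\TT^d\times[n,n+1])\bigr)$ and invokes Proposition~\ref{prop_measure} together with finiteness of $\tilde q$ to see this vanishes. Only then does weak $L^2$-continuity of $\tilde\chi^+$ follow, from which strong $L^1$-continuity of $\tilde\rho^+$ is deduced via $\int|\tilde\chi^+(t)-\tilde\chi^+(t_k)|^2=\int(\tilde\chi^+(t)+\tilde\chi^+(t_k)-2\tilde\chi^+(t)\tilde\chi^+(t_k))$.

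Without this argument you do not obtain a stochastic kinetic solution in the sense of Definition~\ref{def_sol}, since that definition requires almost-sure $L^1$-continuity in time and requires \eqref{2_5000} to hold \emph{for every} $t\in[0,T]$, not merely almost every $t$; and you cannot apply Theorem~\ref{thm_unique} in the Gy\"ongy--Krylov step, since its hypotheses include precisely this continuity. Your closing remark anticipates difficulties at $\xi=0$ and $\xi=\infty$, but the actual obstruction is the \emph{time} regularity of the limit, and its resolution is nontrivial because it loops back through Proposition~\ref{prop_measure} and the proof of Theorem~\ref{thm_unique}.
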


\begin{proof}  The proof is organized as follows.  We will first use the tightness and Skorokhod representation theorem to extract an almost surely convergent subsequence of solutions on an auxiliary probability space.  We will then characterize the martingale terms appearing in the limit as stochastic integrals, which will allow us to prove that the limiting function is a stochastic kinetic solution after we prove that the limit is strongly $L^1$-continuous in time.  To conclude the existence of a probabilistically strong solution, we use the uniqueness of Theorem~\ref{thm_unique} and Lemma~\ref{lem_weak_conv}.

\textbf{Tightness and the Skorokhod representation theorem.}  Let $\rho_0\in L^{m+p-1}(\O;L^1(\TT^d))\cap L^p(\O;L^p(\TT^d))$ be nonnegative and $\F_0$-measurable, let the solutions $\{\rho^{\a,n}\}_{\a\in(0,1),n\in\N}$ be defined by Definition~\ref{def_approx_sol}, for every $\psi\in\C^\infty_c(\TT^d\times(0,\infty))$ let the martingales $\{M^{\a,n,\psi}\}_{\a\in(0,1), n\in\N}$ be defined in Proposition~\ref{prop_mart_tight}, and let the measures $\{q^{\a,n}\}_{\a\in(0,1),n\in\N}$ be defined by
\[q^{\a,n}=\delta_0(\xi-\rho^{\a,n})\left(\abs{\nabla\Theta_{\Phi,2}(\rho^{\a,n})}^2+\a\abs{\nabla\rho^{\a,n}}^2\right).\]
It follows from Proposition~\ref{prop_approx_est}, Remark~\ref{rem_rem}, and Proposition~\ref{prop_approx_exist} that the $\{q^{\a,n}\}_{\a\in(0,1),n\in\N}$ are finite kinetic measures in the sense of Definition~\ref{def_measure}.  It then follows from \eqref{2_5000} that, for every $\a\in(0,1)$, $n\in\N$, $\psi\in\C^\infty_c(\TT^d\times(0,\infty))$, and $t\in[0,T]$, for the kinetic function $\chi^{\a,n}$ of $\rho^{\a,n}$,\small
\begin{align}\label{6_29}
& M^{\a,n,\psi}_t = -\left.\int_\R\int_{\TT^d}\chi^{\a,n}(x,\xi,r)\psi(x,\xi)\right|_{r=0}^{r=t}-\int_0^t\int_{\TT^d}\Phi'(\rho^{\a,n})\nabla\rho^{\a,n}\cdot(\nabla\psi)(x,\rho^{\a,n})
\\ \nonumber &-\a\int_0^t\int_{\TT^d}\nabla\rho^{\a,n}\cdot (\nabla\psi)(x,\rho^{\a,n})-\frac{1}{2}\int_0^t\int_{\TT^d}[\sigma_n'(\rho^{\a,n})]^2\nabla\rho^{\a,n}\cdot(\nabla\psi)(x,\rho^{\a,n})
\\ \nonumber & -\frac{1}{2}\int_0^t\int_{\TT^d}\sigma_n(\rho^{\a,n})\sigma_n'(\rho^{\a,n})F_2\cdot(\nabla\psi)(x,\rho^{\a,n})+\frac{1}{2}\int_0^t\int_{\TT^d}(\partial_\xi\psi)(x,\rho^{\a,n})\sigma(\rho^{\a,n})\sigma'(\rho^{\a,n})\nabla\rho^{\a,n}\cdot F_2
\\ \nonumber & +\frac{1}{2}\int_0^t\int_{\TT^d}F_3\sigma_n^2(\rho^{\a,n})(\partial_\xi\psi)(x,\rho^{\a,n})-\int_0^t\int_\R\int_{\TT^d}\partial_\xi\psi(x,\xi)\dd q^{\a,n}-\int_0^t\int_{\TT^d}\Psi(x,\rho)\nabla\cdot\nu(\rho).
\end{align}
\normalsize Let $s>\frac{d}{2}+1$ and fix a countable sequence $\{\psi_j\}_{j\in\N}$ that is dense in $\C^\infty_c(\TT^d\times(0,\infty))$ in the strong $H^s(\TT^d\times(0,\infty))$-topology.

For every $\a\in(0,1)$ and $n\in\N$ define the random variables
\[X^{\a,n} = (\rho^{\a,n},\nabla\Theta_{\Phi,p}(\rho^{\a,n}),\a\nabla\rho^{\a,n},q^{\a,n}, (M^{\a,n,\psi_j})_{j\in\N}),\]
taking values in the space
\[\overline{X} =  L^1(\TT^d\times[0,T])\times L^2(\TT^d\times[0,T];\R^d)^2\times \mathcal{M}(\TT^d\times\R\times[0,T])\times\C([0,T])^{\N},\]
where $\overline{X}$ is equipped with the product metric topology induced by the strong topology on $L^1(\TT^d\times[0,T])$, the weak topology on $L^2(\TT^d\times[0,T];\R^d)$, the weak topology on the space of nonnegative Borel measures $\mathcal{M}(\TT^d\times\R\times[0,T])$---which is the dual space of the space of continuous functions on $\TT^d\times\R\times[0,T]$ that vanish at infinity in the supremum norm---and the topology of component-wise convergence in the strong norm on $\C([0,T])^\N$ induced by the metric
\[D_C((f_k)_{k\in\N},(g_k)_{k\in\N})=\sum_{k=1}^\infty2^{-k}\frac{\norm{f_k-g_k}_{\C([0,T])}}{1+\norm{f_k-g_k}_{\C([0,T])}}.\]
We aim to apply Lemma~\ref{lem_weak_conv}.  Let $(\alpha_k,n_k)_{k\in\N}$ and $(\beta_k,m_k)_{k\in\N}$ be two subsequences satisfying $\alpha_k,\beta_k\rightarrow 0$ and $n_k,m_k\rightarrow\infty$ as $k\rightarrow\infty$, and consider the laws of
\[(X^{\alpha_k,n_k},X^{\beta_k,m_k},B)\;\;\textrm{on}\;\;\overline{Y}=\overline{X}\times\overline{X}\times\C([0,T])^\N,\]
for $B=(B^j)_{j\in\N}$.  It follows from Propositions~\ref{prop_approx_est}, \ref{prop_tight}, and \ref{prop_mart_tight} that the laws of $(X^{\a,n})_{\a\in(0,1),n\in\N}$ are tight on $\overline{X}$ and therefore by Prokhorov's theorem (see, for example, Billingsley \cite[Chapter~1, Theorem~5.1]{Bil1999}), after passing to a subsequence still denoted $k\rightarrow\infty$, there exists probability measure $\mu$ on $\overline{Y}$ such that, as $k\rightarrow\infty$, $(X^{\alpha_k,n_k},X^{\beta_k,m_k},B)\rightarrow\mu $ in law.  Since the space $\overline{X}$ is separable and hence so too is $\overline{Y}$, it follows from the Skorokhod representation theorem (see, for example, \cite[Chapter~1, Theorem~6.7]{Bil1999}) that there exists a probability space $(\tilde{\O},\tilde{\F},\tilde{\P})$ and $\overline{Y}$-valued random variables $(\tilde{Y}^k,\tilde{Z}^k,\tilde{\beta}^k)_{k\in\N}$ and $(\tilde{Y},\tilde{Z},\tilde{\beta})$ on $\tilde{\O}$ such that, for every $k$,
\begin{equation}\label{6_32}(\tilde{Y}^k,\tilde{Z}^k,\tilde{\beta}^k)=(X^{\alpha_k,n_k},X^{\beta_k,m_k},B)\;\;\textrm{in law on}\;\;\overline{Y},\end{equation}
such that
\begin{equation}\label{6_34}(\tilde{Y},\tilde{Z},\tilde{\beta})=\mu\;\;\textrm{in law on $\overline{Y}$,}\end{equation}
and such that, as $k\rightarrow\infty$,
\begin{equation}\label{6_35}\tilde{Y}^k\rightarrow \tilde{Y},\;\;\tilde{Z}^k\rightarrow \tilde{Z},\;\;\textrm{and}\;\;\tilde{\beta}^k\rightarrow\tilde{\beta}\;\;\textrm{almost surely in $\overline{X}$ and in $\C([0,T];\R^{2N_Kd})$ respectively.}\end{equation}
We will now show that $\tilde{Y}=\tilde{Z}$ almost surely on $\overline{X}$ with respect to $\tilde{\P}$.

It follows from \eqref{6_32} that for every $k$ there exists $\tilde{\rho}^k\in L^\infty(\O\times[0,T];L^1(\TT^d))$, $\tilde{G}_1^k, \tilde{G}^k_2\in L^2(\O\times[0,T];L^2(\TT^d;\R^d))$, $\tilde{q}^k\in L^1(\O;\mathcal{M}(\TT^d\times\R\times[0,T]))$, and $(\tilde{M}^{k,\psi_j})_{j\in\N})\in L^1(\O;\C([0,T])^\N)$  such that
\[\tilde{Y}^k = (\tilde{\rho}^k,\tilde{G}^k_1,\tilde{G}^k_2,\tilde{q}^k,(\tilde{M}^{k,\psi_j})_{j\in\N}).\]
We will first identify the vector fields $\tilde{G}^k_i$.  Since for every smooth and bounded function $\varphi:\R\rightarrow\R$ the map that takes $g\in L^1(\TT^d\times[0,T])$ to $\varphi(g)\in L^1(\TT^d\times[0,T])$ is continuous, it follows from \eqref{6_32} that for every smooth and bounded function $\varphi$ we have
\begin{equation}\label{20_0}\tilde{\E}\left[\int_0^T\int_{\TT^d}\varphi(\tilde{\rho}^k)\right]=\E\left[\int_0^T\int_{\TT^d}\varphi(\rho^{\alpha_k,n_k})\right].\end{equation}
In analogy with Lemma~\ref{lem_nonlinear}, after choosing an increasing sequence $\varphi_n$ that approximate $\Theta_{\Phi,p}$, it follows that, uniformly in $k$,
\[\tilde{\E}\left[\int_0^T\int_{\TT^d}\Theta_{\Phi,p}(\tilde{\rho}^k)\right]<\infty.\]
It then follows from \eqref{6_32} and \eqref{20_0} that, for every $\psi\in\C^\infty(\TT^d)$, $\tilde{A}\in\tilde{\F}$, and $A\in\F$,
\begin{equation}\label{20_1} \tilde{\E}\left[\int_0^T\int_{\TT^d}\left(\Theta_{\Phi,p}(\tilde{\rho}^k)\nabla\psi + \psi \tilde{G}^k_1\right)\mathbf{1}_{\tilde{A}}\right] =  \E\left[\int_0^T\int_{\TT^d}\left(\Theta_{\Phi,p}(\rho^{\alpha_k,n_k})\nabla\psi + \psi \nabla\Theta_{\Phi,p}(\rho^{\alpha_k,n_k})\right)\mathbf{1}_A\right]=0,\end{equation}
from which it follows $\tilde{\P}$-almost surely that $\tilde{G}^k_1=\nabla\Theta_{\Phi,p}(\tilde{\rho}^k)$.  In exact analogy with \eqref{20_1}, it follows that $\tilde{\P}$-almost surely $\tilde{G}^k_2=\a_k\nabla\tilde{\rho}^k$.  A virtually identical argument proves that the continuous paths $(\tilde{M}^{k,\psi_j})_{j\in\N}$ are $\tilde{\P}$-almost surely defined by $\tilde{\rho}^k$ as in \eqref{6_29}.  Finally, it follows from \eqref{6_32} that, for every $\psi\in\C^\infty_c(\TT^d\times[0,T])$ with
\[\norm{\nabla\psi}_{L^2([0,T];L^2(\TT^d;\R^d))}\leq \norm{\nabla\Theta_{\Phi,p}(\tilde{\rho}^k)}_{L^2([0,T];L^2(\TT^d;\R^d))},\]
and for every nonnegative $\phi\in\C^\infty_c(\TT^d\times[0,T]\times(0,\infty))$ and $\tilde{A}\in\tilde{F}$,
\begin{equation}\label{20_2}\tilde{\E}\left[\left(\int_0^T\int_\R\int_{\TT^d}\left(\nabla\Theta_{\Phi,p}(\tilde{\rho}^k)\cdot\nabla\psi\right)\phi\dx\dxi\dt-\int_0^T\int_\R\int_{\TT^d}\phi\dd \tilde{q}^k\right)\mathbf{1}_{\tilde{A}} \right]\leq 0,\end{equation}
from which it follows that the measure $\tilde{q}^k$ is $\tilde{\P}$-almost surely is a kinetic measure for $\tilde{\rho}^k$ in the sense of Definition~\ref{def_measure} that satisfies \eqref{2_500}.  Since Proposition~\ref{prop_approx_est} and \eqref{6_32} prove that $\tilde{\P}$-almost surely $\a_k\nabla\tilde{\rho}^k\rightharpoonup 0$ weakly in $L^2([0,T];L^2(\TT^d;\R^d))$ as $k\rightarrow\infty$, it follows that there exists $\tilde{\rho}\in L^1(\O\times[0,T];L^1(\TT^d))$ such that
\[\tilde{Y} = (\tilde{\rho},\nabla\Theta_{\Phi,p}(\tilde{\rho}),0,\tilde{q},(\tilde{M}^j)_{j\in\N}),\]
where a repetition of the arguments leading to \eqref{20_2} proves that the measure $\tilde{q}$ is a kinetic measure for $\tilde{\rho}$ in the sense of Definition~\ref{def_measure} that satisfies \eqref{2_500} (see \eqref{6_045} below).  It remains to characterize the paths $\tilde{M}^j$.

\textbf{The path $\tilde{\beta}$ is a Brownian motion.}  Let $\tilde{\beta}^k=(\tilde{\beta}^{k,j})_{j\in\N}$ and let $\tilde{\beta}=(\tilde{\beta}^j)_{j\in\N}$.  Let $F\colon\overline{Y}\rightarrow\R$ be a continuous function.  It follows from \eqref{6_32} that, for every $s\leq t\in[0,T]$, $j\in\N$, and $k$,
\begin{align}\label{6_37}
& \tilde{\E}\left[F\left(\tilde{Y}^k|_{[0,s]},\tilde{Z}^k|_{[0,s]}, \tilde{\beta}^k|_{[0,s]}\right)\left(\tilde{\beta}^{k,j}_t-\tilde{\beta}^{k,j}_s\right)  \right]
\\ \nonumber & =\E\left[F\left(X^{\alpha_k,n_k}|_{[0,s]},X^{\beta_k,m_k}|_{[0,s]}, B|_{[0,s]}\right)\left(B^j_t-B^j_s\right)\right]=0.
\end{align}
After passing to the limit $k\rightarrow\infty$ using the uniform integrability of the paths $\tilde{\beta}^{k,j}_t-\tilde{\beta}^{k,j}_s$ implied by Proposition~\ref{prop_mart_tight} and \eqref{6_32} in $k$, it follows from \eqref{6_35} and \eqref{6_37} that, for every $s\leq t\in[0,T]$ and $j\in\N$,
\begin{equation}\label{6_38}\tilde{\E}\left[F\left(\tilde{Y}|_{[0,s]},\tilde{Z}|_{[0,s]}, \tilde{\beta}|_{[0,s]}\right)\left(\tilde{\beta}^j_t-\tilde{\beta}^j_s\right)\right]=0.\end{equation}
An identical argument proves that, for every $i,j\in\N$ and $s\leq t\in[0,T]$,
\begin{equation}\label{6_39}\tilde{\E}\left[F\left(\tilde{Y}|_{[0,s]},\tilde{Z}|_{[0,s]}, \tilde{\beta}|_{[0,s]}\right)\left(\tilde{\beta}^i_t\tilde{\beta}^j_t-\tilde{\beta}^i_s\tilde{\beta}^j_s-\delta_{ij}(t-s)\right)\right]=0,\end{equation}
for the Kronecker delta $\delta_{ij}$ that is $1$ if $i=j$ and $0$ otherwise.  Since $\tilde{\P}$-almost surely $\tilde{\beta}^j\in\C([0,T])$ for every $j\in\N$, it follows from \eqref{6_38}, \eqref{6_39}, and Levy's characterization of Brownian motion (see, for example, \cite[Chapter~4, Theorem~3.6]{RevYor1999}) that $\tilde{\beta}^j$ is for every $j\in\N$ a one-dimensional Brownian motion with respect to the filtration $\mathcal{G}_t=\sigma(\tilde{Y}|_{[0,t]},\tilde{Z}|_{[0,t]}, \tilde{\beta}|_{[0,t]})$.  It follows from the continuity and uniform integrability of the Brownian motion in time that $\tilde{\beta}$ is a Brownian motion with respect to the augmented filtration $(\overline{\mathcal{G}}_t)_{t\in[0,T]}$ of $(\mathcal{G}_t)_{t\in[0,T]}$, by which we mean that $(\overline{\mathcal{G}}_t)_{t\in[0,T]}$ is the smallest complete, right-continuous filtration containing $(\mathcal{G}_t)_{t\in[0,T]}$.

\textbf{The paths $(\tilde{M}^j)_{j\in\N}$ are $\overline{\mathcal{G}}_t$-martingales.}  Let $F\colon\overline{Y}\rightarrow\R$ be a continuous function and let $j\in\N$.  It follows from \eqref{6_32} that, for every $s\leq t\in[0,T]$ and $k$,
\begin{align*}
& \tilde{\E}\left[F\left(\tilde{Y}^k|_{[0,s]},\tilde{Z}^k|_{[0,s]}, \tilde{\beta}^k|_{[0,s]}\right)\left(\tilde{M}^{k,\psi_j}_t-\tilde{M}^{k,\psi_j}_s\right)  \right]
\\ &=\E\left[F\left(X^{\alpha_k,n_k}|_{[0,s]},X^{\beta_k,m_k}|_{[0,s]}, B|_{[0,s]}\right)\left(M^{n_k,\alpha_k,\psi_j}_t-M^{n_k,\alpha_k,\psi_j}_s\right)\right]=0.
\end{align*}
After passing to the limit $k\rightarrow\infty$, using that Proposition~\ref{prop_mart_tight} and \eqref{6_32} prove that the $M^{n_k,\alpha_k,\psi_j}$ are uniformly bounded in $L^p(\O\times[0,T])$ for every $p\in[1,\infty)$ and hence uniformly integrable,
\begin{equation}\label{6_40}\tilde{\E}\left[F\left(\tilde{Y}|_{[0,s]},\tilde{Z}|_{[0,s]}, \tilde{\beta}|_{[0,s]}\right)\left(\tilde{M}^j_t-\tilde{M}^j_s\right)  \right]=0.\end{equation}
This proves that $(\tilde{M}^j)_{t\in[0,\infty)}$ satisfies the martingale property with respect to $(\mathcal{G}_t)_{t\in[0,T]}$.  It then follows from the continuity and uniform integrability of the $\tilde{M}^j$ that the $\tilde{M}^j$ are continuous martingales with respect to the augmentation $(\overline{\mathcal{G}}_t)_{t\in[0,T]}$, with respect to which the $\tilde{M}^j$ are measurable.

\textbf{The $(\tilde{M}^j)_{j\in\N}$ are stochastic integrals with respect to $\tilde{\beta}$.}  Let $F\colon\overline{Y}\rightarrow\R$ be a continuous function.  It follows from \eqref{6_32} that, for every $s\leq t$, $i\in\N$, and $k$,
\begin{align*}
& \tilde{\E}\left[F\left(\tilde{Y}^k|_{[0,s]},\tilde{Z}^k|_{[0,s]}, \tilde{\beta}^k|_{[0,s]}\right)\left(\tilde{M}^{k,\psi_j}_t\tilde{\beta}^{k,i}_t-\tilde{M}^{k,\psi_j}_s\tilde{\beta}^{k,i}_s-\int_s^t\int_{\TT^d}\psi_j(x,\tilde{\rho}^k)\nabla\cdot(\sigma_{n_k}(\tilde{\rho}^k)f_i)\right)\right]
\\ & =\E\left[F\left(X^{\alpha_k,n_k}|_{[0,s]},X^{\beta_k,m_k}|_{[0,s]}, B|_{[0,s]}\right)\cdot\ldots\right.
\\ &  \quad\quad\left.\ldots\cdot \left(M^{\alpha_k,n_k,\psi_j}_tB^i_t-M^{\alpha_k,n_k,\psi_j}_sB^i_s-\int_s^t\int_{\TT^d}\psi_j(x,\rho^{\alpha_k,n_k})\nabla\cdot(\sigma_{n_k}(\rho^{\alpha_k,n_k})f_i)\right)\right]=0.
\end{align*}
Since it follows from Proposition~\ref{prop_mart_tight} and \eqref{6_32} that the $\tilde{M}^{k,\psi_j}_t\tilde{\beta}^i_t$  are uniformly integrability in $k$ for every time $t\in[0,T]$, after passing to the limit $k\rightarrow\infty$,
\begin{equation}\label{6_41}\tilde{\E}\left[F\left(\tilde{Y}|_{[0,s]},\tilde{Z}|_{[0,s]}, \tilde{\beta}|_{[0,s]}\right)\left(\tilde{M}^j_t\tilde{\beta}^i_t-\tilde{M}^j_s\tilde{\beta}^i_s-\int_s^t\int_{\TT^d}\psi_j(x,\tilde{\rho})\nabla\cdot(\sigma(\tilde{\rho})f_i)\right)\right]=0.\end{equation}
We therefore conclude from \eqref{6_41} that, for every $i\in\N$,
\begin{equation}\label{6_42}\tilde{M}^j_t\tilde{\beta}^i_t-\int_0^t\int_{\TT^d}\psi_j(x,\tilde{\rho})\nabla\cdot(\sigma(\tilde{\rho})f_i)\;\;\textrm{is a $\mathcal{G}_t$-martingale.}\end{equation}
It then follows from the continuity of the process in time and the uniform integrability that the process \eqref{6_42} is also a continuous $\overline{\mathcal{G}}_t$-martingale.

A virtually identical proof shows that for every $j\in\N$ the process
\begin{equation}\label{6_43} (\tilde{M}^j_t)^2-\int_0^t\sum_{k=1}^\infty\left(\int_{\TT^d}\psi_j(x,\tilde{\rho})\nabla\cdot(\sigma(\tilde{\rho})f_k)\right)^2,\end{equation}
is a continuous $\overline{\mathcal{G}}_t$-martingale.  It then follows \eqref{6_42}, \eqref{6_43}, and an explicit calculation using the quadratic variation and covariation with the Brownian motion that, for every $j\in\N$ and $t\in[0,T]$,
\begin{equation}\label{6_44}\tilde{\E}\left[\left(\tilde{M}^j_t-\int_0^t\int_{\TT^d}\psi_j(x,\tilde{\rho})\nabla\cdot(\sigma(\tilde{\rho})\dd\tilde{\xi}^F)\right)^2                   \right]=0,\end{equation}
for the noise $\tilde{\xi}^F$ defined analogously to Assumption~\ref{assumption_noise} by the Brownian motion $\tilde{\beta}$ on $\tilde{\O}$.  It follows from Proposition~\ref{prop_mart_tight}, \eqref{6_40}, and \eqref{6_44} that the quadratic variation of the difference between the continuous $L^2$-bounded martingales $\tilde{M}^j$ and $\int_0^t\int_{\TT^d}\psi^j(x,\tilde{\rho})\nabla\cdot(\sigma(\tilde{\rho})\dd\tilde{\xi}^F)$ vanishes.  Hence, almost surely for every $j\in\N$ and $t\in[0,T]$,
\begin{equation}\label{6_45} \tilde{M}^j_t = \int_0^t\int_{\TT^d}\psi_j(x,\tilde{\rho})\nabla\cdot(\sigma(\tilde{\rho})\dd\tilde{\xi}^F).\end{equation}

\textbf{The kinetic measure.}  We will now show that the limiting measure $\tilde{q}$ is almost surely a kinetic measure for $\tilde{\rho}$.  It is a finite measure on $\TT^d\times\R\times[0,T]$, and therefore satisfies \eqref{def_5353}.  In Definition~\ref{def_measure}, the measurability is a consequence of the convergence.  The fact that almost surely
\begin{equation}\label{6_045}\delta_0(\xi-\tilde{\rho})\Phi'(\xi)\abs{\nabla\tilde{\rho}}^2=\delta_0(\xi-\tilde{\rho})\abs{\nabla\Theta_{\Phi,2}(\tilde{\rho})}^2\leq \tilde{q}\;\;\textrm{on}\;\;\TT^d\times(0,\infty)\times[0,T],\end{equation}
follows from the definition of $\tilde{q}$, weak lower-semicontinuity of the Sobolev norm, the strong convergence of the $\tilde{\rho}^k$ to $\tilde{\rho}$ in $L^1([0,T];L^1(\TT^d))$, and the weak convergence of the $\nabla\Theta_{\Phi,p}(\tilde{\rho}^k)$ to $\nabla\Theta_{\Phi,p}(\tilde{\rho})$ in $L^2([0,T];L^2(\TT^d;\R^d))$ from which it follows that the $\nabla\Theta_{\Phi,2}(\tilde{\rho}^k)$ also converge weakly to $\nabla\Theta_{\Phi,2}(\tilde{\rho})$ using Remark~\ref{rem_rem}.

\textbf{The integrability of $\sigma(\tilde{\rho})$}. The $L^2$-integrability of $\sigma(\tilde{\rho})$ is a consequence of the $L^2$-integrability of $\nabla\Theta_{\Phi,p}(\tilde{\rho})$, the $L^1$-integrability of $\tilde{\rho}$ using the fact that the $L^1_tL^1_x$-convergence and \eqref{4_00019} imply that $\norm{\tilde{\rho}(x,t)}_{L^1(\TT^d)}=\norm{\rho_0}_{L^1(\TT^d)}$ for almost every $t\in[0,T]$, \eqref{aa_5050}, and the interpolation estimate \eqref{50_2} applied to $\Psi=\Theta_{\Phi,p}$ and integrated in time.

\textbf{Recovering the equation almost everywhere in time.}  Let $\mathcal{A}\subseteq[0,T]$ denote the random set of atoms of the measure $\tilde{q}$ in time,
\[\mathcal{A} = \{t\in[0,T]\colon \tilde{q}(\{t\}\times\TT^d\times\R)\neq 0\}.\]
Since the measure $\tilde{q}$ is almost surely finite, the set $\mathcal{A}$ is almost surely at most countable.  It then follows from Lemma~\ref{lem_nonlinear}, \eqref{6_29}, \eqref{6_35}, \eqref{6_45}, the definition of the kinetic function and the kinetic function $\tilde{\chi}$ of $\tilde{\rho}$, and the compact support of $\psi_j$ on $\TT^d\times(0,\infty)$ that there almost surely exists a random set of full measure $\mathcal{C}\subseteq [0,T]\setminus\mathcal{A}$ such that, for every $t\in\mathcal{C}$ and $j\in\N$,
\begin{align}\label{6_46}
& \left.\int_\R\int_{\TT^d}\tilde{\chi}(x,\xi,r)\psi_j(x,\xi)\right|_{r=0}^{r=t} =-\int_0^t\int_{\TT^d}\Phi'(\tilde{\rho})\nabla\tilde{\rho}\cdot(\nabla\psi_j)(x,\tilde{\rho}) -\frac{1}{2}\int_0^t\int_{\TT^d}[\sigma'(\tilde{\rho})]^2\nabla\tilde{\rho}\cdot(\nabla\psi_j)(x,\tilde{\rho})
\\ \nonumber &-\frac{1}{2}\int_0^t\int_{\TT^d}\sigma(\tilde{\rho})\sigma'(\tilde{\rho})F_2\cdot(\nabla\psi_j)(x,\tilde{\rho})-\int_0^t\int_\R\int_{\TT^d}\partial_\xi\psi_j(x,\xi)\dd \tilde{q}- \int_0^t\int_{\TT^d}\psi_j(x,\tilde{\rho})\nabla\cdot(\sigma(\tilde{\rho})\dd\tilde{\xi}^F)
\\ \nonumber &-\int_0^t\int_{\TT^d}\psi_j(x,\tilde{\rho})\nabla\cdot\nu(\tilde{\rho})+\frac{1}{2}\int_0^t\int_{\TT^d}(\partial_\xi\psi_j)(x,\tilde{\rho})\sigma(\tilde{\rho})\sigma'(\tilde{\rho})\nabla\tilde{\rho}\cdot F_2+\frac{1}{2}\int_0^t\int_{\TT^d}(\partial_\xi\psi_j)(x,\tilde{\rho})F_3\sigma^2(\tilde{\rho}),
\end{align}
where there is no ambiguity in interpreting the integral with respect $\tilde{q}$ since $t\notin\mathcal{A}$.  We will now prove that the measure $\tilde{q}$ has no atoms in time, and that the function $\tilde{\rho}$ almost surely admits a representative taking values in $\C([0,T];L^1(\TT^d))$.

\textbf{Right- and left-continuous representatives of $\tilde{\rho}$.}  For every $j\in\N$ let
\[\langle \tilde{\chi},\psi_j\rangle_t = \int_\R\int_{\TT^d}\tilde{\chi}(x,\xi,t)\psi_j(x,\xi).\]
It then follows from \eqref{6_46} that, almost surely for every $t\in\mathcal{C}$ and $j\in\N$,
\begin{align}\label{6_47}
& \langle\tilde{\chi},\psi_j\rangle_t = \int_\R \int_{\TT^d}\overline{\chi}(\rho_0)\psi_j-\int_0^t\int_{\TT^d}\Phi'(\tilde{\rho})\nabla\tilde{\rho}\cdot(\nabla\psi_j)(x,\tilde{\rho}) -\frac{1}{2}\int_0^t\int_{\TT^d}[\sigma'(\tilde{\rho})]^2\nabla\tilde{\rho}\cdot(\nabla\psi_j)(x,\tilde{\rho})
\\ \nonumber &-\frac{1}{2}\int_0^t\int_{\TT^d}\sigma(\tilde{\rho})\sigma'(\tilde{\rho})F_2\cdot(\nabla\psi_j)(x,\tilde{\rho})-\int_0^t\int_\R\int_{\TT^d}\partial_\xi\psi_j(x,\xi)\dd \tilde{q}- \int_0^t\int_{\TT^d}\psi_j(x,\tilde{\rho})\nabla\cdot(\sigma(\tilde{\rho})\dd\tilde{\xi}^F)
\\ \nonumber &-\int_0^t\int_{\TT^d}\psi_j(x,\tilde{\rho})\nabla\cdot\nu(\tilde{\rho})+\frac{1}{2}\int_0^t\int_{\TT^d}(\partial_\xi\psi_j)(x,\tilde{\rho})\sigma(\tilde{\rho})\sigma'(\tilde{\rho})\nabla\tilde{\rho}\cdot F_2+\frac{1}{2}\int_0^t\int_{\TT^d}(\partial_\xi\psi_j)(x,\tilde{\rho})F_3\sigma^2(\tilde{\rho}).
\end{align}
Observe that every term on the righthand side of \eqref{6_47} is continuous in time except potentially the term involving the measure $\tilde{q}$, which may have discontinuities on the random set $\mathcal{A}$.  For every $j\in\N$ let $\tilde{q}_{\psi_j}$ be the measure on $\TT^d\times\R\times[0,T]$ defined by $\dd \tilde{q}_{\psi_j} = (\partial_\xi\psi_j)(x,\xi)\dd \tilde{q}$, and observe that by using standard properties of measures the functions $\tilde{Q}^{\pm}_{\psi_j}\colon[0,T]\rightarrow\R$ defined by
\begin{equation}\label{6_48} \tilde{Q}^+_{\psi_j}(t)=\tilde{q}_{\psi_j}(\TT^d\times\R\times[0,t])\;\;\textrm{and}\;\;\tilde{Q}^-_{\psi_j}(t)=\tilde{q}_{\psi_j}([0,t)\times\TT^d\times\R),\end{equation}
are almost surely right- and left-continuous and satisfy $\tilde{Q}^+_{\psi_j}(t)=\tilde{Q}^-_{\psi_j}(t)$ for every $t\in[0,T]\setminus\mathcal{A}$.  As a consequence, it follows from \eqref{6_47} and \eqref{6_48} that for every $j\in\N$ the functions $t\in\mathcal{C}\rightarrow\langle\tilde{\chi},\psi_j\rangle_t$ almost surely admit right- and left-continuous representatives $\langle\tilde{\chi},\psi_j\rangle^{\pm}_t$ defined on $[0,T]$.

Since the $\{\psi_j\}_{j\in\N}$ are dense on $L^2(\TT^d\times[0,\infty))$, it follows from the nonnegativity of the solutions and the definition of the kinetic function $\tilde{\chi}$ of $\tilde{\rho}$ that the functions $\tilde{\chi}^\pm$ defined by
\[\int_{\TT^d}\tilde{\chi}^{\pm}(x,\xi,t)\psi_j(x,\xi) = \langle \tilde{\chi},\psi_j\rangle^{\pm}_t\]
are almost surely weakly right- and left-continuous in $L^2(\TT^d\times\R)$ in time, and satisfy $\tilde{\chi}^{\pm}(x,\xi,t)=\tilde{\chi}(x,\xi,t)$ for every $t\in\mathcal{C}$.  Let $\tilde{Q}^{\pm}_\psi$ be defined analogously to \eqref{6_48} for every $\psi\in\C^\infty_c(\TT^d\times(0,\infty))$.  It then follows from the density of the $\{\psi_j\}_{j\in\N}$ in the $H^s$-norm, the Sobolev embedding theorem, and \eqref{6_47} that for every $\psi\in\C^\infty_c(\TT^d\times(0,\infty))$ there exists a subset of full probability such that, for every $t\in[0,T]$,\small
\begin{align}\label{6_49}
&\langle \tilde{\chi}^{\pm},\psi\rangle_t = \int_\R \int_{\TT^d}\overline{\chi}(\rho_0)\psi-\int_0^t\int_{\TT^d}\Phi'(\tilde{\rho})\nabla\tilde{\rho}\cdot(\nabla\psi)(x,\tilde{\rho}) -\frac{1}{2}\int_0^t\int_{\TT^d}[\sigma'(\tilde{\rho})]^2\nabla\tilde{\rho}\cdot(\nabla\psi)(x,\tilde{\rho})
\\ \nonumber &-\frac{1}{2}\int_0^t\int_{\TT^d}\sigma(\tilde{\rho})\sigma'(\tilde{\rho})F_2\cdot(\nabla\psi)(x,\tilde{\rho})-\tilde{Q}^{\pm}_{\psi}(t)- \int_0^t\int_{\TT^d}\psi(x,\tilde{\rho})\nabla\cdot(\sigma(\tilde{\rho})\dd\tilde{\xi}^F)
\\ \nonumber &-\int_0^t\int_{\TT^d}\psi_j(x,\tilde{\rho})\nabla\cdot\nu(\tilde{\rho})+\frac{1}{2}\int_0^t\int_{\TT^d}(\partial_\xi\psi)(x,\tilde{\rho})\sigma(\tilde{\rho})\sigma'(\tilde{\rho})\nabla\tilde{\rho}\cdot F_2+\frac{1}{2}\int_0^t\int_{\TT^d}(\partial_\xi\psi)(x,\tilde{\rho})F_3\sigma^2(\tilde{\rho}),
\end{align}
\normalsize and such that for every $t\in\mathcal{C}$ we have $\norm{\tilde{\rho}(\cdot,t)}_{L^1(\TT^d)}=\norm{\rho_0}_{L^1(\TT^d)}$.

\textbf{$L^1$-continuity in time.}  We aim to prove that there almost surely exist measurable functions $\tilde{\rho}^\pm\colon\TT^d\times[0,T]\rightarrow[0,\infty)$ such that $\tilde{\chi}^{\pm}=\mathbf{1}_{\{0<\xi<\tilde{\rho}^\pm\}}$.  To do this, we will first show that the $\tilde{\chi}^{\pm}$ are almost surely $\{0,1\}$-valued functions on $\TT^d\times\R\times[0,T]$.  After replacing \eqref{3_6} with the respectively right- and left-continuous versions of the kinetic functions and measures and using \cite[Chapter~0, Proposition~4.5]{RevYor1999} to justify differentiating the equality
\[\int_\R\int_{\TT^d}\tilde{\chi}^{\pm}+\tilde{\chi}^{\pm}-2(\tilde{\chi}^{\pm})^2\dx\dxi = 2\int_\R\int_{\TT^d}\tilde{\chi}^{\pm}(1-\tilde{\chi}^{\pm})\dx\dxi\]
after introducing the regularization used in the proof of Theorem~\ref{thm_unique}, it follows from Theorem~\ref{thm_unique}, the fact that $\tilde{\chi}^{\pm}(x,\xi,t)=\tilde{\chi}(x,\xi,t)$ almost surely for almost every time in $[0,T]$, the fact that the right- and left-continuity prove that $\tilde{\chi}^{\pm}$ preserve the $L^1$-norm, and \eqref{6_045} that, almost surely for every $t\in[0,T]$,
\[\int_\R\int_{\TT^d}\tilde{\chi}^{\pm}(x,\xi,t)(1-\tilde{\chi}^{\pm}(x,\xi,t))\dx\dxi\leq \int_\R\int_{\TT^d}\overline{\chi}(\rho_0)(1-\overline{\chi}(\rho_0))\dx\dxi=0.\]
Since the weak convergence implies that $0\leq \tilde{\chi}^{\pm}\leq 1$ almost everywhere, it follows that
\[\tilde{\chi}^{\pm}(x,\xi,t)(1-\tilde{\chi}^{\pm}(x,\xi,t))=0\;\;\textrm{almost everywhere.}\]
That is, almost surely,
\begin{equation}\label{6_50}\textrm{$\tilde{\chi}^{\pm}$ are $\{0,1\}$-valued on $\TT^d\times\R\times[0,T]$.}\end{equation}

We will now show that as a distribution $\partial_\xi\tilde{\chi}^{\pm}\leq 0$ on $\TT^d\times(0,\infty)\times[0,T]$, from which the claim follows.  Since for all times $t\in[0,T]$, with only a right or a left limit if $t=0$ or $t=T$ respectively, the fact that almost surely $\tilde{\chi}^\pm=\tilde{\chi}$ for almost every $t\in[0,T]$ implies that there almost surely exists a sequence $\{(t^+_k,t^-_k)\}_{k\in\N}$ of positive and negative numbers satisfying $t^+_k,t^-_k\rightarrow 0$ as $k\rightarrow\infty$ such that $\tilde{\chi}^\pm(x,\xi,t+t^\pm_k)=\tilde{\chi}(x,\xi,t+t^\pm_k)$ for every $k\in\N$.  It then follows from the respective right- and left-continuity of $\tilde{\chi}^\pm$ and properties of the kinetic function that, for all nonnegative $\alpha\in\C^\infty_c((0,\infty))$ and $\psi\in\C^\infty(\TT^d)$,
\begin{align*}
\int_\R\int_{\TT^d}\tilde{\chi}^{\pm}(x,\xi,t)\psi(x)\alpha'(\xi) & \geq \liminf_{k\rightarrow\infty}\int_\R\int_{\TT^d}\tilde{\chi}(x,\xi,t+t^{\pm}_k)\psi(x)\alpha'(\xi)
\\ & \geq \liminf_{k\rightarrow\infty}\int_\R\int_{\TT^d}\psi(x)\alpha(\tilde{\rho}(x,t+t^{\pm}_k)) \geq 0,
\end{align*}
from which we conclude from the density of linear combinations of functions of the type $\alpha(\xi)\psi(x)$ in $\C^\infty_c(\TT^d\times(0,\infty))$ that, as distributions,
\begin{equation}\label{6_51}\partial_\xi\tilde{\chi}^{\pm}(x,\xi,t)\leq 0\;\;\textrm{on}\;\;\TT^d\times(0,\infty)\times[0,T].\end{equation}
In combination \eqref{6_50} and \eqref{6_51} prove that the $\tilde{\chi}^\pm$ are kinetic functions in the sense that there exist $\tilde{\rho}^{\pm}\in L^1(\O\times[0,T];L^1(\TT^d))$ which almost surely satisfy $\tilde{\rho}^{\pm}(x,t)=\tilde{\rho}(x,t)$ for almost every $t\in[0,T]$ such that
\begin{equation}\label{6_52}\tilde{\chi}^{\pm}(x,\xi,t)=\mathbf{1}_{\{0<\xi<\tilde{\rho}^\pm(x,t)\}}.\end{equation}

To conclude that $\tilde{\rho}$ has an $L^1$-continuous representative, we will will prove that $\tilde{\rho}^+=\tilde{\rho}^-$.  Observe that it follows from \eqref{6_48} and \eqref{6_49} that, almost surely for every $\alpha\in\C^\infty_c((0,\infty))$ and $\psi\in\C^\infty(\TT^d)$,
\begin{equation}\label{5_53}\int_\R\int_{\TT^d}(\tilde{\chi}^+(x,\xi,t)-\tilde{\chi}^-(x,\xi,t))\psi(x)\alpha(\xi) = \tilde{Q}^-_{\alpha\psi}(t)-\tilde{Q}^+_{\alpha\psi}(t)=-\int_{\{t\}\times\TT^d\times\R}\alpha'(\xi)\psi(x)\dd \tilde{q}. \end{equation}
Fix a sequence of functions $\alpha_n\in\C^\infty_c((0,\infty))$ such that $0\leq \alpha_n\leq 1$, $\alpha_n(\xi)=1$ if $\nicefrac{1}{n}\leq \xi\leq n$, $\alpha_n(\xi)=0$ if $\xi<\nicefrac{1}{2n}$ or if $\xi>n+1$, and such that $\alpha_n'(\xi)\leq\nicefrac{c}{n}$ if $\nicefrac{1}{2n}<\xi<\nicefrac{1}{n}$ and $\alpha_n'(\xi)\leq c$ if $n<\xi<n+1$ for some $c\in(0,\infty)$ independent of $n$.  It follows from \eqref{6_52} and \eqref{5_53} that for every $\psi\in\C^\infty(\TT^d)$ there exists $c\in(0,\infty)$ such that, almost surely for every $t\in[0,T]$,
\[\abs{\int_{\TT^d}(p^+(x,t)-p^-(x,t))\psi(x)\dx}\leq c\liminf_{n\rightarrow\infty}\left(n\tilde{q}(\{t\}\times\TT^d\times[\nicefrac{1}{2n},\nicefrac{1}{n}])+\tilde{q}(\{t\}\times\TT^d\times[n,n+1])\right),\]
and therefore, using Proposition~\ref{prop_measure} and the finiteness of the measures $\tilde{q}$, almost surely for every $t\in[0,T]$ and $\psi\in\C^\infty(\TT^d)$,
\begin{equation}\label{6_54}
\abs{\int_{\TT^d}(p^+(x,t)-p^-(x,t))\psi(x)\dx}=0.
\end{equation}
It follows from \eqref{6_54} that, for every $j\in\N$, there exists a subset of full probability such that, for every $t\in[0,T]$,
\[\int_{\TT^d}(p^+(x,t)-p^-(x,t))\psi_j(x)\dx=0.\]
The density of the $\psi_j$ proves that almost surely $\tilde{\rho}^+=\tilde{\rho}^-$ in $L^1([0,T];L^1(\TT^d))$.  From \eqref{6_52} it follows that almost surely $\tilde{\chi}^+=\tilde{\chi}^-$ in $L^2(\TT^d\times\R\times[0,T])$, and we therefore conclude that $\tilde{\chi}^+=\tilde{\chi}^-$ is almost surely weakly $L^2$-continuous in time.

It remains only to show that the weak continuity of $\tilde{\chi}^+$ implies the strong continuity of $\tilde{\rho}^+$.  Let $t\in[0,T]$ and let $\{t_k\}_{k\in[0,\infty)}$ be a sequence in $[0,T]$ satisfying $t_k\rightarrow t$ as $k\rightarrow\infty$.  Properties of the kinetic function and the weak $L^2$-continuity of $\tilde{\chi}^+$ prove that
\begin{align*}
& \limsup_{k\rightarrow\infty}\int_{\TT^d}\abs{\tilde{\rho}^+(x,t)-\tilde{\rho}^+(x,t_k)}\dx = \limsup_{k\rightarrow\infty}\int_\R\int_{\TT^d}\abs{\tilde{\chi}^+(x,\xi,t)-\tilde{\chi}^+(x,\xi,t_k)}^2\dx\dxi
\\ & = \limsup_{k\rightarrow\infty}\int_\R\int_{\TT^d}\left(\tilde{\chi}^+(x,\xi,t)+\tilde{\chi}^+(x,\xi,t_k)-2\tilde{\chi}^+(x,\xi,t)\tilde{\chi}^+(x,\xi,t_k)\right)\dx\dxi=0,
\end{align*}
which completes the proof that $\tilde{\rho}^+$ is $L^1$-continuous in the strong topology, and therefore that $\tilde{\rho}$ has a representative taking values in $\C([0,T];L^1(\TT^d))$.  Furthermore, it follows from the continuity and \eqref{6_49} that the measure $\tilde{q}$ almost surely has no atoms in time, so that there is no ambiguity, for example, in interpreting integrals of the form $\int_0^t\int_\R\int_{\TT^d}(\partial_\xi\psi)(x,\xi)\dd \tilde{q}$.

\textbf{Conclusion.}  It follows from \eqref{6_045}, \eqref{6_49}, the weak $L^2$-continuity of $\tilde{\chi}^+$ and the strong $L^1$-continuity of $\tilde{\rho}^+$ that $\tilde{\rho}$ has a representative in $L^1(\O\times[0,T];L^1(\TT^d))$ that is a stochastic kinetic solution of \eqref{6_0} in the sense of Definition~\ref{def_sol} with respect to the Brownian motion $\tilde{\beta}$ and the filtration $(\overline{\mathcal{G}}_t)_{t\in[0,\infty)}$.  That is, we have shown the existence of a probabilistically weak solution.  It remains to show that there exists a probabilistically strong solution.

Returning to \eqref{6_32}, \eqref{6_34}, and \eqref{6_35}, it follows that there exists $\overline{\rho}\in L^\infty([0,T];L^1(\TT^d))$ such that
\[\tilde{Z} = (\overline{\rho},\nabla\Theta_{\Phi,p}(\overline{\rho}),0,\overline{q},(\overline{M}^j)_{j\in\N}).\]
A repetition of the above arguments proves that there almost surely exists a strongly $L^1$-continuous representative of $\overline{\rho}$ that is a stochastic kinetic solution of \eqref{6_0} with respect to the Brownian motion $\tilde{\beta}$ and the filtration $(\overline{\mathcal{G}}_t)_{t\in[0,\infty)}$ on $(\tilde{\O},\tilde{\F},\tilde{\P})$ in the sense of Definition~\ref{def_sol}.  The uniqueness of Theorem~\ref{thm_unique} proves that almost surely $\overline{\rho}=\tilde{\rho}$ in $L^1([0,T];L^1(\TT^d))$.  Returning to \eqref{6_32}, we conclude that along the subsequence $k\rightarrow\infty$ the joint laws of $(X^{\alpha_k,n_k},X^{\beta_k,m_k})$ restricted to $L^1([0,T];L^1(\TT^d))^2$ converge weakly to a measure $\mu$ on $L^1([0,T];L^1(\TT^d))^2$ satisfying the conditions of Lemma~\ref{lem_weak_conv}.

Returning to the original solutions $\{\rho^{\a,n}\}_{\a\in(0,1),n\in\N}$ defined on the original probability space $(\O,\F,\P)$, it follows from Lemma~\ref{lem_weak_conv} that, after passing to a subsequence $\alpha_k\rightarrow 0$, $n_k\rightarrow\infty$, there exists a random variable $\rho\in L^1(\O\times [0,T];L^1(\TT^d))$ such that the $\{\rho^{\alpha_k,n_k}\}_{k\in\N}$ converge to $\rho$ in probability.  Passing to a further subsequence, it follows that $\rho^{\a_k,n_k}\rightarrow\rho$ almost surely.  A simplified version of the above argument then proves that $\rho$ is a stochastic kinetic solution of \eqref{6_0} in the sense of Definition~\ref{def_sol} on $(\O,\F,\P)$.  Furthermore, since the $\{\rho^{\a,n}\}_{\a\in(0,1),n\in\N}$ are probabilistically strong solutions, it follows that $\rho$ is a probabilistically strong solution.  The estimates follow from the same argument and the weak lower semicontinuity of the Sobolev norm.  This completes the proof. \end{proof}

\begin{remark}  For $\F_0$-measurable initial data $\rho_0\in L^{m+p-1}(\O;L^1(\TT^d))\cap L^p(\O;L^p(\TT^d))$, as a consequence of Proposition~\ref{prop_approx_est}, Remark~\ref{rem_rem}, and Theorem~\ref{thm_exist}, the resulting measure $q$ is finite in the sense that
\[\E\left[q(\TT^d\times(0,\infty)\times[0,T])\right]<\infty.\]
In this case, therefore, condition \eqref{def_5353} can be replaced by the condition that the kinetic measure is a finite measure.\end{remark}

\begin{cor}\label{cor_exist}  Let $\xi^F$, $\Phi$, $\sigma$, and $\nu$ satisfy Assumptions~\ref{assumption_noise}, \ref{assume},and \ref{assumption_10} for some $p\in[2,\infty)$ and assume that $\rho_0\in L^m(\O;L^1(\TT^d))\cap L^1(\O;\Ent(\TT^d))$ is $\F_0$-measurable.  Then there exists a stochastic kinetic solution of \eqref{6_0} in the sense of Definition~\ref{def_sol}.  Furthermore, the solution satisfies the estimates of Proposition~\ref{ent_dis_est}.\end{cor}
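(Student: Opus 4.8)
The plan is to construct the solution as a limit of solutions with truncated initial data, using the \emph{a priori} entropy dissipation estimate of Proposition~\ref{ent_dis_est} in the role played by the $L^p$-energy estimate \eqref{4_19} in the proof of Theorem~\ref{thm_exist}.

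First I would set $\rho_0^n=\rho_0\wedge n$. Each $\rho_0^n$ is bounded, nonnegative, and $\F_0$-measurable, hence lies in $L^{m+p-1}(\O;L^1(\TT^d))\cap L^p(\O;L^p(\TT^d))$ (with $n$-dependent norms), and $\rho_0^n\to\rho_0$ in $L^1(\O;L^1(\TT^d))$; moreover, using $\rho_0\in L^m(\O;L^1(\TT^d))\cap L^1(\O;\Ent(\TT^d))$ together with the fact that the function $\Psi_\Phi$ of Proposition~\ref{ent_dis_est} is bounded on $[0,1]$ and grows at most like $\xi\log(2+\xi)$ at infinity by Assumption~\ref{assumption_10}, the quantities $\E[\norm{\rho_0^n}_{L^1(\TT^d)}^m]$ and $\E[\int_{\TT^d}\Psi_\Phi(\rho_0^n)]$ are bounded uniformly in $n$. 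For each $n$, Theorem~\ref{thm_exist} then provides a stochastic kinetic solution $\rho^n$ of \eqref{6_0} with data $\rho_0^n$, satisfying in particular the mass conservation \eqref{4_00019} and the local energy estimate \eqref{50_7070}. Since $\rho_0^n$ has finite entropy and Assumption~\ref{assumption_10} holds, the approximate solutions built in the proof of Theorem~\ref{thm_exist} also obey the entropy estimate \eqref{4_019}; passing to the limit in that construction by weak lower semicontinuity of the Dirichlet energy of $\Phi^{\nicefrac{1}{2}}(\cdot)$ for the dissipation term and by Fatou's lemma with the $\C([0,T];L^1(\TT^d))$- and weak $L^2$-continuity of the kinetic functions for the supremum of the entropy, I would conclude that $\rho^n$ satisfies \eqref{4_019} with a constant uniform in $n$.

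Next I would establish uniform estimates and deduce tightness. From \eqref{4_019}, Assumption~\ref{assumption_10}(i), \eqref{aa_0}, and the interpolation estimate \eqref{50_2} applied to $\Psi=\Phi^{\nicefrac{1}{2}}$, the family $\{\Phi^{\nicefrac{1}{2}}(\rho^n)\}$ is bounded in $L^2(\O;L^2([0,T];H^1(\TT^d)))$ and $\{\sigma(\rho^n)\}$ in $L^2(\O;L^2(\TT^d\times[0,T]))$, while the bound on $\E[\sup_{t}\int_{\TT^d}\Psi_\Phi(\rho^n)]$ gives uniform integrability of $\{\rho^n\}$ over $\O\times\TT^d\times[0,T]$ by de la Vall\'ee Poussin. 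The estimate \eqref{50_7070}, whose right-hand side is bounded uniformly in $n$ for fixed cutoff levels, provides uniform local $H^1$-bounds for $\rho^n$ on regions $\{M_1<\rho^n<M_2\}$, and since the cutoffs $\Psi_\d$ of Definition~\ref{def_cutoff} are supported away from zero---where $\Phi'$ is bounded below---I expect to adapt Corollary~\ref{cor_frac_sob} and Proposition~\ref{prop_approx_time} to obtain, uniformly in $n$ and $k$, stable bounds on $\Psi_{\nicefrac{1}{k}}(\rho^n)$ in $L^\infty([0,T];L^1(\TT^d))$, in a space $L^1([0,T];W^{s_0,1}(\TT^d))$ for some $s_0>0$, and in $W^{\gamma,1}([0,T];H^{-s}(\TT^d))$. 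The Aubin--Lions--Simon lemma then gives tightness of each $\{\Psi_{\nicefrac{1}{k}}(\rho^n)\}_n$ in $L^1([0,T];L^1(\TT^d))$, and exactly as in Proposition~\ref{prop_tight} and Lemma~\ref{lem_equivalent_top} this yields tightness of $\{\rho^n\}_n$ in $L^1([0,T];L^1(\TT^d))$ in the strong norm topology. The tightness of the martingale terms follows as in Proposition~\ref{prop_mart_tight}, whose proof needs only the boundedness of the test functions and their primitives and is therefore unaffected by the weaker integrability of $\rho^n$.

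Finally, following the proof of Theorem~\ref{thm_exist}, I would apply the Skorokhod representation theorem to pass to an almost surely convergent subsequence on an auxiliary probability space, identify the limiting martingale terms as stochastic integrals against the limiting Brownian motion, and verify that the limit $\tilde\rho$ is a stochastic kinetic solution of \eqref{6_0} in the sense of Definition~\ref{def_sol}: the flux integrability from the uniform $L^2$-bound on $\sigma(\rho^n)$, the local regularity from \eqref{50_7070}, the $\C([0,T];L^1(\TT^d))$-continuity exactly as in Theorem~\ref{thm_exist}, and the vanishing-at-infinity condition \eqref{def_5353} from the uniform tail bound $\lim_{M\to\infty}\sup_n\E[\int_0^T\int_{\TT^d}\mathbf{1}_{\{\rho^n\geq M\}}\Phi(\rho^n)]=0$, which follows from $\Phi(\rho^n)=(\Phi^{\nicefrac{1}{2}}(\rho^n))^2$ being bounded in $L^1([0,T];L^{r_0}(\TT^d))$ for some $r_0>1$ by Sobolev embedding together with the decay of $\abs{\{\rho^n\geq M\}}$; note that the kinetic measures $q^n$ are no longer uniformly finite, so this last point genuinely requires \eqref{def_5353} rather than finiteness. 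The uniqueness of Theorem~\ref{thm_unique} with Lemma~\ref{lem_weak_conv} would then upgrade this to a probabilistically strong solution on the original space, and the same weak lower-semicontinuity argument shows it satisfies the estimates of Proposition~\ref{ent_dis_est}. I expect the main obstacle to be the tightness step: the entropy estimate is strictly weaker than the $L^p$-energy estimate, the kinetic measures $q^n$ are not uniformly finite, and the solutions are not uniformly $L^p$-integrable, so one must carefully combine the entropy dissipation $\nabla\Phi^{\nicefrac{1}{2}}(\rho^n)\in L^2$, the hypothesis $\sigma\leq c\Phi^{\nicefrac{1}{2}}$, and the nondegeneracy of the diffusion away from zero to recover the spatial and temporal compactness of the cutoffs $\Psi_{\nicefrac{1}{k}}(\rho^n)$ as well as the tail control needed for \eqref{def_5353}.
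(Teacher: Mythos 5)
Your route is genuinely different from the paper's, and the difference matters: you have missed the shortcut that makes Corollary~\ref{cor_exist} an easy consequence of Theorems~\ref{thm_unique} and \ref{thm_exist} rather than a fresh compactness argument. After setting $\rho_0^n=\rho_0\wedge n$ and invoking Theorem~\ref{thm_exist} to produce the stochastic kinetic solutions $\rho^n$, the pathwise $L^1$-contraction of Theorem~\ref{thm_unique} gives, almost surely for every $t$,
\[
\norm{\rho^n(\cdot,t)-\rho^{n'}(\cdot,t)}_{L^1(\TT^d)}\leq\norm{\rho_0^n-\rho_0^{n'}}_{L^1(\TT^d)}\longrightarrow 0,
\]
so $\{\rho^n\}$ is Cauchy in $\C([0,T];L^1(\TT^d))$ on the \emph{original} probability space, and you obtain the strong limit $\rho$ at once, with no tightness, no Aubin--Lions, no Skorokhod, and no Gy\"ongy--Krylov upgrade to a strong solution. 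The remaining work is only to check that $\rho$ inherits the integrability, local regularity, and measure-decay properties of Definition~\ref{def_sol}, which the paper does exactly as you describe in your final paragraph (Proposition~\ref{ent_dis_est} plus interpolation for $\sigma(\rho)\in L^2$, \eqref{50_7070} for local $H^1$-regularity and \eqref{def_5353}, weak lower semicontinuity for \eqref{2_500}).

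This matters beyond economy of means: the tightness route you propose is not merely longer but faces real obstructions that you acknowledge but do not resolve. Corollary~\ref{cor_frac_sob} and Proposition~\ref{prop_approx_time} are proved for the \emph{smooth} approximations of Definition~\ref{def_approx}, not for the stochastic kinetic solutions $\rho^n$ that Theorem~\ref{thm_exist} produces, and their right-hand sides are in terms of $\E[\int_{\TT^d}\rho_0^p]$; replacing the $L^p$-estimate \eqref{4_19} by the entropy estimate \eqref{4_019} throughout the double-limit construction of Section~\ref{sec_exist} would require re-deriving those time- and space-regularity bounds under strictly weaker hypotheses. That is a substantial additional argument, and precisely the one the paper avoids by exploiting the already-established uniqueness.

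Your preliminary observations are fine — the uniform entropy bound \eqref{6_55} on $\rho_0^n$, the use of Proposition~\ref{ent_dis_est} for the dissipation bound on $\Phi^{\nicefrac{1}{2}}(\rho)$, the role of $\sigma\leq c\Phi^{\nicefrac{1}{2}}$, \eqref{50_5}, and \eqref{50_7070} — and your final verification paragraph is essentially what the paper does after the limit is taken. The missing idea is simply to use the contraction to produce the limit.
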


\begin{proof} Let $\rho_0\in L^m(\O;L^1(\TT^d))\cap L^1(\O;\Ent(\TT^d))$ and for every $n\in\N$ let $\rho^n_0=\rho\wedge n$.  It then follows using the bounded entropy of $\rho_0$ in expectation and the fact that $\xi\log(\xi)$ is increasing on the set $[\nicefrac{1}{e},\infty)$ that
\begin{equation}\label{6_55}\sup_n\E\left[ \int_{\TT^d}\rho_0^n\log(\rho_0^n)\right]+\sup_n\E\left[\norm{\rho_0^n}^m_{L^1(\TT^d)}\right]<\infty\;\;\textrm{and}\;\;\lim_{n\rightarrow\infty}\norm{\rho^n_0-\rho_0}_{L^1(\TT^d)}=0.\end{equation}
Using Theorem~\ref{thm_exist} let $\{\rho^n\}_{n\in\N}$ be the unique stochastic kinetic solutions of \eqref{6_0} with initial data $\{\rho^n_0\}_{n\in\N}$.  The $L^1$-contraction of Theorem~\ref{thm_unique} proves that there exists a random variable $\rho\in L^1(\O\times[0,T];L^1(\TT^d))$ such that $\rho^n\rightarrow\rho$ strongly in $L^1(\O\times [0,T];L^1(\TT^d))$.  It follows from the $L^1$-estimate \eqref{4_00019}, Proposition~\ref{ent_dis_est}, the interpolation estimate \eqref{50_2}, and $\rho_0\in L^m(\O;L^1(\TT^d))\cap L^1(\O;\Ent(\TT^d))$ that $\Phi^\frac{1}{2}(\rho)\in L^2(\O\times[0,T];H^1(\TT^d))$.  It then follows from $\sigma\leq c\Phi^\frac{1}{2}$ that $\sigma(\rho)\in L^2(\O\times[0,T];L^2(\TT^d))$, and from $\abs{\nu}\leq (1+\xi+\Phi)$ that $\nu(\rho)\in L^1(\O;L^1(\TT^d\times[0,T]))$.  Finally, it follows from \eqref{50_7070} that the resulting kinetic measure---which is no longer globally integrable---decays at infinity in the sense of \eqref{def_5353}, it follows from the uniform boundedness of $\Phi'$ away from zero on compact subsets of $(0,\infty)$ and \eqref{50_7070} that $\rho$ satisfies the local regularity property \eqref{def_2500000}, and it follows from the weak continuity of the Sobolev norm that the kinetic measure satisfies \eqref{2_500}.  The proof that $\rho$ is $L^1$-continuous and a stochastic kinetic solution of \eqref{6_0} with initial data $\rho_0$ then follows from \eqref{6_55} and a simplified version of the above argument, since here everything is taking place on the original probability space.  The estimates follow from the same argument and the weak lower semicontinuity of the Sobolev norm.  This completes the proof. \end{proof}

\begin{cor}\label{cor_exist_1}  Let $\xi^F$, $\Phi$, $\sigma$, and $\nu$ satisfy Assumptions~\ref{assumption_noise} and \ref{assume} for some $p\in[2,\infty)$ and let $\rho_0\in L^1(\O;L^1(\TT^d))$ be nonnegative and $\F_0$-measurable.  Assume that, for some $c\in(0,\infty)$,
\begin{equation}\label{6_9191}\sigma^2(\xi)+\abs{\nu(\xi)}\leq c(1+\xi)\;\;\textrm{for every}\;\;\xi\in[0,\infty).\end{equation}
Then there exists a stochastic kinetic solution of \eqref{6_0} in the sense of Definition~\ref{def_sol}.  \end{cor}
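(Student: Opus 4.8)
The plan is to construct $\rho$ as a strong $L^1$-limit of the stochastic kinetic solutions with truncated initial data $\rho_0\wedge n$, exploiting the linear growth \eqref{6_9191} of $\sigma^2$ and $\nu$ to produce all the required uniform estimates and the $L^1$-contraction of Theorem~\ref{thm_unique} to turn the a priori weak compactness into strong convergence. Because the contraction already yields genuine convergence on the original probability space, no Skorokhod construction is needed, and the verification of Definition~\ref{def_sol} becomes a simplified version of the argument in Theorem~\ref{thm_exist}.

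For every $n\in\N$ I would set $\rho^n_0=\rho_0\wedge n$. Since $0\le\rho^n_0\le n$ on $\TT^d$ one has $\norm{\rho^n_0}_{L^1(\TT^d)}\le\min(n,\norm{\rho_0}_{L^1(\TT^d)})$ and $\norm{\rho^n_0}_{L^p(\TT^d)}^p\le n^{p-1}\norm{\rho_0}_{L^1(\TT^d)}$, so $\rho^n_0\in L^{m+p-1}(\O;L^1(\TT^d))\cap L^p(\O;L^p(\TT^d))$ is nonnegative and $\F_0$-measurable, and $0\le\rho^n_0\le\rho_0$ with the dominated convergence theorem gives $\norm{\rho^n_0-\rho_0}_{L^1(\O;L^1(\TT^d))}\to0$. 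By Theorem~\ref{thm_exist}, for each $n$ there is a stochastic kinetic solution $\rho^n$ of \eqref{6_0} with initial data $\rho^n_0$ and kinetic measure $q^n$, satisfying the estimates of Proposition~\ref{prop_approx_est}.

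Next I would collect the estimates that survive under $L^1$-data. Preservation of mass \eqref{4_00019} gives $\sup_n\sup_{t\in[0,T]}\norm{\rho^n(\cdot,t)}_{L^1(\TT^d)}\le\norm{\rho_0}_{L^1(\TT^d)}$ almost surely; combined with \eqref{6_9191} this bounds $\sigma(\rho^n)$ in $L^2(\O;L^2(\TT^d\times[0,T]))$ and $\nu(\rho^n)$ in $L^1(\O;L^1(\TT^d\times[0,T];\R^d))$ uniformly in $n$. Applying \eqref{50_7070} with $M_1=\nicefrac{1}{K}$, $M_2=K$ and using that $\Phi'$ is bounded below on $[\nicefrac{1}{K},K]$ gives a uniform-in-$n$ bound on $[(\rho^n\wedge K)\vee\nicefrac{1}{K}]$ in $L^2(\O;L^2([0,T];H^1(\TT^d)))$ for every $K$, and applying \eqref{50_7070} with $M_1=M$, $M_2=M+1$, together with the fact that the strong $L^1$-convergence established below makes $\{\rho^n\}$ uniformly integrable on $\O\times[0,T]\times\TT^d$ (whence $\sup_n\E\int_0^T\int_{\TT^d}\mathbf{1}_{\{\rho^n\ge M\}}(1+\rho^n)\to0$ as $M\to\infty$, using preservation of mass and Chebyshev for the constant part), gives $\lim_{M\to\infty}\sup_n\E[q^n(\TT^d\times[M,M+1]\times[0,T])]=0$. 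The $L^1$-contraction of Theorem~\ref{thm_unique} applied to $\rho^n,\rho^{n'}$ yields $\sup_{t\in[0,T]}\norm{\rho^n(\cdot,t)-\rho^{n'}(\cdot,t)}_{L^1(\TT^d)}\le\norm{\rho^n_0-\rho^{n'}_0}_{L^1(\TT^d)}$ almost surely, so $\{\rho^n\}$ is Cauchy in $L^1(\O;\C([0,T];L^1(\TT^d)))$ and converges to some $\rho$; passing to a subsequence I may assume $\rho^n\to\rho$ almost surely and, by a diagonal argument over $K$ using the local bounds on the $q^n$, that $q^n\rightharpoonup q$ weakly-$*$ on $\TT^d\times[\nicefrac{1}{K},\infty)\times[0,T]$ for every $K$, for a nonnegative locally finite measure $q$ on $\TT^d\times(0,\infty)\times[0,T]$.

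It then remains to check that $(\rho,q)$ satisfies Definition~\ref{def_sol}. Mass \eqref{def_3535} follows from $\norm{\rho^n(\cdot,t)}_{L^1(\TT^d)}=\norm{\rho^n_0}_{L^1(\TT^d)}\to\norm{\rho_0}_{L^1(\TT^d)}$ and the $\C([0,T];L^1(\TT^d))$-convergence; the flux integrability follows from the uniform $L^2$- and $L^1$-bounds, the almost sure convergence $\sigma(\rho^n)\to\sigma(\rho)$, $\nu(\rho^n)\to\nu(\rho)$, Fatou's lemma and uniform integrability; the local regularity \eqref{def_2500000} follows from the uniform $H^1$-bounds on $[(\rho^n\wedge K)\vee\nicefrac{1}{K}]$, weak-$H^1$ compactness, the strong $L^1$-convergence of $\rho^n$, and Stampacchia's lemma; property \eqref{2_500} follows from the definition of $q^n$, the weak-$H^1$ convergence of $\nabla\Theta_{\Phi,2}(\rho^n)$ away from the zero set, and weak lower semicontinuity of the Sobolev norm; \eqref{def_5353} is the uniform tail estimate above plus weak-$*$ convergence. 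For the kinetic equation \eqref{2_5000} one uses that every $\psi\in\C^\infty_c(\TT^d\times(0,\infty))$ is supported away from $\{\xi=0\}$, so all deterministic terms pass to the limit via the local $H^1$-convergence and \eqref{6_9191}, the measure term passes by weak-$*$ convergence of $q^n$ on the support of $\partial_\xi\psi$, and the martingale term passes using the Burkholder--Davis--Gundy inequality and the uniform $L^2$-bound on $\sigma(\rho^n)$ exactly as in Theorem~\ref{thm_exist}; the strong $L^1$-continuity in time of $\rho$ and the absence of temporal atoms of $q$ are obtained by repeating the time-regularity argument of the proof of Theorem~\ref{thm_exist} (right- and left-continuous representatives of the kinetic function, the uniqueness computation of Theorem~\ref{thm_unique} to keep the limiting kinetic function $\{0,1\}$-valued, and Proposition~\ref{prop_measure} to rule out atoms), now directly on the original probability space. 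The main obstacle is precisely this last step: in the $L^1$-data setting the kinetic measure is no longer globally finite, so every argument involving it — notably \eqref{def_5353} and the atom-free property — must be driven by the uniform integrability of $\{\rho^n\}$ and by \eqref{50_7070} rather than by a global bound on $q$, and one must check carefully that the delicate time-continuity scheme of Theorem~\ref{thm_exist} still closes under these weaker inputs.
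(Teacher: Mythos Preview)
Your proposal is correct and follows essentially the same approach as the paper: truncate the initial data by $\rho^n_0=\rho_0\wedge n$, invoke Theorem~\ref{thm_exist} for the $\rho^n$, use the almost sure $L^1$-contraction of Theorem~\ref{thm_unique} to obtain a Cauchy sequence in $L^1(\O;\C([0,T];L^1(\TT^d)))$, and then verify Definition~\ref{def_sol} for the limit using the linear growth \eqref{6_9191} (which replaces the entropy-dissipation argument of Corollary~\ref{cor_exist}) together with estimate \eqref{50_7070} for the local regularity and the decay of the kinetic measure at infinity. The paper's own proof is terser---it simply points to Corollary~\ref{cor_exist} and observes that \eqref{6_9191} combined with mass conservation \eqref{4_00019} directly gives the required integrability of $\sigma(\rho)$ and $\nu(\rho)$, and that \eqref{50_7070} only needs these inputs---but the content is the same as what you wrote out in detail.
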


\begin{proof} Let $\rho_0\in L^1(\O;L^1(\TT^d))$ and for every $n\in\N$ let $\rho^n_0=\rho\wedge n$.  Since it follows almost surely that
\[\lim_{n\rightarrow\infty}\norm{\rho^n_0-\rho_0}_{L^1(\TT^d)}=0,\]
the proof is now identical to Corollary~\ref{cor_exist}, since the entropy dissipation estimate of Proposition~\ref{ent_dis_est} was used only to prove the $L^2$-integrability of $\sigma(\rho)$, and the $L^1$-integrability of $\nu(\rho)$.  Precisely, it follows from \eqref{4_00019} and \eqref{6_9191} that there exists $c\in(0,\infty)$ such that, almost surely,
\[\norm{\sigma(\rho)}^2_{L^2([0,T];L^2(\TT^d))}+\norm{\nu(\rho)}_{L^1(\TT^d\times[0,T];\R^d)}\leq cT(1+\norm{\rho_0}_{L^1(\TT^d)}).\]
The local regularity property \eqref{def_2500000} and the vanishing of the kinetic measure at infinity \eqref{def_5353} are a consequence of estimate \eqref{50_7070}, which only requires the $L^1$-integrability of $\rho$ and the $L^2$-integrability of $\sigma(\rho)$.  This completes the proof. \end{proof}

\section{The well-posedness of \eqref{1_000}}\label{sec_gen}

In this section, we will extend the well-posedness theory to equations of the form
\begin{align}\label{9_0} \dd\rho =\Delta \Phi(\rho)\dt - \nabla\cdot\left(\sigma(\rho)\circ \dd\xi^F+\nu(\rho)\dt\right)+\phi(\rho)\dd\xi^G+\lambda(\rho)\dt\;\;\textrm{in}\;\;\TT^d\times(0,T),\end{align}
where the noise $\xi^G$ is of the form $\xi^G=\sum_{k=1}^\infty g_kW^k_t$ for continuous functions $g_k$ on $\TT^d$ and for independent Brownian motions $W^k$, and where the nonlinearities $\Phi$, $\sigma$, and $\nu$ satisfying Assumptions~\ref{assume_1} and \ref{assume} above.  The essential point in this section is that the pathwise (almost sure) contraction property of Theorem~\ref{thm_unique} will no longer be true if $\phi$ is nonzero or $\lambda$ is not monotone, and in general we only expect to obtain the $L^1$-contraction in expectation.

\textbf{The uniqueness of stochastic kinetic solutions to \eqref{9_0}}.  We  define a stochastic kinetic solution of \eqref{9_0} in Definition~\ref{def_gen_sol}.  We introduce the assumptions on $\phi$ and $\lambda$ in Assumption~\ref{assume_5}, and we prove the uniqueness of stochastic kinetic solutions in Theorem~\ref{thm_gen_unique}.  To do this, we control the kinetic measure at zero using Proposition~\ref{prop_measure_1} and control certain divergences at infinity using Lemma~\ref{lem_partition}.

\begin{assumption}\label{assumption_noise_1}  Assume that the noise $\xi^F$ and the initial data $\rho_0$ satisfy Assumption~\ref{assumption_noise}.  Let $\{W^k\}_{k\in\N}$ be independent one-dimensional Brownian motions that are independent of the $\{B^k\}_{k\in\N}$ and that are defined on the same probability space $(\O,\F,\P)$ with respect to the same filtration $(\F_t)_{t\in[0,\infty)}$ and let $g_k\in\C(\TT^d)$ for every $k\in\N$.  Assume that the sum $G_1=\sum_{k=1}^\infty g_k^2$ is continuous on $\TT^d$ and define $\xi^G = \sum_{k=1}^\infty g_k(x)W^k_t$. \end{assumption}

\begin{definition}\label{def_gen_sol}  Let $\rho_0\in L^1(\O;L^1(\TT^d))$ be nonnegative and $\F_0$-measurable.  A \emph{stochastic kinetic solution} of \eqref{9_0} is a nonnegative, almost surely continuous $L^1(\TT^d)$-valued, $\F_t$-predictable function $\rho\in L^1(\O\times[0,T];L^1(\TT^d))$ that satisfies the following three properties.
\begin{enumerate}[(i)]
\item \emph{Preservation of mass}:  for every $t\in[0,T]$,
\begin{equation}\label{defn_1}\E\left[\norm{\rho(\cdot,t)}_{L^1(\TT^d)}\right]=\E\left[\norm{\rho_0}_{L^1(\TT^d)}\right]+\E\left[\int_0^t\int_{\TT^d}\lambda(\rho)\right].\end{equation}
\item \emph{Integrability}:  we have that
\[\sigma(\rho), \phi(\rho)\in L^2(\O;L^2(\TT^d\times[0,T]))\;\;\textrm{and}\;\;\nu(\rho), \lambda(\rho)\in L^1(\O;L^1(\TT^d\times[0,T])).\]
\item \emph{Local regularity}:  for every $K\in\N$,
\[ [(\rho\wedge K)\vee\nicefrac{1}{K}]\in L^2(\O;L^2([0,T];H^1(\TT^d))).\]
\end{enumerate}
Furthermore, there exists a kinetic measure $q$ that satisfies the following three properties.
\begin{enumerate}[(i)]
\setcounter{enumi}{3}
\item \emph{Regularity}: almost surely as nonnegative measures,
\[\delta_0(\xi-\rho)\Phi'(\xi)\abs{\nabla\rho}^2\leq q\;\;\textrm{on}\;\;\TT^d\times(0,\infty)\times[0,T].\]
\item \emph{Vanishing at infinity}:  we have that
\[\lim_{M\rightarrow\infty}\E\left[q(\TT^d\times[M,M+1]\times[0,T])\right]=0.\]
\item \emph{The equation}: for every $\psi\in \C^\infty_c(\TT^d\times(0,\infty))$, almost surely for every $t\in[0,T]$,
\begin{align}\label{defn_6}
& \int_\R\int_{\TT^d}\chi(x,\xi,t)\psi= \int_\R\int_{\TT^d}\overline{\chi}(\rho_0)\psi-\int_0^t\int_{\TT^d}\Phi'(\rho)\nabla\rho\cdot(\nabla\psi)(x,\rho)+\int_0^t\int_{\TT^d}\lambda(\rho)\psi(x,\rho)
\\ \nonumber & -\frac{1}{2}\int_0^t\int_{\TT^d}[\sigma'(\rho)]^2\nabla\rho\cdot(\nabla\psi)(x,\rho)-\frac{1}{2}\int_0^t\int_{\TT^d}\sigma(\rho)\sigma'(\rho)F_2\cdot(\nabla\psi)(x,\rho)-\int_0^t\int_{\TT^d}\psi(x,\rho)\nabla\cdot\nu(\rho)
\\ \nonumber & -\int_0^t\int_\R\int_{\TT^d}\partial_\xi\psi(x,\xi)\dd q+\frac{1}{2}\int_0^t\int_{\TT^d}F_3\sigma^2(\rho)(\partial_\xi\psi)(x,\rho)+\frac{1}{2}\int_0^t\int_{\TT^d}(\partial_\xi\psi)(x,\rho)\sigma(\rho)\sigma'(\rho)\nabla\rho\cdot F_2
\\ \nonumber & +\frac{1}{2}\int_0^t\int_{\TT^d}G_1\phi^2(\rho)(\partial_\xi\psi)(x,\rho)-\int_0^t\int_{\TT^d}\psi(x,\rho)\nabla\cdot\left(\sigma(\rho)\dd\xi^F\right)+\int_0^t\int_{\TT^d}\phi(\rho)\psi(x,\rho)\dd\xi^G.
\end{align}
\end{enumerate}
\end{definition}

\begin{assumption}\label{assume_5}  Let $\Phi$, $\sigma$, and $\nu$ satisfy Assumption~\ref{assume_1}.  Let $\phi,\lambda\in\C([0,\infty))$ satisfy the following two assumptions.

\begin{enumerate}[(i)]

\item  We have that $\lambda(0)=0$ and that $\lambda\in\C([0,\infty))$ is Lipschitz continuous on $[0,\infty)$.
\item We have $\phi(0)=0$ and there exists $c\in(0,\infty)$ such that
\begin{equation}\label{9_1700} \abs{\phi(\xi)-\phi(\xi')}\leq c(\abs{\xi-\xi'}^{\nicefrac{1}{2}}\mathbf{1}_{\{\abs{\xi-\xi'}\leq 1\}}+\abs{\xi-\xi'}\mathbf{1}_{\{\abs{\xi-\xi'}\geq 1\}})\;\;\textrm{for every}\;\;\xi,\xi'\in[0,\infty).\end{equation}
\end{enumerate}
\end{assumption}

\begin{remark}  The following two properties follow from the above assumptions.
\begin{enumerate}[(i)]
\item There exists $c\in(0,\infty)$ such that
\begin{equation}\label{9_16}\limsup_{\xi\rightarrow 0^+}\frac{\phi^2(\xi)}{\xi}\leq c.\end{equation}
\item There exists $c\in(0,\infty)$ such that
\begin{equation}\label{9_17}\abs{\phi(\xi)}\leq c(1+\xi)\;\;\textrm{and}\;\;\abs{\lambda(\xi)}\leq c\xi\;\;\textrm{for every}\;\;\xi\in[0,\infty).\end{equation}
\end{enumerate}
\end{remark}

\begin{prop}\label{prop_measure_1}  Let $\xi^F$, $\xi^G$, $\Phi$, $\sigma$, $\nu$, $\phi$, and $\lambda$ satisfy Assumptions~\ref{assumption_noise_1} and \ref{assume_5} and let $\rho_0\in L^1(\O;L^1(\TT^d))$ be nonnegative and $\F_0$-measurable.  Then, if $\rho$ is a stochastic kinetic solution of \eqref{9_0} in the sense of Definition~\ref{def_gen_sol} with initial data $\rho_0$ and with kinetic measure $q$, it follows almost surely that
\[\liminf_{\beta\rightarrow 0}\left(\beta^{-1}q(\TT^d\times[\nicefrac{\beta}{2},\beta]\times[0,T])\right)= 0.\]
\end{prop}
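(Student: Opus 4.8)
The strategy is to repeat, essentially verbatim, the argument of Proposition~\ref{prop_measure}, now carried out for the stochastic kinetic solution of the more general equation \eqref{9_0} and its equation \eqref{defn_6}. First I would fix $M\in\N$ and $\beta\in(0,1)$ and test \eqref{defn_6} with smooth approximations of the product $\zeta_M\varphi_\beta$ from Definition~\ref{def_smooth}, whose derivatives converge pointwise to $-\mathbf 1_{\{M<\xi<M+1\}}$ and $2\beta^{-1}\mathbf 1_{\{\nicefrac{\beta}{2}<\xi<\beta\}}$ respectively. Taking expectations and using the dominated convergence theorem — the martingale terms involving $\dd\xi^F$ and $\dd\xi^G$ vanish in expectation since $\varphi_\beta\zeta_M$ and its product with $\psi$ are bounded and the integrands are square integrable by the Integrability property of Definition~\ref{def_gen_sol} — yields an identity of the same shape as \eqref{44_1}, but with two genuinely new contributions: a term $\E\big[\int_0^t\int_{\TT^d}\lambda(\rho)\varphi_\beta\zeta_M\big]$ coming from the reaction term, and the correction term $\tfrac12\E\big[\int_0^T\int_{\TT^d}G_1\phi^2(\rho)(\varphi_\beta\zeta_M)'(\rho)\big]$ from the It\^o correction of the non-conservative noise.

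Next I would dispose of the $M$-dependent parts exactly as in Proposition~\ref{prop_measure}: the $q(\TT^d\times[M,M+1]\times[0,T])$ term vanishes as $M\to\infty$ by the Vanishing-at-infinity property; the $F_2$- and $F_3$-terms restricted to $\{M<\rho<M+1\}$ vanish using \eqref{assume_f1}, Assumption~\ref{assume_1}(iv), the boundedness of $\nabla\cdot F_2$ and $F_3$, the $L^2$-integrability of $\sigma(\rho)$, and dominated convergence; the $\chi$-term converges to $\big[\int_\R\int_{\TT^d}\chi\varphi_\beta\big]_{s=0}^{s=T}$ by $L^1$-integrability of $\rho$. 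The two new $M$-dependent pieces also vanish: the $\zeta_M'$-part of the $\lambda$ term is $-\E[\int_0^T\int_{\TT^d}\mathbf 1_{\{M<\rho<M+1\}}\lambda(\rho)\varphi_\beta]$, which $\to0$ as $M\to\infty$ by $\abs{\lambda(\xi)}\le c\xi$ (see \eqref{9_17}), the $L^1$-integrability of $\rho$, and dominated convergence; the $\zeta_M'$-part of the $G_1\phi^2$ term is handled just like the $F_3\sigma^2$ term, using that $G_1$ is continuous hence bounded on $\TT^d$, that $\phi^2(\rho)\in L^1$ (indeed $\phi(\rho)\in L^2$), and the analogue of \eqref{assume_f1} for $\phi$ — which follows from \eqref{9_1700} and Chebyshev's inequality exactly as for $\sigma$. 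After passing $M\to\infty$ I arrive at the analogue of \eqref{44_2}:
\begin{align*}
& \E\left[2\beta^{-1}q(\TT^d\times[\nicefrac{\beta}{2},\beta]\times[0,T])\right] = -\E\left[\left.\int_\R\int_{\TT^d}\chi(x,\xi,s)\varphi_\beta\dx\dxi\right|_{s=0}^{s=T}\right]
\\ & \quad +\beta^{-1}\E\left[\int_0^T\int_{\TT^d}\mathbf 1_{\{\nicefrac{\beta}{2}<\rho<\beta\}}\sigma(\rho)\sigma'(\rho)\nabla\rho\cdot F_2+\int_0^T\int_{\TT^d}\mathbf 1_{\{\nicefrac{\beta}{2}<\rho<\beta\}}F_3\sigma^2(\rho)\right]
\\ & \quad +\beta^{-1}\E\left[\int_0^T\int_{\TT^d}\mathbf 1_{\{\nicefrac{\beta}{2}<\rho<\beta\}}G_1\phi^2(\rho)\right]+\E\left[\int_0^T\int_{\TT^d}\lambda(\rho)\varphi_\beta\right].
\end{align*}

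Finally I would let $\beta\to0$. The $\chi$-term converges to $\norm{\rho(\cdot,T)}_{L^1}-\norm{\rho_0}_{L^1}$, which by the Preservation of mass property \eqref{defn_1} equals $\E[\int_0^T\int_{\TT^d}\lambda(\rho)]$; since $\varphi_\beta(\rho)\to\mathbf 1_{\{\rho>0\}}$ pointwise and $\lambda(0)=0$, this exactly cancels the $\lambda$-term in the identity above, using dominated convergence and $\abs{\lambda(\xi)}\le c\xi$. The $F_2$- and $F_3$-terms are killed exactly as in Proposition~\ref{prop_measure}: for $F_2$ one integrates by parts as in \eqref{44_3} and uses either $\nabla\cdot F_2=0$ or $(\sigma\sigma')(0)=0$ together with boundedness of $\nabla\cdot F_2$; for $F_3$ one bounds $\beta^{-1}\int_0^T\int_{\TT^d}\mathbf 1_{\{\nicefrac\beta2<\rho<\beta\}}F_3\sigma^2(\rho)\le c\beta^{-1}\int_0^T\int_{\TT^d}\mathbf 1_{\{\nicefrac\beta2<\rho<\beta\}}\to0$ using \eqref{9_16}-type local behaviour of $\sigma^2$ (Assumption~\ref{assume_1}(iii)) and the $L^1$-integrability of $\rho$. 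The new $G_1\phi^2$ term is controlled the same way: $\phi(0)=0$ and \eqref{9_16} give $\phi^2(\rho)\le c\rho$ near zero, so $\beta^{-1}\int_0^T\int_{\TT^d}\mathbf 1_{\{\nicefrac\beta2<\rho<\beta\}}G_1\phi^2(\rho)\le c\norm{G_1}_\infty\beta^{-1}\int_0^T\int_{\TT^d}\mathbf 1_{\{\nicefrac\beta2<\rho<\beta\}}\beta\le c\int_0^T\int_{\TT^d}\mathbf 1_{\{\nicefrac\beta2<\rho<\beta\}}\to0$. Hence $\lim_{\beta\to0}\E[2\beta^{-1}q(\TT^d\times[\nicefrac\beta2,\beta]\times[0,T])]=0$, and the claimed almost sure $\liminf$ statement follows from Fatou's lemma. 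The main obstacle — really the only point requiring care beyond bookkeeping — is ensuring that the reaction term $\lambda$ cancels cleanly: one must invoke the modified mass-preservation identity \eqref{defn_1} rather than pointwise $L^1$ conservation, and verify that the $\varphi_\beta$-regularized $\lambda$ integral converges to $\E[\int_0^T\int_{\TT^d}\lambda(\rho)]$, which is where $\lambda(0)=0$ and the linear bound $\abs{\lambda(\xi)}\le c\xi$ are used.
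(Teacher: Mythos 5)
Your proof is correct and follows exactly the route the paper takes: the paper gives this proof only as a one-line reduction to Proposition~\ref{prop_measure}, noting that one must use \eqref{9_17} and replace pathwise mass conservation by the expectation identity \eqref{defn_1}, and you have filled in precisely those details. In particular you correctly identify the one genuinely new point — that the $\lambda$-term coming from \eqref{defn_6} is cancelled, in the $\beta\to 0$ limit, by the non-zero value of $\E\big[\!\int_\R\!\int_{\TT^d}\chi\varphi_\beta\big|_{s=0}^{s=T}\big]$ produced by \eqref{defn_1} — and you handle the extra $G_1\phi^2$ It\^o correction at both ends of the $\xi$-axis exactly as the paper handles the $F_3\sigma^2$ term, using $\phi(\rho)\in L^2_{\omega,t,x}$ together with boundedness of $G_1$ for $M\to\infty$, and \eqref{9_16} (i.e.\ $\phi^2(\xi)\le c\xi$ near zero) for $\beta\to 0$.
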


\begin{proof}  The proof is identical to Proposition~\ref{prop_measure} using \eqref{9_17} and the fact that in this case \eqref{defn_1} is the correct notion of mass preservation, which is the formal estimate obtained by testing \eqref{defn_6} with $\psi=1$ and taking the expectation.\end{proof}

\begin{lem}\label{lem_partition}  Let $(X,\mathcal{S})$ be a measurable space, let $K\in\N$, let $\{\mu_k\}_{k\in\{1,2,\ldots,K\}}$ be finite nonnegative measures on $(X,\mathcal{S})$, and for every $k\in\{1,2,\ldots,K\}$ let $\{B_{n,k}\subseteq X\}_{n\in\N}\subseteq \mathcal{S}$ be disjoint subsets.  Then,
\[\liminf_{n\rightarrow\infty}\left(n\sum_{k=1}^K\mu_k(B_{n,k})\right)=0.\]
\end{lem}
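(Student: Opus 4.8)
The plan is to argue by contradiction. Suppose the $\liminf$ is strictly positive; then there exist $\varepsilon>0$ and $N\in\N$ such that $n\sum_{k=1}^K\mu_k(B_{n,k})\geq\varepsilon$ for every $n\geq N$, i.e.\ $\sum_{k=1}^K\mu_k(B_{n,k})\geq\varepsilon/n$ for all large $n$. Summing over $n$ from $N$ to any $N'$ and using that, for each fixed $k$, the sets $\{B_{n,k}\}_{n\in\N}$ are pairwise disjoint, we get
\[
\sum_{k=1}^K\mu_k(X)\geq\sum_{k=1}^K\mu_k\Bigl(\bigcup_{n=N}^{N'}B_{n,k}\Bigr)=\sum_{n=N}^{N'}\sum_{k=1}^K\mu_k(B_{n,k})\geq\varepsilon\sum_{n=N}^{N'}\frac{1}{n}.
\]
Letting $N'\to\infty$, the right-hand side diverges because the harmonic series diverges, while the left-hand side is the finite quantity $\sum_{k=1}^K\mu_k(X)<\infty$ since each $\mu_k$ is a finite measure and $K$ is finite. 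This contradiction forces $\liminf_{n\to\infty}\bigl(n\sum_{k=1}^K\mu_k(B_{n,k})\bigr)=0$, which is the claim.

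The only mild subtlety is making sure the disjointness is used correctly: for each fixed $k$ the family $\{B_{n,k}\}_{n}$ is disjoint, so countable additivity of $\mu_k$ gives $\mu_k\bigl(\bigcup_{n}B_{n,k}\bigr)=\sum_n\mu_k(B_{n,k})$, and in particular any partial sum $\sum_{n=N}^{N'}\mu_k(B_{n,k})$ is bounded by $\mu_k(X)$. There is no assumed relation between the families for different values of $k$, and none is needed; we simply add the $K$ inequalities. Since $K$ is finite, interchanging the two finite/infinite sums over $n$ and $k$ is harmless (all terms are nonnegative, so Tonelli applies trivially). There is essentially no obstacle here — the statement is a clean pigeonhole/Borel–Cantelli-flavored observation, and the proof is two lines of estimates; the role of the lemma in the paper (controlling divergences at infinity, applied with $\mu_k$ the kinetic measures restricted to dyadic-in-space shells and $B_{n,k}$ the sets $\TT^d\times[n,n+1]\times[0,T]$ or similar) is where the real work lies, but that is downstream of this elementary fact.
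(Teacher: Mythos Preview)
Your proof is correct and uses the same core idea as the paper: argue by contradiction, extract a lower bound $\sum_k\mu_k(B_{n,k})\geq\varepsilon/n$ for all large $n$, and play the divergence of the harmonic series against the finiteness coming from disjointness.

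The paper's argument is slightly more elaborate: after obtaining the lower bound, it introduces the index sets $\mathcal{I}_{N,k}=\{n\geq N:\mu_k(B_{n,k})\geq \varepsilon/(2Kn)\}$ and uses a pigeonhole step to find a single $k_0$ with $\sum_{n\in\mathcal{I}_{N,k_0}}1/n=\infty$, then derives the contradiction from $\mu_{k_0}$ alone. Your version bypasses this by summing over all $k$ at once and swapping the two sums, which is legitimate since all terms are nonnegative and $K$ is finite. Your route is shorter and arguably cleaner; the paper's route isolates the contradiction to a single measure but gains nothing from doing so here.
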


\begin{proof}   Proceeding by contradiction, suppose that there exists $\ve\in(0,1)$ such that
\[\liminf_{n\rightarrow\infty}\left(n\sum_{k=1}^K\mu_k(B_{n,k})\right)\geq\ve.\]
Then, there exists $N\in\N$ such that, for every $n\geq N$,
\[n\sum_{k=1}^K\mu_k(B_{n,k})\geq \frac{\ve}{2}.\]
For every $k\in\{1,2,\ldots,K\}$ let $\mathcal{I}_{N,k}\subseteq[N,N+1,\ldots)$ be defined by
\[\mathcal{I}_{N,k}=\left\{n\in[N,N+1,\ldots)\colon \mu_k(B_{n,k})\geq \frac{\ve}{2Kn}\right\}.\]
Since by definition $[N,N+1,\ldots)=\cup_{k=1}^K\mathcal{I}_{N,k}$, and since $\sum_{n=N}^\infty\frac{1}{n}=\infty$, there exists $k_0\in\{1,2,\ldots,K\}$ such that $\sum_{n\in \mathcal{I}_{N,k_0}}\frac{1}{n}=\infty$.  This contradicts the assumption that $\mu_{k_0}$ is a finite measure, since the assumption that the $\{B_{n,k_0}\}_{n\in\N}$ are disjoint and the definition of $\mathcal{I}_{n,k_0}$ imply that
\[\infty=\sum_{n\in\mathcal{I}_{N,k_0}}\frac{1}{n}\leq \frac{2K}{\ve}\sum_{n\in\mathcal{I}_{N,k_0}}\mu_{k_0}(B_{n,k_0})\leq \frac{2K}{\ve}\mu_{k_0}(X)<\infty,\]
which completes the proof.  \end{proof}

\begin{thm}\label{thm_gen_unique} Let $\xi^F$, $\xi^G$, $\Phi$, $\sigma$, $\nu$, $\phi$, and $\lambda$ satisfy Assumptions~\ref{assumption_noise_1} and \ref{assume_5}, let $\rho^1_0,\rho^2_0\in L^1(\O;L^1(\TT^d))$ be nonnegative and $\F_0$-measurable and let $\rho^1,\rho^2$ be stochastic kinetic solutions of \eqref{9_0} in the sense of Definition~\ref{def_gen_sol} with initial data $\rho_0^1,\rho_0^2$.  Then there exists $c\in(0,\infty)$ such that, for every $t\in[0,T]$,
\begin{equation}\label{10_0}\E\left[\norm{\rho^1(\cdot,t)-\rho^2(\cdot,t)}_{L^1(\TT^d))}\right]\leq c\exp(ct)\E\left[\norm{\rho^1_0-\rho^2_0}_{L^1(\TT^d)}\right].\end{equation}
Furthermore, there exists $c\in(0,\infty)$ such that
\begin{equation}\label{10_1} \E\left[\sup_{t\in[0,T]}\norm{\rho^1(\cdot,t)-\rho^2(\cdot,t)}_{L^1(\TT^d)}\right]\leq c\exp(cT)\left(\left(\E\left[\norm{\rho^1_0-\rho^2_0}_{L^1(\TT^d)}\right]\right)^\frac{1}{2}+\E\left[\norm{\rho^1_0-\rho^2_0}_{L^1(\TT^d)}\right]\right).\end{equation}
\end{thm}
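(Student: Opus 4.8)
The plan is to adapt the computation carried out in the proof of Theorem~\ref{thm_unique} to the generalized equation \eqref{9_0}, carefully tracking the new contributions coming from the non-conservative terms $\phi(\rho)\dd\xi^G$ and $\lambda(\rho)\dt$. As before, I would introduce the convolution kernels $\kappa^{\ve,\d}$ and the cutoff functions $\varphi_\beta,\zeta_M$ of Definition~\ref{def_smooth}, form the regularized kinetic functions $\chi^{\ve,\d}_{s,i}$ of the two solutions $\rho^1,\rho^2$, and compute
\[
\left.\int_\R\int_{\TT^d}\left(\chi^{\ve,\d}_{s,1}+\chi^{\ve,\d}_{s,2}-2\chi^{\ve,\d}_{s,1}\chi^{\ve,\d}_{s,2}\right)\varphi_\beta\zeta_M\right|^t_{s=0}
\]
via the stochastic product rule applied to the equation \eqref{defn_6}. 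The terms coming from $\Phi$, from the conservative noise $\xi^F$, and from $\nu$ are handled exactly as in Theorem~\ref{thm_unique}: the measure term is nonnegative by \eqref{2_500} and H\"older, the error term vanishes as $\ve,\d\to 0$ using the local $\nicefrac{1}{2}$-H\"older regularity of the square roots, the cutoff term vanishes along subsequences $\beta\to 0$, $M\to\infty$ using Proposition~\ref{prop_measure_1} and property \eqref{def_5353}, and the $\xi^F$-martingale term vanishes. This requires no change of substance.

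The genuinely new pieces are: (i) the quadratic variation of the $\xi^G$-martingale, which produces a term of the form $\sum_k g_k(x)g_k(x')\phi(\rho^1)\phi(\rho^2)\overline{\kappa}^{\ve,\d}_{s,1}\overline{\kappa}^{\ve,\d}_{s,2}\varphi_\beta\zeta_M$ together with an It\^o correction $\tfrac12 G_1(\phi^2(\rho^1)+\phi^2(\rho^2))(\partial_\xi$-type$)$ coming from \eqref{defn_6}; combining these as in the derivation of $I^{\textrm{err}}_t$ yields, after passing $\ve,\d\to 0$, a term controlled by $\int_0^t\int_\R\int_{\TT^d}(\sqrt{G_1(y)})^2(\phi(\rho^1)-\phi(\rho^2))^2\overline{\kappa}^\d_{s,1}\overline{\kappa}^\d_{s,2}$-type expressions; using the $\nicefrac12$-H\"older bound \eqref{9_1700} this is $\lesssim \int_0^t\int_{\TT^d}\abs{\rho^1-\rho^2}$, up to the $\{\abs{\rho^1-\rho^2}\geq 1\}$ piece which is handled by the Lipschitz part of \eqref{9_1700}. (ii) The $\lambda$ contribution gives $\int_0^t\int_{\TT^d}\sgn(\rho^2-\rho^1)(\lambda(\rho^1)-\lambda(\rho^2))\varphi_\beta\zeta_M$ in the limit, which by the global Lipschitz continuity of $\lambda$ (Assumption~\ref{assume_5}(i)) is bounded by $c\int_0^t\int_{\TT^d}\abs{\rho^1-\rho^2}$. (iii) The $\xi^G$-martingale term itself survives and must be kept; it has mean zero, and its quadratic variation is again controlled by \eqref{9_1700}. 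After passing to the limits in the order $\ve\to 0$, $\d\to 0$, $\beta\to 0$, $M\to\infty$ exactly as in Theorem~\ref{thm_unique}, using properties \eqref{3_0000} of the kinetic function and taking expectations to kill the martingale, I obtain
\[
\E\left[\norm{\rho^1(\cdot,t)-\rho^2(\cdot,t)}_{L^1(\TT^d)}\right]\leq \E\left[\norm{\rho^1_0-\rho^2_0}_{L^1(\TT^d)}\right]+c\int_0^t\E\left[\norm{\rho^1(\cdot,s)-\rho^2(\cdot,s)}_{L^1(\TT^d)}\right]\ds,
\]
and Gr\"onwall's inequality gives \eqref{10_0}.

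For \eqref{10_1} I would not take expectations immediately; instead, before passing to the limit, I keep the $\xi^G$-martingale term $N_t$ and, writing $u(t)=\norm{\rho^1(\cdot,t)-\rho^2(\cdot,t)}_{L^1(\TT^d)}$, arrive at $u(t)\leq u(0)+c\int_0^t u(s)\ds+N_t$ almost surely for every $t$, where $N_t$ is a continuous martingale whose quadratic variation, using \eqref{9_1700} and the boundedness of $G_1$, satisfies $\langle N\rangle_t\leq c\int_0^t u(s)\ds + c\int_0^t u(s)^2\ds$ (the quadratic piece of \eqref{9_1700} only ever contributes where $\abs{\rho^1-\rho^2}\geq 1$, so $\abs{\rho^1-\rho^2}^2\leq\abs{\rho^1-\rho^2}$ pointwise there, and in any case $u\leq\norm{\rho^1_0}_{L^1}+\norm{\rho^2_0}_{L^1}$). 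Taking $\sup_{t\in[0,T]}$, applying the Burkholder--Davis--Gundy inequality to $\E[\sup_{t\leq T}\abs{N_t}]\lesssim \E[\langle N\rangle_T^{1/2}]\lesssim \E[(\int_0^T u\,\ds)^{1/2}]\lesssim T^{1/2}\E[\sup_{t\leq T}u(t)]^{1/2}$, and combining with $\E[\sup u]\leq \E[u(0)] + cT\E[\sup u]^{1/2}\cdot(\cdots)$ after invoking the already-established bound \eqref{10_0} on $\E[\sup_{t}u(t)]$ to replace $\E[\int_0^T u]$ by $c\exp(cT)\E[u(0)]$, yields an estimate of the form $\E[\sup_{t\leq T}u(t)]\leq c\exp(cT)(\E[u(0)]^{1/2}+\E[u(0)])$, i.e. \eqref{10_1}. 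The main obstacle I anticipate is bookkeeping the new $\xi^G$ and $\lambda$ terms through the quadruple limit $\ve,\d,\beta,M$ while simultaneously keeping the surviving $\xi^G$-martingale (which in Theorem~\ref{thm_unique} was absent) under control — in particular, making sure that the $\nicefrac12$-H\"older-only regularity of $\phi$ in \eqref{9_1700} is exactly what is needed to close the $L^1$-type estimate for the quadratic-variation term, and that the square-root defect near $\rho^i=0$ is compatible with the cutoff $\varphi_\beta$; the splitting in \eqref{9_1700} into a $\nicefrac12$-H\"older part on $\{\abs{\rho^1-\rho^2}\leq 1\}$ and a Lipschitz part on $\{\abs{\rho^1-\rho^2}\geq 1\}$ is precisely the device that resolves this.
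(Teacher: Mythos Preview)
Your overall strategy matches the paper's, and the plan for \eqref{10_0} is essentially correct, but there are two concrete slips. First, the $\phi$-error term $\int G_1(\phi(\rho^1)-\phi(\rho^2))^2\overline{\kappa}^\d_{s,1}\overline{\kappa}^\d_{s,2}\varphi_\beta\zeta_M$ does \emph{not} contribute a Gr\"onwall term: on its support $|\rho^1-\rho^2|<2\d$, so the $\tfrac12$-H\"older bound \eqref{9_1700} gives $(\phi(\rho^1)-\phi(\rho^2))^2\le c\d$, while $\int_\R\overline{\kappa}^\d_{s,1}\overline{\kappa}^\d_{s,2}\,d\eta\le c/\d$; the product is $\le c\mathbf{1}_{\{0<|\rho^1-\rho^2|<2\d\}}$ and vanishes as $\d\to0$ exactly as in \eqref{3_11}--\eqref{3_12}. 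Your bound ``$\lesssim\int|\rho^1-\rho^2|$'' drops a factor $1/\d$ and is not correct as written. The Gr\"onwall constant in \eqref{10_0} comes solely from $\lambda$. Second, you do not separately address the new cutoff term $\tfrac12\int G_1\phi^2(\rho^i)(1-2\chi^{\ve,\d}_{s,j})\overline{\kappa}^{\ve,\d}_{s,i}\partial_\eta(\varphi_\beta\zeta_M)$ that arises when $\partial_\eta$ hits the cutoff rather than the kinetic function. The $\beta\to0$ piece is handled by \eqref{9_16}, but the $M\to\infty$ piece is not covered by Proposition~\ref{prop_measure_1} or \eqref{def_5353}; the paper extracts a subsequence via Lemma~\ref{lem_partition} applied to the measures $(1+\rho^i)\,dx\,ds\,d\P$ (alternatively, dominated convergence using $\phi(\rho^i)\in L^2$ from Definition~\ref{def_gen_sol}(ii) would also do the job).

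For \eqref{10_1} your handling of the quadratic variation has genuine errors. On $\{|\rho^1-\rho^2|\ge1\}$ one has $|\rho^1-\rho^2|^2\ge|\rho^1-\rho^2|$, not $\le$; and the bound $u(t)\le\|\rho^1_0\|_{L^1}+\|\rho^2_0\|_{L^1}$ is false in the non-conservative case since mass is not preserved pathwise (cf.\ \eqref{defn_1}). You also invoke \eqref{10_0} as a bound on $\E[\sup_t u]$, but it only bounds $\sup_t\E[u]$. The correct device, used in the paper at \eqref{10_8}--\eqref{10_11}, is to write $\bigl(\int_0^T u^2\,ds\bigr)^{1/2}\le(\sup_t u)^{1/2}\bigl(\int_0^T u\,ds\bigr)^{1/2}$, apply H\"older and Young to produce $\tfrac12\E[\sup_t u]+c\E[\int_0^T u]$, absorb the first piece into the left-hand side, and bound $\E[\int_0^T u]$ via the already-established estimate \eqref{10_0}.
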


\begin{proof}  In comparison to Theorem~\ref{thm_exist}, since the expectation eliminates the martingale terms and since the noise terms $\xi^F$ and $\xi^G$ are independent, to obtain \eqref{10_0} it remains only to estimate the cutoff term
\begin{equation}\label{9_10}\E\left[\frac{1}{2}\int_0^T\int_\R\int_{(\TT^d)^2}G_1\phi^2(\rho^1)(1-2\chi^{\ve,\d}_{s,2})\overline{\kappa}^{\ve,\d}_{s,1}\partial_\eta(\varphi_\beta(\eta)\zeta_M(\eta))\dx\dy\deta\ds\right],\end{equation}
and the analogous term obtained by swapping the roles of $\rho^1$ and $\rho^2$, the error term (after passing $\ve\rightarrow 0$ as in \eqref{3_0010000})
\begin{equation}\label{9_14}\E\left[\frac{1}{2}\int_0^T\int_\R\int_{\TT^d}G_1\left(\phi(\rho^1)-\phi(\rho^2)\right)^2\overline{\kappa}^{\d}_{s,1}\overline{\kappa}^{\d}_{s,2}\varphi_\beta(\eta)\zeta_M(\eta)\dy\deta\ds\right],\end{equation}
and the error term
\begin{equation}\label{9_70}
\E\left[\int_0^T\int_\R\int_{(\TT^d)^2}\lambda(\rho^1)\overline{\kappa}^{\ve,\d}_{s,1}(1-2\chi^{\ve,\d}_{s,2})\varphi_\beta(\eta)\zeta_M(\eta)+ \int_0^T\int_\R\int_{(\TT^d)^2}\lambda(\rho^2)\overline{\kappa}^{\ve,\d}_{s,2}(1-2\chi^{\ve,\d}_{s,1})\varphi_\beta(\eta)\zeta_M(\eta)\right].
\end{equation}
Term \eqref{9_10} is treated analogously to \eqref{3_12}, where \eqref{9_16} and \eqref{9_17} are used to treat the limit $\beta\rightarrow 0$, and where the $L^1$-integrability of the $\rho^i$, Lemma~\ref{lem_partition} applied to the partitions $\O\times\{M<\rho^i<M+1\}\times[0,T]$ for $M\in\N$ and the measures $(1+\rho^i)\dx\ds\dd\P$, and assumption \eqref{9_17} are used to treat the limit $M\rightarrow\infty$.  The error term \eqref{9_14} is treated identically to \eqref{3_11} using the local $\nicefrac{1}{2}$-H\"older continuity of $\phi$ on $(0,\infty)$.  After passing to the limit $\ve\rightarrow 0$ exactly as in \eqref{3_23}, it follows that \eqref{9_70} becomes
\begin{equation}\label{9_700}
\E\left[\int_0^T\int_\R\int_{\TT^d}\lambda(\rho^1)\overline{\kappa}^\d_{s,1}(1-2\chi^\d_{s,2})\varphi_\beta(\eta)\zeta_M(\eta)+ \int_0^T\int_\R\int_{\TT^d}\lambda(\rho^2)\overline{\kappa}^\d_{s,2}(1-2\chi^\d_{s,1})\varphi_\beta(\eta)\zeta_M(\eta)\right].
\end{equation}
After passing to the limit $\d\rightarrow 0$ using \eqref{3_25} and the fact that $\lambda(0)=0$, equation \eqref{9_700} becomes
\begin{align}\label{9_71}
& \E\left[\int_0^T\int_{\TT^d}\lambda(\rho^1)(1-\mathbf{1}_{\{\rho^1=\rho^2\}}-2\mathbf{1}_{\{\rho^1<\rho^2\}})\varphi_\beta(\rho^1)\zeta_M(\rho^1)\right.
\\ \nonumber& \quad \quad \left.+ \int_0^T\int_{\TT^d}\lambda(\rho^2)(1-\mathbf{1}_{\{\rho^1=\rho^2\}}-2\mathbf{1}_{\{\rho^2<\rho^1\}})\varphi_\beta(\rho^2)\zeta_M(\rho^2)\right].
\end{align}
After passing to the limit $\beta\rightarrow 0$ using the dominated convergence theorem and $\lambda(0)=0$ and to the limit $M\rightarrow\infty$ using the dominated convergence theorem, Assumption~\ref{assume_5}, and the $L^1$-integrability of the $\rho^1$, it follows from the equality $\mathbf{1}_{\{\rho^1<\rho^2\}}=1-\mathbf{1}_{\{\rho^1=\rho^2\}}-\mathbf{1}_{\{\rho^2<\rho^1\}}$ that \eqref{9_71} becomes
\begin{equation}\label{9_72} \E\left[\int_0^T\int_{\TT^d}(\lambda(\rho^1)-\lambda(\rho^2))(1-\mathbf{1}_{\{\rho^1=\rho^2\}}-2\mathbf{1}_{\{\rho^1<\rho^2\}})\right]=\E\left[\int_0^T\int_{\TT^d}(\lambda(\rho^1)-\lambda(\rho^2))\sgn(\rho^1-\rho^2)\right].\end{equation}
It then follows from \eqref{9_72} and the Lipschitz continuity of $\lambda$ that there exists $c\in(0,\infty)$ such that
\begin{equation}\label{9_72000}\E\left[\int_0^T\int_{\TT^d}(\lambda(\rho^1)-\lambda(\rho^2))(1-\mathbf{1}_{\{\rho^1=\rho^2\}}-2\mathbf{1}_{\{\rho^1<\rho^2\}})\right]\leq c\E\left[\int_0^T\int_{\TT^d}\abs{\rho^1-\rho^2}\right].\end{equation}
It then follows from Theorem~\ref{thm_unique}, \eqref{9_10}, \eqref{9_14}, and \eqref{9_72000} that there exists $c\in(0,\infty)$ such that, almost surely for every $t\in[0,T]$,
\[\E\left[\norm{\rho^1(\cdot,t)-\rho^2(\cdot,t)}_{L^1(\TT^d)}\right]\leq \E\left[\norm{\rho^1_0-\rho^2_0}_{L^1(\TT^d)}\right]+c\E\left[\int_0^t\int_{\TT^d}\abs{\rho^1-\rho^2}\right].\]
The proof of \eqref{10_0} follows from an application of Gr\"onwall's inequality.  It remains to prove estimate \eqref{10_1}, for which it is only necessary to consider the additional martingale term.

That is, in analogy with \eqref{3_23} in Theorem~\ref{thm_unique}, it remains only to estimate the term
\begin{equation}\label{10_6} \sup_{t\in[0,T]}\abs{\int_0^t\int_{\TT^d}\left((1-\mathbf{1}_{\{\rho^1=\rho^2\}}-2\mathbf{1}_{\{\rho^1<\rho^2\}})\phi(\rho^1)+(1-\mathbf{1}_{\{\rho^1=\rho^2\}}-2\mathbf{1}_{\{\rho^2<\rho^1\}})\phi(\rho^2)\right)\dd\xi^G}.\end{equation}
The identity $\mathbf{1}_{\{\rho^1<\rho^2\}}=1-\mathbf{1}_{\{\rho^1=\rho^2\}}-\mathbf{1}_{\{\rho^2<\rho^1\}}$ proves that \eqref{10_6} is equal to
\begin{equation}\label{10_7}
\sup_{t\in[0,T]}\abs{\int_0^t\int_{\TT^d}\sgn(\rho^1-\rho^2)(\phi(\rho^1)-\phi(\rho^2))\dd\xi^G}.
\end{equation}
After taking the expectation of \eqref{10_7}, the Burkholder-Davis-Gundy inequality (see, for example, \cite[Chapter~4, Theorem~4.1]{RevYor1999}) and H\"older's inequality prove that there exists $c\in(0,\infty)$ such that
\begin{equation}\label{10_8}
 \E\sup_{t\in[0,T]}\abs{\int_0^t\int_{\TT^d}\sgn(\rho^1-\rho^2)(\phi(\rho^1)-\phi(\rho^2))\dd\xi^G}  \leq c\left(\E\left[\int_0^T\left(\int_{\TT^d}\abs{\phi(\rho^1)-\phi(\rho^2)}\dx\right)^2\ds\right]\right)^\frac{1}{2}.
\end{equation}
It then follows from Assumption~\ref{assume_5} and specifically \eqref{9_1700} and H\"older's inequality that there exists $c\in(0,\infty)$ such that
\begin{align}\label{10_9}
& \left(\E\left[\int_0^T\left(\int_{\TT^d}\abs{\phi(\rho^1)-\phi(\rho^2)}\dx\right)^2\ds\right]\right)^\frac{1}{2}
\\ \nonumber & \leq c\left(\E\left[\int_0^T\int_{\TT^d}\abs{\rho^1-\rho^2}\right]\right)^\frac{1}{2}+c\E\left[\sup_{t\in[0,T]}\norm{\rho^1(\cdot,)-\rho^2(\cdot,t)}^\frac{1}{2}_{L^1(\TT^d)}\left(\int_0^T\int_{\TT^d}\abs{\rho^1-\rho^2}\right)^\frac{1}{2}\right].
\end{align}
H\"older's inequality, Young's inequality, and \eqref{10_9} prove that there exists $c\in(0,\infty)$ such that
\begin{align}\label{10_10}
& \left(\E\left[\int_0^T\left(\int_{\TT^d}\abs{\phi(\rho^1)-\phi(\rho^2)}\dx\right)^2\ds\right]\right)^\frac{1}{2}\leq c\left(\E\left[\int_0^T\int_{\TT^d}\abs{\rho^1-\rho^2}\right]\right)^\frac{1}{2}
\\ \nonumber & +c\E\left[\int_0^T\int_{\TT^d}\abs{\rho^1-\rho^2}\right]+\frac{1}{2}\E\left[\sup_{t\in[0,T]}\norm{\rho^1(\cdot,t)-\rho^2(\cdot,t)}_{L^1(\TT^d)}\right],
\end{align}
Finally, it follows from \eqref{10_0} and \eqref{10_10} that there exists $c\in(0,\infty)$ such that
\begin{align}\label{10_11}
& \E\left[\int_0^T\left(\int_{\TT^d}(\phi(\rho^1)-\phi(\rho^2)\right)^2\right]^\frac{1}{2} \leq c\exp(cT)\left(\left(\E\left[\norm{\rho^1_0-\rho^2_0}_{L^1(\TT^d)}\right]\right)^\frac{1}{2}+\E\left[\norm{\rho^1_0-\rho^2_0}_{L^1(\TT^d)}\right]\right)
\\ \nonumber &+\frac{1}{2}\E\left[\sup_{t\in[0,T]}\norm{\rho^1(\cdot,t)-\rho^2(\cdot,t)}_{L^1(\TT^d)}\right],
\end{align}
where the final term on the righthand side of \eqref{10_11} is absorbed into the lefthand side of the estimate.  This completes the proof of estimate \eqref{10_1}, and therefore the proof.  \end{proof}

\textbf{Existence of solutions to \eqref{9_0}}.  We will now construct a stochastic kinetic solution to \eqref{9_0}.  We introduce the assumptions on $\Phi$, $\sigma$, $\nu$, $\phi$, and $\lambda$ in Assumption~\ref{assume_4}.  We define a solution of the regularized version of \eqref{9_0} with $\a\in(0,\infty)$ and for smooth and bounded $\sigma$ in Definition~\ref{gen_approx}.  In Propositions~\ref{prop_gen_est} and \ref{gen_approx_time} we prove estimates analogous to Propositions~\ref{prop_approx_est} and \ref{prop_approx_time}.  Finally, in Theorem~\ref{thm_gen_exist} we construct a probabilistically strong solution to \eqref{9_0} in the sense of Definition~\ref{def_gen_sol}.  In Corollaries~\ref{cor_exist_3} and \ref{cor_exist_2} we extend these results to initial data with finite entropy and $L^1$-initial data respectively.

\begin{assumption}\label{assume_4}  Let $\Phi$, $\sigma$, and $\nu$ satisfy Assumption~\ref{assume} for some $p\in[2,\infty)$.  Let $\phi,\lambda\in\C([0,\infty))$ satisfy the following assumption.
\begin{enumerate}[(i)]
\item  There exists $c\in(0,\infty)$ such that
\[\abs{\phi(\xi)}\leq c(1+\xi)\;\;\textrm{and}\;\;\abs{\lambda(\xi)}\leq c\xi\;\;\textrm{for every}\;\;\xi\in[0,\infty).\]
\end{enumerate}
\end{assumption}

\begin{definition}\label{gen_approx} Let $\xi^F$, $\xi^G$, $\Phi$, $\sigma$, $\nu$, $\phi$, and $\lambda$ satisfy Assumptions~\ref{assume_3}, \ref{assumption_noise_1}, and \ref{assume_4} for some $p\in[2,\infty)$, let $\a\in(0,1)$, and let $\rho_0\in L^{m+p-1}(\O;L^1(\TT^d))\cap L^p(\O;L^p(\TT^d))$ be nonnegative and $\F_0$-measurable.  A solution of \eqref{9_0} with initial data $\rho_0$ is a continuous $L^p(\TT^d)$-valued, nonnegative, $\F_t$-predictable process $\rho$ such that almost surely $\rho$ and $\Theta_{\Phi,2}(\rho)$ are in $L^2([0,T];H^1(\TT^d))$ and such that for every $\psi\in\C^\infty(\TT^d)$, almost surely for every $t\in[0,T]$,
\begin{align*}
& \int_{\TT^d}\rho(x,t)\psi(x)\dx = \int_{\TT^d}\rho_0\psi\dx -\int_0^t\int_{\TT^d}\Phi'(\rho)\nabla\rho\cdot\nabla\psi - \a\int_0^t\int_{\TT^d}\nabla\rho\cdot\nabla\psi
\\ & \quad +\int_0^t\int_{\TT^d}\sigma(\rho)\nabla\psi\cdot\dd\xi^F+\int_0^t\int_{\TT^d}\nabla\psi\cdot \nu(\rho)+\int_0^t\int_{\TT^d}\psi\phi(\rho)\dd\xi^G+\int_0^t\int_{\TT^d}\lambda(\rho)\psi
\\ & \quad -\frac{1}{2}\int_0^t\int_{\TT^d}F_1[\sigma'(\rho)]^2\nabla\rho\cdot\nabla\psi-\frac{1}{2}\int_0^t\int_{\TT^d}\sigma(\rho)\sigma'(\rho)F_2\cdot\nabla\psi.
\end{align*}  \end{definition}

\begin{prop}\label{prop_gen_est}  Let $\xi^F$, $\xi^G$, $\Phi$, $\sigma$, $\nu$, $\phi$, and $\lambda$ satisfy Assumptions~\ref{assume_3}, \ref{assumption_noise_1}, and \ref{assume_4} for some $p\in[2,\infty)$, let $T\in[1,\infty)$, let $\a\in(0,1)$, let $\rho_0\in L^{m+p-1}(\O;L^1(\TT^d))\cap L^p(\O;L^p(\TT^d))$ be nonnegative and $\F_0$-measurable, and let $\rho$ be a solution in the sense of Definition~\ref{gen_approx}.  Then there exists $c\in(0,\infty)$ such that
\begin{equation}\label{9_4} \E\left[\sup_{t\in[0,T]}\norm{\rho(\cdot,t)}_{L^1(\TT^d)}\right]\leq c\exp(cT)\E\left[\norm{\rho_0}_{L^1(\TT^d)}\right].\end{equation}
For $\Theta_{\Phi,p}$ defined in Lemma~\ref{lem_aux} there exists $c\in(0,\infty)$ depending on $p$ but independent of $\a$ and $T$ such that
\begin{align}\label{9_7}
& \sup_{t\in[0,T]}\E\left[\int_{\TT^d}\rho^p(\cdot,t)\right]+\E\left[\int_0^T\int_{\TT^d}\abs{\nabla\Theta_{\Phi,p}(\rho)}^2\right]+\a\E\left[\int_0^T\int_{\TT^d}\abs{\rho}^{p-2}\abs{\nabla\rho}^2\dx\ds\right]
\\ \nonumber &  \leq c\exp(cT)\left(1+\E\left[\norm{\rho_0}^{m+p-1}_{L^1(\TT^d)}+\int_{\TT^d}\rho_0^p\right]\right).
\end{align}
\end{prop}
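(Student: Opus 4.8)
The plan is to adapt the argument of Proposition~\ref{prop_approx_est} to the equation with the additional It\^o noise term $\phi(\rho)\dd\xi^G$ and the zeroth-order term $\lambda(\rho)\dt$, the key difference being that the expectation of the martingale $\int_0^t\int_{\TT^d}\psi\phi(\rho)\dd\xi^G$ no longer vanishes after applying It\^o's formula to a nonlinear function of $\rho$: it produces an It\^o correction proportional to $G_1\phi^2(\rho)S''(\rho)$. First I would prove the $L^1$-estimate \eqref{9_4}: choosing $\psi=1$ in Definition~\ref{gen_approx}, the diffusion and Stratonovich flux terms integrate to zero, so that $\norm{\rho(\cdot,t)}_{L^1(\TT^d)}=\norm{\rho_0}_{L^1(\TT^d)}+\int_0^t\int_{\TT^d}\lambda(\rho)+\int_0^t\int_{\TT^d}\phi(\rho)\dd\xi^G$ using the nonnegativity of $\rho$; taking the supremum over $t$, applying the Burkholder--Davis--Gundy inequality to the $\xi^G$-martingale, using $\abs{\phi(\xi)}\le c(1+\xi)$ and $\abs{\lambda(\xi)}\le c\xi$ from Assumption~\ref{assume_4}, H\"older's inequality and Young's inequality to absorb the martingale contribution, and then Gr\"onwall's inequality yields \eqref{9_4} with the constant $c\exp(cT)$.

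For the $L^p$-estimate \eqref{9_7}, I would apply It\^o's formula---justified as in \cite[Proposition~7.7]{FehGes2020} via \cite[Theorem~3.1]{Kry2013}---to $\frac{1}{p(p-1)}\int_{\TT^d}\rho^p$, exactly as in the derivation of \eqref{4_00}--\eqref{4_02}, but now carrying along the extra terms $\int_0^t\int_{\TT^d}\abs{\rho}^{p-2}\lambda(\rho)\,\dt$, the stochastic term $\int_0^t\int_{\TT^d}\abs{\rho}^{p-2}\phi(\rho)\psi\,\dd\xi^G$ (which here appears as $\int_0^t\int_{\TT^d}\abs{\rho}^{p-1}\phi(\rho)\dd\xi^G$ after the substitution $\psi = 1$ conceptually, i.e. the full space integral), and the It\^o correction $\frac{1}{2}(p-1)\int_0^t\int_{\TT^d}\abs{\rho}^{p-2}G_1\phi^2(\rho)\,\dt$. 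The cancellation between the It\^o and It\^o-to-Stratonovich corrections for the $\xi^F$-terms is unchanged, so the right-hand side reproduces \eqref{4_02} plus these three new contributions. Taking the expectation kills the $\xi^G$-martingale; the $\lambda$-term is bounded by $c\E\int_0^T\int_{\TT^d}\rho^p$ using $\abs{\lambda(\xi)}\le c\xi$ and Young's inequality; and the $G_1\phi^2(\rho)$-term is bounded by $c\E\int_0^T\int_{\TT^d}(1+\rho^p)$ using the boundedness of $G_1$ (Assumption~\ref{assumption_noise_1}) and $\abs{\phi(\xi)}\le c(1+\xi)$. All remaining terms are handled exactly as in Proposition~\ref{prop_approx_est}: the $\nu$-gradient term vanishes as in \eqref{15_1}, the $F_2$-term is controlled by \eqref{aa_7000}, and the $F_3\sigma^2(\rho)$-term is controlled by \eqref{aa_5050}.

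The remaining step is to close the estimate. After taking expectations I would obtain an inequality of the form
\begin{align*}
& \sup_{t\in[0,T]}\E\left[\int_{\TT^d}\rho^p(\cdot,t)\right]+\E\left[\int_0^T\int_{\TT^d}\abs{\nabla\Theta_{\Phi,p}(\rho)}^2\right]+\a\E\left[\int_0^T\int_{\TT^d}\abs{\rho}^{p-2}\abs{\nabla\rho}^2\right]
\\ & \quad \leq cT\left(1+\E\left[\norm{\rho_0}_{L^1(\TT^d)}+\int_{\TT^d}\rho_0^p\right]\right)+c\int_0^T\E\left[\int_{\TT^d}\rho^p(\cdot,s)\right]\ds+c\E\left[\int_0^T\int_{\TT^d}\Theta_{\Phi,p}^2(\rho)\right],
\end{align*}
where the $\Theta_{\Phi,p}^2(\rho)$-term is absorbed via Lemma~\ref{lem_aux}, the interpolation estimate \eqref{50_2} applied to $\Psi=\Theta_{\Phi,p}$ integrated in time, the $L^1$-bound \eqref{9_4}, and Young's inequality (this is where the $\norm{\rho_0}^{m+p-1}_{L^1}$ exponent enters, exactly as in Proposition~\ref{prop_approx_est}). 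Applying Gr\"onwall's inequality to the function $t\mapsto\sup_{s\in[0,t]}\E\int_{\TT^d}\rho^p(\cdot,s)$ then produces the factor $c\exp(cT)$ and closes \eqref{9_7}. The main obstacle is bookkeeping: one must verify that the It\^o correction from the $\xi^G$-noise really does grow no faster than $\rho^p$ (which is exactly the content of the linear-growth assumption $\abs{\phi(\xi)}\le c(1+\xi)$ in Assumption~\ref{assume_4}, so this is where that hypothesis is essential) and that the supremum-in-time on the left of \eqref{9_4} can be obtained despite the presence of the $\xi^G$-martingale, which requires the BDG inequality rather than just taking expectations as in the conservative case. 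Beyond that, every individual term was already estimated in Proposition~\ref{prop_approx_est}, so no new analytic difficulty arises.
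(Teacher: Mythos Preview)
Your proposal is correct and follows essentially the same approach as the paper: for \eqref{9_4} the paper likewise takes $\psi=1$ in Definition~\ref{gen_approx}, applies the Burkholder--Davis--Gundy inequality (in analogy with \eqref{10_8}--\eqref{10_11}) together with Assumption~\ref{assume_4} and Gr\"onwall's inequality, and for \eqref{9_7} it observes that the only new terms relative to Proposition~\ref{prop_approx_est} are $\int_0^t\int_{\TT^d}\lambda(\rho)\rho^{p-1}$ and $\int_0^t\int_{\TT^d}G_1\phi^2(\rho)\abs{\rho}^{p-2}$, bounds both by $c(t+\int_0^t\int_{\TT^d}\rho^p)$ via Assumption~\ref{assume_4}, and closes with Gr\"onwall.
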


\begin{proof}  The proof of \eqref{9_4} follows by taking $\psi=1$ in Definition~\ref{gen_approx} and applying the Burkholder-Davis-Gundy inequality (see, for example, \cite[Chapter~4, Theorem~4.1]{RevYor1999}) in analogy with \eqref{10_8}, \eqref{10_9}, \eqref{10_10}, and \eqref{10_11} above using Assumption~\ref{assume_4} and Gr\"onwall's inequality.  To prove \eqref{9_7}, in comparison to Proposition~\ref{prop_approx_est} and \eqref{4_00}, since the expectation eliminates the martingale term it remains only to use Assumption~\ref{assume_4} to estimate the term, for some $c\in(0,\infty)$,
\[\abs{\int_0^t\int_{\TT^d}\lambda(\rho)\rho^{p-1}}+\int_0^t\int_{\TT^d}G_1\phi^2(\rho)\abs{\rho}^{p-2}\leq c\left(t+\int_0^t\int_{\TT^d}\rho^p\right).\]
Gr\"onwall's inequality completes the proof.\end{proof}

\begin{prop}\label{gen_approx_time}  Let $\xi^F$, $\xi^G$, $\Phi$, $\sigma$, $\nu$, $\phi$, and $\lambda$ satisfy Assumptions~\ref{assume_3}, \ref{assumption_noise_1}, and \ref{assume_4} for some $p\in[2,\infty)$, for every $\d\in(0,1)$ let $\Psi_\d$ be as in Definition~\ref{def_cutoff}, let $T\in[1,\infty)$, let $\a\in(0,1)$, let $\rho_0\in L^{m+p-1}(\O;L^1(\TT^d))\cap L^p(\O;L^p(\TT^d))$ be nonnegative and $\F_0$-measurable, and let $\rho$ be a solution in the sense of Definition~\ref{gen_approx}.  Then, for every $\beta\in(0,\nicefrac{1}{2})$ there exists $c\in(0,\infty)$ depending on $\d$ and $\beta$ but independent of $\a$ and $T$ such that, for every $s>\frac{d}{2}+1$,
\[\E\left[\norm{\Psi_\d(\rho)}_{W^{\beta,1}([0,T];H^{-s}(\TT^d))}\right]\leq c\exp(cT)\left(1+\E\left[\norm{\rho_0}^{m+p-1}_{L^1(\TT^d)}+\int_{\TT^d}\rho_0^p\right]\right).\]
\end{prop}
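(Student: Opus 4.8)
The plan is to repeat the argument of Proposition~\ref{prop_approx_time}, keeping track of the additional contributions produced by the non-conservative terms $\phi(\rho)\dd\xi^G$ and $\lambda(\rho)\dt$ in \eqref{9_0}. First I would apply It\^o's formula---justified as in \cite[Proposition~7.7]{FehGes2020} via \cite[Theorem~3.1]{Kry2013}, using the compact support of $\Psi_\d$ in $(0,\infty)$ and the distributional equality \eqref{4_02525}---to obtain, as distributions on $\TT^d$, a decomposition $\Psi_\d(\rho(x,t)) = \Psi_\d(\rho_0) + I^{\textrm{f.v.}}_t + I^{\textrm{mart}}_t$, in which $I^{\textrm{f.v.}}_t$ is the sum of the finite-variation terms appearing in Proposition~\ref{prop_approx_time} together with the two new terms $\int_0^t\Psi'_\d(\rho)\lambda(\rho)$ and $\frac{1}{2}\int_0^t\Psi''_\d(\rho)G_1\phi^2(\rho)$, while $I^{\textrm{mart}}_t$ is the sum of the martingale terms of Proposition~\ref{prop_approx_time} together with the new term $\int_0^t\Psi'_\d(\rho)\phi(\rho)\dd\xi^G$. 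There is no cross term between $\xi^F$ and $\xi^G$, since the driving Brownian motions are independent by Assumption~\ref{assumption_noise_1}.

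The terms already present in the conservative equation \eqref{4_0} would be estimated verbatim as in Proposition~\ref{prop_approx_time}, with Proposition~\ref{prop_gen_est} used in place of Proposition~\ref{prop_approx_est}; since the a priori bounds \eqref{9_4} and \eqref{9_7} carry a factor $\exp(cT)$ in place of the linear dependence on $T$, this replaces the factor $cT$ by $c\exp(cT)$ while preserving the powers $m+p-1$ and $p$ of $\norm{\rho_0}_{L^1(\TT^d)}$ and $\norm{\rho_0}_{L^p(\TT^d)}$. For the two new finite-variation terms I would use that $\Psi'_\d$ is bounded and $\Psi''_\d$ is bounded with support in $[\nicefrac{\d}{2},\d]$, the boundedness of $G_1$, the growth bounds $\abs{\lambda(\xi)}\leq c\xi$ and $\abs{\phi(\xi)}\leq c(1+\xi)$ of Assumption~\ref{assume_4}, the Sobolev embedding $H^s(\TT^d)\hookrightarrow W^{1,\infty}(\TT^d)$ for $s>\frac{d}{2}+1$ (equivalently $L^1(\TT^d)\hookrightarrow H^{-s}(\TT^d)$), and the $L^1$-estimate \eqref{9_4}, to bound their $W^{1,1}([0,T];H^{-s}(\TT^d))$-norm by $c\exp(cT)(1+\E[\norm{\rho_0}_{L^1(\TT^d)}])$ with $c$ depending on $\d$.

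The main work lies in the new martingale term $I^{\textrm{mart},G}_t=\int_0^t\Psi'_\d(\rho)\phi(\rho)\dd\xi^G$, which, unlike the conservative noise, is not in divergence form. Following the computation around \eqref{4_403} and \cite[Lemma~2.1]{FlaGat1995}, I would use the Burkholder--Davis--Gundy inequality (see \cite[Chapter~4, Theorem~4.1]{RevYor1999}) and duality with $H^s(\TT^d)$, together with the bound $\sum_{k=1}^\infty\big(\int_{\TT^d}h g_k\big)^2 = \int_{\TT^d}\int_{\TT^d}h(x)h(y)\sum_{k}g_k(x)g_k(y) \leq \norm{G_1}_{L^\infty(\TT^d)}\norm{h}^2_{L^1(\TT^d)}$, which follows from $\abs{\sum_k g_k(x)g_k(y)}\leq G_1(x)^{\nicefrac{1}{2}}G_1(y)^{\nicefrac{1}{2}}$. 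This gives $\E\big[\norm{I^{\textrm{mart},G}_t-I^{\textrm{mart},G}_s}^2_{H^{-s}(\TT^d)}\big]\leq c\abs{t-s}\sup_{r\in[0,T]}\E\big[(1+\norm{\rho(\cdot,r)}_{L^1(\TT^d)})^2\big]$ with $c$ depending on $\d$ and $\norm{G_1}_{L^\infty}$, where the uniform-in-time second moment is controlled using $\norm{\rho}_{L^1(\TT^d)}\leq c\norm{\rho}_{L^p(\TT^d)}$ on the torus, Jensen's inequality (since $p\geq2$), and the $L^p$-estimate \eqref{9_7}. Together with the analogous bound on $\E[\norm{I^{\textrm{mart},G}_\cdot}^2_{L^2([0,T];H^{-s}(\TT^d))}]$, this yields the estimate for $\norm{I^{\textrm{mart},G}_\cdot}_{W^{\beta,2}([0,T];H^{-s}(\TT^d))}$ for every $\beta\in(0,\nicefrac{1}{2})$.

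Combining the three groups of estimates and using the embeddings $W^{\beta,2}([0,T];H^{-s}(\TT^d)),W^{1,1}([0,T];H^{-s}(\TT^d))\hookrightarrow W^{\beta,1}([0,T];H^{-s}(\TT^d))$ for $\beta\in(0,\nicefrac{1}{2})$ would complete the proof. I expect the only genuinely new difficulty to be the stochastic term $\phi(\rho)\dd\xi^G$: it forces one to work with a non-divergence-form martingale and to extract a second moment bound on $\norm{\rho(\cdot,t)}_{L^1(\TT^d)}$ that is uniform in $t$, which the $L^1$-supremum estimate \eqref{9_4} does not provide directly but which does follow from \eqref{9_7}; the only structural input on $\xi^G$ used throughout is the boundedness of $G_1=\sum_k g_k^2$, in accordance with Assumption~\ref{assumption_noise_1}.
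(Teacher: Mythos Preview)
Your proposal is correct and follows the same approach as the paper: reduce to Proposition~\ref{prop_approx_time}, identify the three new contributions $\int_0^t\Psi'_\d(\rho)\lambda(\rho)$, $\frac{1}{2}\int_0^t\Psi''_\d(\rho)G_1\phi^2(\rho)$, and $\int_0^t\Psi'_\d(\rho)\phi(\rho)\dd\xi^G$, and bound the first two in $W^{1,1}_tH^{-s}_x$ via Assumption~\ref{assume_4} and \eqref{9_4} and the last in $W^{\beta,2}_tH^{-s}_x$ via BDG and \eqref{9_7}. The only cosmetic difference is in the martingale bound: the paper uses the embedding $L^2(\TT^d)\hookrightarrow H^{-s}(\TT^d)$ to reach $\E[\int_0^T\int_{\TT^d}\phi^2(\rho)]\leq c(T+\E[\int_0^T\int_{\TT^d}\rho^2])$ directly, whereas you use $L^1(\TT^d)\hookrightarrow H^{-s}(\TT^d)$ and then need a uniform-in-time bound on $\E[\norm{\rho(\cdot,t)}_{L^1}^2]$; both close using $p\geq 2$ and \eqref{9_7}, so this is not a substantive deviation.
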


\begin{proof}  In comparison to Proposition~\ref{prop_approx_time}, it remains to estimate the $W^{1,1}([0,T];H^{-s}(\TT^d))$-norm of
\[\frac{1}{2}\int_0^tG_1\Psi_\d''(\rho)\phi^2(\rho)+\int_0^t\Psi'_\d(\rho)\lambda(\rho),\]
and, for every $\b\in(0,\nicefrac{1}{2})$, the $W^{\beta,2}([0,T];H^{-s}(\TT^d))$-norm of
\[\int_0^t\Psi_\d'(\rho)\phi(\rho)\dd\xi^G.\]
Since $s>\frac{d}{2}+1$, similarly to \eqref{4_400} and \eqref{4_402}, it follows from Assumption~\ref{assume_4}, the boundedness of $G_1$, the fact that $\Psi''_\d$ is supported on $[\nicefrac{\d}{2},\d]$, and \eqref{9_4} that there exists $c\in(0,\infty)$ such that
\begin{align*}
& \E\left[\norm{\frac{1}{2}\int_0^\cdot\Psi_\d''(\rho)\phi^2(\rho)+\int_0^\cdot\Psi'_\d(\rho)\lambda(\rho)}_{W^{1,1}([0,T];H^{-s}(\TT^d)}\right]\leq c\exp(cT)\left(1+\E\left[\norm{\rho_0}_{L^1(\TT^d)}\right]\right).
\end{align*}
Since $s>\nicefrac{d}{2}+1$, similarly to \eqref{4_403} and \eqref{4_404}, it follows from Assumption~\ref{assume_4} and \eqref{9_4} that there exists $c\in(0,\infty)$ such that
\[\E\left[\norm{\int_0^\cdot\Psi_\d'(\rho)\phi(\rho)\dd\xi^G}^2_{W^{\beta,2}([0,T];H^{-s}(\TT^d))}\right]\leq c\E\left[\int_0^t\int_{\TT^d}\phi^2(\rho)\right]\leq c\left(T+\E\left[\int_0^T\int_{\TT^d}\rho^2\right]\right).\]
The claim now follows from the same argument as in Proposition~\ref{prop_approx_time}, using $p\in[2,\infty)$, H\"older's inequality, Young's inequality, and the estimates of Proposition~\ref{prop_gen_est}.  This completes the proof.
\end{proof}

\begin{thm}\label{thm_gen_exist}  Let $\xi^F$, $\xi^G$, $\Phi$, $\sigma$, $\nu$, $\phi$, and $\lambda$ satisfy Assumptions \ref{assumption_noise_1} and \ref{assume_4} for some $p\in[2,\infty)$ and let $\rho_0\in L^{m+p-1}(\O;L^1(\TT^d))\cap L^p(\O;L^p(\TT^d))$ be nonnegative and $\F_0$-measurable.  Then there exists a stochastic kinetic solution of \eqref{9_0} in the sense of Definition~\ref{def_gen_sol}.  Furthermore, the solution satisfies the estimates of Proposition~\ref{prop_gen_est}.\end{thm}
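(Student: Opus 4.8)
The plan is to follow closely the structure of the proof of Theorem~\ref{thm_exist}, using the a priori estimates of Propositions~\ref{prop_gen_est} and \ref{gen_approx_time} in place of Propositions~\ref{prop_approx_est} and \ref{prop_approx_time}, and the uniqueness result Theorem~\ref{thm_gen_unique} in place of Theorem~\ref{thm_unique}. First I would invoke Lemma~\ref{lem_nonlinear} to obtain the approximating sequence $\{\sigma_n\}_{n\in\N}$, and for each $\a\in(0,1)$ and $n\in\N$ solve the regularized equation \eqref{9_0} with $\a\Delta\rho$ added and $\sigma$ replaced by $\sigma_n$, in the sense of Definition~\ref{gen_approx}; the existence of these solutions follows exactly as in Proposition~\ref{prop_approx_exist}, since $\phi$ and $\lambda$ are continuous with at most linear growth (Assumption~\ref{assume_4}) and the finite-dimensional Galerkin system still has a unique strong solution. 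One then verifies, as in Proposition~\ref{prop_approx_kinetic}, that each $\rho^{\a,n}$ is a stochastic kinetic solution of the regularized version of \eqref{9_0} with kinetic measure $q^{\a,n}=\delta_0(\xi-\rho^{\a,n})(\abs{\nabla\Theta_{\Phi,2}(\rho^{\a,n})}^2+\a\abs{\nabla\rho^{\a,n}}^2)$, where now the kinetic equation carries the two additional terms $\tfrac{1}{2}G_1\phi^2(\rho^{\a,n})(\partial_\xi\psi)(x,\rho^{\a,n})$ and $\phi(\rho^{\a,n})\psi(x,\rho^{\a,n})\dd\xi^G$ and the drift $\lambda(\rho^{\a,n})\psi(x,\rho^{\a,n})$.

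Next I would establish tightness. The bounds of Proposition~\ref{gen_approx_time} give a stable $W^{\beta,1}_tH^{-s}_x$-estimate for the cutoff functions $\Psi_\d(\rho^{\a,n})$, and combined with Corollary~\ref{cor_frac_sob} (which holds verbatim here since its proof only uses the energy estimate, now supplied by \eqref{9_7}) and the new metric $D$ of Definition~\ref{def_metric_1}, the argument of Proposition~\ref{prop_tight} shows that the laws of $\{\rho^{\a,n}\}$ are tight on $L^1([0,T];L^1(\TT^d))$ in the strong topology. For the martingale terms I need tightness of both families $M^{\a,n,\psi}_t=\int_0^t\int_{\TT^d}\psi(x,\rho^{\a,n})\nabla\cdot(\sigma_n(\rho^{\a,n})\dd\xi^F)$, handled exactly as in Proposition~\ref{prop_mart_tight}, and the new family $N^{\a,n,\psi}_t=\int_0^t\int_{\TT^d}\phi(\rho^{\a,n})\psi(x,\rho^{\a,n})\dd\xi^G$; the latter is in fact simpler, since $\psi$ has compact support, $\phi$ grows at most linearly, and the $L^1$-mass of $\rho^{\a,n}$ is controlled by \eqref{9_4}, so the Burkholder--Davis--Gundy inequality together with Kolmogorov's criterion gives tightness on $\C^\gamma([0,T])$ for $\gamma<\nicefrac{1}{2}$. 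I would then run the Skorokhod representation and Jakubowski/martingale-identification argument of Theorem~\ref{thm_exist} verbatim, now also identifying the limit of $N^{\a,n,\psi}$ as the It\^o integral against the limiting noise $\tilde\xi^G$, using the joint convergence of the two independent Brownian families and the characterization of the quadratic variation and cross-variation; the term $\tfrac12 G_1\phi^2(\rho)$ passes to the limit by the strong $L^1$-convergence of $\rho^{\a,n}$ and the continuity and at-most-quadratic growth of $\phi^2$, with the $L^2$-integrability of $\phi(\tilde\rho)$ following from the interpolation estimate \eqref{50_2} exactly as for $\sigma(\tilde\rho)$.

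The construction of the $L^1$-continuous-in-time representative and the verification that the limiting measure is a kinetic measure are unchanged from Theorem~\ref{thm_exist}: the right- and left-continuous representatives $\tilde\chi^\pm$ are built from \eqref{defn_6}, the measure $\tilde q$ is shown to have no atoms in time via Proposition~\ref{prop_measure_1}, and the fact that $\tilde\chi^\pm$ are $\{0,1\}$-valued follows from applying the uniqueness estimate, now Theorem~\ref{thm_gen_unique}, which preserves the $L^1$-distance (up to the Gronwall factor that does not affect the argument at equal times). Finally, to pass from a probabilistically weak to a probabilistically strong solution I would invoke Lemma~\ref{lem_weak_conv} together with the pathwise uniqueness furnished by Theorem~\ref{thm_gen_unique}: since \eqref{10_0} and \eqref{10_1} give uniqueness in law and a.s.\ among stochastic kinetic solutions with the same initial data and driving noise, the Gy\"ongy--Krylov lemma yields convergence in probability of the original sequence $\rho^{\a,n}$ on $(\O,\F,\P)$, hence a probabilistically strong limit which is a stochastic kinetic solution of \eqref{9_0}; the estimates of Proposition~\ref{prop_gen_est} persist by weak lower semicontinuity of the Sobolev norm. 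The main obstacle I anticipate is the martingale identification for the second, independent noise $\xi^G$ inside the kinetic formulation --- one must carefully arrange the joint Skorokhod space so that the two Brownian families remain independent in the limit and so that the cross-variations $\langle \tilde M^j, \tilde N^i\rangle$ vanish while $\langle \tilde N^j,\tilde N^i\rangle$ produces exactly the $G_1\phi^2(\tilde\rho)$ correction --- but this is a routine, if bookkeeping-heavy, adaptation of the corresponding step in Theorem~\ref{thm_exist}.
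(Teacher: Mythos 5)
Your proposal is correct and follows exactly the route the paper takes: the paper's proof of Theorem~\ref{thm_gen_exist} simply declares itself identical to the proof of Theorem~\ref{thm_exist}, substituting Assumption~\ref{assume_4}, Propositions~\ref{prop_gen_est} and~\ref{gen_approx_time} for their conservative counterparts, and (implicitly) Theorem~\ref{thm_gen_unique} for Theorem~\ref{thm_unique}. Your expansion of the details---tightness of the extra $\xi^G$-martingale via BDG and the $L^1$-bound \eqref{9_4}, joint Skorokhod representation keeping the two Brownian families independent, identification of the $G_1\phi^2$ correction via cross- and quadratic variations, and the observation that the Gr\"onwall factor in \eqref{10_0} is harmless because it multiplies a zero right-hand side in the $\{0,1\}$-valuedness step---matches what the phrase ``identical to'' is meant to encompass.
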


\begin{proof}  The proof is identical to the proof of Theorem~\ref{thm_exist}, using Assumption~\ref{assume_4} and the estimates of Propositions~\ref{prop_gen_est} and \ref{gen_approx_time}.\end{proof}

\begin{assumption}\label{assumption_11}  Let $\Phi,\sigma\in\C([0,\infty))$ and $\nu\in\C([0,\infty);\R^d)$ satisfy Assumption~\ref{assumption_10}.  Assume that $\phi,\lambda\in\C([0,\infty))$ satisfy the following assumptions.
\begin{enumerate}[(i)]
\item There exists $c\in(0,\infty)$ such that
\[\phi^2(\xi)\left(1+\log^2(\Phi(\xi))+\frac{\Phi'(\xi)}{\Phi(\xi)}\right)\leq c(1+\xi+\Phi(\xi))\;\;\textrm{for every}\;\;\xi\in[0,\infty).\]
\item There exists $c\in(0,\infty)$ such that
\[\abs{\lambda(\xi)\log(\Phi(\xi))}\leq c(1+\xi+\Phi(\xi)).\]
\end{enumerate}
\end{assumption}

\begin{prop}\label{final_ent}  Let $\xi^F$, $\xi^G$, $\Phi$, $\sigma$, $\nu$, $\phi$, and $\lambda$ satisfy Assumptions~\ref{assume_3}, \ref{assumption_noise_1}, \ref{assume_4}, and \ref{assumption_11} for some $p\in[2,\infty)$, let $\a\in(0,1)$, let $T\in[1,\infty)$, let $\rho_0\in L^m(\O;L^1(\TT^d))\cap L^1(\O;\Ent(\TT^d))$ be $\F_0$-measurable, and let $\rho$ be a solution of \eqref{9_0} in the sense of Definition~\ref{gen_approx}.  Then there exists $c\in(0,\infty)$ independent of $\a$ and $T$ such that
\begin{align}\label{9_000007}
&\E\left[ \sup_{t\in[0,T]}\int_{\TT^d}\Psi_\Phi(\rho(x,t))\right] +\E\left[\int_0^T\int_{\TT^d}\abs{\nabla\Phi^\frac{1}{2}(\rho)}^2\right]+\a\E\left[\int_0^T\int_{\TT^d}\frac{\Phi'(\rho)}{\Phi(\rho)}\abs{\nabla\rho}^2\right]
 \\ \nonumber & \leq c\exp(cT)\left(1+\E\left[\norm{\rho_0}^m_{L^1(\TT^d)}+\int_{\TT^d}\Psi_\Phi(\rho_0)\right]\right).
\end{align}
\end{prop}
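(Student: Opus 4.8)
The plan is to run the proof of Proposition~\ref{ent_dis_est} essentially unchanged, carrying along the two extra terms produced by the reaction $\lambda(\rho)\dt$ and the non-conservative noise $\phi(\rho)\dd\xi^G$. For $\d\in(0,1)$ let $\Psi_{\Phi,\d}$ be the function with $\Psi_{\Phi,\d}(0)=0$ and $\Psi_{\Phi,\d}'(\xi)=\log(\Phi(\xi)+\d)$, so that $\Psi_{\Phi,\d}''(\xi)=\Phi'(\xi)/(\Phi(\xi)+\d)$. Applying It\^o's formula to $\int_{\TT^d}\Psi_{\Phi,\d}(\rho(x,s))$ (justified as in \cite[Proposition~7.7]{FehGes2020} via \cite[Theorem~3.1]{Kry2013}) and using \eqref{4_02525}, I would obtain the identity \eqref{4_025} of Proposition~\ref{ent_dis_est} augmented by
\[
\int_0^t\!\!\int_{\TT^d}\log(\Phi(\rho)+\d)\lambda(\rho),\qquad
\int_0^t\!\!\int_{\TT^d}\log(\Phi(\rho)+\d)\phi(\rho)\,\dd\xi^G,\qquad
\frac{1}{2}\int_0^t\!\!\int_{\TT^d}\frac{\Phi'(\rho)}{\Phi(\rho)+\d}G_1\phi^2(\rho),
\]
with no cross term, since $\xi^F$ and $\xi^G$ are independent. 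Because Assumption~\ref{assumption_11} contains Assumption~\ref{assumption_10}, we still have $\nabla\cdot F_2=0$, $\abs{\sigma}\le c\Phi^{\nicefrac{1}{2}}$, and \eqref{50_5}, so the $\nu$-term vanishes by integration by parts as in \eqref{15_1}, and the $\xi^F$-martingale and the $F_2$- and $F_3$-terms are treated exactly as in \eqref{4_05}, \eqref{4_4646}, \eqref{4_4444}, the $\xi^F$-martingale being absorbed (after taking the supremum in time, Burkholder--Davis--Gundy, and Young's inequality) into $\E\int_0^T\int_{\TT^d}\bigl(4\Phi(\rho)/(\Phi(\rho)+\d)\bigr)\abs{\nabla\Phi^{\nicefrac{1}{2}}(\rho)}^2$.

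The genuinely new point is to bound the three added terms uniformly in $\d\in(0,1)$. The elementary observation is that for $\d\in(0,1)$ one has $\abs{\log(\Phi(\xi)+\d)}\le\abs{\log\Phi(\xi)}$ when $\Phi(\xi)<\nicefrac{1}{2}$ (then $0<\Phi(\xi)<\Phi(\xi)+\d<1$ and $\log$ is negative and increasing) and $\abs{\log(\Phi(\xi)+\d)}\le\abs{\log\Phi(\xi)}+c$ when $\Phi(\xi)\ge\nicefrac{1}{2}$. Combined with $\phi(0)=\lambda(0)=0$, $\abs{\lambda(\xi)}\le c\xi$, and Assumption~\ref{assumption_11}, this gives, uniformly in $\d$,
\[
\abs{\lambda(\xi)\log(\Phi(\xi)+\d)}\le c\bigl(1+\xi+\Phi(\xi)\bigr),\qquad
\phi^2(\xi)\Bigl(\log^2(\Phi(\xi)+\d)+\frac{\Phi'(\xi)}{\Phi(\xi)}\Bigr)\le c\bigl(1+\xi+\Phi(\xi)\bigr),
\]
the first from Assumption~\ref{assumption_11}(ii), the second from Assumption~\ref{assumption_11}(i) together with $\Phi'(\xi)/(\Phi(\xi)+\d)\le\Phi'(\xi)/\Phi(\xi)$ on $(0,\infty)$. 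Hence the $\lambda$-term and the $G_1\phi^2$ It\^o-correction term are bounded pointwise by $c(1+\rho+\Phi(\rho))$, and for the $\xi^G$-martingale, after taking the supremum in time, Burkholder--Davis--Gundy, the boundedness of $G_1$, and H\"older's inequality on $\TT^d$ give
\[
\E\Bigl[\sup_{t\in[0,T]}\Bigl|\int_0^t\!\!\int_{\TT^d}\log(\Phi(\rho)+\d)\phi(\rho)\,\dd\xi^G\Bigr|\Bigr]
\le c\,\E\Bigl[\int_0^T\!\!\int_{\TT^d}\bigl(1+\rho+\Phi(\rho)\bigr)\Bigr]^{\nicefrac{1}{2}}
\le \frac{1}{2}+c\,\E\Bigl[\int_0^T\!\!\int_{\TT^d}\bigl(1+\rho+\Phi(\rho)\bigr)\Bigr].
\]

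To close the estimate I would, as in Proposition~\ref{ent_dis_est}, apply the interpolation inequality \eqref{50_2} to $\Psi=\Phi^{\nicefrac{1}{2}}$ (with exponent $m$, using $\Phi(\xi)\le c(1+\xi^m)$ from \eqref{aa_0}) to write $\E\int_0^T\int_{\TT^d}\Phi(\rho)\le c\int_0^T\bigl(1+\E\norm{\rho(\cdot,s)}^m_{L^1(\TT^d)}\bigr)\ds+\ve\,\E\int_0^T\int_{\TT^d}\abs{\nabla\Phi^{\nicefrac{1}{2}}(\rho)}^2$ and absorb the last term into the dissipation. Since mass is no longer conserved, I would bound $\E\norm{\rho(\cdot,s)}_{L^1(\TT^d)}$ and $\E\norm{\rho(\cdot,s)}^m_{L^1(\TT^d)}$ using the a priori estimate \eqref{9_4} of Proposition~\ref{prop_gen_est} and its $m$-th moment analogue (obtained by the same Gr\"onwall argument applied to $\norm{\rho(\cdot,t)}^m_{L^1(\TT^d)}$ after testing with $\psi=1$), which is where $\rho_0\in L^m(\O;L^1(\TT^d))$ enters and where the factor $c\exp(cT)$ arises. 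Finally I would let $\d\to0$: the left side increases to the quantity in \eqref{9_000007} by monotone convergence ($\Psi_{\Phi,\d}$ is nondecreasing in $\d$, and $\Phi'(\rho)/(\Phi(\rho)+\d)$, $4\Phi(\rho)/(\Phi(\rho)+\d)$ increase to $\Phi'(\rho)/\Phi(\rho)$ and $4$ on $\{\rho>0\}$, while $\nabla\rho=\nabla\Phi^{\nicefrac{1}{2}}(\rho)=0$ a.e.\ on $\{\Phi(\rho)=0\}=\{\rho=0\}$ by Stampacchia's lemma, \cite[Chapter~5, Exercises~17,18]{Eva2010}), and the right side converges by dominated convergence, using $-C\le\Psi_{\Phi,\d}(\rho_0)\le\Psi_{\Phi,1}(\rho_0)\le c(1+\rho_0+\rho_0\log_+\rho_0)$, which is integrable because $\rho_0\in L^1(\O;\Ent(\TT^d))$. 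Young's inequality ($m\ge1$, $T\ge1$) then yields \eqref{9_000007}.

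I expect the main obstacle to be the It\^o-correction term $\frac{1}{2}\int_{\TT^d}\frac{\Phi'(\rho)}{\Phi(\rho)+\d}G_1\phi^2(\rho)$: like the singular term \eqref{eq:singular_term} it involves the ratio $\Phi'/\Phi$ and is only integrable near the zero set of the solution because Assumption~\ref{assumption_11}(i) is precisely the hypothesis forcing $\phi^2\Phi'/\Phi\le c(1+\xi+\Phi)$, and this bound must be made uniform in the regularization parameter $\d$ — the role of the $\log$-comparison above and of the monotonicity exploited when sending $\d\to0$. The remaining steps are routine repetitions of Proposition~\ref{ent_dis_est} and of the Gr\"onwall argument behind Proposition~\ref{prop_gen_est}.
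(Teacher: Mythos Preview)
Your proposal is correct and follows essentially the same route as the paper: rerun the entropy-dissipation computation of Proposition~\ref{ent_dis_est}, identify the three new terms coming from $\lambda$, $\phi$, and the $\xi^G$-It\^o correction, bound them via Assumption~\ref{assumption_11}, Burkholder--Davis--Gundy, and Young's inequality by $c\exp(cT)\bigl(1+\E[\norm{\rho_0}_{L^1}+\int_0^T\!\int_{\TT^d}\Phi(\rho)]\bigr)$, and close with the interpolation estimate \eqref{50_2} for $\Psi=\Phi^{\nicefrac{1}{2}}$ together with \eqref{9_4}. Your treatment is actually more careful than the paper's sketch in two respects---you track the $\d$-regularization explicitly (with the $\abs{\log(\Phi(\xi)+\d)}\le\abs{\log\Phi(\xi)}+c$ comparison and the monotone passage $\d\to0$), and you flag that the non-conservative setting requires the $m$-th moment analogue of \eqref{9_4} rather than exact mass conservation---both of which the paper leaves implicit.
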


\begin{proof}To prove \eqref{9_000007}, in comparison to Proposition~\ref{prop_approx_est} and \eqref{4_025}, it is necessary to estimate the term
\[\E\left[\frac{1}{2}\int_0^T\int_{\TT^d}G_1\phi^2(\rho)\frac{\Phi'(\rho)}{\Phi(\rho)}+\int_0^T\int_{\TT^d}\abs{\lambda(\rho)\log(\Phi(\rho))}\right]+\E\left[\sup_{t\in[0,T]}\abs{\int_0^t\int_{\TT^d}\phi(\rho)\log(\Phi(\rho))\dd\xi^G}\right].\]
It follows from the Assumption~\ref{assumption_11}, $T\in[1,\infty)$,  \eqref{9_4}, the Burkholder-Davis-Gundy inequality (see, for example, \cite[Chapter~4, Theorem~4.1]{RevYor1999}), and Young's inequality that there exists $c\in(0,\infty)$ such that these terms are bounded by
\[c\exp(cT)\left(1+\E\left[\norm{\rho_0}_{L^1(\TT^d)}+\int_0^T\int_{\TT^d}\Phi(\rho)\right]\right).\]
The claim now follows using the interpolation estimate \eqref{50_2} with $\Psi=\Phi^\frac{1}{2}$ and \eqref{9_4}, which completes the proof.\end{proof}

\begin{cor}\label{cor_exist_3}  Let $\xi^F$, $\xi^G$, $\Phi$, $\sigma$, $\nu$, $\phi$, and $\lambda$ satisfy Assumptions \ref{assumption_noise_1}, \ref{assume_4}, and \ref{assumption_11} for some $p\in[2,\infty)$ and let $\rho_0\in L^m(\O;L^1(\TT^d))\cap L^1(\O;\Ent(\TT^d))$ be $\F_0$-measurable.  Then there exists a stochastic kinetic solution of \eqref{9_0} in the sense of Definition~\ref{def_gen_sol}.  Furthermore, the solution satisfies the estimates of Proposition~\ref{final_ent}.\end{cor}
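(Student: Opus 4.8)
\textbf{Proof proposal for Corollary~\ref{cor_exist_3}.}

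The plan is to mirror the strategy used in Corollary~\ref{cor_exist} for the conservative equation \eqref{6_0}, replacing the pathwise $L^1$-contraction of Theorem~\ref{thm_unique} by the $L^1$-contraction in expectation of Theorem~\ref{thm_gen_unique}, and replacing the entropy dissipation estimate of Proposition~\ref{ent_dis_est} by its generalized analogue Proposition~\ref{final_ent}. First I would truncate the initial datum: for $\rho_0\in L^m(\O;L^1(\TT^d))\cap L^1(\O;\Ent(\TT^d))$ set $\rho^n_0=\rho_0\wedge n$, and observe exactly as in \eqref{6_55} that, since $\xi\log(\xi)$ is increasing on $[\nicefrac{1}{e},\infty)$, one has
\[
\sup_n\E\left[\int_{\TT^d}\rho^n_0\log(\rho^n_0)\right]+\sup_n\E\left[\norm{\rho^n_0}^m_{L^1(\TT^d)}\right]<\infty
\quad\textrm{and}\quad
\lim_{n\rightarrow\infty}\norm{\rho^n_0-\rho_0}_{L^1(\TT^d)}=0.
\]
In particular each $\rho^n_0$ is bounded, hence lies in $L^{m+p-1}(\O;L^1(\TT^d))\cap L^p(\O;L^p(\TT^d))$, so Theorem~\ref{thm_gen_exist} provides a stochastic kinetic solution $\rho^n$ of \eqref{9_0} with initial data $\rho^n_0$ in the sense of Definition~\ref{def_gen_sol}, and these solutions satisfy the estimates of Proposition~\ref{prop_gen_est}. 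Moreover, since $\rho^n_0$ has finite entropy in expectation and is $L^m$-integrable, Proposition~\ref{final_ent} applies (after passing to the limit in the vanishing viscosity parameter $\a$, as in the construction of $\rho^n$), giving the entropy dissipation bound \eqref{9_000007} for $\rho^n$ uniformly in $n$.

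Next I would pass to the limit $n\rightarrow\infty$. The contraction estimate \eqref{10_0} of Theorem~\ref{thm_gen_unique} applied to $\rho^n$ and $\rho^{n'}$ gives
\[
\E\left[\norm{\rho^n(\cdot,t)-\rho^{n'}(\cdot,t)}_{L^1(\TT^d)}\right]\leq c\exp(cT)\E\left[\norm{\rho^n_0-\rho^{n'}_0}_{L^1(\TT^d)}\right],
\]
so $\{\rho^n\}$ is Cauchy in $L^1(\O\times[0,T];L^1(\TT^d))$ and converges to some $\rho\in L^1(\O\times[0,T];L^1(\TT^d))$, with $\norm{\rho(\cdot,t)}_{L^1(\TT^d)}$-preservation (in the averaged sense \eqref{defn_1}) passing to the limit using \eqref{9_17} and the strong convergence. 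The uniform entropy bound \eqref{9_000007}, together with the $L^1$-mass identity \eqref{9_4}, the interpolation estimate \eqref{50_2} applied to $\Psi=\Phi^{\nicefrac{1}{2}}$ integrated in time, and Assumption~\ref{assumption_10}(i), shows $\Phi^{\nicefrac{1}{2}}(\rho)\in L^2(\O\times[0,T];H^1(\TT^d))$; then $\abs{\sigma}\leq c\Phi^{\nicefrac{1}{2}}$ yields $\sigma(\rho)\in L^2(\O\times[0,T];L^2(\TT^d))$, $\abs{\nu}\leq c(1+\xi+\Phi)$ from \eqref{50_5} yields $\nu(\rho)\in L^1(\O;L^1(\TT^d\times[0,T]))$, and Assumption~\ref{assume_4}(i) yields $\phi(\rho)\in L^2(\O\times[0,T];L^2(\TT^d))$ and $\lambda(\rho)\in L^1(\O;L^1(\TT^d\times[0,T]))$. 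For the kinetic measure, I would reuse the estimate \eqref{50_7070} (which extends verbatim to \eqref{9_0} since the non-conservative terms vanish when testing against $S_M''$ that is supported away from zero, up to lower-order terms controlled by \eqref{9_17}): this gives a limiting kinetic measure $q$ that decays at infinity in the sense of \eqref{def_5353} even though it is no longer globally finite, the local $H^1$-regularity property of Definition~\ref{def_gen_sol} follows from the uniform boundedness of $\Phi'$ away from zero on compact subsets of $(0,\infty)$, and weak lower semicontinuity of the Sobolev norm gives the regularity inequality $\delta_0(\xi-\rho)\Phi'(\xi)\abs{\nabla\rho}^2\leq q$.

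Finally, the passage to the limit in the equation \eqref{defn_6}, the $L^1$-continuity in time of $\rho$, and the identification of the martingale terms (both the $\xi^F$- and $\xi^G$-stochastic integrals) are carried out by a simplified repetition of the corresponding steps in the proof of Theorem~\ref{thm_exist}---simplified because here everything takes place on the original probability space $(\O,\F,\P)$, so there is no need for the Skorokhod representation theorem, and one only needs the stability of the individual terms under strong $L^1_{t,x}$-convergence of $\rho^n$ together with the Burkholder--Davis--Gundy inequality for the martingale parts (exactly as in Proposition~\ref{prop_mart_tight} and the martingale-identification step of Theorem~\ref{thm_exist}). The estimates of Proposition~\ref{final_ent} for the limit follow from the uniform bounds on $\rho^n$ and weak lower semicontinuity of the Sobolev norm. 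The main obstacle is ensuring that the kinetic measure---which is only finite for $L^2$-integrable initial data and is genuinely infinite here---still behaves well enough: one must verify that \eqref{50_7070} (adapted to the extra terms $\phi$, $\lambda$, which are harmless because they are supported where $S_M''$ is, away from zero, and bounded by \eqref{9_17}) both controls the decay at infinity \eqref{def_5353} and provides the local regularity \eqref{def_2500000}, and that the limiting measure $q$ has no atoms in time (which, as in Theorem~\ref{thm_exist}, follows once $L^1$-continuity of $\rho$ is established via \eqref{defn_6} and Proposition~\ref{prop_measure_1}). Everything else is a routine transcription of the arguments already developed for \eqref{6_0}.
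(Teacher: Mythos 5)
Your proposal is correct and takes essentially the same approach as the paper: the paper's proof of Corollary~\ref{cor_exist_3} is a one-sentence reference to the proof of Corollary~\ref{cor_exist}, with Theorem~\ref{thm_unique}, Theorem~\ref{thm_exist}, and Proposition~\ref{ent_dis_est} replaced by Theorem~\ref{thm_gen_unique}, Theorem~\ref{thm_gen_exist}, and Proposition~\ref{final_ent}. You have spelled out in full the truncation $\rho_0^n=\rho_0\wedge n$, the Cauchy argument from the expectation $L^1$-contraction, the integrability of $\sigma(\rho)$, $\nu(\rho)$, $\phi(\rho)$, $\lambda(\rho)$ from the entropy dissipation bound, and the kinetic measure properties via the analogue of \eqref{50_7070}, which is exactly what the paper's cross-reference intends.
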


\begin{proof}The proof is identical to Corollary~\ref{cor_exist} using Theorem~\ref{thm_gen_unique}, Theorem~\ref{thm_gen_exist}, and Proposition~\ref{final_ent}.\end{proof}

\begin{cor}\label{cor_exist_2}  Let $\xi^F$, $\xi^G$, $\Phi$, $\sigma$, $\nu$, $\phi$, and $\lambda$ satisfy Assumptions \ref{assumption_noise_1} and \ref{assume_4} for some $p\in[2,\infty)$, let $\rho_0\in L^1(\O;L^1(\TT^d))$ be nonnegative and $\F_0$-measurable, and assume that there exists $c\in(0,\infty)$ such that
\[\sigma^2(\xi)+\abs{\nu(\xi)}+\phi^2(\xi)\leq c(1+\xi)\;\;\textrm{for every}\;\;\xi\in[0,\infty).\]
Then there exists a stochastic kinetic solution of \eqref{9_0} in the sense of Definition~\ref{def_gen_sol}.  \end{cor}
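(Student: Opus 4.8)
The plan is to construct the solution by truncating the initial datum and passing to the limit, exactly along the lines of Corollaries~\ref{cor_exist_1} and \ref{cor_exist_3}, now using Theorems~\ref{thm_gen_unique} and \ref{thm_gen_exist} and the a priori estimates of Proposition~\ref{prop_gen_est} in place of their conservative counterparts. First I would set $\rho_0^n=\rho_0\wedge n$, which is bounded and therefore lies in $L^{m+p-1}(\O;L^1(\TT^d))\cap L^p(\O;L^p(\TT^d))$, and which satisfies $\norm{\rho_0^n-\rho_0}_{L^1(\TT^d)}\to 0$ almost surely and, by dominated convergence, in $L^1(\O;L^1(\TT^d))$. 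Theorem~\ref{thm_gen_exist} then yields probabilistically strong stochastic kinetic solutions $\rho^n$ of \eqref{9_0} with initial data $\rho_0^n$, and the stability estimate \eqref{10_0} of Theorem~\ref{thm_gen_unique} shows that $\{\rho^n\}_{n\in\N}$ is Cauchy in $L^1(\O\times[0,T];L^1(\TT^d))$. I would let $\rho$ be the limit and pass to a subsequence converging almost everywhere on $\O\times\TT^d\times[0,T]$; being an $L^1$-limit of probabilistically strong solutions, $\rho$ is again defined on $(\O,\F,\P)$ and $\F_t$-predictable.

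Next I would verify that $\rho$ satisfies the integrability and regularity required by Definition~\ref{def_gen_sol}. The mass balance \eqref{defn_1}, equivalently the $L^1$-bound \eqref{9_4}, together with the hypothesis $\sigma^2(\xi)+\abs{\nu(\xi)}+\phi^2(\xi)\le c(1+\xi)$, gives uniformly in $n$ that $\sigma(\rho^n),\phi(\rho^n)$ are bounded in $L^2(\O\times\TT^d\times[0,T])$ and $\nu(\rho^n),\lambda(\rho^n)$ are bounded in $L^1(\O\times\TT^d\times[0,T])$; since $\rho^n\to\rho$ in $L^1$ forces the uniform integrability of $\{\rho^n\}$, a uniform-integrability (Vitali) argument upgrades the almost-everywhere convergence to strong $L^2$-convergence of $\sigma(\rho^n)$ and $\phi(\rho^n)$ and strong $L^1$-convergence of $\nu(\rho^n)$ and $\lambda(\rho^n)$, which is property (ii). The local regularity \eqref{def_2500000} and the decay \eqref{def_5353} of the kinetic measure follow from the localized energy estimate \eqref{50_7070}---whose analogue for the equation of Definition~\ref{gen_approx} holds by the same truncation computation, the additional $\lambda$- and $\phi^2$-terms being absorbed using \eqref{9_4} and the linear growth---since this estimate needs only the $L^1$-integrability of $\rho$ and the $L^2$-integrability of $\sigma(\rho)$; the bound \eqref{2_500} on the limiting measure is then recovered by weak lower semicontinuity of the Sobolev norm. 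The estimates of Proposition~\ref{prop_gen_est} survive the limit by the same semicontinuity.

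Finally I would pass to the limit in the kinetic equation \eqref{defn_6} and establish that $\rho$ has a representative in $\C([0,T];L^1(\TT^d))$ with a kinetic measure having no atoms in time. Because every approximation here is defined on the original space $(\O,\F,\P)$, no Skorokhod representation is required: this step is a strict simplification of the corresponding part of the proof of Theorem~\ref{thm_exist}, in which the almost-everywhere and $L^2$-convergence identify the drift, flux, martingale and kinetic-measure terms in the limit, Proposition~\ref{prop_measure_1} controls the measure near zero, and Lemma~\ref{lem_partition} handles the divergences at infinity. The main obstacle, exactly as in Corollary~\ref{cor_exist_1}, is this last step: reproducing, in the single-probability-space setting, the construction of left- and right-continuous representatives of the kinetic function and the argument showing they coincide, thereby upgrading weak $L^2$-continuity in time to strong $L^1$-continuity of $\rho$.
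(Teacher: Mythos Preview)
Your proposal is correct and follows essentially the same approach as the paper: truncate $\rho_0$ to $\rho_0^n=\rho_0\wedge n$, apply Theorem~\ref{thm_gen_exist} to obtain solutions $\rho^n$, use the stability estimate \eqref{10_0} of Theorem~\ref{thm_gen_unique} to obtain a Cauchy sequence, and then verify the properties of Definition~\ref{def_gen_sol} for the limit via the linear growth hypothesis, \eqref{9_4}, and the localized energy estimate \eqref{50_7070}. The paper simply states that the proof is identical to Corollary~\ref{cor_exist_1} using Theorems~\ref{thm_gen_unique} and \ref{thm_gen_exist}; your write-up just makes the implicit steps explicit.
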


\begin{proof} The proof is identical to Corollary~\ref{cor_exist_1} using Theorem~\ref{thm_gen_unique} and Theorem~\ref{thm_gen_exist}.\end{proof}

\section*{Acknowledgements}

The first author acknowledges financial support from the EPSRC through the EPSRC Early Career Fellowship EP/V027824/1.  This work was funded by the Deutsche Forschungsgemeinschaft (DFG, German Research Foundation) -- SFB 1283/2 2021 -- 317210226.

\bibliography{Untitled}
\bibliographystyle{plain}

\end{document}